\documentclass[11pt,a4paper,reqno]{amsart}
\usepackage[margin=25mm]{geometry}
\usepackage{amsmath,amssymb,amsthm,mathtools,mathrsfs,longtable}
\usepackage[T1]{fontenc}
\usepackage{lmodern,microtype,needspace,enumitem}
\AtBeginDocument{%
  \begingroup
  \normalfont\footnotesize\fontencoding{OT1}\selectfont
  \endgroup
}
\usepackage[colorlinks=true,linkcolor=blue,citecolor=blue,urlcolor=blue]{hyperref}
\setlist[enumerate]{leftmargin=2.2em,itemsep=3pt,topsep=4pt}
\allowdisplaybreaks[2]
\numberwithin{equation}{section}
\newtheorem{theorem}{Theorem}[section]
\newtheorem{lemma}[theorem]{Lemma}
\newtheorem{proposition}[theorem]{Proposition}
\newtheorem{corollary}[theorem]{Corollary}
\newtheorem{definition}[theorem]{Definition}
\newtheorem{problem}{Problem}[section]
\theoremstyle{definition}
\newtheorem{example}[theorem]{Example}
\theoremstyle{remark}
\newtheorem{remark}[theorem]{Remark}
\newcommand{\C}{\mathbb C}
\newcommand{\R}{\mathbb R}
\newcommand{\Z}{\mathbb Z}
\newcommand{\N}{\mathbb N}

\newcommand{\cF}{\mathcal F_t^2}
\newcommand{\dbar}{\overline{\partial}}
\newcommand{\norm}[1]{\|#1\|}
\DeclareMathOperator{\Hess}{Hess}
\DeclareMathOperator{\supp}{supp}

\DeclareMathOperator{\Tr}{Tr}
\DeclareMathOperator{\rank}{rank}
\title[Schatten limits for semi-commutators in Toeplitz quantization]
{Schatten limits for semi-commutators in Toeplitz quantization on Fock space}
\author{Jie Qin}
\address{School of Mathematics and Statistics,
Chongqing Technology and Business University,
Chongqing 400067, China}
\email{qinjie24520@163.com}
\date{}
\subjclass[2020]{Primary 47B35; Secondary 47B10, 46E35, 81S10}
\keywords{Toeplitz semi-commutators, weighted Fock spaces, Schatten classes,
weak compactness, homogeneous Sobolev spaces, Toeplitz quantization}
\begin{document}
\begin{abstract}
We characterize, for every $0<p<\infty$, the bounded symbols $f$
such that $T_f^{(t)}T_g^{(t)}-T_{fg}^{(t)}$ tends to zero in the
unnormalized Schatten $p$-norm or quasi-norm as $t\to0^{+}$ for every
fixed bounded symbol $g$, solving a problem posed by Hu and Virtanen in [Anal. PDE, IDA and Hankel operators on Fock spaces, 16 (2023), 2041-2077].
We work on weighted Fock spaces over $\mathbb C^n$ whose defining
weights have uniformly positive and bounded real Hessians.
Only constants satisfy the limit when $p<2n$, while the critical
class at $p=2n$ consists exactly of bounded symbols with weak real gradient
in $L^{2n}(\mathbb R^{2n})$.
When $p>2n$, the limit is characterized by relative weak compactness
in $L^1(\mathbb R^{2n})$ of normalized local mean oscillation
densities. Uniform convergence over $\|g\|_\infty\le1$ is characterized,
for $p\ge2$, by vanishing total mass of the normalized densities.
For $p>2n$, a continuous compactly supported counterexample shows
that uniformly bounded density mass and convergence for
$g=\overline f$ do not imply convergence for every fixed bounded
second symbol.
The symbol classes form weight-independent unital algebras, with
further consequences for Toeplitz products, commutators, and
finite-rank approximation of semi-commutators.
The Bargmann transform transfers the Gaussian case of this
classification to anti-Wick quantization by unitary equivalence.
For the same convergence problem on compact K\"ahler manifolds
with positive line bundles, we establish a H\"older sufficient
condition, including the endpoint, for each fixed bounded
second symbol.
An example on the projective line shows that trace norm
convergence can fail even for a smooth first symbol and a fixed
bounded second symbol.
The main proofs combine local holomorphic approximation and Schatten estimates
with Sobolev approximation, singular integral methods, and the
Dunford--Pettis theorem to connect the operator limits with weak
compactness of the normalized densities.
\end{abstract}
\thanks{The author was supported by the National Natural Science Foundation of China (12501154).}
\maketitle

\section{Introduction and main results}\label{sec:intro}
\subsection{Setting and problem}
Let $n\geq1$ and put $d=2n$. We identify $\C^n$ with
$\R^d$ by $z_j=x_{2j-1}+ix_{2j}$, $1\leq j\leq n$, and
write $dv(z)=dx_1\cdots dx_d$ for the Lebesgue measure.
Fix a real-valued function
$\varphi\in C^2(\C^n)$ such that
\begin{equation}\label{eq:weight}
 mI_d\leq\Hess_{\R}\varphi(z)
 =\left(
   \frac{\partial^2\varphi}{\partial x_\ell\partial x_k}(z)
  \right)_{\ell,k=1}^d
 \leq MI_d,\quad z\in\C^n,
\end{equation}
where $0<m\leq M<\infty$ and $I_d$ is the $d\times d$
identity matrix. The matrix inequalities mean that
$$
 m|v|^2
 \leq\sum_{\ell,k=1}^d
 \frac{\partial^2\varphi}{\partial x_\ell\partial x_k}(z)
 v_\ell v_k
 \leq M|v|^2
$$
for every $z\in\C^n$ and $v=(v_1,\ldots,v_d)\in\R^d$.
Following the scaling in
\cite[Section 6]{HuVirtanen2023}, we define
\begin{equation}\label{E82}
 d\mu_t(z)=t^{-n}e^{-2\varphi(z/\sqrt t)} dv(z),\quad
 L_t^2=L^2(\C^n,d\mu_t),\quad
 \cF=L_t^2\cap\mathcal O(\C^n)
\end{equation}
for $t>0$. Here $\mathcal O(\Omega)$ denotes
the holomorphic functions on a domain $\Omega\subseteq\C^n$. On $L_t^2$, we define
the inner product and its induced norm by
$$
 \langle u,h\rangle_t=\int_{\C^n}u\overline{h}d\mu_t,
 \quad \norm{u}_{L_t^2}=\langle u,u\rangle_t^{1/2},
 \quad u,h\in L_t^2.
$$
The Fock space $\cF$ is a Hilbert space with the inner product
inherited from $L_t^2$.

Let $P_t:L_t^2\to\cF$ be the orthogonal projection.
For $f\in L^\infty(\C^n)$, let $M_f:L_t^2\to L_t^2$
be given by $M_fu=fu$ for $u\in L_t^2$.
 The Toeplitz and Hankel operators are
\begin{equation}\label{E1}
 T_f^{(t)}=P_tM_f:\cF\to\cF,\quad
 H_f^{(t)}=(I-P_t)M_f:\cF\to L_t^2\ominus\cF.
\end{equation}
For $f,g\in L^\infty(\C^n)$, we consider the semi-commutator
\begin{equation}\label{E2}
 D_t(f,g)=T_f^{(t)}T_g^{(t)}-T_{fg}^{(t)}:\cF\to\cF.
\end{equation}
The adjoint
$(H_f^{(t)})^*:L_t^2\ominus\cF\to\cF$ satisfies
$(H_f^{(t)})^*u=P_t(\overline{f} u)$ for $u\in L_t^2\ominus\cF$.
Since $H_g^{(t)}$ takes values in this orthogonal complement,
\eqref{E2} yields
$$
 D_t(f,g)=-(H_{\overline{f}}^{(t)})^*H_g^{(t)}.
$$

For a bounded operator $A$, $\norm{A}$ denotes its operator
norm. We include space subscripts on operator norms only when
needed. The parameter $t>0$ is the semiclassical parameter.
For the standard Gaussian weights, it plays the role of
Planck's constant in Toeplitz quantization
\cite[Section 1]{BauerCoburnHagger2018}.
The limit $\norm{D_t(f,g)}\to0$ as $t\to0^{+}$ expresses
asymptotic preservation of multiplication, one of the basic
requirements in quantization. Berezin developed the general
framework in \cite{Berezin1974,Berezin1975}, and Bordemann,
Meinrenken, and Schlichenmaier established operator norm
estimates for the classical limit on compact quantizable
K\"ahler manifolds \cite{BordemannMeinrenkenSchlichenmaier1994}.
For reproducing kernels and composition expansions in this
setting and on complex domains, see
\cite{Englis1996,Schlichenmaier2010,MaMarinescu2012}.

On Fock space, the operator calculus of Berger and Coburn
\cite{BergerCoburn1986} and the deformation estimates of
Coburn \cite{Coburn1992} connect Toeplitz products with
quantization. Composition formulas for suitable classes of
possibly unbounded symbols are studied in \cite{Bauer2009},
while Hagger obtains product expansions for symbols with
finitely many bounded uniformly continuous derivatives
\cite{Hagger2019}. The zeroth order limit also admits less
regular symbols. Bauer and Coburn proved
$\norm{D_t(f,g)}\to0$ for bounded uniformly continuous symbols
\cite{BauerCoburn2016}. Bauer, Coburn, and Hagger extended
this limit to a bounded first symbol whose mean oscillation
vanishes at small radii, uniformly in the center, and an
arbitrary bounded second symbol. They also characterized
this symbol class by requiring both orders of the
semi-commutator to vanish for every bounded test
\cite{BauerCoburnHagger2018}. For the weights and scaling
used here, Hu and Virtanen characterized the first symbols
for which the limit holds for every bounded $g$ in terms
of the local analytic distance of $\overline{f}$
\cite[Theorem 6.4]{HuVirtanen2023}.

We study the corresponding limit in Schatten classes.
For $0<p<\infty$, $\mathcal S_p(\cF)$ consists of compact operators on $\cF$
whose singular values have summable $p$-th powers.
We write $\norm{\cdot}_{\mathcal S_p(\cF)}$ for the associated norm
when $p\geq1$ and quasi-norm when $0<p<1$, and assign it the
value $+\infty$ outside $\mathcal S_p(\cF)$
\cite[Section 1.4]{Zhu1990}. For brevity, we write
$\norm{\cdot}_{\mathcal S_p}$ for this norm or quasi-norm on
$\cF$. The Schatten norm or quasi-norm depends on all singular
values, so convergence in operator norm alone does not imply
convergence in a Schatten norm or quasi-norm. On weighted Bergman spaces of the unit ball,
Tang, Wang, and Zheng estimate remainders in product
expansions under differentiability assumptions on the symbols
\cite{TangWangZheng2023}.
Here we consider the unnormalized Schatten limit for each
fixed bounded second symbol, allowing the first symbol to
have no prescribed differentiability. Hu and Virtanen posed
the following problem for Fock spaces.

\begin{problem}[{Hu and Virtanen, \cite[p. 2075]{HuVirtanen2023}}]
\label{problem:hu-virtanen}
Characterize the functions $f\in L^\infty(\C^n)$ for which
$$
 \lim_{t\rightarrow0^{+}}\norm{D_t(f,g)}_{\mathcal S_2}=0
$$
for every $g\in L^\infty(\C^n)$.
More generally, consider the same problem with
$\mathcal S_2$ replaced by $\mathcal S_p$, where $0<p<\infty$.
\end{problem}

For each $0<p<\infty$, set
\begin{equation}\label{eq:property}
 \mathcal A_p=\left\{f\in L^\infty(\C^n):
 \lim_{t\to0^{+}}\norm{D_t(f,g)}_{\mathcal S_p}=0
 \text{ for every fixed }g\in L^\infty(\C^n)\right\}.
\end{equation} 
The factorization \eqref{eq:factorization} relates this problem
to Schatten estimates for Hankel operators. On a fixed space,
we omit the scale superscript and write $H_f$.
For weighted Fock spaces over $\C^n$, including those whose
weights satisfy \eqref{eq:weight}, Hu and Virtanen
characterized membership of a single Hankel operator in
$\mathcal S_q$, $0<q<\infty$, by integral distance to
holomorphic functions \cite[Theorem 1.1]{HuVirtanen2022}.
On the standard Fock space, Xia and Zheng obtained a
simultaneous criterion for $H_f,H_{\overline{f}}\in\mathcal S_q$,
$1\leq q<\infty$, using Gaussian standard deviation
\cite[Theorem 1.1]{XiaZheng2004}. For bounded symbols on
this space, Hilbert--Schmidt membership is preserved by conjugation
\cite[Theorem 2.4]{Bauer2004}. Compactness and extensions
to other operator ideals are treated in
\cite{HaggerVirtanen2021,Xia2023}.
Asghari, Hu, and Virtanen obtained Schatten criteria on
Fock spaces over $\C$ with doubling Laplacian measure
\cite{AsghariHuVirtanen2024}.

Related Hankel criteria on Bergman spaces describe
boundedness, compactness, and Schatten membership for
general symbols on the unit disk \cite{Luecking1992}, and
simultaneous membership of $H_f,H_{\overline{f}}$ in
$\mathcal S_q$, $0<q<\infty$, on standard weighted spaces
of the unit ball. The latter is characterized by local
mean oscillation with respect to invariant volume for
symbols in the corresponding weighted $L^2$ space
\cite[Theorem 1]{Pau2016}.
Other fixed-space results concern antiholomorphic symbols
on radial weighted Fock spaces
\cite{BommierHatoYoussfi2007,SeipYoussfi2013} and singular-value
estimates for anti-analytic Bergman symbols \cite{BourassEtAl2022}
or general symbols on weighted planar Bergman spaces,
including Fock spaces on $\C$ \cite{FanWangZeng2026}.
For background on mean oscillation, Fock spaces, and Hankel
forms, see
\cite{Bauer2005,PeralaSchusterVirtanen2014,Zhu2012,JansonPeetreRochberg1987}.
These results concern individual Hankel operators or a
conjugate pair on a fixed space. Membership in $\mathcal A_p$
requires a scale-dependent semi-commutator to converge for
every fixed bounded second symbol, and the distinction
between this limit and convergence uniform in the second
symbol is part of the problem.

\subsection{Main results}
To state the role of differentiability, write $\nabla f$ for
the weak gradient in the real coordinates, with the norm convention
$$\norm{\nabla f}_{L^{q}(\R^d,dv)}=\norm{|\nabla f|}_{L^{q}(\R^d,dv)}.$$
Our first result describes how the limit depends on $p$
relative to the real dimension $d$.

\begin{theorem}\label{thm:main}
Suppose \eqref{eq:weight} holds, and put $d=2n$.
\begin{enumerate}[label=\textup{(\alph*)}]
\item If $0<p<d$, then
$$
 \mathcal A_p=\{f\in L^\infty(\C^n):f\text{ is constant almost everywhere}\}.
$$
\item At $p=d$, the limit class is given by
$$
 \mathcal A_d=\{f\in L^\infty(\C^n):|\nabla f|\in L^d(\R^d,dv)\}.
$$
\item If $p>d$, $f\in L^\infty(\C^n)$, and $|\nabla f|\in L^p(\R^d,dv)$, then
\begin{equation}\label{eq:super-gradient}
 \sup_{\norm{g}_{L^\infty(\C^n)}\leq1}\norm{D_t(f,g)}_{\mathcal S_p}
 \leq C t^{1/2-n/p}\norm{\nabla f}_{L^{p}(\R^d,dv)}\rightarrow0\quad(t\rightarrow0^{+}).
\end{equation}
Here $C>0$ depends only on $n,p$, and the weight bounds $m,M$ in \eqref{eq:weight}.
In addition, $\mathcal A_d\subseteq\mathcal A_p$ for every $p>d$.
\end{enumerate}
\end{theorem}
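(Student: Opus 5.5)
The plan is to reduce everything to a fixed space by dilation and then compare Schatten norms with lattice sums of local oscillation. The map $(U_tu)(z)=u(\sqrt t z)$ is unitary from $\cF$ onto $\mathcal F_1^2$ and intertwines $T^{(t)}_f$ with $T^{(1)}_{f_t}$, where $f_t(z)=f(\sqrt t z)$. Hence
$$
 \norm{D_t(f,g)}_{\mathcal S_p}=\norm{(H_{\overline{f_t}})^*H_{g_t}}_{\mathcal S_p(\mathcal F_1^2)} .
$$
On $\mathcal F_1^2$ the weight satisfies \eqref{eq:weight}, so the reproducing kernel has Gaussian off-diagonal decay and there is a lattice $\{a_j\}$ of unit cells. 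For $H_{\overline{f}}$, I will use the Hu--Virtanen criterion \cite[Theorem 1.1]{HuVirtanen2022} in its quantitative form:
$$
 \norm{H_{\overline f}}_{\mathcal S_q}^q\asymp\sum_j G_r(\overline f)(a_j)^q ,
$$
where $G_r$ is the local $L^2$ distance to holomorphic functions on $B(a_j,r)$. For bounded symbols, $G_r(\overline f)$ is controlled by the local mean oscillation and, from below, by the local size of $\partial f$.

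\emph{Part (c).} Here $\norm{H_{g_t}}\le\norm{g}_\infty$, so $\norm{D_t}_{\mathcal S_p}\le\norm{g}_\infty\norm{H_{\overline{f_t}}}_{\mathcal S_p}$ (with the quasi-triangle constant when $p<1$). A local Poincaré inequality gives
$$
 G_r(\overline{f_t})(a_j)\lesssim\Bigl(\int_{B(a_j,2r)}|\nabla f_t|^p\Bigr)^{1/p},
$$
valid since $p>d\ge2$. Summing over the lattice with bounded overlap gives
$$
 \norm{D_t}_{\mathcal S_p}^p\lesssim\int|\nabla f_t|^p\,dv=t^{p/2-n}\int|\nabla f|^p\,dv ,
$$
which is \eqref{eq:super-gradient}. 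The inclusion $\mathcal A_d\subseteq\mathcal A_p$ then follows from (b): $\norm{\cdot}_{\mathcal S_p}\le\norm{\cdot}_{\mathcal S_d}$ for $p>d$.

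\emph{Lower bounds, giving (a) and necessity in (b).} I will fix a bounded "fractal" second symbol $g$, for instance a sum of checkerboards at dyadic scales $2^{-k}$ with summable amplitudes chosen so that every scale survives. The goal is that on every unit cell, at every scale $t$, the local Hankel piece of $H_{g_t}$ has norm at least $c>0$. Localizing $(H_{\overline{f_t}})^*H_{g_t}$ to diagonal blocks, using the Gaussian decay to discard off-diagonal blocks, and comparing with a Rotfel'd type inequality should give
$$
 \norm{D_t(f,g)}_{\mathcal S_p}^p\gtrsim\sum_jG_r(\overline{f_t})(a_j)^p\gtrsim\int|\partial f_t|^p\,dv=t^{p/2-n}\int|\partial f|^p ,
$$
interpreted by duality with test forms after mollification when $f$ is rough. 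If $f$ is nonconstant, then $\partial f\neq0$, since otherwise $f$ is a bounded antiholomorphic function and Liouville's theorem applies; so for $p<d$ the right side blows up, which proves (a). At $p=d$ the same bound gives $\partial f\in L^d$. Because $\overline\partial f=\overline{\partial\overline f}$ and the $\partial_j,\overline\partial_j$ components are related by Riesz/Beurling-type singular integrals bounded on $L^d$, it follows that $|\nabla f|\in L^d$.

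\emph{Sufficiency at $p=d$ (the main obstacle).} At $p=d$ the estimate above is scale invariant, so it only shows boundedness and the second symbol must be used. I will prove a bilinear localized bound
$$
 \norm{(H_{\overline{f_t}})^*H_{g_t}}_{\mathcal S_d}^d\lesssim\sum_j G_r(\overline{f_t})(a_j)^d\,\min\bigl(1,\mathrm{MO}_R(g_t)(a_j)\bigr)^{s}+(\text{tail in }R)
$$
for some $s>0$. For fixed bounded $g$ and smooth $\varphi$ with $\nabla\varphi\in L^d\cap L^\infty$, this becomes
$$
 \int|\nabla\varphi(x)|^d\,\min\bigl(1,\mathrm{MO}_{R\sqrt t}(g)(x)\bigr)^s\,dx .
$$
This tends to $0$ by dominated convergence, since every Lebesgue point of $g$ has $\mathrm{MO}_{R\sqrt t}(g)(x)\to0$. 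A general $f$ with $\nabla f\in L^d$ is approximated by such bounded $\varphi$ with $\norm{\nabla(f-\varphi)}_{L^d}<\varepsilon$, and the error is handled by the uniform scale-invariant bound from (c) with $p=d$. Establishing the bilinear localization, namely that the $\mathcal S_d$ mass of the product is genuinely damped by local oscillation of $g_t$ on the same cells, is where I expect the real difficulty.
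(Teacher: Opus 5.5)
Your part (c) is sound. The quantitative Hu--Virtanen criterion, combined with a local $L^2$ Poincar\'e inequality and Jensen (valid since $p>d\geq2$), gives the same bound that the paper gets in Lemma~\ref{lem:gradient} via the corrected decomposition theorem. The inclusion $\mathcal A_d\subseteq\mathcal A_p$ is just monotonicity of Schatten norms.

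The genuine gap is the necessity half, i.e.\ part (a) and the necessity in (b). No bounded $g$ can have the property your fractal symbol is meant to have.
\begin{itemize}
\item For every constant $\gamma$, $\gamma k_{1,z}$ is holomorphic, so $\norm{H_{g_t}k_{1,z}}\leq\norm{(g_t-\gamma)k_{1,z}}_{L_1^2}$.
\item By the exponential decay of $|k_{1,z}|^2e^{-2\varphi}$, this tends to $0$ along $z=x/\sqrt t$ at every Lebesgue point $x$ of $g$.
\item By dominated convergence, on any compact $K$ the proportion of cells in $K/\sqrt t$ where the local Hankel piece exceeds $c$ tends to zero.
\end{itemize}

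Your target inequality is also refuted by the theorem itself. With $g$ fixed and $p=d$, $\norm{D_t(f,g)}_{\mathcal S_p}^p\gtrsim t^{p/2-n}\int|\partial f|^p$ reads $\norm{D_t(f,g)}_{\mathcal S_d}\gtrsim\norm{\partial f}_{L^d}$ for all $t$. This contradicts part (b), and your own sufficiency sketch, for every nonconstant $f\in C_c^\infty(\R^d)$. The underlying reason is that $(H_{\overline f})^*H_g$ can be small because the two ranges are misaligned, so lower bounds need tests adapted to $f$ and $t$. A Rotfel'd inequality is a subadditivity bound and gives no lower bound for products.

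The paper supplies the missing argument in two steps.
\begin{itemize}
\item A Baire category argument (Lemma~\ref{lem:baire}, for $p\geq1$) turns convergence for each fixed $g$ into a uniform bound $m_{p,t}(f)\leq a$ for $0<t<t_0$.
\item Lemma~\ref{lem:phase} tests with the $f$-dependent phases $g_j=\overline{c_j}/|c_j|$, where $c_j=k_j\overline{H^{(t)}_{\overline f}k_j}$, on disjoint balls. It averages over random signs to kill cross terms and applies the lattice diagonal estimate, giving $\norm{G_{1,\rho r}(\overline f)}_{L^p}\leq Cr^{d/p}m_{p,r^2}(f)$.
\end{itemize}
Hence $\norm{\dbar(\overline f*\eta_r)}_{L^p}\leq Cr^{d/p-1}a$. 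This forces $\dbar\overline f=0$ when $p<d$ and gives a uniform $L^d$ bound when $p=d$. The range $0<p<1$ then follows from $\mathcal A_p\subseteq\mathcal A_1$.

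For sufficiency at $p=d$, your bilinear localization is unproven, and it is not needed. For $f\in C_c^\infty(\R^d)$ the paper writes $g=h+u+k$, with $h\in C_c^\infty(\R^d)$, $\norm{u}_{L^d}<\varepsilon$, and $\operatorname{dist}(\supp f,\supp k)>0$. Then:
\begin{itemize}
\item $\norm{D_t(f,h)}_{\mathcal S_d}\leq\norm{H^{(t)}_{\overline f}}_{\mathcal S_d}\norm{H^{(t)}_h}\lesssim\norm{\nabla f}_{L^d}\,r[h]_1$;
\item $\norm{D_t(f,u)}_{\mathcal S_d}\leq\norm{H^{(t)}_{\overline f}}\norm{H^{(t)}_u}_{\mathcal S_d}\lesssim r[f]_1\cdot r^{-1}\norm{u}_{L^d}$, where the powers of $r$ cancel exactly at $p=d$;
\item $D_t(f,k)$ is exponentially small by kernel decay.
\end{itemize}
This argument uses compact support of $f$. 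So the passage to general $f$ needs $C_c^\infty$ approximants in the $L^d$ gradient norm, which the paper builds with a logarithmic cutoff because $f$ need not lie in $L^d$. The scale-invariant gradient bound then controls the approximation error.
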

The factor $t^{1/2-n/p}$ in \eqref{eq:super-gradient}
records the dependence of the gradient estimate on the
semiclassical scale and dimension. It tends to zero when
$p>d$. At $p=d$, the gradient bound in
Lemma \ref{lem:gradient} (b) is independent of the scale.
Theorem \ref{thm:main} (b) identifies
the critical limit class with the bounded functions whose
weak gradient belongs to $L^d(\R^d,dv)$, without requiring
$f$ itself to belong to $L^d(\R^d,dv)$.
For the original Hilbert--Schmidt problem, this condition
is $|\nabla f|\in L^2(\R^2,dv)$ when $n=1$, whereas only
constant symbols satisfy the limit when $n\geq2$.

For $p>d$, part \textup{(c)} gives a sufficient gradient condition.
To complete the characterization of $\mathcal A_p$ without
assuming weak differentiability, we use local analytic distance
and ordinary mean oscillation. For $z\in\C^n$ and $r>0$, let $B(z,r)$
denote the open Euclidean ball centered at $z$ with radius $r$,
and let $|B(z,r)|$ be its Lebesgue measure.
For $1\leq q<\infty$
and $u\in L^q_{\rm loc}(\C^n,dv)$, define the local mean by
\begin{equation}\label{E4}
 M_{q,r}(u)(z)=
 \bigg(\frac1{|B(z,r)|}\int_{B(z,r)}|u(w)|^q dv(w)\bigg)^{1/q}.
\end{equation}
The corresponding local analytic distance is given by
\begin{equation}\label{E106}
 G_{q,r}(u)(z)=\inf_{h\in\mathcal O(B(z,r))}M_{q,r}(u-h)(z).
\end{equation}
These are the local means and integral distances used in
\cite[Section 3A]{HuVirtanen2023}.
The first index $q$ in $M_{q,r}$ and $G_{q,r}$ is the exponent
used in the local average, which we call the local exponent.
For $a>0$, we write
$G_{q,r}^{a}(u)(z)=(G_{q,r}(u)(z))^{a}$.
We use the same convention for powers of other functions
and functionals.
Writing
$$
 f_{B(z,r)}=\frac1{|B(z,r)|}\int_{B(z,r)}f(w) dv(w),
$$
we define the ordinary mean oscillation by
\begin{equation}\label{E6}
 \operatorname{MO}_{1,r}(f)(z)=
 \frac1{|B(z,r)|}\int_{B(z,r)}|f(w)-f_{B(z,r)}| dv(w).
\end{equation}
When $t$ and a spatial scale $r$ occur together, we take
$t=r^2$ unless otherwise stated. For a fixed $p>0$,
define the nonnegative densities
\begin{equation}\label{eq:energy-definitions}
 e_r(f)(z)=r^{-d}G_{1,r}^{p}(\overline{f})(z)\quad\text{and}\quad
 \omega_r(f)(z)=r^{-d}\operatorname{MO}_{1,r}^p(f)(z).
\end{equation}
Their dependence on $p$ is suppressed in the notation.
Let $\mathcal W$ be the class of relatively weakly compact
subsets of $L^1(\R^d,dv)$.
By the Dunford--Pettis characterization
\cite[Theorem 4.30]{Brezis2011}, recalled in
Section \ref{sec:weak-localization}, this condition controls
both concentration on small sets and mass outside large
balls, in addition to bounding the total mass.
The next theorem uses it to characterize the limit for
each fixed bounded second symbol.

\begin{theorem}\label{thm:weakcriterion}
Suppose \eqref{eq:weight} holds, $2\leq p<\infty$, and
$f\in L^\infty(\C^n)$. The following are equivalent.
\begin{enumerate}[label=\textup{(\alph*)}]
\item $f\in\mathcal A_p$.
\item $\{e_r(f):0<r<r_0\}\in\mathcal W$ for some $r_0>0$.
\item $\{\omega_r(f):0<r<r_0\}\in\mathcal W$ for some $r_0>0$.
\end{enumerate}
\end{theorem}
This criterion completes the characterization for $p>d$
without a differentiability assumption. This together with
Theorem \ref{thm:main} describes $\mathcal A_p$ for every
finite $p>0$ using only the first symbol and Lebesgue measure.
In particular, $\mathcal A_p$ is the same for all weights
satisfying \eqref{eq:weight}, as recorded in
Corollary \ref{cor:algebra-weight}.

The mass of the densities need not tend to zero when the
limit holds for each fixed test. To compare this limit with
convergence uniform over the unit ball of $L^\infty(\C^n)$,
we use the testing quantity defined for $f\in L^\infty(\C^n)$
and $0<p<\infty$ by
\begin{equation}\label{E5}
 m_{p,t}(f)=\sup\{\norm{D_t(f,g)}_{\mathcal S_p}:
                  g\in L^\infty(\C^n),\ \norm{g}_{L^\infty(\C^n)}\leq1\}.
\end{equation}
The value $+\infty$ is allowed. The next theorem estimates
this quantity and characterizes its convergence to zero
as $t\rightarrow0^{+}$.

\begin{theorem}\label{thm:uniform}
Suppose \eqref{eq:weight} holds and $2\leq p<\infty$. There are
$\rho,R,c,C>0$, depending only on $n,m,M,p$, such that
\begin{equation}\label{eq:two-sided}
 c r^{-d/p}\norm{G_{1,\rho r}(\overline{f})}_{L^{p}(\R^d,dv)}
 \leq m_{p,r^2}(f)
 \leq C r^{-d/p}\norm{G_{1,Rr}(\overline{f})}_{L^{p}(\R^d,dv)}
\end{equation}
for every bounded $f$ and every $r>0$. These bounds give
the equivalence
\begin{equation}\label{eq:uniform-characterization}
 \lim_{t\rightarrow0^{+}}m_{p,t}(f)=0
 \quad\Longleftrightarrow\quad
 \lim_{r\rightarrow0^{+}}r^{-d/p}\norm{G_{1,r}(\overline{f})}_{L^{p}(\R^d,dv)}=0.
\end{equation}
\end{theorem}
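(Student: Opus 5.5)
We prove the two inequalities in \eqref{eq:two-sided} separately. The equivalence \eqref{eq:uniform-characterization} then follows by taking $t=r^2$ and rescaling the radius. Since $\rho,R$ are fixed constants, $r^{-d/p}\norm{G_{1,\rho r}}_{L^p}$ and $r^{-d/p}\norm{G_{1,Rr}}_{L^p}$ tend to zero together with $s^{-d/p}\norm{G_{1,s}}_{L^p}$ as $s=\rho r$ or $s=Rr$ tends to $0$. Throughout we use the factorization $D_t(f,g)=-(H_{\overline f}^{(t)})^*H_g^{(t)}$.

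\emph{Upper bound.} Since $\norm{g}_\infty\le1$, we have $\norm{H_g^{(t)}}\le1$. Hence
$$\norm{D_t(f,g)}_{\mathcal S_p}\le\norm{H_{\overline f}^{(t)}}_{\mathcal S_p}.$$
It therefore suffices to show
$$\norm{H_{\overline f}^{(r^2)}}_{\mathcal S_p}\le Cr^{-d/p}\norm{G_{1,Rr}(\overline f)}_{L^p}.$$
We first reduce to $t=1$ by the dilation $z\mapsto \sqrt t z$. This dilation is unitary from $L^2_t$ onto $L^2_1$, it maps $\mathcal F^2_t$ onto $\mathcal F^2_1$, and it carries $\overline f$ to $\overline f(\sqrt t\,\cdot)$. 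Under it, $G_{1,R}$ at scale $1$ becomes $G_{1,Rr}$, and the Jacobian gives the factor $r^{-d/p}$. At $t=1$ we invoke the Hu--Virtanen $\mathcal S_p$ estimate for Hankel operators \cite[Theorem 1.1]{HuVirtanen2022}, whose constants depend only on $n,m,M,p$. For $p\ge2$ it reads $\norm{H_u}_{\mathcal S_p}\lesssim\norm{G_{q,R}(u)}_{L^p}$ with local exponent $q=2$.

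To lower the local exponent to $1$, we use boundedness of $\overline f$. Decompose $\overline f=f_1+f_2$ by $\bar\partial$-correction on a lattice of balls: $f_1$ is a patched local holomorphic approximant, smooth, with $|\bar\partial f_1|$ controlled by $G_{1,R}$, and $f_2$ is the local remainder. The bound $\norm{H_{f_1}}_{\mathcal S_p}\lesssim \norm{\bar\partial f_1}_{L^p}$ follows from H\"ormander $L^2$ estimates together with a Schatten bound for the resulting integral operator. For $f_2$, which is bounded, the $L^2$ mean is dominated by a power of the $L^1$ mean times $\norm{f}_\infty$; since $G_{1,R}\le 2\norm f_\infty$, this replaces $G_{2,R}$ by $G_{1,R'}$ up to constants. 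If this step becomes heavy, we can quote the equivalence of $G_{q,r}$ norms in $L^p$ for different $q$ from \cite[Section 3]{HuVirtanen2023}, or from an earlier lemma of the paper. The \emph{main obstacle} is that the equivalence must hold without a factor of $\norm f_\infty$, since the estimate has to be linear in $\norm{G_{1,Rr}}_{L^p}$.

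\emph{Lower bound.} We test with a specific $g$ built from $f$. Take a lattice $\{a_j\}$ of spacing $\sim\rho r$ (after rescaling, spacing $\sim\rho$ at $t=1$), and let $g=\sum_j \epsilon_j\chi_{B(a_j,\rho)}$ be a sum of localized unimodular phases with $|g|\le1$. Using the normalized reproducing kernels $k_{a_j}$, which are well localized because the weight satisfies \eqref{eq:weight}, the operator $(H_{\overline f})^*H_g$ nearly diagonalizes on $\{k_{a_j}\}$. Its diagonal entries $\langle H_g k_{a_j},H_{\overline f}k_{a_j}\rangle$ can be bounded below by $c\,G_{1,\rho}(\overline f)(a_j)$ for a suitable choice of $g$ on each ball; for example, $g$ is taken to be the phase of the non-holomorphic part of $\overline f$ on that ball, chosen as a duality element for the $L^1$ distance to $\mathcal O$. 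The off-diagonal entries decay like the Gaussian kernel, and taking $\rho$ small together with a sparse sublattice makes them negligible. The standard inequality $\norm{A}_{\mathcal S_p}^p\ge\sum_j|\langle Ae_j,e_j\rangle|^p$, valid for $p\ge2$ and almost orthonormal $\{e_j\}$, then gives
$$m_{p,1}(f)^p\gtrsim\sum_j G_{1,\rho}^p(\overline f)(a_j)\gtrsim\norm{G_{1,\rho}(\overline f)}_{L^p}^p$$
after averaging over translates of the lattice. Rescaling back to $t=r^2$ restores the factor $r^{-d/p}$.

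The delicate point in the lower bound is the duality choice of $g$. The quantity $G_{1,\rho}$ is an $L^1$ distance, so the dual test function lies in $L^\infty$; this is exactly why the restriction $\norm g_\infty\le1$ is the natural normalization for $m_{p,t}$.
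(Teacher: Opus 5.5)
There is a genuine gap, and the main one is in the upper bound. After $\norm{D_t(f,g)}_{\mathcal S_p}\le\norm{H_{\overline f}^{(t)}}_{\mathcal S_p}\norm{g}_{L^\infty}$, you reduce to the inequality $\norm{H_{\overline f}^{(r^2)}}_{\mathcal S_p}\le Cr^{-d/p}\norm{G_{1,Rr}(\overline f)}_{L^p}$. You flag this as the main obstacle, but it is in fact false, even when $\norm{f}_{L^\infty}=1$.

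Take $t=1$ and $f=\mathbf 1_{B(0,\epsilon)}$ with $\epsilon<\rho$ small. The competitor $h=0$ gives $G_{1,R}(f)\le(\epsilon/R)^d\mathbf 1_{B(0,R+1)}$, so $\norm{G_{1,R}(f)}_{L^p}\lesssim\epsilon^d$. On the other side, $\norm{T_f}\lesssim\epsilon^d$ by the diagonal kernel bound, and $\norm{fk_{1,0}}_{L_1^2}^2\gtrsim\epsilon^d$ by \eqref{E105}. Hence $\norm{H_fk_{1,0}}_{L_1^2}^2=\norm{fk_{1,0}}_{L_1^2}^2-\norm{T_fk_{1,0}}_{L_1^2}^2\gtrsim\epsilon^d$, and so $\norm{H_f}_{\mathcal S_p}\ge\norm{H_f}\gtrsim\epsilon^{d/2}$.

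So no decomposition, H\"ormander estimate, or cited ``equivalence of $G_{q,r}$ norms for different $q$'' can lower the local exponent of a Hankel Schatten bound from $2$ to $1$ linearly. Your step for $f_2$ only gives $M_{2,1}(f_2)\lesssim(\norm{f}_{L^\infty}M_{1,1}(f_2))^{1/2}$, which is a square root. The loss comes from the factorization $-(H_{\overline f}^{(t)})^*H_g^{(t)}$: it puts all of $f$ in one Hankel factor, so the remainder enters quadratically, through $H^*H\sim T_{|u_2|^2}$.

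The paper uses that factorization only for the smooth part $u_1$ of the decomposition $\overline f=u_1+u_2$ from Lemma~\ref{lem:analytic-decomposition}. For the remainder it factors the semi-commutator symmetrically. With $J=M_{|u_2|^{1/2}}$ and $\theta$ the phase of $\overline{u_2}$, one has $T_{\overline{u_2}}=J^*M_\theta J$ and $T_{\overline{u_2}g}=J^*M_{\theta g}J$. This gives $\norm{D_t(\overline{u_2},g)}_{\mathcal S_p}\le2\norm{g}_{L^\infty}\norm{T_{|u_2|}}_{\mathcal S_p}$ (Lemma~\ref{lem:remainder}). The comparison $T_{|u_2|}\lesssim T_{M_{1,1}(u_2)}$ then yields $\norm{T_{|u_2|}}_{\mathcal S_p}\lesssim\norm{M_{1,1}(u_2)}_{L^p}\lesssim\norm{G_{1,4}(\overline f)}_{L^p}$ (Lemma~\ref{lem:average-toeplitz}). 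Here $|u_2|$ enters linearly.

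Your lower-bound scheme is the right one: kernels on a lattice of disjoint balls, phases on the balls, and a diagonal estimate through a bounded synthesis operator. What is missing is control of the cross terms. With $u_i=H_{\overline f}k_i$ and $g=\sum_jg_j$, one has $\langle D_t(f,g)k_i,k_i\rangle_t=-\sum_j\int_{B_j}g_jk_i\overline{u_i}\,d\mu_t$. The error is the terms $j\ne i$ inside this diagonal entry. The matrix entries $\langle Ak_i,k_j\rangle_t$ play no role in the inequality you use.

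These terms have absolute bounds such as $\norm{f}_{L^\infty}e^{-c|a_i-a_j|/r}$, or bounds through $\norm{H_{\overline f}k_i}$. Nothing you give makes them small relative to $G_{1,\rho r}(\overline f)(a_i)$. Sparsifying the lattice gives only absolute smallness, and shrinking $\rho$ shrinks the main term too. Errors measured by $\norm{H_{\overline f}}_{\mathcal S_p}$ are exactly the ones shown above to be too large, so they cannot be absorbed.

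The paper removes the cross terms exactly. It multiplies the phases by independent signs $\varepsilon_j$ and uses $\mathbb E_{\neq i}\sum_j\varepsilon_jc_{ij}=\varepsilon_ic_{ii}$ (Lemma~\ref{L2}). Jensen's inequality then gives $\mathbb E|\langle D_t(f,g_\varepsilon)k_i,k_i\rangle_t|^p\ge c_{ii}^p$, and Lemma~\ref{lem:lattice} is applied to each admissible $g_\varepsilon$. The diagonal term is explicit: $c_{ii}=\int_{B_i}|k_i|^2|\overline f-h_i|\,d\mu_t$ with $h_i=P_t(\overline fk_i)/k_i$ holomorphic on $B_i$. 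By \eqref{E105} this is $\gtrsim G_{1,\rho r}(\overline f)(a_i)$, so no duality element is needed.

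Your deduction of \eqref{eq:uniform-characterization} from \eqref{eq:two-sided} is fine.
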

Theorems \ref{thm:weakcriterion} and \ref{thm:uniform}
distinguish convergence for each fixed bounded second
symbol from convergence uniform over all such symbols
of norm at most one. For $2\leq p<\infty$, relative weak
compactness of the densities $\omega_r(f)$ characterizes
$f\in\mathcal A_p$ by
Theorem \ref{thm:weakcriterion}, while
$\norm{\omega_r(f)}_{L^1(\R^d,dv)}\to0$ as $r\rightarrow0^{+}$
characterizes uniform convergence by
Corollary \ref{cor:uniform-real}. This difference is explicit
at $p=d$, where Theorem \ref{thm:main} (b) admits nonconstant
$f\in\mathcal A_d$ and Corollary \ref{cor:critical-bound} gives
$$
 \liminf_{t\rightarrow0^{+}}m_{d,t}(f)
 \geq c\norm{\nabla f}_{L^d(\R^d,dv)}>0,
$$
where $c>0$ is independent of $f$. More generally, uniform
convergence for $0<p\leq d$ holds only for constant first
symbols by Remark \ref{R1}.

In the supercritical range, even a uniform bound on $m_{p,t}(f)$
and convergence for the conjugate test $g=\overline{f}$
do not imply convergence
for every fixed bounded test. We write $C_c(\R^d)$ for continuous
compactly supported functions on $\R^d$.

\begin{proposition}\label{prop:continuous-concentration}
Suppose $p>d=2n$. There are real-valued $f\in C_c(\R^d)$
and $g\in L^\infty(\R^d)$ such that
$$
 \begin{gathered}
 \sup_{0<t<1}m_{p,t}(f)<\infty,\quad
 \lim_{t\rightarrow0^{+}}\norm{D_t(f,f)}_{\mathcal S_p}=0,\quad
 \limsup_{t\rightarrow0^{+}}\norm{D_t(f,g)}_{\mathcal S_p}>0.
 \end{gathered}
$$
The same functions work for every weight satisfying
\eqref{eq:weight}.
\end{proposition}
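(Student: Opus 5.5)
The plan is to build $f$ as a sum of many small continuous bumps, arranged so that the family of densities $\omega_r(f)$ keeps bounded, non-vanishing $L^1$ mass but concentrates on sets of vanishing measure. Then the criteria already stated give the three properties:
\begin{itemize}
\item Theorem~\ref{thm:uniform} gives $\sup_t m_{p,t}(f)<\infty$.
\item Theorem~\ref{thm:weakcriterion} gives $f\notin\mathcal A_p$, since the family is not relatively weakly compact. This produces the required $g$.
\item Running the same density computation with exponent $2p$ gives $D_t(f,f)\to0$.
\end{itemize}
Because both criteria involve only Lebesgue measure and the constants depend only on $n,p,m,M$, the same $f$ and $g$ work for every weight satisfying \eqref{eq:weight}.

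\textbf{Construction.} Fix a smooth bump $\phi\in C_c^\infty(B(0,1))$ with $0\le\phi\le1$ and $\phi\not\equiv0$. Choose amplitudes $a_k\downarrow0$, for instance $a_k=2^{-k}$, and set $N_k=\lceil a_k^{-p}\rceil$. Choose scales $s_k\downarrow0$ that are very rapidly decreasing, say $s_{k+1}\le s_k^{K}$ for a large $K$. Place $N_k$ translates $x_{k,j}+s_k\phi$-supports at mutual distances $\gg s_k$, inside a fixed ball and disjoint from the supports of the other generations; this is possible since $N_ks_k^d\to0$ fast. Define
$$f=\sum_k\sum_{j\le N_k}a_k\,\phi\big((x-x_{k,j})/s_k\big).$$
This series converges uniformly because $a_k\to0$ and the supports are disjoint, so $f\in C_c(\R^d)$ is real-valued.

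\textbf{Mass and concentration.} For a single bump of generation $k$ at radius $r$, the oscillation behaves as follows.
\begin{itemize}
\item If $r\le s_k$, the gradient bound gives $\operatorname{MO}_{1,r}\lesssim a_kr/s_k$ on an $O(s_k)$-neighbourhood. The contribution of generation $k$ to $\|\omega_r(f)\|_{L^1}$ is then $\lesssim N_ka_k^p(r/s_k)^{p-d}$. This is where $p>d$ is used.
\item If $r\ge s_k$, then $\operatorname{MO}_{1,r}\lesssim a_k(s_k/r)^d$ on a set of measure $\lesssim r^d$ per bump. The contribution is $\lesssim N_ka_k^p(s_k/r)^{dp}$.
\end{itemize}
Since $N_ka_k^p\approx1$, summing over $k$ with the scale separation gives $\sup_r\|\omega_r(f)\|_{L^1}<\infty$. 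The same bounds apply to $G_{1,r}(\overline f)\le\operatorname{MO}_{1,r}(f)$, so the upper bound in \eqref{eq:two-sided} yields $\sup_{0<t<1}m_{p,t}(f)<\infty$.

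Conversely, at $r=s_k$ a fixed fraction of each generation-$k$ bump has $\operatorname{MO}_{1,r}\gtrsim a_k$. Hence $\|\omega_{s_k}(f)\|_{L^1}\gtrsim N_ka_k^p\gtrsim1$, while this mass is carried by a set of measure $\lesssim N_ks_k^d\to0$. The family $\{\omega_r(f):0<r<r_0\}$ is therefore not uniformly integrable, so by Dunford--Pettis it is not in $\mathcal W$ for any $r_0$. Theorem~\ref{thm:weakcriterion} then gives $f\notin\mathcal A_p$, which supplies a $g\in L^\infty$ with $\limsup_{t\to0^+}\|D_t(f,g)\|_{\mathcal S_p}>0$.

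\textbf{The conjugate test.} Since $f$ is real, $D_t(f,f)=-(H_f^{(t)})^*H_f^{(t)}$, and therefore $\|D_t(f,f)\|_{\mathcal S_p}=\|H_f^{(t)}\|_{\mathcal S_{2p}}^2$. I would invoke the single-Hankel upper estimate behind the right-hand inequality of \eqref{eq:two-sided}, namely
$$\|H_f^{(r^2)}\|_{\mathcal S_q}^q\lesssim r^{-d}\int G_{1,Rr}^q(f)\,dv,$$
taken with $q=2p$. The same scale computation as above, now with exponent $2p$, gives generation-$k$ contributions of order $N_ka_k^{2p}\cdot(\text{decay factors})\approx a_k^p$ at the matching scale, and even smaller at mismatched scales. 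As $r\to0$ only generations with $k\to\infty$ matter, so this quantity tends to $0$.

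\textbf{Main obstacle.} The delicate part is the bookkeeping of cross-generation contributions. I have to check that, for every intermediate $r$ between $s_{k+1}$ and $s_k$, the two tails $\sum_{j>k}$ (from the regime $r\ge s_j$) and $\sum_{j\le k}$ (from the regime $r\le s_j$) stay summable with exponent $p$, and tend to zero with exponent $2p$. I would first try the super-exponential separation $s_{k+1}=s_k^{K}$ with $K$ large in terms of $p,d$. If the $(r/s_k)^{p-d}$ tail is too slow near $r\approx s_k$, I would replace $N_k$ by $\lceil a_k^{-p}\rceil$ with $a_k$ decaying only logarithmically and absorb the geometric losses into that choice.
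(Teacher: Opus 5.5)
Your construction (many non-oscillating bumps per scale, with the three properties read off Theorems \ref{thm:uniform} and \ref{thm:weakcriterion}) is a genuinely different route from the paper's. The paper uses one oscillating cosine packet per scale, sums the operator bounds $m_{p,r^2}(f_k)\lesssim\min\{(r_k/r)^{d/p},(r/r_k)^{1-d/p}\}$, and gives an explicit test $g$. The first property is reachable your way, but there are two genuine gaps.

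\textbf{The test $g$ depends on the weight.} Theorem \ref{thm:weakcriterion} gives $f\notin\mathcal A_p$, and because the criterion involves only Lebesgue measure, this holds for every weight. But $f\notin\mathcal A_p$ only yields, for each weight $\varphi$, some $g_\varphi$ with $\limsup_{t\to0^+}\norm{D_t(f,g_\varphi)}_{\mathcal S_p}>0$. The witness comes out of the necessity proof through Lemma \ref{lem:localized-testing} and phase tests built from $k_j\overline{u_j}$, so it depends on the kernel. Nothing in your argument gives one $g$ that works for all (uncountably many) weights, yet the last sentence of the proposition asserts a single pair $(f,g)$.

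You need an explicit $g$ and a direct lower bound, as in Lemma \ref{L8} and the lattice sum after it. In your setting, take $g=\sum_{k,j}\phi((x-x_{k,j})/s_k)$, with the centers on the lattice $Ls_k\Z^d$ and $\phi$ real and nonconstant near $0$. Then at each center $z$, $\langle D_{s_k^2}(f,g)k_z,k_z\rangle=-a_k\norm{H_gk_z}^2+(\text{exponentially small})$. Moreover $\norm{H_gk_z}^2\geq c_0$, because a real nonconstant function has positive $L^2$ distance to holomorphic functions on $B(0,\rho)$. Lemma \ref{lem:lattice} then gives $\norm{D_{s_k^2}(f,g)}_{\mathcal S_p}^p\gtrsim N_ka_k^p\gtrsim1$. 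If $g$ must be real, pass to $\Re g$ or $\Im g$.

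\textbf{The Hankel bound you invoke for the conjugate test is false.} The right inequality in \eqref{eq:two-sided} is not a Hankel estimate. Its remainder part goes through Lemma \ref{lem:remainder}, which uses the boundedness of the second symbol. A single Hankel operator is governed by $G_{2,r}$, not by $G_{1,Rr}$. For example, take $N$ spikes $\mathbf 1_{B(x_j,\epsilon)}$ at unit separation: then $\int G_{2,1}^q\,dv\gtrsim N\epsilon^{dq/2}$, while $\int G_{1,R}^q\,dv\lesssim N\epsilon^{dq}$. What \eqref{eq:two-sided} actually gives is only $\norm{H_{\overline f}^{(r^2)}}_{\mathcal S_{2p}}^2\leq\norm{f}_{L^\infty}m_{p,r^2}(f)$, which is merely bounded here.

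There are two ways to repair this. One is to use $\norm{H_u^{(r^2)}}_{\mathcal S_q}\asymp r^{-d/q}\norm{G_{2,r}(u)}_{L^q}$. For your smooth bumps, $G_{2,r}\lesssim a_k\min\{r/s_k,(s_k/r)^{d/2}\}$ near generation $k$, and the exponent-$2p$ sum is $\lesssim\sum_ka_k^{1/2}\cdot(\text{decay})\to0$. The other is the paper's splitting. Finite partial sums are smooth, so Lemma \ref{lem:gradient} with exponent $2p$ gives $O(r^{1-d/(2p)})$. The tail satisfies $\norm{H_{f_{>N}}}_{\mathcal S_{2p}}^2\leq\norm{f_{>N}}_{L^\infty}\sup_tm_{p,t}(f_{>N})$, which requires your mass bound to hold uniformly over tails.

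On your bookkeeping:
\begin{itemize}
\item $\omega_r$ is not subadditive, so sum generations through $\bigl(\int\omega_r(f^{(k)})\bigr)^{1/p}$, using the $L^p$ triangle inequality for $\operatorname{MO}_{1,r}$.
\item In the regime $r\geq s_k$, once $r$ exceeds the bump spacing $\sim N_k^{-1/d}$, a ball meets many bumps and the per-bump bounds cannot simply be added in $p$-th power. Use $\operatorname{MO}_{1,r}\lesssim a_kN_ks_k^d$ there instead; for bounded $r$ this gives $\int\omega_r\lesssim(N_ks_k^d)^p$, which is harmless under fast decay of $s_k$.
\end{itemize}
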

Remark \ref{R2} shows that the densities $\omega_r(f)$ have
uniformly bounded mass but concentrate on sets of vanishing measure. The weak compactness criterion also gives sufficient
conditions without assuming weak differentiability.
For compactly supported H\"older symbols, we control the
densities through the H\"older seminorm
\begin{equation}\label{E44}
 [f]_\alpha=\sup_{x\ne y}\frac{|f(x)-f(y)|}{|x-y|^\alpha}
\end{equation}
for $0<\alpha\leq1$.
Finiteness of $[f]_\alpha$ means H\"older continuity
of exponent $\alpha$, with $\alpha=1$ corresponding to
Lipschitz continuity. Let $C_c^{0,\alpha}(\R^d)$ denote
the compactly supported functions with $[f]_\alpha<\infty$.
The next theorem applies the criteria to this class.

\begin{theorem}\label{thm:holder}
Suppose \eqref{eq:weight} holds, $p>d$, and
$d/p\leq\alpha\leq1$. Every $f\in C_c^{0,\alpha}(\R^d)$ belongs to
$\mathcal A_p$. If $\alpha>d/p$, the convergence is uniform over
$\norm{g}_{L^\infty(\C^n)}\leq1$, with the estimate
$$
 \sup_{\norm{g}_{L^\infty(\C^n)}\leq1}\norm{D_t(f,g)}_{\mathcal S_p}
 \leq C_f t^{\alpha/2-n/p}.
$$
Here $C_f>0$ may depend on $n,m,M,p,\alpha$, the support
of $f$, and $\norm{f}_{L^\infty(\C^n)}+[f]_\alpha$, but not on $t$ or $g$.
At $\alpha=d/p$, the limit also holds for every fixed bounded $g$.
\end{theorem}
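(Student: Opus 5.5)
We derive Theorem \ref{thm:holder} from Theorems \ref{thm:uniform} and \ref{thm:weakcriterion}, estimating the mean oscillation density $\omega_r(f)$ directly from the H\"older seminorm. Fix $f\in C_c^{0,\alpha}(\R^d)$ with $\supp f\subseteq B(0,R_0)$. For $w\in B(z,r)$ we have $|f(w)-f_{B(z,r)}|\leq [f]_\alpha(2r)^\alpha$. Hence
$$\operatorname{MO}_{1,r}(f)(z)\leq 2^\alpha[f]_\alpha r^\alpha .$$
Moreover $\operatorname{MO}_{1,r}(f)(z)=0$ whenever $B(z,r)\cap B(0,R_0)=\emptyset$, so $\omega_r(f)$ is supported in $B(0,R_0+1)$ for $r<1$. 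This gives
$$0\leq\omega_r(f)(z)\leq C[f]_\alpha^p\, r^{\alpha p-d}\,\mathbf 1_{B(0,R_0+1)}(z).$$

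\emph{The case $\alpha>d/p$.} Here $\alpha p-d>0$. The densities are dominated by a single integrable function, which gives relative weak compactness; in fact their total mass tends to zero. To obtain the stated rate, we use the upper bound in \eqref{eq:two-sided}. Taking $h$ equal to the constant $\overline{f_{B(z,Rr)}}$ in \eqref{E106} gives $G_{1,Rr}(\overline f)\leq \operatorname{MO}_{1,Rr}(f)$. Therefore
$$m_{p,r^2}(f)\leq C r^{-d/p}\Big(\int_{B(0,R_0+R)}(C[f]_\alpha r^\alpha)^p\,dv\Big)^{1/p}\leq C_f r^{\alpha-d/p}.$$
This bound holds for $r<1$; for $r\geq1$ the same computation applies with the support radius enlarged, or we simply restrict to small $t$. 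Setting $t=r^2$ gives $C_f t^{\alpha/2-n/p}$. For $p\geq2$ this route is immediate. For $p<2$ we would note that $p>d\geq2$, so this case does not arise. The same bound also yields $f\in\mathcal A_p$.

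\emph{The endpoint $\alpha=d/p$.} Now $r^{\alpha p-d}=1$, and the pointwise bound only shows that $\omega_r(f)$ is uniformly bounded by $C[f]_\alpha^p\mathbf 1_{B(0,R_0+1)}$. A uniformly bounded family supported in a fixed ball is equi-integrable and tight. By the Dunford--Pettis theorem, the family $\{\omega_r(f):0<r<1\}$ belongs to $\mathcal W$, and Theorem \ref{thm:weakcriterion} (c)$\Rightarrow$(a) gives $f\in\mathcal A_p$. Again $p>d\geq2$ places us within the range $2\leq p<\infty$ of that theorem.

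The only step requiring care is the endpoint. There uniform convergence is not claimed, and indeed it may fail, since the total mass need not vanish. The argument depends on the observation that a uniform $L^\infty$ bound on a common compact support is exactly the Dunford--Pettis condition. Checking the support and $L^\infty$ bound is routine. The nontrivial input, the equivalence of $\mathcal W$-membership with $\mathcal A_p$, is already supplied by Theorem \ref{thm:weakcriterion}.
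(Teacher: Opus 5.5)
Your argument is essentially the paper's. You dominate the normalized densities by $[f]_\alpha^p$ times the indicator of a fixed bounded set and invoke Theorem \ref{thm:weakcriterion} for membership. For the rate you use the upper bound in \eqref{eq:two-sided} with $G_{1,Rr}(\overline f)\leq\operatorname{MO}_{1,Rr}(f)\lesssim[f]_\alpha r^\alpha$. The only differences are cosmetic. You work with $\omega_r(f)$ and criterion (c), while the paper works with $e_r(f)$ and criterion (b); by \eqref{E152} these are interchangeable here. For $\alpha>d/p$ you also read off membership from the uniform rate, where the paper uses the weak-compactness argument in both cases. Your small-$t$ estimate and your endpoint argument are correct.

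The one step that fails is the large-$t$ regime. The estimate is meant for every $t>0$. This is why $C_f$ is allowed to depend on $\norm{f}_{L^\infty(\C^n)}$, and Proposition \ref{P3} uses the bound for all $t$. So ``restrict to small $t$'' proves less than is stated.

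``The same computation with the support radius enlarged'' also does not work. For $r\geq1$, the bound $\operatorname{MO}_{1,Rr}(f)\leq2^\alpha[f]_\alpha(Rr)^\alpha$ on $B(0,R_0+Rr)$ only gives $r^{-d/p}\norm{G_{1,Rr}(\overline f)}_{L^p(\R^d,dv)}\lesssim r^{\alpha}$. This exceeds $r^{\alpha-d/p}$ by the factor $r^{d/p}\to\infty$.

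For large $r$ you must replace the H\"older bound by integrability of $f$. There are two ways to do this.
\begin{enumerate}
\item Apply \eqref{eq:multiplier} to $H_{\overline f}^{(r^2)}$, which gives $m_{p,r^2}(f)\leq Cr^{-d/p}\norm{f}_{L^p(\R^d,dv)}$. This is what the paper does for $r\geq1/4$.
\item Stay in your framework: use $\operatorname{MO}_{1,s}(f)\leq2M_{1,s}(|f|)$ together with \eqref{E56}. This gives $\norm{G_{1,Rr}(\overline f)}_{L^p(\R^d,dv)}\leq2\norm{f}_{L^p(\R^d,dv)}$.
\end{enumerate}
In either case $r^{-d/p}\leq r^{\alpha-d/p}$ for $r\geq1$. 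Then $\norm{f}_{L^p(\R^d,dv)}\leq|\supp f|^{1/p}\norm{f}_{L^\infty(\C^n)}$ gives a constant of the stated form.
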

Example \ref{ex:lacunary} provides, for each
$p>d$, a function $f\in C_c^{0,d/p}(\R^d)$ that does not
satisfy the weak gradient condition in Theorem \ref{thm:main} (c).
Its membership in $\mathcal A_p$ by Theorem \ref{thm:holder}
shows that this sufficient condition is not necessary.

The characterization of $\mathcal A_p$ also yields its algebra
properties in Corollary \ref{cor:algebra-weight}. For every
$0<p<\infty$, this class is a unital algebra closed under complex
conjugation, which allows us to extend the semi-commutator limit
to finite Toeplitz products and commutators.

\begin{corollary}\label{P1}
Suppose \eqref{eq:weight} holds and let $0<p<\infty$.
All symbols below are independent of $t$.
\begin{enumerate}[label=\textup{(\alph*)}]
\item If $k\geq2$ is an integer,
$f_1,\ldots,f_{k-1}\in\mathcal A_p$, and
$f_k\in L^\infty(\C^n)$, then
$$
 \norm{T_{f_1}^{(t)}\cdots T_{f_k}^{(t)}
             -T_{f_1\cdots f_k}^{(t)}}_{\mathcal S_p(\cF)}
 \longrightarrow0\quad(t\rightarrow0^{+}).
$$
\item If $f\in\mathcal A_p$ and $g\in L^\infty(\C^n)$, then
$$
 \norm{[T_f^{(t)},T_g^{(t)}]}_{\mathcal S_p(\cF)}
 \longrightarrow0\quad(t\rightarrow0^{+}),
$$
where $[A,B]=AB-BA$ for operators on the same space.
\end{enumerate}
\end{corollary}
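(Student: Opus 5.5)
We prove (a) by induction on $k$ and then derive (b) from (a) together with the algebra property. The case $k=2$ of (a) is exactly the definition of $\mathcal A_p$ in \eqref{eq:property}.

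For $k\geq3$, assume (a) holds for products of length $k-1$. We write the telescoping identity
\[
 T_{f_1}^{(t)}\cdots T_{f_k}^{(t)}-T_{f_1\cdots f_k}^{(t)}
 =T_{f_1}^{(t)}\bigl(T_{f_2}^{(t)}\cdots T_{f_k}^{(t)}-T_{f_2\cdots f_k}^{(t)}\bigr)
 +D_t(f_1,f_2\cdots f_k).
\]
The second term tends to zero in $\mathcal S_p$ because $f_1\in\mathcal A_p$ and $f_2\cdots f_k\in L^\infty(\C^n)$ is a fixed bounded symbol independent of $t$.

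For the first term we use the ideal property $\norm{AB}_{\mathcal S_p}\leq\norm{A}\norm{B}_{\mathcal S_p}$, which holds for every $0<p<\infty$ since the singular values satisfy $s_j(AB)\leq\norm{A}s_j(B)$. We also use $\norm{T_{f_1}^{(t)}}\leq\norm{f_1}_{L^\infty(\C^n)}$, since $P_t$ is an orthogonal projection. The bracket then tends to zero by the induction hypothesis applied to $f_2,\ldots,f_{k-1}\in\mathcal A_p$ and $f_k\in L^\infty(\C^n)$.

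In the quasi-norm case $0<p<1$, the triangle inequality holds only up to a constant $2^{1/p-1}$. Alternatively, one can use $\norm{A+B}_{\mathcal S_p}^p\leq\norm{A}_{\mathcal S_p}^p+\norm{B}_{\mathcal S_p}^p$. Either way, the sum of two terms tending to zero still tends to zero.

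For (b), write
\[
 [T_f^{(t)},T_g^{(t)}]=D_t(f,g)-D_t(g,f),
\]
which holds because $T_{fg}^{(t)}=T_{gf}^{(t)}$. The first term tends to zero since $f\in\mathcal A_p$.

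For the second term we take adjoints. Since $(T_u^{(t)})^*=T_{\overline u}^{(t)}$, we get $D_t(g,f)^*=T_{\overline f}^{(t)}T_{\overline g}^{(t)}-T_{\overline f\,\overline g}^{(t)}=D_t(\overline f,\overline g)$. Taking adjoints preserves singular values, so $\norm{D_t(g,f)}_{\mathcal S_p}=\norm{D_t(\overline f,\overline g)}_{\mathcal S_p}$.

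The only non-formal step is showing that $\mathcal A_p$ is closed under complex conjugation. We take this from Corollary \ref{cor:algebra-weight}, which records that $\mathcal A_p$ is a unital algebra closed under conjugation. That corollary rests on the characterizations in Theorems \ref{thm:main} and \ref{thm:weakcriterion}, where the conditions are visibly conjugation-invariant: constancy, $|\nabla f|\in L^d$, and weak compactness of $\omega_r(f)$, using $\operatorname{MO}_{1,r}(\overline f)=\operatorname{MO}_{1,r}(f)$.

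There is one gap to check. Theorem \ref{thm:weakcriterion} assumes $p\geq2$, so the range $d<p<2$ is not covered by it, and this can occur only when $n=1$. For that range we rely on Corollary \ref{cor:algebra-weight} as stated for all $0<p<\infty$. With $\overline f\in\mathcal A_p$ and $\overline g$ a fixed bounded symbol, $D_t(\overline f,\overline g)\to0$ in $\mathcal S_p$, and (b) follows.
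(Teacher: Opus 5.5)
Your proof is correct. Part (b) is the same as the paper's argument. It uses the splitting $[T_f^{(t)},T_g^{(t)}]=D_t(f,g)-D_t(g,f)$, the adjoint identity $D_t(g,f)^*=D_t(\overline f,\overline g)$, and closure of $\mathcal A_p$ under conjugation from Corollary~\ref{cor:algebra-weight}.

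For part (a) your route is genuinely different. The paper telescopes from the right, $A_j(t)=A_{j-1}(t)T_{f_j}^{(t)}+D_t(h_{j-1},f_j)$ with $h_{j-1}=f_1\cdots f_{j-1}$. This needs $h_{j-1}\in\mathcal A_p$, and hence closure of $\mathcal A_p$ under multiplication from Corollary~\ref{cor:algebra-weight}. You peel off $f_1$ on the left instead. The only new semi-commutator at each stage is then $D_t(f_1,f_2\cdots f_k)$, whose first symbol is assumed to lie in $\mathcal A_p$. Your induction therefore uses only the definition \eqref{eq:property}, the bound $s_j(AB)\le\|A\|\,s_j(B)$, and the ($p$-)triangle inequality. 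This makes (a) self-contained and valid verbatim for every $0<p<\infty$. The paper, by contrast, first disposes of $p<d$ by invoking $\mathcal A_p=\C$ from Theorem~\ref{thm:main}~(a), so that it can work with a genuine norm and \eqref{E7}. The product closure the paper relies on is itself derived formally from \eqref{eq:property}, via $D_t(fh,g)=T_f^{(t)}D_t(h,g)+D_t(f,hg)-D_t(f,h)T_g^{(t)}$. So your gain is economy rather than a stronger result. In both arguments, conjugation closure is the one non-formal input, and it is needed only for (b).

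One correction: the ``gap'' $d<p<2$ you flag does not exist. Since $d=2n\geq2$ for every $n\geq1$, the range is empty; for $n=1$ it reads $2<p<2$. Whenever $p>d$ one automatically has $p>2$, so Theorem~\ref{thm:weakcriterion} applies, and Corollary~\ref{cor:algebra-weight} covers every $p$ without an extra case.
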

Passing to an algebraic quotient of operator families expresses
preservation of multiplication and conjugation as a unital
$*$-homomorphism in Proposition \ref{P2}. For compactly supported
H\"older symbols with $\alpha>d/p$, the quantitative estimate
in Theorem \ref{thm:holder} also gives singular value bounds
and finite rank approximation through Proposition \ref{P3}.

The Bargmann transform transfers the classification to Gaussian
anti-Wick quantization by unitary equivalence in
Corollary \ref{cor:antiwick}.
For Toeplitz quantization on compact K\"ahler manifolds,
the H\"older sufficient condition in Theorem \ref{thm:holder}
has the following analogue. To state it, let $(X,\omega)$ be a
compact connected K\"ahler manifold of complex dimension $n\geq1$ with complex
structure $J$, and let $L\to X$ be a holomorphic line bundle
with a smooth Hermitian metric $h^L$ of strictly positive curvature.
For each integer $k\geq1$, write
$$\mathcal H_k=H^0(X,L^{\otimes k})$$ for the holomorphic section
space, with the $L^2$ inner product induced by $h^L$
and $dv_X=\omega^n/n!$.
For $a\in L^\infty(X)$, let
$T_{a,k}:\mathcal H_k\to\mathcal H_k$ be the orthogonal compression
of multiplication by $a$, and write
$$D_k(f,g)=T_{f,k}T_{g,k}-T_{fg,k}$$ for bounded $f,g$.
Here $C^{0,\alpha}(X)$ consists of the continuous functions with
finite H\"older seminorm $[f]_\alpha$, defined as in \eqref{E44}
using the distance induced by $\omega$ and $J$.
The geometric setting and operator construction are detailed in
Section \ref{sec:compact-model}.

\begin{proposition}\label{P4}
Let $0<\alpha\leq1$ and $2n/\alpha\leq p<\infty$.
For $f\in C^{0,\alpha}(X)$ and every fixed
$g\in L^\infty(X)$, we have
$$
 \norm{D_k(f,g)}_{\mathcal S_p(\mathcal H_k)}\longrightarrow0
 \quad(k\longrightarrow\infty).
$$
Moreover, there are $C>0$ and $k_0\geq1$, depending only on
$(X,\omega,J)$, $(L,h^L)$, $\alpha$, and $p$, such that
\begin{equation}\label{E215}
 \sup_{\norm{g}_{L^\infty(X)}\leq1}
 \norm{D_k(f,g)}_{\mathcal S_p(\mathcal H_k)}
 \leq C[f]_\alpha k^{n/p-\alpha/2},\qquad k\geq k_0.
\end{equation}
In particular, the convergence is uniform over this unit ball
when $p>2n/\alpha$.
\end{proposition}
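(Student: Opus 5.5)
We transfer the Fock-space argument behind Theorem \ref{thm:holder} to the compact setting, using Bergman kernel estimates in place of the exact Gaussian structure. The starting point is the factorization $D_k(f,g)=-H_{\overline f,k}^*H_{g,k}$, where $H_{a,k}=(I-P_k)M_a:\mathcal H_k\to L^2(X,L^k)\ominus\mathcal H_k$. Since $\norm{H_{g,k}}\le\norm{g}_\infty$, Hölder's inequality for Schatten ideals gives
\[
\norm{D_k(f,g)}_{\mathcal S_p}\le\norm{g}_{L^\infty(X)}\norm{H_{\overline f,k}}_{\mathcal S_p}.
\]
So both claims reduce to estimating $\norm{H_{\overline f,k}}_{\mathcal S_p(\mathcal H_k)}$ for Hölder $f$. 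The semiclassical length scale is $r=k^{-1/2}$, which plays the role of $\sqrt t$.

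\textbf{Step 1 (uniform bound).} Fix a finite atlas of Kähler normal coordinates. Choose a separated net $\{x_j\}$ at scale $r=k^{-1/2}$ with bounded overlap, so that there are $N\asymp k^{n}$ points. Take a partition of unity $\{\chi_j\}$ subordinate to the balls $B(x_j,Rr)$, with $|\nabla\chi_j|\lesssim k^{1/2}$. Write
\[
\overline f-\overline f(x_j)=O([f]_\alpha k^{-\alpha/2})\quad\text{on } B(x_j,Rr).
\]
Then decompose $H_{\overline f,k}=\sum_j (I-P_k)M_{\chi_j(\overline f-\overline f(x_j))}+E_k$. The error $E_k$ collects the terms with $(I-P_k)M_{\chi_j}$ multiplied by constants, and these sum to $(I-P_k)M_{\sum_j\chi_j\overline f(x_j)}$. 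To make this clean, it is better to estimate directly with the Bergman kernel. Its off-diagonal decay is
\[
|P_k(x,y)|\le Ck^{n}e^{-c\sqrt k\,d(x,y)}\quad(k\ge k_0),
\]
a standard result of Dai--Liu--Ma, Ma--Marinescu, and Christ. This gives the local kernel bound
\[
|H_{\overline f,k}(x,y)|\lesssim k^n e^{-c\sqrt k d(x,y)}|f(x)-f(y)|\le [f]_\alpha k^n d(x,y)^\alpha e^{-c\sqrt kd}.
\]
This holds because $H_{\overline f}=M_{\overline f}P_k-P_kM_{\overline f}P_k$ acts on $\mathcal H_k$ with kernel $\int P(x,z)(\overline f(x)-\overline f(z))P(z,y)$. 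For $p\ge2$, I would bound $\mathcal S_p$ by a localized mixed norm. The Schur/Russo-type estimate $\norm{A}_{\mathcal S_p}^p\lesssim\sum_j\norm{A\,\mathbf 1_{B_j}}_{\mathcal S_2}^p$ holds up to tails for $p\ge2$ after a Cotlar-type almost orthogonality, and it mirrors the $p\ge2$ lemmas used for Theorem \ref{thm:uniform}. Each piece has Hilbert--Schmidt norm
\[
\norm{A\mathbf 1_{B_j}}_{\mathcal S_2}^2\lesssim\int_{B_j}\int_X|H(x,y)|^2\lesssim [f]_\alpha^2k^{-n}k^{2n}k^{-\alpha}k^{-n}=[f]_\alpha^2k^{-\alpha}.
\]
Summing over $N\asymp k^n$ pieces gives
\[
\norm{H_{\overline f,k}}_{\mathcal S_p}\lesssim [f]_\alpha k^{n/p}k^{-\alpha/2},
\]
which is \eqref{E215}. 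This yields uniform convergence when $p>2n/\alpha$.

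\textbf{Step 2 (endpoint $p=2n/\alpha$).} Here the bound is merely $O([f]_\alpha)$, so I would use density together with the fixed $g$. Approximate $f$ by $f_\varepsilon\in C^\infty(X)$. Smooth functions are not dense in the Hölder norm, but the little-Hölder refinement suffices. The local contribution above uses only the local oscillation quantity
\[
\omega_j=\sup_{x,y\in B(x_j,Rr)}|f(x)-f(y)|\,r^{-\alpha},
\]
and the estimate refines to
\[
\norm{H_{\overline f,k}}^p_{\mathcal S_p}\lesssim\sum_j\bigl(r^{\alpha}\omega_j\bigr)^p=k^{-n}\sum_j\omega_j^p\cdot(\text{bounded}).
\]
This is a Riemann-sum version of the normalized density. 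At the endpoint, it shows that the discrete densities are uniformly bounded in $L^1$. Convergence for fixed $g$ then needs the compact analogue of Theorem \ref{thm:weakcriterion}: uniform integrability plus a separate argument. Split $X$ into a set $E$ of small measure where the oscillation densities concentrate, and its complement. Write
\[
D_k(f,g)=-H_{\overline f}^*M_{\mathbf 1_E}H_g-H_{\overline f}^*M_{\mathbf 1_{E^c}}H_g,
\]
where the multipliers are inserted via localized kernels with controlled tails. The first term is small by uniform integrability, since $X$ is compact and there is no mass at infinity. The second term is small because $\mathbf 1_{E^c}H_g$ tested against the bounded-mass density tends to zero. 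This last point follows the Fock-space proof of Theorem \ref{thm:holder} at $\alpha=d/p$, and uses that $\norm{H_{g,k}M}$ localized to small balls is small in the average sense for fixed $g$ by Lebesgue differentiation.

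\textbf{Main obstacle.} The main obstacle is the endpoint fixed-$g$ argument. In particular, one must establish uniform integrability of the Hölder oscillation densities $r^{-2n}\mathrm{osc}^p$, which are only bounded by $[f]_\alpha^p$ pointwise. For genuinely Hölder $f$, I would show they are dominated by a fixed integrable function, namely the constant $[f]_\alpha^p$ on compact $X$. Then uniform integrability is automatic, and the remaining work is the fixed-$g$ convergence, which uses compactness of $X$ and Lebesgue points of $g$.
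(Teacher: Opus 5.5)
The genuine gap is the endpoint $p=2n/\alpha$ for a fixed $g$. This is the only case of the first assertion that does not already follow from \eqref{E215}, and your Step~2 does not prove it.

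It appeals to a ``compact analogue of Theorem~\ref{thm:weakcriterion}'' that is neither formulated nor proved. The Fock-space sufficiency argument rests on Lemmas~\ref{L3}, \ref{L4} and \ref{lem:smooth-family-ui}, and you transfer none of them.

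The proposed splitting $D_k(f,g)=-H_{\overline f}^*M_{\mathbf 1_E}H_g-H_{\overline f}^*M_{\mathbf 1_{E^c}}H_g$ is also not an identity. We have $D_k(f,g)=-P_kM_f(I-P_k)M_gP_k$, and $M_{\mathbf 1_E}$ does not commute with $I-P_k$. So the commutator error terms are unaccounted for, and the smallness of the two pieces is asserted rather than derived.

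More basically, you try to extract the little-$o$ from the first symbol. At the endpoint your bound on $\norm{H_{\overline f,k}}_{\mathcal S_p}$ is only $O([f]_\alpha)$, and smooth functions are not dense in $C^{0,\alpha}(X)$, so nothing in $f$ alone improves it. This is consistent with the statement claiming uniformity only for $p>2n/\alpha$.

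The missing idea is to place the Schatten norm on the \emph{fixed} factor:
$\norm{D_k(f,g)}_{\mathcal S_p}\le\norm{H_{\overline f,k}}\,\norm{H_{g,k}}_{\mathcal S_p}$.
Here $\norm{H_{\overline f,k}}\le C[f]_\alpha k^{-\alpha/2}$ follows from Schur's test on the commutator kernel $(\overline f(x)-\overline f(y))P_k(x,y)$, using the off-diagonal decay and $\operatorname{vol}_X(B_X(x,r))\lesssim r^{2n}$.

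Then approximate $g$ (not $f$) in $L^2(X,dv_X)$ by smooth $h$. Since $|P_k(x,x)|=B_k(x)\le C_0k^n$, one has $\norm{H_{g-h,k}}_{\mathcal S_2}^2\le\int_X|g-h|^2B_k\,dv_X\le C_0k^n\norm{g-h}_{L^2}^2$. By the $\alpha=1$ case of the operator bound, $\norm{H_{h,k}}_{\mathcal S_2}^2\le N_k\norm{H_{h,k}}^2\lesssim k^{n-1}$. Hence $\norm{H_{g,k}}_{\mathcal S_2}^2=o(k^n)$.

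The inequality $\norm{A}_{\mathcal S_p}^p\le\norm{A}^{p-2}\norm{A}_{\mathcal S_2}^2$ is valid since $p\ge2n/\alpha\ge2$. It gives $\norm{H_{g,k}}_{\mathcal S_p}=o(k^{n/p})$, so $\norm{D_k(f,g)}_{\mathcal S_p}=o(k^{n/p-\alpha/2})\to0$. No weak-compactness criterion is needed.

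The same ingredients also give Step~1 directly. $H_{\overline f,k}$ has rank at most $N_k=\int_XB_k\,dv_X\le C_0\operatorname{vol}_X(X)k^n$, so $\norm{D_k(f,g)}_{\mathcal S_p}\le N_k^{1/p}\norm{H_{\overline f,k}}\norm{g}_{L^\infty(X)}$, which yields \eqref{E215}.

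You should drop your mixed-norm inequality $\norm{A}_{\mathcal S_p}^p\lesssim\sum_j\norm{A\mathbf 1_{B_j}}_{\mathcal S_2}^p$. For $p>2$ it fails in general: take $A=\langle\cdot,\sum_{j=1}^Ne_j\rangle v$ with $\norm{v}=1$ and $\mathbf 1_{B_j}$ the projection onto $e_j$; the left side is $N^{p/2}$ and the right side is $N$. The almost orthogonality meant to rescue it is never supplied.
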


Example \ref{Eg1} gives a smooth first symbol and a fixed bounded
second symbol on the projective line for which the trace norm
has a positive limit, although every Schatten $p$-norm with
$1<p<\infty$ tends to zero.

\subsection{Proof strategy and organization}
The necessary conditions and the two testing criteria require
estimates beyond those obtained directly from the Hankel
factorization. The local exponent is $1$ in both $G_{1,\rho r}$ and
$G_{1,Rr}$ in \eqref{eq:two-sided}, whereas the corresponding
Hankel estimate uses $G_{2,r}$. More precisely, for bounded
$u$ and $r>0$, Lemma \ref{lem:scaled-kernel} (b) and
\cite[Theorem 1.1]{HuVirtanen2022} yield
$$
 \norm{H_u^{(r^2)}}_{\mathcal S_q(\mathcal F_{r^2}^2,L_{r^2}^2\ominus\mathcal F_{r^2}^2)}
 \asymp r^{-d/q}\norm{G_{2,r}(u)}_{L^q(\R^d,dv)},
 \quad 0<q<\infty.
$$
Here $q$ is the Schatten exponent and the exponent of the
outer $L^q$ norm, while the local exponent in $G_{2,r}$ is $2$.
The notation $\asymp$ denotes two-sided estimates with positive
constants depending only on $n,m,M,q$.
For the conjugate test, the factorization gives
$$
 \norm{D_t(f,\overline{f})}_{\mathcal S_p}
 =\norm{H_{\overline{f}}^{(t)}}_{\mathcal S_{2p}(\cF,L_t^2\ominus\cF)}^2.
$$
For nonzero bounded $f$ and $2\leq p<\infty$, testing
\eqref{E5} with $g=\overline{f}/\norm{f}_{L^\infty(\C^n)}$
gives a lower bound by this identity.
The factorization \eqref{eq:factorization}, \eqref{E7}, and
$\norm{H_g^{(t)}}\leq\norm{g}_{L^\infty(\C^n)}$ give the upper bound.
Combining these bounds, we obtain
$$
 \frac{\norm{H_{\overline{f}}^{(t)}}_{\mathcal S_{2p}(\cF,L_t^2\ominus\cF)}^2}
      {\norm{f}_{L^\infty(\C^n)}}
 \leq m_{p,t}(f)
 \leq\norm{H_{\overline{f}}^{(t)}}_{\mathcal S_p(\cF,L_t^2\ominus\cF)},
$$
whose lower and upper bounds involve the distinct Schatten
exponents $2p$ and $p$. These bounds alone do not give
the estimate with local exponent one in \eqref{eq:two-sided}.

The conjugate test also has a restricted role in the
classification. By \eqref{eq:conjugate-identity} and
\cite[Theorem 1.1]{HuVirtanen2022}, Proposition \ref{prop:conjugate}
derives constancy from convergence of $D_t(f,\overline{f})$
to zero in $\mathcal S_p$ as $t\to0^{+}$ for $0<p\leq n$.
For $n<p<2n$, the Gaussian
example in Proposition \ref{prop:gaussian} satisfies this limit
without being constant. Our proof of constancy in this
remaining subcritical range and of the necessary gradient
condition at $p=2n$ uses additional bounded tests.

For the necessary conditions in Theorem \ref{thm:main} with
$1\leq p\leq d$, the scale at which a fixed test gives a
Schatten bound may depend on that test. A Baire argument
gives a common bound for $m_{p,t}(f)$ at small scales.
Local phase tests then control derivatives of mollifications
through analytic distance, with the factor $r^{d/p-1}$ giving
the dimension threshold in Proposition \ref{prop:necessity}.
Schatten inclusion extends the subcritical conclusion to
$0<p<1$. For sufficiency, the gradient estimate proves
\eqref{eq:super-gradient} when $p>d$. At $p=d$, its bound is
independent of the scale, allowing us to pass from convergence
for compactly supported smooth symbols to the full gradient
class by approximation in the gradient norm, as established
in Section \ref{sec:critical}.

We prove necessity in Theorem \ref{thm:weakcriterion} by representing
normalized finite rank trace pairings along sequences of scales
tending to zero by $L^1$ functions. The assumed semi-commutator
convergence forces their integrals against each fixed $L^\infty$
function to tend to zero, producing weakly null sequences in $L^1$.
The Dunford--Pettis criterion then controls their absolute integrals
uniformly on small sets and outside large balls.
Local phase tests transfer this control to the normalized
analytic distance densities.
To pass to ordinary mean oscillation, the analytic decomposition
first controls the complex derivatives of its smooth part and
local averages of its remainder. For the operators used to
recover the real gradient, Lemma \ref{lem:ui-stability} preserves
relative weak compactness in $L^1(\R^d,dv)$ of the corresponding
families of $p$-th power densities. This reconstruction and a
Poincar\'e inequality give the equivalent mean oscillation
condition in Lemma \ref{lem:ui-analytic-real}.

Conversely, we use the mean oscillation criterion to control
the gradient and remainder of a mollification at $s=\lambda r$, with
$0<\lambda<1/4$ fixed. Lemma \ref{lem:smooth-family-ui} uses
relative weak compactness of the normalized gradient densities
to make the smooth contribution tend to zero for each fixed test.
Positive Toeplitz estimates and comparison of local averages
bound the remainder by $C\lambda^{d/p}$, uniformly in $r$ and
$\lambda$. We take the upper limit as $r\to0^{+}$ before letting
$\lambda\to0^{+}$.

Local phase tests also give the lower bound in Theorem
\ref{thm:uniform}. For the upper bound, the analytic decomposition
at scale $r$ controls both the scaled complex derivatives of its
smooth part and the local $L^1$ averages of its remainder by
$G_{1,4r}$.
Gradient estimates handle the smooth part, and positive Toeplitz
estimates retain this local exponent for the remainder.
This gives the upper bound with local exponent one in
\eqref{eq:two-sided}.
For Theorem \ref{thm:holder}, a common integrable majorant for
$e_r(f)$, $0<r<1$, gives membership in $\mathcal A_p$ through
Theorem \ref{thm:weakcriterion}, including at $\alpha=d/p$.
When $\alpha>d/p$, the H\"older estimate and the upper bound in
\eqref{eq:two-sided} give the quantitative rate.

On compact K\"ahler manifolds, off-diagonal decay of the
Bergman kernel and the dimension bound for holomorphic sections
give the uniform H\"older estimate in Proposition \ref{P4}.
Smooth approximation of each fixed bounded second symbol improves
its Hankel Schatten bound to a little-$o$ estimate, so the
semi-commutator factorization also gives convergence at the endpoint.
Radial diagonalization on the projective line gives the explicit
trace norm obstruction in Example \ref{Eg1}.

The paper is organized around the classification and the two
convergence criteria. After the preliminaries in Section
\ref{sec:prelim}, Sections \ref{sec:necessary} and
\ref{sec:critical} prove Theorem \ref{thm:main}.
Section \ref{sec:remaining} establishes Theorem
\ref{thm:weakcriterion}, and Section \ref{sec:uniform} proves
Theorems \ref{thm:uniform} and \ref{thm:holder}.
Section \ref{sec:examples} examines the scope of these results
through a H\"older example, a Gaussian conjugate test, and
concentration examples culminating in Proposition
\ref{prop:continuous-concentration}.
Section \ref{sec:quantization} proves Corollary \ref{P1} and
develops the algebra of operator families and the quantitative
approximation consequences of the main estimates.
Section \ref{sec:other-models} transfers the classification to
Gaussian anti-Wick operators and proves Proposition \ref{P4}
in the compact K\"ahler setting, ending with an explicit example
on the projective line.

\section{Notation and preliminaries}\label{sec:prelim}
In this section, we fix the notation for function spaces and weak derivatives
and collect the Schatten, kernel, and analytic distance estimates
used below. We also record the properties of finite sign averages
needed in the local testing argument.
\subsection{Function spaces and weak derivatives}\label{sec:notation}
We use the same notation for functions and sets under the
identification $\C^n\cong\R^d$.
For a measurable set $E\subseteq\R^d$ and a positive measure
$\nu$ on $E$, let $L^q(E,d\nu)$, $0<q<\infty$, consist of
measurable complex-valued functions $u$ such that
$$
 \norm{u}_{L^q(E,d\nu)}=
 \left(\int_E|u|^qd\nu\right)^{1/q}<\infty.
$$
For $q=\infty$, we use the essential supremum with respect
to $\nu$. 
For a measurable set $E\subseteq\R^d$, we write $|E|$ for
its Lebesgue measure and $\mathbf1_E$ for its indicator.
If $0<|E|<\infty$ and $u\in L^1(E,dv)$,
its average over $E$ is $u_E=|E|^{-1}\int_Eu dv$.

For a vector-valued function $u=(u_1,\ldots,u_N):\R^d\to\C^N$, where
$N\geq1$ is an integer, we use the Euclidean length and the
corresponding norm or quasi-norm
$$
 |u|=\left(\sum_{j=1}^N|u_j|^2\right)^{1/2},\quad
 \norm{u}_{L^{q}(\R^d,dv)}=\norm{|u|}_{L^{q}(\R^d,dv)}.
$$
The space $[L^q(\R^d,dv)]^N$ consists of vectors whose
components belong to $L^q(\R^d,dv)$. With this convention,
$M_{q,r}(u)$ in \eqref{E4} means $M_{q,r}(|u|)$.

For an open set $\Omega\subseteq\R^d$ and $0<q<\infty$,
the space $L^q_{\rm loc}(\Omega,dv)$ consists of measurable
functions $u$ such that $\int_K|u|^q dv<\infty$ for every
compact set $K\subset\Omega$.
  For an integer $k\geq0$, $C^k(\Omega)$ consists of functions
with continuous real derivatives through order $k$.
We write $C^\infty(\Omega)$ for smooth functions and
$C_c^\infty(\Omega)$ for those with compact support in
$\Omega$. We write $A\Subset\Omega$ if $\overline A$ is a compact
subset of $\Omega$. For $h\in C_c^\infty(\Omega)$, define its zero extension by
\begin{equation}\label{E93}
 h_0(x)=
 \begin{cases}
 h(x),&x\in\Omega,\\
 0,&x\in\R^d\setminus\Omega.
 \end{cases}
\end{equation}
Since $\supp h\Subset\Omega$, the function $h_0$ vanishes
near each boundary point and belongs to $C_c^\infty(\R^d)$.
We use this convention in other Euclidean dimensions
and retain the original function notation when no
distinction is needed.
For measurable symbols, $\supp$ denotes the essential
closed support.

The weak derivatives used below are understood in the
Sobolev sense \cite[Section 9.1, p. 263--264]{Brezis2011}.
\begin{definition}\label{def:weak-gradient}
Let $u\in L^1_{\rm loc}(\R^d,dv)$ and $1\leq\ell\leq d$.
A function $g_\ell\in L^1_{\rm loc}(\R^d,dv)$ is the weak
derivative of $u$ in the coordinate $x_\ell$ if
\begin{equation}\label{E3}
 \int_{\R^d}u\partial_{x_\ell}h dv
 =-\int_{\R^d}g_\ell h dv,
 \quad h\in C_c^\infty(\R^d).
\end{equation}
We write $g_\ell=\partial_{x_\ell}u$ for the weak partial derivative
of $u$ with respect to $x_\ell$. When all first weak
derivatives exist, we set
$$
 \nabla u=(\partial_{x_\ell}u)_{\ell=1}^d,\quad
 |\nabla u|=\bigg(\sum_{\ell=1}^d|\partial_{x_\ell}u|^2\bigg)^{1/2},\quad
 \norm{\nabla u}_{L^{q}(\R^d,dv)}=\norm{|\nabla u|}_{L^{q}(\R^d,dv)}.
$$
\end{definition}
The coordinates of differentiation are real, but $u$ and
its weak derivatives may be complex-valued. We write
$u=\Re u+i\Im u$, where $\Re u$ and $\Im u$ denote
its real and imaginary parts, respectively.
On an open subset of $\R^d$, we use the same integral identity
with test functions supported in that subset. The definition applies separately to the real and imaginary
parts, as do the Sobolev results used below.  The differential operators in
complex coordinates are defined from real derivatives by
\begin{equation}\label{E13}
 \partial_{z_j}=\tfrac12(\partial_{x_{2j-1}}-i\partial_{x_{2j}}),\quad
 \partial_{\overline{z}_j}=\tfrac12(\partial_{x_{2j-1}}+i\partial_{x_{2j}})
\end{equation}
for $1\leq j\leq n$.
For a function $u$ with first weak derivatives, we write
\begin{equation}\label{E107}
 \dbar u=(\partial_{\overline{z}_j}u)_{j=1}^n,\quad
 |\dbar u|=\bigg(\sum_{j=1}^n|\partial_{\overline{z}_j}u|^2\bigg)^{1/2}.
\end{equation}
For a holomorphic function $h$, the derivatives defined in
\eqref{E13} satisfy $\partial_{\overline{z}_j}h=0$,
and $\partial_{z_j}h$ agrees with its usual complex partial derivative.
Equations \eqref{E13} and \eqref{E107} together with
$|a+ib|^2\leq2(|a|^2+|b|^2)$ for $a,b\in\C$ give
$|\dbar u|\leq2^{-1/2}|\nabla u|$ almost everywhere.
In particular, if $0<q\leq\infty$ and
$|\nabla u|\in L^q(\R^d,dv)$, then
\begin{equation}\label{E57}
 \norm{\dbar u}_{L^q(\R^d,dv)}
 \leq\frac1{\sqrt2}\norm{\nabla u}_{L^q(\R^d,dv)}<\infty.
\end{equation}

To pass to limits without assuming weak differentiability,
we use distributional derivatives
\cite[Sections 6.11--6.13]{Rudin1991}.
\begin{definition}\label{def:distributional-derivative}
Let $u\in L^1_{\rm loc}(\R^d,dv)$ and $1\leq\ell\leq d$.
Its associated distribution and distributional derivative
are defined by
\begin{equation}\label{E48}
 \langle u,h\rangle=\int_{\R^d}uh dv,\quad
 \langle\partial_{x_\ell}u,h\rangle
 =-\int_{\R^d}u\partial_{x_\ell}h dv,
 \quad h\in C_c^\infty(\R^d).
\end{equation}
The pairings are bilinear. More generally, for a distribution
$T$ on $\R^d$, we define
\begin{equation}\label{E159}
 \langle\partial_{x_\ell}T,h\rangle
 =-\langle T,\partial_{x_\ell}h\rangle,
 \quad h\in C_c^\infty(\R^d),\quad 1\leq\ell\leq d.
\end{equation}
Higher derivatives are defined by iterating \eqref{E159}.
By \eqref{E3} and \eqref{E48}, a function
$g_\ell\in L^1_{\rm loc}(\R^d,dv)$ is the weak derivative
of $u$ exactly when
\begin{equation}\label{E160}
 \langle\partial_{x_\ell}u,h\rangle=\int_{\R^d}g_\ell h dv
\end{equation}
for every $h\in C_c^\infty(\R^d)$.
The complex distributional derivatives are defined by
\eqref{E13}, so
\begin{equation}\label{E49}
 \langle\partial_{\overline{z}_j}u,h\rangle
 =-\int_{\R^d}u\partial_{\overline{z}_j}h dv,
 \quad h\in C_c^\infty(\R^d),\quad 1\leq j\leq n.
\end{equation}
The condition $\dbar u=0$ in distributions means that the
integral in \eqref{E49} vanishes for every such $h$ and $j$.
\end{definition}
Throughout the paper, a condition such as
$|\nabla u|\in L^p(\Omega,dv)$ on an open set
$\Omega\subseteq\R^d$ means that all weak first derivatives
in Definition \ref{def:weak-gradient} exist and their
Euclidean length belongs to $L^p(\Omega,dv)$.
For $u\in C^1(\Omega)$, integration by parts identifies its
classical first derivatives with its weak derivatives in
\eqref{E3} and the functions representing its distributional
derivatives \cite[Section 6.13, p. 158]{Rudin1991}.
We retain the same notation for derivatives and gradients
under these identifications. In the distributional setting,
$\nabla u$ denotes the vector of derivatives in \eqref{E48},
with test functions taken in $C_c^\infty(\Omega)$.
We explicitly mark identities asserted only in distributions.

Constants $c,C>0$ may vary, with dependence stated locally.
We write $a\lesssim b$ for $a\leq Cb$ and $a\asymp b$ when
both $a\lesssim b$ and $b\lesssim a$ hold. Unless stated otherwise, the bounds are
uniform in the scale and symbols. All scale parameters are
positive, and their limits at zero are taken through positive values.

\subsection{Schatten classes and diagonal estimates}\label{sec:schatten}
Let $X,Y$ be complex Hilbert spaces, and let $A:X\to Y$ be bounded.
Its operator norm is given by
$$
 \norm{A}=\sup_{\norm{u}_X\leq1}\norm{Au}_Y.
$$ Its approximation
numbers are
\begin{equation}\label{E102}
 s_j(A)=\inf\{\norm{A-R}:R:X\to Y\text{ is bounded},\ \rank R<j\},
 \qquad j\geq1.
\end{equation}
When $A$ is compact, these are its singular values in
nonincreasing order, counted with multiplicity
\cite[Theorem 1.34]{Zhu1990}. For $0<q<\infty$, the Schatten
class $\mathcal S_q(X,Y)$ consists of compact operators
$A:X\to Y$ with
$$
 \norm{A}_{\mathcal S_q(X,Y)}=\bigg(\sum_{j=1}^\infty s_j^q(A)\bigg)^{1/q}<\infty.
$$
We write $\mathcal S_q(X)$ if $X=Y$. When displayed, the spaces
in the norm subscript specify the domain and codomain of the operator.
The same series with approximation numbers defines the extended
value $+\infty$ for a bounded operator outside $\mathcal S_q(X,Y)$.
For $0<q<\infty$ and bounded $A:X\to Y$, \eqref{E102} with
$j=1$ and the definition of the Schatten norm give
\begin{equation}\label{E128}
 \norm{A}=s_1(A)
 \leq\bigg(\sum_{j=1}^\infty s_j^q(A)\bigg)^{1/q}
 =\norm{A}_{\mathcal S_q(X,Y)}.
\end{equation}
Each $\cF$ is a separable Hilbert space
\cite[Section 3, p. 369--370]{SchusterVarolin2012}, so these
conventions apply to the operators considered here.
If $Y$ is a closed subspace of $L_t^2$ and $A:\cF\to Y$
is bounded, the operator $\widetilde A:L_t^2\to L_t^2$
defined by $\widetilde A(v+w)=Av$ for
$v\in\cF$ and $w\in L_t^2\ominus\cF$ acts as zero on
the orthogonal complement. With respect to
$L_t^2=\cF\oplus(L_t^2\ominus\cF)$, we have
\begin{equation}\label{E138}
 \widetilde A^*\widetilde A=(A^*A)\oplus0.
\end{equation}
This identity shows that the natural inclusion of $Y$
and the zero extension preserve the operator norm and
nonzero singular values.

We use the ideal and H\"older inequalities for Schatten classes
\cite[Theorems 2.7 and 2.8]{Simon2005}. Let $q\geq1$.
In the first inequality below, take Hilbert spaces $X_0,X,Y,Y_0$,
$A\in\mathcal S_q(X,Y)$, and bounded operators
$B:Y\to Y_0$ and $R:X_0\to X$. In the second, take
$B\in\mathcal S_{2q}(X,Y)$ and $A\in\mathcal S_{2q}(Y,Z)$,
where $Z$ is a Hilbert space. These operators satisfy
\begin{equation}\label{E7}
 \begin{aligned}
 \norm{BAR}_{\mathcal S_q(X_0,Y_0)}
 &\leq\norm{B}\norm{A}_{\mathcal S_q(X,Y)}\norm{R}\quad\text{and}\quad
 \norm{AB}_{\mathcal S_q(X,Z)}
 \leq\norm{A}_{\mathcal S_{2q}(Y,Z)}\norm{B}_{\mathcal S_{2q}(X,Y)}.
 \end{aligned}
\end{equation}
To derive \eqref{E7} from the cited results on one Hilbert
space, take $H=X_0\oplus X\oplus Y\oplus Y_0$ for the first
inequality and $H=X\oplus Y\oplus Z$ for the second.
For each summand $U$, let $\iota_U:U\to H$ be the canonical
inclusion and $P_U=\iota_U^*:H\to U$ the orthogonal projection.
For a bounded operator $T:U\to V$ between summands, define
$\widetilde T=\iota_VTP_U:H\to H$. Since $P_V\iota_V=I_V$,
its adjoint satisfies
$$
 \widetilde T^*\widetilde T
 =\iota_UT^*(P_V\iota_V)TP_U=\iota_UT^*TP_U.
$$
Relative to $H=U\oplus U^\perp$, this has the form of
\eqref{E138}, so $\norm{\widetilde T}=\norm{T}$.
When $T$ is compact, so is $\widetilde T$, with the same
nonzero singular values and multiplicities, hence the same
Schatten norms. For a bounded $S:V\to W$ between summands,
$P_V\iota_V=I_V$ gives
$$
 \widetilde S\widetilde T
 =\iota_WS(P_V\iota_V)TP_U
 =\iota_WSTP_U=\widetilde{ST}.
$$
Norm preservation and this product identity give
\eqref{E7} from the cited inequalities on $H$.

For a compact positive operator $A$, let $\lambda_j(A)$
be its eigenvalues in nonincreasing order, counted with
multiplicity. If $q>0$, $A,B:\cF\to\cF$ are bounded,
$0\leq A\leq CB$ for a fixed $C>0$, and
$B\in\mathcal S_q(\cF)$ is positive, then $A$ is compact.
Indeed, let $A^{1/2}$ be its positive square root
\cite[Section 1.2, p. 7]{Zhu1990}. For every sequence
$(h_k)$ in $\cF$ converging weakly to zero, compactness of $B$ gives
$$
 \norm{A^{1/2}h_k}_{L_t^2}^2
 =\langle Ah_k,h_k\rangle_t
 \leq C\langle Bh_k,h_k\rangle_t\longrightarrow0\quad(k\rightarrow\infty).
$$
By \cite[Theorems 1.14 and 1.15, p. 11]{Zhu1990},
$A^{1/2}$ and $A=A^{1/2}A^{1/2}$ are compact. The variational
formula \cite[Section 1.6, Exercise 8, p. 31]{Zhu1990},
applied to $A$ and $CB$, followed by summation of the
$q$-th powers, gives
\begin{equation}\label{E8}
 A\in\mathcal S_q(\cF),\quad
 \lambda_j(A)\leq C\lambda_j(B),\quad
 \norm{A}_{\mathcal S_q}\leq C\norm{B}_{\mathcal S_q}
\end{equation}
for $q>0$.
Note that both inequalities use the same constant $C$ as in $A\leq CB$.
For a bounded operator $J:\cF\to Y$, where $Y$ is a
Hilbert space, compactness of $J$ is equivalent to compactness
of $J^*J$. In this case, the singular values give
\begin{equation}\label{E9}
 \norm{J}_{\mathcal S_s(\cF,Y)}^2=\norm{J^*J}_{\mathcal S_{s/2}},
 \qquad 0<s<\infty,
\end{equation}
whenever either side is finite, by
\cite[Theorems 1.17 and 1.26]{Zhu1990}.

The following diagonal estimate follows from
\cite[Proposition 2.6, p. 19]{Simon2005}
by taking the symmetric sequence norm to be the $\ell^p$ norm
and the two orthonormal sets to be the same basis.
We apply it below to normalized kernels through their
synthesis operator.

\begin{lemma}\label{lem:diagonal}
Let $X$ be a separable Hilbert space with an orthonormal basis
$(e_j)_{j=0}^{\infty}$. For $1\leq p<\infty$, there is a positive
constant $C_p$, depending only on $p$, such that every
$A\in\mathcal S_p(X)$ satisfies
$$
 \sum_{j=0}^{\infty}|\langle Ae_j,e_j\rangle_X|^p
 \leq C_p\norm{A}_{\mathcal S_p(X)}^p.
$$
\end{lemma}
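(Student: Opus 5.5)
The statement to prove is Lemma \ref{lem:diagonal}: the diagonal of an $\mathcal S_p$ operator in an orthonormal basis lies in $\ell^p$, with norm controlled by $\norm{A}_{\mathcal S_p}$. The plan is to reduce it to convexity of $x\mapsto x^p$ applied to the Schmidt decomposition. This gives the bound with $C_p=1$.

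First, take the singular value decomposition. Since $A\in\mathcal S_p(X)$ is compact, write
$$
 A=\sum_k s_k\langle\cdot,u_k\rangle_X v_k,
$$
where $(u_k)$ and $(v_k)$ are orthonormal families in $X$ and $s_k=s_k(A)$. Then
$$
 \langle Ae_j,e_j\rangle_X=\sum_k s_k\langle e_j,u_k\rangle_X\langle v_k,e_j\rangle_X,
$$
and by the triangle inequality
$$
 |\langle Ae_j,e_j\rangle_X|\leq\sum_k s_k a_{jk},\qquad
 a_{jk}=|\langle e_j,u_k\rangle_X||\langle v_k,e_j\rangle_X|.
$$

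Second, bound the matrix $(a_{jk})$. By Cauchy--Schwarz and Bessel's inequality (in fact Parseval for the $e_j$-sums), we have
$$
 \sum_j a_{jk}\leq1\quad\text{for each }k,\qquad
 \sum_k a_{jk}\leq1\quad\text{for each }j.
$$
Both sums use $|ab|\leq\tfrac12(|a|^2+|b|^2)$ or Cauchy--Schwarz directly.

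Third, apply convexity. For fixed $j$, the weights $a_{jk}$ have total mass at most $1$. Jensen's inequality (with a possible zero weight added at $s=0$) gives
$$
 \Big(\sum_k s_k a_{jk}\Big)^p\leq\sum_k s_k^p a_{jk}.
$$
Summing over $j$ and exchanging sums by Tonelli's theorem, the column bound yields
$$
 \sum_j|\langle Ae_j,e_j\rangle_X|^p\leq\sum_k s_k^p=\norm{A}_{\mathcal S_p(X)}^p.
$$

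The only point needing care is the Jensen step with sub-probability weights. It follows from convexity of $x^p$ for $p\geq1$ together with $0^p=0$. Alternatively, one can simply cite \cite[Proposition 2.6]{Simon2005}, as the paper indicates. There is no real obstacle.
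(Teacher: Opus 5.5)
Your argument is correct and in fact gives $C_p=1$. The row and column sums of $(a_{jk})$ are at most $1$ by Bessel and Parseval, and the Jensen step with sub-probability weights is valid for $p\geq1$. The paper gives no separate proof: it cites \cite[Proposition 2.6]{Simon2005} with the $\ell^p$ norm and both orthonormal sets taken to be $(e_j)$. Your Schmidt decomposition plus doubly substochastic matrix argument is the $\ell^p$ case of that proposition's proof, so it follows the same route, made self-contained.
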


\subsection{Kernel estimates and localization}\label{sec:kernels}
Let $K_t$ be the reproducing kernel of $\cF$ and put $K=K_1$. 
For $a\in\C^n$, the normalized reproducing kernel $k_{t,a}$ at $a$ is $k_{t,a}=K_t(\cdot,a)/K_t^{1/2}(a,a)$. The reproducing identity is given by
\begin{equation}\label{E10}
 \begin{gathered}
 h(z)=P_t h(z)=\int_{\C^n}K_t(z,w)h(w)d\mu_t(w),\quad h\in\cF.
 \end{gathered}
\end{equation}
For $r=\sqrt t$, define $U_t :L_t^2\longrightarrow L_1^2$ and $ V_t:L_t^2\longrightarrow L^2(\C^n,dv)$ by 
\begin{equation}\label{E11}
 \begin{aligned}
  (U_tu)(\zeta)=u(r\zeta)\quad\text{and}\quad
  (V_tu)(z)&=r^{-d/2}e^{-\varphi(z/r)}u(z).
 \end{aligned}
\end{equation}
On unweighted $L^2(\C^n,dv)$, set
\begin{equation}\label{E12}
 \Pi_t=V_tP_tV_t^{-1}\quad \text{and}\quad
 p_t(z,w)=r^{-d}e^{-\varphi(z/r)}K_t(z,w)e^{-\varphi(w/r)}.
\end{equation}
 We regard $\Pi_t$ as an operator
on this whole space, with
range $V_t\cF$.

Dilation reduces the kernel estimates to scale one and fixes
the powers of $r$ in the local bounds. Throughout the paper, $P=P_1:L_1^2\to\mathcal F_1^2$
denotes the orthogonal projection at scale one, and we write
$T_u=T_u^{(1)}$.

\begin{lemma}\label{lem:scaled-kernel}
Suppose \eqref{eq:weight} holds and $r=\sqrt t$.
\begin{enumerate}[label=\textup{(\alph*)}]
\item
The maps $U_t$ and $V_t$ are unitary, with
$U_t\cF=\mathcal F_1^2$. For every $f\in L^\infty(\C^n)$,
the following identities hold.
\begin{equation}
\begin{aligned}\label{eq:unitary-dilation}
 &U_tP_tU_t^{-1}=P,\quad K_t(z,w)=K(z/r,w/r),\quad U_tM_fU_t^{-1}=M_{f(r\cdot)},\quad V_tM_fV_t^{-1}=M_f.
\end{aligned}
\end{equation}
The orthogonal projection $$\Pi_t:L^2(\C^n,dv)\to V_t\cF$$ has
integral kernel $p_t$.

\item
The unitary $U_t$ restricts to unitary maps
$\cF\to\mathcal F_1^2$ and
$L_t^2\ominus\cF\to L_1^2\ominus\mathcal F_1^2$.
We use the same notation for these restrictions.
For $u,g\in L^\infty(\C^n)$, the following identities hold.
\begin{equation}\label{E55}
 U_tT_u^{(t)}U_t^{-1}=T_{u(r\cdot)},\quad
 U_tH_u^{(t)}U_t^{-1}=H_{u(r\cdot)}^{(1)},\quad
 U_tD_t(u,g)U_t^{-1}=D_1(u(r\cdot),g(r\cdot)).
\end{equation}
These identities preserve the operator norms and the corresponding Schatten norms or quasi-norms for
$0<q<\infty$.
\end{enumerate}
\end{lemma}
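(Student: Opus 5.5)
The plan is a direct computation with the change of variables $z=r\zeta$. In that variable the weights match: $d\mu_t(r\zeta)=t^{-n}e^{-2\varphi(\zeta)}r^{d}\,dv(\zeta)=e^{-2\varphi(\zeta)}dv(\zeta)=d\mu_1(\zeta)$, because $r^d=t^n$.

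For part (a), I would proceed as follows.

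\begin{enumerate}
\item \emph{$U_t$ is unitary.} The substitution gives $\norm{U_tu}_{L_1^2}=\norm{u}_{L_t^2}$. The inverse $u\mapsto u(\cdot/r)$ is isometric in the same way, so $U_t$ is unitary.
\item \emph{$U_t\cF=\mathcal F_1^2$.} Holomorphy is preserved under dilation in both directions, so $U_t$ maps $\cF$ onto $\mathcal F_1^2$.
\item \emph{Projections.} A unitary carrying one closed subspace onto another conjugates the corresponding orthogonal projections. Hence $U_tP_tU_t^{-1}=P$.
\item \emph{Multipliers.} Directly, $(U_tM_fU_t^{-1}h)(\zeta)=f(r\zeta)h(\zeta)$, so $U_tM_fU_t^{-1}=M_{f(r\cdot)}$.
\item \emph{Kernels.} For $h\in\cF$, write $h=U_t^{-1}(U_th)$ and apply the reproducing identity at scale one to $U_th$. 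Changing variables back gives
$$h(z)=\int K(z/r,w/r)\,h(w)\,d\mu_t(w).$$
The function $K(\cdot/r,w/r)$ lies in $\cF$. By uniqueness of the reproducing kernel, $K_t(z,w)=K(z/r,w/r)$.
\item \emph{$V_t$ is unitary.} Since $|V_tu|^2\,dv=r^{-d}e^{-2\varphi(z/r)}|u|^2\,dv=|u|^2\,d\mu_t$, the map $V_t$ is an isometry. It is onto, with inverse given by multiplication by $r^{d/2}e^{\varphi(z/r)}$. Multiplication by the weight factor commutes with $M_f$, so $V_tM_fV_t^{-1}=M_f$.
\item \emph{The kernel of $\Pi_t$.} $\Pi_t=V_tP_tV_t^{-1}$ is the orthogonal projection onto $V_t\cF$, because conjugating an orthogonal projection by a unitary gives an orthogonal projection. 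Its kernel follows from writing $V_t^{-1}v=r^{d/2}e^{\varphi(\cdot/r)}v$, inserting into \eqref{E10}, and converting $d\mu_t$ to $dv$:
$$(\Pi_tv)(z)=r^{-d/2}e^{-\varphi(z/r)}\int K_t(z,w)\,r^{d/2}e^{\varphi(w/r)}v(w)\,r^{-d}e^{-2\varphi(w/r)}\,dv(w).$$
This is exactly $\int p_t(z,w)v(w)\,dv(w)$. Validity on all of $L^2$ uses $P_t$ given by the kernel integral on $L^2_t$, which holds because $K_t(\cdot,z)\in\cF$ and $P_tu(z)=\langle P_tu,K_t(\cdot,z)\rangle_t=\langle u,K_t(\cdot,z)\rangle_t$.
\end{enumerate}

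For part (b), unitarity of $U_t$ together with $U_t\cF=\mathcal F_1^2$ gives $U_t(L_t^2\ominus\cF)=L_1^2\ominus\mathcal F_1^2$. Therefore the restrictions are unitary as well. Then:
\begin{itemize}
\item $U_tT_u^{(t)}U_t^{-1}=U_tP_tU_t^{-1}\,U_tM_uU_t^{-1}=PM_{u(r\cdot)}=T_{u(r\cdot)}$ on $\mathcal F_1^2$.
\item The Hankel identity follows the same way with $I-P_t$ in place of $P_t$.
\item The semi-commutator identity follows by inserting $U_t^{-1}U_t$ between the two factors and using $(ug)(r\cdot)=u(r\cdot)g(r\cdot)$.
\end{itemize}

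Finally, unitary equivalence, including equivalence through different unitaries on the domain and codomain as for $H_u$, preserves the approximation numbers \eqref{E102}. Rank and norm are invariant under composition with unitaries, so all operator norms and Schatten (quasi-)norms agree, including the value $+\infty$.

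The only point needing care is the kernel identification for $\Pi_t$ on all of $L^2(\C^n,dv)$, not just on a dense subspace. Writing $P_t$ as a kernel integral on the whole of $L_t^2$, as in step 7, settles it; everything else is bookkeeping of the factors $r^{d}=t^{n}$.
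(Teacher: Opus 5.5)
Your proposal is correct and follows the paper's proof essentially step for step: the same reproducing-kernel uniqueness argument gives $K_t(z,w)=K(z/r,w/r)$, the same substitution of $V_t^{-1}$ into $P_tu(z)=\langle u,K_t(\cdot,z)\rangle_t$ identifies $p_t$, and the same conjugation argument handles part (b). The only difference is that you verify unitarity of $U_t$, $U_t\cF=\mathcal F_1^2$, and $U_tP_tU_t^{-1}=P$ directly by the substitution $z=r\zeta$ (using $r^d=t^n$), whereas the paper cites Hu and Virtanen for these facts.
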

\begin{proof}
We first prove \textup{(a)}. By
\cite[Section 6, p. 2071]{HuVirtanen2023}, $U_t$ is
unitary, maps $\cF$ onto $\mathcal F_1^2$, and satisfies
$U_tP_tU_t^{-1}=P$. To obtain the kernel identity, take
$h\in\cF$ and $w\in\C^n$. The reproducing property gives
$$
 h(w)=(U_th)(w/r)
 =\langle U_th,K(\cdot,w/r)\rangle_1
 =\langle h,U_t^{-1}K(\cdot,w/r)\rangle_t.
$$
Uniqueness of the reproducing kernel therefore yields
$K_t(\cdot,w)=U_t^{-1}K(\cdot,w/r)$, and evaluation at $z$
gives $K_t(z,w)=K(z/r,w/r)$.

For the passage to unweighted coordinates, \eqref{E11} and
the definition of $\mu_t$ in \eqref{E82} give
$$
 \norm{V_tu}_{L^2(\C^n,dv)}^2
 =\int_{\C^n}|u(z)|^2r^{-d}e^{-2\varphi(z/r)} dv(z)
 =\norm{u}_{L_t^2}^2.
$$
Multiplication by $r^{d/2}e^{\varphi(z/r)}$ is the inverse
of $V_t$, so $V_t$ is unitary. Hence
$\Pi_t=V_tP_tV_t^{-1}$ is the orthogonal projection onto
$V_t\cF$. To identify its kernel, take $h\in L^2(\C^n,dv)$.
By the reproducing property and self-adjointness of $P_t$,
$P_tu(z)=\langle u,K_t(\cdot,z)\rangle_t$ for $u\in L_t^2$.
Substituting $V_t^{-1}h$ and writing $\mu_t$ in terms of
its density, we obtain
\begin{align*}
(\Pi_t h)(z)
&=r^{-d/2}e^{-\varphi(z/r)}
  \int_{\C^n}K_t(z,w)(V_t^{-1}h)(w)d\mu_t(w)\\
&=\int_{\C^n}r^{-d}e^{-\varphi(z/r)}
  K_t(z,w)e^{-\varphi(w/r)}h(w) dv(w)\\
&=\int_{\C^n}p_t(z,w)h(w) dv(w),
\end{align*}
which proves the kernel formula in \eqref{E12}. The definitions in \eqref{E11} also give the two
multiplication identities in \eqref{eq:unitary-dilation}.

For \textup{(b)}, combining these multiplication identities
with the projection identity and the operator definitions
\eqref{E1} and \eqref{E2} proves \eqref{E55}.
The asserted equalities of operator and Schatten norms
follow from these unitary equivalences. The proof is complete.
\end{proof}

 \begin{remark}\label{R3}
In $V_tM_fV_t^{-1}=M_f$, the multiplier between $V_t$ and
$V_t^{-1}$ acts on $L_t^2$, whereas the multiplier on the
right acts on $L^2(\C^n,dv)$, both by pointwise multiplication. Throughout the paper,  we use the same notation $M_f$ for multiplication by $f$
on the relevant spaces and for its restrictions,
with the domain and codomain determined by context. 
\end{remark}  

For $f,g\in L^\infty(\C^n)$, the operator definitions
\eqref{E1} and \eqref{E2} give the factorization
\cite[(6-9)]{HuVirtanen2023}
\begin{equation}\label{eq:factorization}
 D_t(f,g)=-P_tM_f(I-P_t)M_g
         =-\bigl(H_{\overline{f}}^{(t)}\bigr)^*H_g^{(t)}
         \quad\text{on }\cF.
\end{equation}
The corresponding commutator satisfies
$[T_f^{(t)},T_g^{(t)}]=D_t(f,g)-D_t(g,f)$.
Applying \eqref{E9} to $H_{\overline{f}}^{(t)}$ and using
\eqref{eq:factorization} gives
\begin{equation}\label{eq:conjugate-identity}
 \norm{D_t(f,\overline{f})}_{\mathcal S_p}
 =\norm{H_{\overline{f}}^{(t)}}_{\mathcal S_{2p}(\cF,L_t^2\ominus\cF)}^2
\end{equation}
for $0<p<\infty$.
The identity also holds when both sides are infinite.
The operator $D_t(f,g)P_t:L_t^2\to L_t^2$ is the zero
extension of $D_t(f,g):\cF\to\cF$ defined before
\eqref{E138}.
To estimate $D_t(f,g)$ on the fixed space $L^2(\C^n,dv)$,
we transport this extension by $V_t$ and define
\begin{equation}\label{E83}
 \widetilde D_t(f,g)=V_tD_t(f,g)P_tV_t^{-1}:L^2(\C^n,dv)\to L^2(\C^n,dv).
\end{equation}
By \eqref{E138} and the unitarity of $V_t$ in
Lemma \ref{lem:scaled-kernel} (a), we have
\begin{equation}\label{E43}
 \norm{\widetilde D_t(f,g)}_{\mathcal S_q(L^2(\C^n,dv))}
 =\norm{D_t(f,g)}_{\mathcal S_q}
\end{equation}
for $0<q<\infty$.
For $h\in L_t^2$, the vector $P_th$ belongs to $\cF$.
Applying \eqref{E1} and \eqref{E2} to $P_th$ gives
$$
 D_t(f,g)P_t=P_tM_f(P_t-I)M_gP_t
 \quad\text{on }L_t^2.
$$
Conjugating by $V_t$ and using \eqref{eq:unitary-dilation},
\eqref{E83}, and $\Pi_t^2=\Pi_t$, we obtain
\begin{equation}\label{eq:unweighted-semicommutator}
 \widetilde D_t(f,g)=\Pi_tM_f(\Pi_t-I)M_g\Pi_t
 =\Pi_t[M_f,\Pi_t]M_g\Pi_t,
\end{equation}
where $[M_f,\Pi_t]=M_f\Pi_t-\Pi_tM_f$.

By \cite[Section 2A, p. 2045--2046]{HuVirtanen2023},
the real Hessian bounds in \eqref{eq:weight} imply complex
Hessian bounds with constants depending only on $m,M$.
This verifies the weight assumptions in the results of Hu
and Virtanen used below, so we will not repeat the verification in the rest of the paper.
In particular, the kernel
estimates at scale one are given in \cite[Theorem 2]{Delin1998}
and \cite[Proposition 3.3]{SchusterVarolin2012}, and are recorded
in \cite[Lemma 2.2]{HuVirtanen2023}. Setting the two kernel variables
equal gives $K(\zeta,\zeta)e^{-2\varphi(\zeta)}\asymp1$.
The diagonal lower and upper bounds control the normalization
in the Gram estimate and the local lower bound, respectively.
The dilation in \eqref{eq:unitary-dilation} then yields the following estimates.

\begin{lemma}\label{L1}
Suppose \eqref{eq:weight} holds and $r=\sqrt t$. There are constants
$c,C,\rho>0$, depending only on $n,m,M$, such that
\begin{equation}\label{eq:gram}
 \begin{gathered}
 |\langle k_{t,a},k_{t,b}\rangle_t|\leq Ce^{-c|a-b|/r},\quad
 K_t(z,z)e^{-2\varphi(z/r)}\leq C,
 \quad|p_t(z,w)|\leq Cr^{-d}e^{-c|z-w|/r}
 \end{gathered}
\end{equation}
for $a,b,z,w\in\C^n$.
The normalized kernels also satisfy the local lower bound
\begin{equation}\label{E105}
 |k_{t,a}(z)|^2e^{-2\varphi(z/r)}\geq c
 \quad\text{if }|z-a|<\rho r.
\end{equation}
In particular, $k_{t,a}$ satisfies $\norm{k_{t,a}}_{L_t^2}=1$
and has no zero on $B(a,\rho r)$.
\end{lemma}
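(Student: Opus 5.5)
The plan is to derive everything from the scale-one estimates quoted from \cite[Lemma 2.2]{HuVirtanen2023} and then transfer them to scale $t$ through Lemma \ref{lem:scaled-kernel}. At scale one the cited results give two facts. The first is off-diagonal decay $|K(\zeta,\eta)|e^{-\varphi(\zeta)-\varphi(\eta)}\leq Ce^{-c|\zeta-\eta|}$. The second is the diagonal comparison $K(\zeta,\zeta)e^{-2\varphi(\zeta)}\asymp1$, obtained by setting $\zeta=\eta$.

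First, I treat the three bounds in \eqref{eq:gram}. By Lemma \ref{lem:scaled-kernel} (a), $K_t(z,w)=K(z/r,w/r)$. Putting $\zeta=z/r$ gives the diagonal bound $K_t(z,z)e^{-2\varphi(z/r)}\leq C$ directly. For $p_t$, I insert the definition \eqref{E12}:
$$|p_t(z,w)|=r^{-d}|K(z/r,w/r)|e^{-\varphi(z/r)-\varphi(w/r)}\leq Cr^{-d}e^{-c|z-w|/r}.$$
For the Gram estimate, the reproducing property gives $\langle K_t(\cdot,b),K_t(\cdot,a)\rangle_t=K_t(a,b)$. Hence
$$|\langle k_{t,a},k_{t,b}\rangle_t|=\frac{|K(a/r,b/r)|}{K^{1/2}(a/r,a/r)K^{1/2}(b/r,b/r)}.$$
The numerator is bounded above by the off-diagonal estimate. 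The denominator is bounded below using the diagonal lower bound $K(\zeta,\zeta)\geq ce^{2\varphi(\zeta)}$. The weights cancel, and this yields $Ce^{-c|a-b|/r}$.

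Second, the lower bound \eqref{E105}. After scaling, it says $|K(\zeta,\alpha)|^2e^{-2\varphi(\zeta)-2\varphi(\alpha)}\geq c$ for $|\zeta-\alpha|<\rho$, since $K(\alpha,\alpha)e^{-2\varphi(\alpha)}\asymp1$ normalizes the expression. This bound is the main point, because it is not directly a decay estimate. Unless it is already among the estimates recorded in \cite[Lemma 2.2]{HuVirtanen2023}, I would derive it from a uniform gradient bound
$$|\nabla_\zeta\bigl(K(\zeta,\alpha)e^{-\varphi(\zeta)-\varphi(\alpha)}\bigr)|\leq C.$$
This gradient bound should follow from the Cauchy estimates applied to the holomorphic function $\zeta\mapsto K(\zeta,\alpha)$ on a unit ball. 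That function is locally bounded after multiplying by $e^{-\varphi}$, once we use a local holomorphic normalization of $\varphi$, namely subtracting the real part of its second-order holomorphic Taylor polynomial, which is controlled by $M$. At $\zeta=\alpha$ the function is at least $c_0>0$. The gradient bound then keeps it above $c_0/2$ on a ball of radius $\rho=c_0/(2C)$.

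Finally, the normalization $\norm{k_{t,a}}_{L_t^2}=1$ follows from $\norm{K_t(\cdot,a)}_{L_t^2}^2=K_t(a,a)$. The absence of zeros on $B(a,\rho r)$ follows immediately from \eqref{E105}. All constants depend only on $n,m,M$ through the cited scale-one estimates, which applies the complex Hessian verification mentioned above.
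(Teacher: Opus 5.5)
Your treatment of \eqref{eq:gram} and of $\norm{k_{t,a}}_{L_t^2}=1$ matches the paper, as does your reduction of \eqref{E105} to a scale-one near-diagonal lower bound for $|K(\zeta,\alpha)|e^{-\varphi(\zeta)-\varphi(\alpha)}$, normalized by the diagonal upper bound. The paper takes the off-diagonal decay, the diagonal comparison, and that near-diagonal lower bound from the scale-one estimates of Delin and Schuster--Varolin recorded in \cite[Lemma 2.2]{HuVirtanen2023}, and transfers them by $K_t(z,w)=K(z/r,w/r)$. So your primary route, citing the lower bound, is the paper's proof, and no derivation is needed.

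Your fallback derivation would, however, fail as written, because the claimed uniform bound $|\nabla_\zeta(K(\zeta,\alpha)e^{-\varphi(\zeta)-\varphi(\alpha)})|\leq C$ is false. Take $n=1$ and $\varphi(z)=|z|^2/2$. Then $K(\zeta,\alpha)=\pi^{-1}e^{\zeta\overline{\alpha}}$, so
\[
K(\zeta,\alpha)e^{-\varphi(\zeta)-\varphi(\alpha)}=\pi^{-1}e^{-|\zeta-\alpha|^2/2}e^{i\operatorname{Im}(\zeta\overline{\alpha})},
\]
and its gradient at $\zeta=\alpha$ has length $|\alpha|/\pi$. In general, the phase $e^{i\operatorname{Im}g_\alpha}$ coming from the holomorphic normalization has gradient of length $|\nabla\varphi(\alpha)|$, which is unbounded under \eqref{eq:weight}.

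What your Cauchy-estimate mechanism actually controls is the holomorphic function $G_\alpha(\zeta)=K(\zeta,\alpha)e^{-g_\alpha(\zeta)-\varphi(\alpha)}$. Here $g_\alpha$ is the holomorphic affine polynomial with $g_\alpha(\alpha)=\varphi(\alpha)$ and $0\leq\varphi-\Re g_\alpha\leq(M/2)|\zeta-\alpha|^2$, as in the proof of Lemma \ref{lem:average-toeplitz}. Then $|G_\alpha|\leq Ce^{2M}$ on $B(\alpha,2)$, so $|\nabla G_\alpha|\leq C'$ on $B(\alpha,1)$, while $G_\alpha(\alpha)=K(\alpha,\alpha)e^{-2\varphi(\alpha)}\geq c_0$. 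Run your argument on $G_\alpha$ and return to the original expression through
\[
|K(\zeta,\alpha)|e^{-\varphi(\zeta)-\varphi(\alpha)}=|G_\alpha(\zeta)|e^{\Re g_\alpha(\zeta)-\varphi(\zeta)}\geq\bigl(c_0-C'|\zeta-\alpha|\bigr)e^{-M|\zeta-\alpha|^2/2}.
\]
This gives the lower bound on a ball whose radius depends only on $n,m,M$. Only the modulus needs a Lipschitz bound, not the complex-valued function itself.
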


The next lemma transfers the diagonal estimate to the
normalized reproducing kernels $k_{t,a}$ defined before \eqref{E10},
with a bound independent of the scale and the lattice translation. Here $\ell^2(\Z^d)$ denotes
the Hilbert space of square-summable complex sequences
$c=(c_j)_{j\in\Z^d}$, with norm
$\norm{c}_{\ell^2(\Z^d)}=(\sum_{j\in\Z^d}|c_j|^2)^{1/2}$.

\begin{lemma}\label{lem:lattice}
Let $\rho>0$ be as in Lemma \ref{L1}, and fix $L>2\rho$.
For $r>0$ and $b\in[0,Lr)^d$, set
$$
 a_j=b+Lrj,\qquad k_j=k_{r^2,a_j},\qquad j\in\Z^d.
$$
\begin{enumerate}[label=\textup{(\alph*)}]
\item
The balls $B(a_j,\rho r)$ are pairwise disjoint, and the
synthesis operator
$$
 B_{r,b}:\ell^2(\Z^d)\longrightarrow\mathcal F_{r^2}^2,
 \qquad B_{r,b}c=\sum_{j\in\Z^d}c_jk_j,
$$
satisfies $\norm{B_{r,b}}\leq C_0$, where $C_0>0$ depends only on
$n,m,M,L$.

\item
For $1\leq p<\infty$, the normalized kernels satisfy
\begin{equation}\label{eq:lattice-diagonal}
 \sum_{j\in\Z^d}|\langle Ak_j,k_j\rangle_{r^2}|^p
 \leq C_p\norm{A}_{\mathcal S_p(\mathcal F_{r^2}^2)}^p,
 \qquad A\in\mathcal S_p(\mathcal F_{r^2}^2).
\end{equation}
Here $C_p>0$ depends only on $n,m,M,p,L$.
\end{enumerate}
\end{lemma}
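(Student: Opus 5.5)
For part (a), disjointness is elementary. If $j\neq k$, then $|a_j-a_k|=Lr|j-k|\geq Lr>2\rho r$, so the balls $B(a_j,\rho r)$ are pairwise disjoint.

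For the bound on $B_{r,b}$, I would apply Schur's test to the Gram matrix $G=(\langle k_k,k_j\rangle_{r^2})_{j,k}$. This matrix represents $B_{r,b}^*B_{r,b}$ on finitely supported sequences, since
$$
\norm{B_{r,b}c}_{L^2_{r^2}}^2=\sum_{j,k}c_k\overline{c_j}\langle k_k,k_j\rangle_{r^2}.
$$
The Gram estimate \eqref{eq:gram} with $t=r^2$ gives $|\langle k_k,k_j\rangle_{r^2}|\leq Ce^{-cL|j-k|}$. The scale $r$ cancels here, because $|a_j-a_k|/r=L|j-k|$. Hence every row sum and column sum is bounded by
$$
C\sum_{m\in\Z^d}e^{-cL|m|}=:C_0^2,
$$
which depends only on $n,m,M,L$ and not on $r$ or $b$. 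Schur's test then gives $\norm{G}_{\ell^2\to\ell^2}\leq C_0^2$, so $\norm{B_{r,b}c}\leq C_0\norm{c}$ on finitely supported $c$. This bound also shows the series defining $B_{r,b}c$ converges unconditionally for every $c\in\ell^2(\Z^d)$, so $B_{r,b}$ extends to the whole space with the same bound.

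For part (b), I would pass from the nonorthogonal family $(k_j)$ to the orthonormal basis $(e_j)$ of $\ell^2(\Z^d)$ through the synthesis operator. Since $B_{r,b}e_j=k_j$, we have
$$
\langle Ak_j,k_j\rangle_{r^2}=\langle B_{r,b}^*AB_{r,b}e_j,e_j\rangle_{\ell^2(\Z^d)}.
$$
I would enumerate $\Z^d$ by $\N$ and apply Lemma \ref{lem:diagonal} on $X=\ell^2(\Z^d)$ to the operator $B_{r,b}^*AB_{r,b}$. The ideal inequality in \eqref{E7}, with the operator between different spaces handled as explained there, gives
$$
\norm{B_{r,b}^*AB_{r,b}}_{\mathcal S_p(\ell^2)}\leq C_0^2\norm{A}_{\mathcal S_p(\mathcal F_{r^2}^2)}.
$$
Together these yield \eqref{eq:lattice-diagonal} with constant $C_pC_0^{2p}$.

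The only step needing care is uniformity in $r$ and $b$. This is exactly the cancellation in the exponent of the Gram bound noted above, so I do not expect any real obstacle.
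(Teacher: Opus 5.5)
Your proposal is correct. Part (b) is exactly the paper's argument: the diagonal entries become those of $B_{r,b}^*AB_{r,b}$ in the canonical basis of $\ell^2(\Z^d)$, then Lemma \ref{lem:diagonal} and the ideal inequality \eqref{E7} give the same constant $C_pC_0^{2p}$.

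Part (a) differs in how the synthesis bound is obtained. The paper uses the dilation $U_{r^2}k_j=k_{1,a_j/r}$ from Lemma \ref{lem:scaled-kernel} to move to scale one, where the points $a_j/r$ are $L$-separated. It then cites \cite[Lemma 2.4 and its proof]{HuVirtanen2022} for the bound on finitely supported sequences, followed by an explicit Cauchy-sequence extension.

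You instead apply Schur's test to the Gram matrix using the first estimate in \eqref{eq:gram}. The only input is then Lemma \ref{L1}, where the dilation has already been absorbed. Independence of $r$ and $b$ appears directly as the cancellation $|a_j-a_k|/r=L|j-k|$, and you get the explicit constant $C_0^2=C\sum_{m\in\Z^d}e^{-cL|m|}$.

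Your route is self-contained within the paper's stated estimates. The paper's citation is shorter and rests on a result for general separated sequences. Your Schur argument would extend to that setting as well, via a packing bound on $\sup_j\sum_k e^{-c|a_j-a_k|/r}$.
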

\begin{proof}
For \textup{(a)}, the lattice centers satisfy
$$
 |a_i-a_j|=Lr|i-j|\geq Lr>2\rho r,\qquad i\neq j.
$$
So the balls are pairwise disjoint, and the scaled centers
$a_j/r=b/r+Lj$ satisfy $$|a_i/r-a_j/r|\geq L$$ for $i\neq j$.
Lemma \ref{lem:scaled-kernel} (a), together with the
definition of $k_{t,a}$ before \eqref{E10}, gives
$U_{r^2}k_j=k_{1,a_j/r}$.
For every finitely supported $c\in\ell^2(\Z^d)$, unitarity
and \cite[Lemma 2.4 and its proof]{HuVirtanen2022} yield
\begin{equation}\label{E199}
\|\sum_jc_jk_j\|_{L_{r^2}^2}
 =\|\sum_jc_jk_{1,a_j/r}\|_{L_1^2}
 \leq C_0\norm{c}_{\ell^2(\Z^d)},
\end{equation}
where $C_0>0$ depends only on $n,m,M,L$.
For $c=(c_j)_{j\in\Z^d}\in\ell^2(\Z^d)$ and an integer
$N\geq1$, define its finite truncation
$c^{(N)}=(c_j^{(N)})_{j\in\Z^d}$ by
$$
 c_j^{(N)}=
 \begin{cases}
 c_j,&|j|\leq N,\\
 0,&|j|>N.
 \end{cases}
$$
Each $c^{(N)}$ has finite support, and square summability gives
$$
 \norm{c-c^{(N)}}_{\ell^2(\Z^d)}^2
 =\sum_{|j|>N}|c_j|^2
 \longrightarrow0\quad(N\rightarrow\infty).
$$
For integers $1\leq N'<N$, applying \eqref{E199} to
$c^{(N)}-c^{(N')}$ gives
$$
 \bigg\|\sum_{N'<|j|\leq N}c_jk_j\bigg\|_{L_{r^2}^2}
 \leq C_0\bigg(\sum_{N'<|j|\leq N}|c_j|^2\bigg)^{1/2}
 \rightarrow0\quad(N',N\rightarrow\infty).
$$
Then $\sum_{|j|\leq N}c_jk_j$ is Cauchy in
$\mathcal F_{r^2}^2$. By completeness, we may define
$$
 B_{r,b}c=\lim_{N\rightarrow\infty}\sum_{|j|\leq N}c_jk_j
 \quad\text{in }\mathcal F_{r^2}^2.
$$
Passing to the limit in \eqref{E199} yields
$$
 \norm{B_{r,b}c}_{L_{r^2}^2}
 \leq C_0\norm{c}_{\ell^2(\Z^d)}.
$$
Thus, the synthesis map extends uniquely to a bounded
operator from $\ell^2(\Z^d)$ to $\mathcal F_{r^2}^2$,
with $\norm{B_{r,b}}\leq C_0$.

To prove \textup{(b)}, let $(e_j)_{j\in\Z^d}$ be the canonical
orthonormal basis of $\ell^2(\Z^d)$, so that $B_{r,b}e_j=k_j$.
Denote the constant from Lemma \ref{lem:diagonal} by $c_p>0$.
For $A\in\mathcal S_p(\mathcal F_{r^2}^2)$, that lemma and
\cite[Theorem 1.27]{Zhu1990}, together with the first inequality
in \eqref{E7} and the bound $\norm{B_{r,b}}\leq C_0$ from
\textup{(a)}, give
\begin{align*}
 \sum_{j\in\Z^d}|\langle Ak_j,k_j\rangle_{r^2}|^p
 &=\sum_{j\in\Z^d}
   |\langle B_{r,b}^*AB_{r,b}e_j,e_j\rangle_{\ell^2(\Z^d)}|^p\leq c_p
   \norm{B_{r,b}^*AB_{r,b}}_{\mathcal S_p(\ell^2(\Z^d))}^p\\
 &\leq c_p
   \norm{B_{r,b}}_{\ell^2(\Z^d)\to\mathcal F_{r^2}^2}^{2p}
   \norm{A}_{\mathcal S_p(\mathcal F_{r^2}^2)}^p\leq c_p C_0^{2p}
   \norm{A}_{\mathcal S_p(\mathcal F_{r^2}^2)}^p.
\end{align*}
With $C_p=c_p C_0^{2p}$, this is \eqref{eq:lattice-diagonal},
which completes the proof.
\end{proof}

\subsection{Analytic distance and derivative estimates}\label{sec:analytic-prelim}
We first recall the coordinate derivative estimate of Hu
and Virtanen and its consequence for the real gradient.

\begin{lemma}[{\cite[Lemma 5.1]{HuVirtanen2023}}]
\label{lem:complex-derivatives}
For $1<q<\infty$, there are $C_q,C_{n,q}>0$, depending only on
$n,q$, such that every $u\in C^2(\C^n)\cap L^\infty(\C^n)$ satisfies
\begin{equation}\label{E14}
\|{\nabla u}\|_{L^q(\R^d,dv)}\leq C_{n,q}\|{\dbar u}\|_{L^q(\R^d,dv)} \quad\text{and}\quad \|{\partial_{z_j}u}\|_{L^q(\R^d,dv)}\leq C_q\|{\partial_{\overline{z}_j}u}\|_{L^q(\R^d,dv)}
 \end{equation}
for $1\leq j\leq n$.
In particular, $|\dbar u|\in L^q(\R^d,dv)$ implies
$|\nabla u|\in L^q(\R^d,dv)$, without requiring $u\in L^q(\C^n,dv)$.
\end{lemma}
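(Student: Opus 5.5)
The plan is to prove the second inequality in \eqref{E14} coordinate by coordinate with the Beurling--Ahlfors transform, and then deduce the first inequality algebraically. Fix $1\leq j\leq n$ and let $S_j$ be the Fourier multiplier with symbol $\overline{\xi_j}/\xi_j$. Here $\xi_j=\xi_{2j-1}+i\xi_{2j}$ is the complex frequency dual to $z_j$, and $S_j$ acts in the variable $z_j$ only, with the remaining variables as parameters. Under the Fourier transform, $\partial_{z_j}$ and $\partial_{\overline z_j}$ have symbols proportional to $\overline{\xi_j}$ and $\xi_j$ (up to the same constant factor and a sign depending on the convention). So formally
$$
 \partial_{z_j}u=S_j(\partial_{\overline z_j}u).
$$
The multiplier is homogeneous of degree zero and smooth off $\{\xi_j=0\}$ in $\R^2$. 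It is the planar Beurling transform, so the Calder\'on--Zygmund theorem bounds it on $L^q(\R^2)$, $1<q<\infty$. Fubini in the other $d-2$ variables then gives $\norm{S_jv}_{L^q(\R^d,dv)}\leq C_q\norm{v}_{L^q(\R^d,dv)}$. This is where the dependence of $C_q$ on $q$ alone comes from. If $|\dbar u|\notin L^q$, the inequality is trivial, so I assume $g_j:=\partial_{\overline z_j}u\in L^q$.

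The main obstacle is justifying $\partial_{z_j}u=S_jg_j$. The function $u$ is only bounded and need not lie in $L^q$, so there is no a priori integrability for $\partial_{z_j}u$. I would argue with tempered distributions. Since $u\in L^\infty$, $\partial_{z_j}u$ is tempered, and so is $S_jg_j\in L^q$. Set
$$
 w=\partial_{z_j}u-S_jg_j .
$$
On the Fourier side, $\xi_j\widehat{\partial_{z_j}u}=\overline{\xi_j}\,\widehat{g_j}$ (up to the common normalizing constant) and $\xi_j\widehat{S_jg_j}=\overline{\xi_j}\,\widehat{g_j}$. Hence $\xi_j\widehat w=0$, so $\widehat w$ is supported in the codimension-two subspace $\{\xi_{2j-1}=\xi_{2j}=0\}$. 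The multiplication $\xi_j\widehat{S_jg_j}$ is legitimate because the multiplier is bounded and one can pair against Schwartz functions after approximating $g_j$ in $L^q$ by Schwartz functions. By the structure theorem for distributions supported on a subspace, $w$ is then a polynomial in $(x_{2j-1},x_{2j})$ whose coefficients are tempered distributions in the other variables.

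To kill this polynomial, I would test $w$ against $\psi(x')\chi_R(z_j)$. Here $\psi$ is a Schwartz function in the other variables, and $\chi_R(z_j)=R^{-2}\chi(z_j/R)$ with $\chi\in C_c^\infty(\C)$ of integral one. For the first term,
$$
 |\langle\partial_{z_j}u,\psi\chi_R\rangle|=|\langle u,\psi\,\partial_{z_j}\chi_R\rangle|\lesssim\norm{u}_\infty R^{-1}\to0 .
$$
For the second, H\"older gives
$$
 |\langle S_jg_j,\psi\chi_R\rangle|\lesssim\norm{g_j}_{L^q}\,R^{-2+2/q'}\to0 .
$$
For the polynomial, the pairing with $\chi_R$ tends to a nonzero multiple of the top-degree coefficient times $R^{\deg}$ unless that coefficient vanishes. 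Choosing $\chi$ with suitable vanishing moments isolates each coefficient in turn, so all coefficients vanish and $w=0$. This yields $\norm{\partial_{z_j}u}_{L^q}\leq C_q\norm{\partial_{\overline z_j}u}_{L^q}$. The $C^2$ hypothesis lets all derivatives be classical, so no weak-derivative subtleties arise.

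Finally, \eqref{E13} gives
$$
 \partial_{x_{2j-1}}=\partial_{z_j}+\partial_{\overline z_j},\qquad \partial_{x_{2j}}=i(\partial_{z_j}-\partial_{\overline z_j}).
$$
Hence $|\nabla u|\leq 2\sum_j(|\partial_{z_j}u|+|\partial_{\overline z_j}u|)$ pointwise. Combining this with the coordinate bounds and $|\partial_{\overline z_j}u|\leq|\dbar u|$, the triangle inequality gives $\norm{\nabla u}_{L^q}\leq 2n(1+C_q)\norm{\dbar u}_{L^q}$, which is the first inequality with $C_{n,q}=2n(1+C_q)$. The final sentence of the lemma is immediate, since at no point was $u\in L^q$ used, only $u\in L^\infty$.
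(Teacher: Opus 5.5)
Your proof is correct, but for the main inequality it does not follow the paper. The paper never proves $\norm{\partial_{z_j}u}_{L^q}\leq C_q\norm{\partial_{\overline z_j}u}_{L^q}$; it imports it from \cite[Lemma 5.1]{HuVirtanen2023}. The only step it carries out is the passage to the real gradient. That step matches your last paragraph, except that the paper uses the identity $|\nabla u|^2=2\sum_j(|\partial_{z_j}u|^2+|\partial_{\overline z_j}u|^2)$ and so gets $C_{n,q}=\sqrt2\,n(C_q+1)$ instead of your $2n(1+C_q)$.

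Your self-contained argument holds up. You apply the Beurling transform slice by slice, then run a Liouville-type step for the tempered distribution $w=\partial_{z_j}u-S_j\partial_{\overline z_j}u$, whose Fourier transform is supported on $\{\xi_{2j-1}=\xi_{2j}=0\}$. The decay rates $R^{-1}$ and $R^{-2/q}$ are right, and varying the moments of $\chi$ does remove every polynomial coefficient. What your route adds is transparency about where boundedness of $u$ enters: only in making $\langle\partial_{z_j}u,\psi\chi_R\rangle\to0$. That is exactly why no integrability of $u$ is needed, for every $1<q<\infty$.

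You can avoid the global structure theorem for tempered distributions supported on a subspace. Fix a Schwartz function $\psi$ in the other variables. Then $\phi\mapsto\langle w,\phi\otimes\psi\rangle$ is a tempered distribution on $\R^2$ annihilated by $\partial_{\overline z}$. Its Fourier transform is therefore supported at the origin, so it is a polynomial, and your moment argument kills it. Density of finite sums $\sum\phi_i\otimes\psi_i$ in the Schwartz space of $\R^d$ then gives $w=0$.

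Your step also parallels the Liouville argument the paper uses later for \eqref{eq:gradient-riesz-ui}, in the proof of Lemma \ref{lem:ui-analytic-real}. There the harmonic remainder is killed because it lies in $L^p$, which the paper knows only through this very lemma. Your version needs nothing beyond $u\in L^\infty$ and $\dbar u\in L^q$.
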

To obtain the real gradient estimate in \eqref{E14}, the coordinate
identities in \eqref{E13} give
$$
 |\nabla u|^2=2\sum_{j=1}^n
 \bigl(|\partial_{z_j}u|^2+|\partial_{\overline{z}_j}u|^2\bigr).
$$
The cited coordinate estimate and the triangle inequality therefore yield
\begin{align*}
 \norm{\nabla u}_{L^q(\R^d,dv)}
 &\leq\sqrt2\sum_{j=1}^n
 \bigl(\norm{\partial_{z_j}u}_{L^q(\R^d,dv)}
      +\norm{\partial_{\overline{z}_j}u}_{L^q(\R^d,dv)}\bigr)\\
 &\leq\sqrt2 n(C_q+1)\norm{\dbar u}_{L^q(\R^d,dv)}.
\end{align*}
Thus, we may take $C_{n,q}=\sqrt2 n(C_q+1)$ in \eqref{E14}.

To control derivatives and the remainder at scale $r$,
we use the construction in
\cite[Lemmas 3.3 and 3.6, p. 2051--2054]{HuVirtanen2023}.
We first fix its auxiliary radius, then use dilation to obtain
constants independent of $r$.

\begin{lemma}\label{lem:analytic-decomposition}
For $u\in L^\infty(\C^n)$ and $r>0$, there are
$u_{1,r}\in C^\infty(\C^n)\cap L^\infty(\C^n)$ and
$u_{2,r}\in L^\infty(\C^n)$ such that $u=u_{1,r}+u_{2,r}$ and
\begin{equation}\label{E15}
 \begin{gathered}
 \norm{u_{1,r}}_{L^\infty(\C^n)}+\norm{u_{2,r}}_{L^\infty(\C^n)}
 \leq C\norm{u}_{L^\infty(\C^n)},\\
 r|\dbar u_{1,r}(z)|+M_{1,r}(u_{2,r})(z)
 \leq CG_{1,4r}(u)(z),\quad z\in\C^n.
 \end{gathered}
\end{equation}
The constant $C>0$ depends only on $n$ and the fixed smooth
partition used in the construction, and is independent of $u,r,z$.
\end{lemma}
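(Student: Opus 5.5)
The plan is to reduce to scale one by dilation, apply the Hu--Virtanen construction of \cite[Lemmas 3.3 and 3.6]{HuVirtanen2023} there with a fixed auxiliary radius, and dilate back. Given $u\in L^\infty(\C^n)$ and $r>0$, set $v=u(r\cdot)$. At scale one, the cited construction gives $v=v_1+v_2$. Here $v_1$ is smooth and built from a fixed smooth partition of unity $\{\psi_j\}$ subordinate to a covering by balls $B(a_j,1)$ at lattice points $a_j$, with bounded overlap. Concretely, $v_1=\sum_j\psi_j h_j$, where $h_j\in\mathcal O(B(a_j,2))$ is a near-minimizer for $G_{1,2}(v)(a_j)$. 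We truncate it if necessary so that $\norm{h_j}_{L^\infty}\lesssim\norm{v}_{L^\infty}$; this is possible because a bounded function's best local $L^1$ holomorphic approximant can be replaced, up to constants, by the local average or by a mean-value-controlled holomorphic function. Then $v_2=v-v_1$.

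We first check the sup bound. Since $\sum_j\psi_j=1$ with bounded overlap, $\norm{v_1}_\infty\le C\norm{v}_\infty$ and $\norm{v_2}_\infty\le(1+C)\norm{v}_\infty$.

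Next comes the derivative estimate, via $\dbar v_1=\sum_j(\dbar\psi_j)(h_j-h_{k})$ on $B(a_k,1)$. This uses $\sum_j\dbar\psi_j=0$ and holomorphy of $h_j$. Each difference $h_j-h_k$ is holomorphic on the intersection of the relevant balls, so the mean value (subharmonicity) inequality bounds its sup on a smaller ball by its $L^1$ average. That average is at most the sum of the $L^1$ averages of $v-h_j$ and $v-h_k$, which in turn is $\lesssim G_{1,4}(v)(z)$ for $z$ in the ball, because $B(a_j,2)\subset B(z,4)$ whenever $|z-a_j|<2$. Enlarging the radius to $4$ also lets us compare averages over different centers at a bounded volume ratio. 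Thus $|\dbar v_1(z)|\lesssim G_{1,4}(v)(z)$. Similarly, on $B(z,1)$ we have $v_2=\sum_j\psi_j(v-h_j)$, so $M_{1,1}(v_2)(z)\lesssim\sum_{|a_j-z|<2}M_{1,2}(v-h_j)(a_j)\lesssim G_{1,4}(v)(z)$.

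The main care point is this comparison of analytic distances at nearby centers and radii, which must hold with constants independent of the point; it is handled by the inclusions above and the bounded overlap. The infimum in \eqref{E106} need not be attained, so we take $h_j$ within a factor $2$ of it; if the distance is zero we use a minimizing sequence, or simply note that near-minimizers with error $\varepsilon$ still give the bound after letting $\varepsilon\to0$. We keep $v_1$ smooth by choosing $h_j$ before forming the sum.

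Finally we dilate: $u_{1,r}=v_1(\cdot/r)$ and $u_{2,r}=v_2(\cdot/r)$. Then $\dbar u_{1,r}(z)=r^{-1}(\dbar v_1)(z/r)$. Averages also scale, $M_{1,r}(u_{2,r})(z)=M_{1,1}(v_2)(z/r)$, and so does the analytic distance, $G_{1,4}(v)(z/r)=G_{1,4r}(u)(z)$, because holomorphy on balls is preserved by dilation. This yields \eqref{E15} with $C$ depending only on $n$ and the partition.
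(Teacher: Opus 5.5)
Your proposal is essentially the paper's proof. You dilate to scale one, use the fixed partition $\psi_j\in C_c^\infty(B(a_j,1))$ with holomorphic approximants $h_j$ on $B(a_j,2)$, set $v_1=\sum_j\psi_jh_j$ and $v_2=v-v_1$, and dilate back. The only real difference is that the paper cites \cite[Lemma 3.6, (3-13)]{HuVirtanen2023} for the $\dbar$ and remainder bounds. You re-derive them from $\sum_j\dbar\psi_j=0$, the sub-mean-value inequality on $B(z,1)\subset B(a_j,2)\cap B(a_k,2)$, and the inclusion $B(a_j,2)\subset B(z,4)$, and that derivation is correct.

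Two details need tightening.

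\emph{No truncation of $h_j$ is needed.} A holomorphic function cannot be truncated. Replacing $h_j$ by a local average would also ruin the $\dbar$ estimate, since you would then control mean oscillation rather than analytic distance. Note too that $h_j$ may be unbounded on all of $B(a_j,2)$. The bound is only needed on $\supp\psi_j\subset B(a_j,1)$, and there it is automatic. For $\zeta\in B(a_j,1)$ we have $B(\zeta,1)\subset B(a_j,2)$, so
$|h_j(\zeta)|\le M_{1,1}(h_j)(\zeta)\le 2^dM_{1,2}(h_j)(a_j)\le 2^d\bigl(\norm{v}_{L^\infty(\C^n)}+M_{1,2}(v-h_j)(a_j)\bigr)\lesssim\norm{v}_{L^\infty(\C^n)}$.
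This is exactly the paper's argument.

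\emph{The infimum defining $G_{1,2}(v)(a_j)$ is attained, and you need this.} Your statement that it need not be attained is false; see \cite[Lemma 3.3]{HuVirtanen2023}. A minimizing sequence is bounded in $L^1(B(a_j,2))$, hence locally uniformly bounded by the mean-value inequality, so Montel and Fatou give an exact minimizer. This matters when $G_{1,2}(v)(a_j)=0$, where a factor-$2$ near-minimizer must be exact. Your fallback of taking $\varepsilon$-near-minimizers and letting $\varepsilon\to0$ does not work. The function $v_1$ depends on the chosen $h_j$, so you only get $|\dbar v_1(z)|\lesssim G_{1,4}(v)(z)+\varepsilon$ for an $\varepsilon$-dependent $v_1$, which cannot be improved afterwards. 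With exact minimizers, as in the paper, the argument is complete.
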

\begin{proof}
Fix $r>0$ and set $h(\zeta)=u(r\zeta)$, so that
$h\in L^\infty(\C^n)\subset L^1_{\rm loc}(\C^n,dv)$.
We use the construction in
\cite[(2-8) and (3-11)--(3-12)]{HuVirtanen2023}
with local exponent $q=1$ and auxiliary radius $2$.
For each dimension, fix the lattice centers
$\{a_j\}\subset\C^n$ and the associated smooth partition
of unity $\{\psi_j\}$.
Distinct centers have a fixed positive separation,
and the partition functions satisfy
$$
 \begin{gathered}
 \psi_j\in C_c^\infty(B(a_j,1)),\quad
 \psi_j\geq0,\quad \sum_j\psi_j=1,\quad
 \sup_j\norm{\dbar\psi_j}_{L^\infty(\C^n)}\leq C_\psi,
 \end{gathered}
$$
where $C_\psi>0$ depends only on $n$ and the fixed partition.
By \cite[Lemma 3.3, (3-2)]{HuVirtanen2023}, choose
$g_j\in\mathcal O(B(a_j,2))$ such that
$$
 M_{1,2}(h-g_j)(a_j)=G_{1,2}(h)(a_j).
$$
Since $g_j\psi_j\in C_c^\infty(B(a_j,2))$, define its
zero extension as in \eqref{E93} by
$$
 g_{j,0}(\zeta)=
 \begin{cases}
 g_j(\zeta)\psi_j(\zeta),&\zeta\in B(a_j,2),\\
 0,&\zeta\in\C^n\setminus B(a_j,2).
 \end{cases}
$$
These functions satisfy
$$
 g_{j,0}\in C_c^\infty(\C^n)\quad\text{and}\quad
 \supp g_{j,0}\subseteq\supp\psi_j\Subset B(a_j,1).
$$
To define the decomposition on all of $\C^n$, set
$$
 h_1=\sum_jg_{j,0},\quad h_2=h-h_1.
$$
To verify local finiteness, fix $\zeta\in\C^n$ and set
$$
 J_\zeta=\{j:B(\zeta,1)\cap\supp\psi_j\neq\varnothing\}
 \subseteq\{j:a_j\in B(\zeta,2)\}.
$$
We write $\#J$ for the number of elements of an index set $J$.
The lattice bound in \cite[(2-9)]{HuVirtanen2023}
gives an integer $N_n\geq1$, depending only on $n$, such that
$$
 \#J_\zeta
 \leq\#\{j:a_j\in B(\zeta,2)\}
 \leq N_n.
$$
Since $g_{j,0}=0$ on $B(\zeta,1)$ for $j\notin J_\zeta$,
we have
$$
 h_1(w)=\sum_{j\in J_\zeta}g_{j,0}(w),\quad w\in B(\zeta,1).
$$
This finite sum of smooth functions proves
$h_1\in C^\infty(\C^n)$. The regularity follows from
the partition in \cite[(3-11)]{HuVirtanen2023},
although its Lemma 3.6 states only $C^2$ regularity.

To bound the two parts, take any $j$ and $\zeta\in B(a_j,1)$.
The inclusion $B(\zeta,1)\subset B(a_j,2)$ and the submean
estimate used in the proof of
\cite[Lemma 3.3]{HuVirtanen2023} yield
\begin{align*}
 |g_j(\zeta)|
 &\leq M_{1,1}(g_j)(\zeta)
 \leq2^dM_{1,2}(g_j)(a_j)\leq2^d\bigl(M_{1,2}(h)(a_j)+G_{1,2}(h)(a_j)\bigr)\leq2^{d+1}\norm{h}_{L^\infty(\C^n)}.
\end{align*}
Here the choice of $g_j$ gives the third inequality,
and the zero holomorphic approximant in \eqref{E106}
gives the last one. Set $C_1=2^{d+1}$.
The definition of $g_{j,0}$ and the nonnegative partition
of unity therefore imply
$$
 |h_1(\zeta)|\leq\sum_j|g_{j,0}(\zeta)|
 \leq C_1\norm{h}_{L^\infty(\C^n)}\sum_j\psi_j(\zeta)
 =C_1\norm{h}_{L^\infty(\C^n)}
$$
for every $\zeta\in\C^n$.
This bound together with $h_2=h-h_1$ proves
$$
 \norm{h_1}_{L^\infty(\C^n)}
 +\norm{h_2}_{L^\infty(\C^n)}
 \leq(1+2C_1)\norm{h}_{L^\infty(\C^n)}.
$$
The derivative and remainder bounds follow from
\cite[Lemma 3.6, (3-13)]{HuVirtanen2023} applied to this
decomposition with $q=1$ and auxiliary radius $2$.
Dropping its nonnegative term $M_{1,1}(\dbar h_1)$ gives
$$
 |\dbar h_1|+M_{1,1}(h_2)\leq C_0G_{1,4}(h),
$$
where $C_0>0$ depends only on $n$ and the fixed partition.

To return to scale $r$, set $u_{j,r}(z)=h_j(z/r)$ for
$j=1,2$. The chain rule and definitions \eqref{E4} and
\eqref{E106} give
$$
 \begin{gathered}
 r\dbar u_{1,r}(z)=(\dbar h_1)(z/r),\quad
 M_{1,r}(u_{2,r})(z)=M_{1,1}(h_2)(z/r),\quad
 G_{1,4r}(u)(z)=G_{1,4}(h)(z/r).
 \end{gathered}
$$
For the last identity, $g\mapsto g(r\cdot)$ maps
$\mathcal O(B(z,4r))$ bijectively onto
$\mathcal O(B(z/r,4))$, and the substitution $w=r\zeta$
preserves the normalized averages.
Since $\norm{u_{j,r}}_{L^\infty(\C^n)}
=\norm{h_j}_{L^\infty(\C^n)}$ for $j=1,2$, the norm bound
and derivative estimate for $h_1,h_2$ prove \eqref{E15}
with $C=\max\{C_0,1+2C_1\}$, independently of $r$.
The proof is complete.
\end{proof}

\subsection{Averages over signs}\label{sec:sign-averages}
The proof of Lemma \ref{lem:phase} uses finite averages to
separate a diagonal coefficient from the remaining terms.

\begin{definition}\label{def:sign-average}
Let $J\subset\Z^d$ be finite. We write
$$
 \{-1,1\}^J
 =\bigl\{\varepsilon=(\varepsilon_j)_{j\in J}:
          \varepsilon_j\in\{-1,1\}\text{ for every }j\in J\bigr\}.
$$
Each vector $\varepsilon$ specifies one sign for each index
$j\in J$, giving $2^{\#J}$ possible sign vectors.
Given $u:\{-1,1\}^J\to\C$, its average over the $2^{\#J}$
equally weighted sign choices is defined by
\begin{equation}\label{E45}
 \mathbb E u(\varepsilon)
 =2^{-\#J}\sum_{\varepsilon\in\{-1,1\}^J}u(\varepsilon).
\end{equation}
Given $i\in J$, we use $\mathbb E_{\neq i}u$ for the uniform
average over the $2^{\#J-1}$ choices of the remaining signs,
with $\varepsilon_i$ held fixed. This average remains a function
of $\varepsilon_i$.
\end{definition}

These averages are expectations on finite uniform probability
spaces \cite[Example 1.1.2 and Section 1.6]{Durrett2019}.
They are linear, preserve constants, and preserve inequalities
between real-valued functions \cite[Theorem 1.6.1, p. 25]{Durrett2019}.
For complex-valued functions, linearity follows by applying
the real-valued result to real and imaginary parts, as in
\cite[Section 3.3.1]{Durrett2019}. In particular, both averages
commute with finite sums. By \eqref{E45}, averaging
$\mathbb E_{\neq i}u$ over $\varepsilon_i=1,-1$ recovers
$\mathbb E u$.

Let $u:\{-1,1\}^J\to\C$ and $1\leq p<\infty$.
The triangle inequality gives $|\mathbb E u|\leq\mathbb E|u|$.
Applying Jensen's inequality \cite[Theorem 1.6.2]{Durrett2019}
to the nonnegative variable $|u|$ and the convex function
$x\mapsto x^p$ on $[0,\infty)$, for each of the two averages, yields
\begin{equation}\label{E47}
 |\mathbb E u|^p\leq\mathbb E|u|^p\quad\text{and}\quad
 |\mathbb E_{\neq i}u|^p\leq\mathbb E_{\neq i}|u|^p.
\end{equation}
The second inequality holds for each $i\in J$ and each fixed
value of $\varepsilon_i$. The following lemma identifies the term retained when one sign
is fixed and all remaining signs are averaged.
\begin{lemma}\label{L2}
Let $J\subset\Z^d$ be a nonempty finite set, let $i\in J$,
and let $c_j\in\C$, $j\in J$, be fixed coefficients.
With $\varepsilon_i=\sigma\in\{-1,1\}$ held fixed,
Definition \ref{def:sign-average} gives
\begin{equation}\label{E46}
 \mathbb E_{\neq i}\bigg(\sum_{j\in J}\varepsilon_jc_j\bigg)
 =\frac1{2^{\#J-1}}
   \sum_{\substack{\varepsilon\in\{-1,1\}^J\\
                   \varepsilon_i=\sigma}}
   \bigg(\sigma c_i+
         \sum_{\substack{j\in J\\j\neq i}}\varepsilon_jc_j\bigg)=\sigma c_i.
\end{equation}
\end{lemma}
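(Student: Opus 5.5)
The identity in Lemma \ref{L2} follows from a direct computation using only Definition \ref{def:sign-average} and the linearity of the averages.

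\textbf{Step 1: split off the fixed index.} With $\varepsilon_i=\sigma$ held fixed, the sum becomes
$$
\sum_{j\in J}\varepsilon_jc_j=\sigma c_i+\sum_{j\neq i}\varepsilon_jc_j .
$$
This gives the middle expression in \eqref{E46}.

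\textbf{Step 2: apply linearity.} The average $\mathbb E_{\neq i}$ runs over the $2^{\#J-1}$ sign vectors in $\{-1,1\}^{J\setminus\{i\}}$, with the $i$-th sign held fixed. It is linear and preserves constants. Since the term $\sigma c_i$ does not depend on the averaged signs, its average is $\sigma c_i$.

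\textbf{Step 3: the remaining terms vanish.} Fix $j\neq i$. Let $\tau_j$ be the map on $\{-1,1\}^{J\setminus\{i\}}$ that flips the $j$-th coordinate. It is an involution, so it is a bijection, and it interchanges the sign vectors with $\varepsilon_j=1$ and those with $\varepsilon_j=-1$. Hence these two classes each contain exactly $2^{\#J-2}$ elements. Summing $\varepsilon_jc_j$ over all vectors therefore gives
$$
2^{\#J-2}\bigl(c_j-c_j\bigr)=0 .
$$
By linearity, the average of $\sum_{j\neq i}\varepsilon_jc_j$ is zero.

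\textbf{Degenerate case.} If $J=\{i\}$, the remaining sum is empty, the average is over the single vector $\varepsilon=(\sigma)$, and the identity holds trivially.

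There is no real obstacle here. The only point needing care is the counting argument in Step 3, which requires $\#J\geq2$ so that $2^{\#J-2}$ is an integer; the degenerate case covers $\#J=1$.
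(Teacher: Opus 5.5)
Your proposal is correct and follows the paper's own proof: separate the case $J=\{i\}$, then for $\#J\geq2$ observe that each remaining sign takes the values $1$ and $-1$ exactly $2^{\#J-2}$ times among the $2^{\#J-1}$ vectors with $\varepsilon_i=\sigma$, so every term with $j\neq i$ cancels. Your flip-involution argument only makes this count explicit; the paper simply states it.
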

\begin{proof}
Fix $\varepsilon_i=\sigma\in\{-1,1\}$.
If $J=\{i\}$, the average equals $\sigma c_i$ directly.
Suppose $\#J\geq2$.
Among the $2^{\#J-1}$ sign choices with $\varepsilon_i=\sigma$,
each remaining sign takes each value $1$ and $-1$
exactly $2^{\#J-2}$ times.
By Definition \ref{def:sign-average}, we obtain
\begin{align*}
\mathbb E_{\neq i}\bigg(\sum_{j\in J}\varepsilon_jc_j\bigg)
&=\frac1{2^{\#J-1}}
  \sum_{\substack{\varepsilon\in\{-1,1\}^J\\
                  \varepsilon_i=\sigma}}
  \bigg(\sigma c_i+\sum_{\substack{j\in J\\j\neq i}}
                         \varepsilon_jc_j\bigg)\\
&=\sigma c_i+\frac1{2^{\#J-1}}
  \sum_{\substack{j\in J\\j\neq i}}
  \bigl(2^{\#J-2}c_j-2^{\#J-2}c_j\bigr)\\
&=\sigma c_i=\varepsilon_i c_i.
\end{align*}
This proves \eqref{E46}.
\end{proof}

\section{The bounded tests and necessary conditions}\label{sec:necessary}
We derive the necessary conditions in Theorem \ref{thm:main}
by passing from bounded tests to local analytic distance
and then to derivatives of mollified symbols.
We also obtain a constancy criterion from the conjugate test.

\subsection{Bounded tests and local analytic distance}
We first use the Schatten norm for $p\geq1$ to pass from
convergence for fixed tests to a common bound for the testing norm
$m_{p,t}(f)$ in \eqref{E5}.

\begin{lemma}\label{lem:baire}
If $1\leq p<\infty$ and $f\in\mathcal A_p$, there are fixed
positive numbers $t_0$ and $a$ such that $m_{p,t}(f)\leq a$
for $0<t<t_0$. They may depend on $f,p$ and the fixed weight
$\varphi$, but not on the second symbol $g$ or on $t$.
\end{lemma}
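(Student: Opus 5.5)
The plan is a Baire category argument on the Banach space $L^\infty(\C^n)$, using that for $p\geq1$ the Schatten norm is a genuine norm. Fix $f\in\mathcal A_p$ and, for each integer $k\geq1$, set
$$
 E_k=\Bigl\{g\in L^\infty(\C^n):\norm{D_t(f,g)}_{\mathcal S_p}\leq k
 \text{ for every }0<t<1/k\Bigr\}.
$$
Membership in $\mathcal A_p$ means that for each fixed $g$ the quantity $\norm{D_t(f,g)}_{\mathcal S_p}$ tends to zero. In particular it is at most $1$ for $0<t<t_g$. Taking $k\geq\max\{1,1/t_g\}$ then gives $g\in E_k$, so $L^\infty(\C^n)=\bigcup_kE_k$.

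The first step is to show each $E_k$ is closed. Fix $t$. The map $g\mapsto D_t(f,g)$ is linear, and it is bounded from $L^\infty$ to the operator norm, since $\norm{D_t(f,g)}\leq2\norm{f}_\infty\norm{g}_\infty$. If $g_m\to g$ in $L^\infty$ with $\norm{D_t(f,g_m)}_{\mathcal S_p}\leq k$, then $D_t(f,g_m)\to D_t(f,g)$ in operator norm. Each approximation number $s_j$ is $1$-Lipschitz in operator norm by \eqref{E102}, so $s_j(D_t(f,g_m))\to s_j(D_t(f,g))$. Fatou's lemma for the series then gives $\norm{D_t(f,g)}_{\mathcal S_p}\leq\liminf_m\norm{D_t(f,g_m)}_{\mathcal S_p}\leq k$, and this holds for all $t<1/k$. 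This lower semicontinuity under operator norm convergence is the only delicate point; it is exactly why I would work with the extended-valued definition through approximation numbers, so that nothing requires knowing a priori that the limit lies in $\mathcal S_p$.

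By the Baire category theorem, some $E_k$ contains a ball $\{g:\norm{g-g_0}_\infty\leq\delta\}$. Given $\norm{g}_\infty\leq1$, write $g=\delta^{-1}\bigl((g_0+\delta g)-g_0\bigr)$, where both $g_0+\delta g$ and $g_0$ lie in $E_k$. Linearity of $D_t(f,\cdot)$ and the triangle inequality for $\norm{\cdot}_{\mathcal S_p}$, valid since $p\geq1$, give $\norm{D_t(f,g)}_{\mathcal S_p}\leq2k/\delta$ for $0<t<1/k$. Taking the supremum in \eqref{E5} yields $m_{p,t}(f)\leq a:=2k/\delta$ with $t_0=1/k$. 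These constants depend only on $f$, $p$ and $\varphi$, through the sets $E_k$.

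For $0<p<1$ the same argument would go through using the $p$-triangle inequality, but the statement only needs $p\geq1$.
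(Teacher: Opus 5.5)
Your proposal is correct and follows essentially the same Baire category argument as the paper. You use the same closed sets $E_k$, prove closedness via the $1$-Lipschitz dependence of approximation numbers on the operator norm together with Fatou's lemma, and apply the same translation of a ball in some $E_k$ to obtain $a=2k/\delta$ and $t_0=1/k$.
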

\begin{proof}
Fix $1\leq p<\infty$ and $f\in\mathcal A_p$. For each
integer $j\geq1$, set
$$
 E_j=\bigg\{g\in L^\infty(\C^n):
 \sup_{0<t<1/j}\norm{D_t(f,g)}_{\mathcal S_p}\leq j\bigg\}.
$$
For each fixed $g\in L^\infty(\C^n)$, \eqref{eq:property}
provides $\tau_g>0$ such that
$\norm{D_t(f,g)}_{\mathcal S_p}\leq1$ for $0<t<\tau_g$.
Choose an integer $j\geq1$ with $1/j<\tau_g$. Then
$$
 \sup_{0<t<1/j}\norm{D_t(f,g)}_{\mathcal S_p}\leq1\leq j,
$$
so $g\in E_j$ and $L^\infty(\C^n)=\bigcup_{j=1}^\infty E_j$.

To apply Baire's theorem to this cover, we show that each
$E_j$ is closed. Fix $j\geq1$ and let
$g_\ell\in E_j$ converge to $g$ in $L^\infty(\C^n)$ as
$\ell\rightarrow\infty$. At each fixed $t\in(0,1/j)$,
\eqref{E1}, \eqref{E2}, and $\norm{P_t}=1$ give
$$
 \norm{D_t(f,g_\ell)-D_t(f,g)}
 \leq2\norm{f}_{L^\infty(\C^n)}\norm{g_\ell-g}_{L^\infty(\C^n)}\rightarrow0.
$$
Each $D_t(f,g_\ell)$ is compact, so its operator norm limit
$D_t(f,g)$ is compact by \cite[Theorem 1.18]{Zhu1990}.
By the approximation number formula in
\cite[Theorem 1.34]{Zhu1990}, each $k\geq1$ satisfies
$$
 |s_k(D_t(f,g_\ell))-s_k(D_t(f,g))|
 \leq\norm{D_t(f,g_\ell)-D_t(f,g)}
 \longrightarrow0\quad(\ell\rightarrow\infty).
$$
Fatou's lemma applied to the $p$-th powers of these singular
values yields
$$
 \norm{D_t(f,g)}_{\mathcal S_p}^p
 \leq\liminf_{\ell\rightarrow\infty}
              \norm{D_t(f,g_\ell)}_{\mathcal S_p}^p
 \leq j^p.
$$
Since $t\in(0,1/j)$ was arbitrary, we have $g\in E_j$,
so $E_j$ is closed.

The sets $E_j$ form a countable closed cover of the Banach
space $L^\infty(\C^n)$. By Baire's theorem
\cite[Theorem 2.2]{Rudin1991}, at least one of them has nonempty
interior. We can therefore choose an integer $j_0\geq1$,
a function $g_0\in L^\infty(\C^n)$, and $\delta>0$ such that
$$
 \left\{g\in L^\infty(\C^n):
              \norm{g-g_0}_{L^\infty(\C^n)}<\delta\right\}
 \subseteq E_{j_0}.
$$
For $\norm{h}_{L^\infty(\C^n)}\leq1$, we have
$$
 \norm{g_0\pm(\delta/2)h-g_0}_{L^\infty(\C^n)}
 \leq\delta/2<\delta,
$$
so $g_0\pm(\delta/2)h\in E_{j_0}$.
Linearity in the second symbol and the triangle inequality
for $p\geq1$ now give
\begin{align*}
 \delta\norm{D_t(f,h)}_{\mathcal S_p}
 &=\norm{D_t(f,g_0+(\delta/2)h)
          -D_t(f,g_0-(\delta/2)h)}_{\mathcal S_p}\\
 &\leq\norm{D_t(f,g_0+(\delta/2)h)}_{\mathcal S_p}
      +\norm{D_t(f,g_0-(\delta/2)h)}_{\mathcal S_p}\\
 &\leq2j_0
 \end{align*}
for $0<t<1/j_0$.
Taking the supremum over $\norm{h}_{L^\infty(\C^n)}\leq1$
and using \eqref{E5}, we obtain
$$
 m_{p,t}(f)\leq\frac{2j_0}{\delta}
 \quad\text{for }0<t<\frac1{j_0}.
$$
Thus, $t_0=1/j_0$ and $a=2j_0/\delta$ have the required
properties and depend only on the fixed $f,p,\varphi$.
The proof is complete.
\end{proof}

Lemma \ref{lem:baire} gives a common bound for the testing norm
$m_{p,t}(f)$ in \eqref{E5}. We next relate this norm to local
analytic distance by choosing tests at each scale on the
disjoint balls of Lemma \ref{lem:lattice} (a).

\begin{lemma}\label{lem:phase}
Let $1\leq p<\infty$ and $f\in L^\infty(\C^n)$.
For each $r>0$, the function $G_{1,\rho r}(\overline{f})$
is continuous and satisfies
\begin{equation}\label{eq:phase}
 \norm{G_{1,\rho r}(\overline{f})}_{L^{p}(\R^d,dv)}
 \leq C_p r^{d/p}m_{p,r^2}(f),\qquad r>0.
\end{equation}
Here $\rho$ is the radius in Lemma \ref{L1}, and $C_p>0$
depends only on $n,m,M,p$ and a fixed lattice spacing $L>2\rho$.
\end{lemma}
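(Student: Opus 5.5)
The plan is to test the semi-commutator against one bounded symbol per sign pattern, supported on the disjoint balls of Lemma \ref{lem:lattice}. A sign average then isolates the diagonal coefficients, and an average over lattice translations converts the resulting lattice sum into the $L^p$ norm.

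\emph{Continuity.} I would first show that $G_{1,\rho r}(\overline f)$ is continuous, so that it is measurable and its values at lattice points are meaningful. The intended route is to compare the infima at nearby centers $z,z'$ through the balls $B(z,s)\subset B(z',s+|z-z'|)$, and to use normal-family compactness of the holomorphic near-minimizers, which is available because $\overline f$ is bounded. If this becomes delicate, I would instead cite the corresponding continuity statement in \cite[Section 3A]{HuVirtanen2023}. I may also assume $m_{p,r^2}(f)<\infty$, since otherwise there is nothing to prove.

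\emph{Key identity.} Fix $r$, set $t=r^2$, and write $k=k_{t,a}$. By \eqref{eq:factorization},
$$
\langle D_t(f,g)k,k\rangle_t=-\langle H_gk,H_{\overline f}k\rangle_t=-\langle gk,(I-P_t)(\overline fk)\rangle_t.
$$
Put $h_a=P_t(\overline f k_{t,a})$. Then the pairing equals $-\int g\,k\,\overline{(\overline f k-h_a)}\,d\mu_t$.
\begin{itemize}
\item Choose $g_a=\mathbf 1_{B(a,\rho r)}\,\overline{\theta_a}$, where $\theta_a$ is the phase of $k\,\overline{(\overline fk-h_a)}$.
\item By Lemma \ref{L1}, $k$ has no zero on the ball, so $h_a/k$ is holomorphic there and $|k|^2e^{-2\varphi(\cdot/r)}\geq c$.
\item Since $d\mu_t=r^{-d}e^{-2\varphi(\cdot/r)}dv$, we get
$$
\Bigl|\int g_a\,k\,\overline{(\overline fk-h_a)}\,d\mu_t\Bigr|=r^{-d}\int_{B(a,\rho r)}|k|^2e^{-2\varphi(\cdot/r)}\,|\overline f-h_a/k|\,dv\geq c\,G_{1,\rho r}(\overline f)(a).
$$
\end{itemize}
The key point is that $h_a$ does not depend on $g$.

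\emph{Sign averaging over the lattice.} Take the lattice $a_j=b+Lrj$ and a finite set $J\subset\Z^d$. Set $g_\varepsilon=\sum_{j\in J}\varepsilon_jg_{a_j}$. The balls are disjoint, so $\norm{g_\varepsilon}_{L^\infty(\C^n)}\leq1$. The diagonal entries then have the form
$$
\langle D_t(f,g_\varepsilon)k_i,k_i\rangle=-\sum_{j\in J}\varepsilon_jX_{ij},\qquad |X_{ii}|\geq c\,G_{1,\rho r}(\overline f)(a_i).
$$
Lemma \ref{L2} and \eqref{E47} give $|X_{ii}|^p\leq\mathbb E_{\neq i}|\langle D_t(f,g_\varepsilon)k_i,k_i\rangle|^p$. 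Averaging over $\varepsilon_i$ gives $|X_{ii}|^p\leq2\,\mathbb E|\langle D_t(f,g_\varepsilon)k_i,k_i\rangle|^p$. Summing over $i\in J$, interchanging the sum with $\mathbb E$, and applying \eqref{eq:lattice-diagonal} to each $D_t(f,g_\varepsilon)$ yields
$$
\sum_{i\in J}G_{1,\rho r}^p(\overline f)(a_i)\leq C\,m_{p,r^2}^p(f).
$$
Letting $J\uparrow\Z^d$ by monotone convergence removes the finiteness restriction.

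\emph{Averaging over translations.} Finally, I integrate over $b\in[0,Lr)^d$ and divide by $(Lr)^d$. The left side becomes $(Lr)^{-d}\int_{\R^d}G_{1,\rho r}^p(\overline f)\,dv$, which gives \eqref{eq:phase}.

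The main obstacle is the continuity and measurability step, together with the measurable choice of the phase $\theta_a$. For the phase, I would take it pointwise from the fixed functions $k$ and $h_a$, which are continuous on the ball, so $g_a$ is measurable. The core estimate itself is the sign-averaged diagonal argument above.
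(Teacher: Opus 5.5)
Your proof of the estimate \eqref{eq:phase} is the paper's proof. It uses the same phase symbols $\overline{c_j}/|c_j|$, with $c_j=k_j\overline{H^{(t)}_{\overline f}k_j}$, on the disjoint balls $B(a_j,\rho r)$, and the same key point that $P_t(\overline f k_j)$ does not depend on the test symbol. The remaining steps also coincide: the sign average via Lemma \ref{L2} and \eqref{E47}, the lattice diagonal bound \eqref{eq:lattice-diagonal}, monotone convergence in $J$, and Tonelli over $b\in[0,Lr)^d$. Your factor $2$ can be dropped, since averaging the two inequalities for $\varepsilon_i=\pm1$ already gives $|X_{ii}|^p\le\mathbb E|\langle D_t(f,g_\varepsilon)k_i,k_i\rangle_t|^p$.

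The one place your sketch is incomplete is continuity. You call it the main obstacle, but it is the easiest step. The inclusion $B(z,s)\subset B(z',s+|z-z'|)$ compares $G_{1,s}(\overline f)(z)$ with $G_{1,s+|z-z'|}(\overline f)(z')$, so it changes the radius. You would then also need continuity of $G_{1,s}$ in $s$, which your sketch does not address. No citation or normal-family argument is needed. The paper keeps the radius fixed and translates instead. With $B=B(0,\rho r)$,
\[
G_{1,\rho r}(\overline f)(z)=\inf_{h\in\mathcal O(B)}\frac1{|B|}\int_B|\overline f(z+w)-h(w)|\,dv(w),
\]
so the infima at $z$ and $z_0$ run over the same class of competitors. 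The triangle inequality then gives
\[
|G_{1,\rho r}(\overline f)(z)-G_{1,\rho r}(\overline f)(z_0)|\le\frac1{|B|}\int_B|\overline f(z+w)-\overline f(z_0+w)|\,dv(w).
\]
The right side tends to zero as $z\to z_0$ by $L^1$ continuity of translations applied to $\overline f\mathbf 1_{B(z_0,\rho r+1)}$. This gives continuity, and hence the measurability used in the Tonelli step.
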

\begin{proof}
We first prove continuity to ensure measurability when integrating
over the lattice translation below.
Fix $r>0$ and put $B=B(0,\rho r)$.
Translation in
\eqref{E106} gives
$$
 G_{1,\rho r}(\overline{f})(z)
 =\inf_{h\in\mathcal O(B)}\frac1{|B|}
   \int_B|\overline{f}(z+w)-h(w)| dv(w).
$$
For $h\in\mathcal O(B)$, the triangle inequality yields
\begin{align*}
 G_{1,\rho r}(\overline{f})(z)
 &\leq\frac1{|B|}\int_B
       |\overline{f}(z+w)-\overline{f}(z_0+w)| dv(w)\\
 &\quad+\frac1{|B|}\int_B
       |\overline{f}(z_0+w)-h(w)| dv(w).
\end{align*}
Taking the infimum over $h$ and applying the same argument
with $z,z_0$ interchanged, we obtain
\begin{equation}\label{E110}
\begin{aligned}
 &|G_{1,\rho r}(\overline{f})(z)-G_{1,\rho r}(\overline{f})(z_0)|\leq\frac1{|B|}\int_B
       |\overline{f}(z+w)-\overline{f}(z_0+w)| dv(w)
       \rightarrow0\quad(z\rightarrow z_0).
\end{aligned}
\end{equation}
To justify the limit in \eqref{E110}, fix $z_0\in\C^n$
and set $g=\overline{f}\mathbf1_{B(z_0,\rho r+1)}$. Then we have
$$\norm{g}_{L^1(\R^d,dv)}
\leq |B(z_0,\rho r+1)|\norm{f}_{L^\infty(\C^n)}<\infty.$$
For $|z-z_0|<1$, the substitution $x=z_0+w$ and
\cite[Lemma 4.3, p. 114]{Brezis2011}, applied with $N=d=2n$
and $q=1$ to the real and imaginary parts of $g$, give
\begin{align*}
 \frac1{|B|}\int_B
 |\overline{f}(z+w)-\overline{f}(z_0+w)| dv(w)
 &\leq\frac1{|B|}
       \norm{g(\cdot+z-z_0)-g}_{L^1(\R^d,dv)}\rightarrow0
\end{align*}
as $z\rightarrow z_0$. This proves \eqref{E110} and the
continuity of $G_{1,\rho r}(\overline{f})$.

To prove \eqref{eq:phase}, we use the normalized kernel
localization in \cite[(4.7)]{HuVirtanen2022}, keeping the
volume factors in $\C^n$. The additional phase and sign choices
below estimate the local analytic distance with exponent one.
If the right-hand side of \eqref{eq:phase} is infinite,
the estimate holds. We may therefore assume that
$m_{p,t}(f)<\infty$, where $t=r^2$.
Fix $b\in[0,Lr)^d$ and take the lattice points
$a_j=b+Lrj$, $j\in\Z^d$, as in Lemma \ref{lem:lattice}.
Let $k_j=k_{t,a_j}$ be the normalized reproducing kernel
at $a_j$ defined before \eqref{E10}, and put $B_j=B(a_j,\rho r)$.
Note that the balls $B_j$ are pairwise disjoint by
Lemma \ref{lem:lattice} (a). The lower bound \eqref{E105}
shows that $k_j$ has no zeros on $B_j$, so we may define
$$
 u_j=H_{\overline{f}}^{(t)}k_j\quad\text{and}\quad
 h_j=\frac{P_t(\overline{f} k_j)}{k_j}
$$ on $B_j$.
The Hankel image $u_j\in L_t^2\ominus\cF$ is defined on
$\C^n$, while the quotient $h_j\in\mathcal O(B_j)$ satisfies
$u_j=k_j(\overline{f}-h_j)$ on $B_j$.

To remove the phase of $k_j\overline{u_j}$ in the local
integrals, set $c_j(w)=k_j(w)\overline{u_j(w)}$ and define
the measurable symbol $g_j$ by
\begin{equation}\label{E29}
 g_j(w)=
 \begin{cases}
 \overline{c_j(w)}/|c_j(w)|,&w\in B_j,\ c_j(w)\neq0,\\
 0,&\text{otherwise}.
 \end{cases}
\end{equation}
These functions satisfy $|g_j|\leq\mathbf1_{B_j}$ on $\C^n$
and $g_jc_j=|c_j|$ on $B_j$. Fix a finite set $J\subset\Z^d$.
Using the notation of Definition \ref{def:sign-average},
for each sign vector $\varepsilon=(\varepsilon_j)_{j\in J}
\in\{-1,1\}^J$, we define
$$
 g_\varepsilon(w)=\sum_{j\in J}\varepsilon_jg_j(w)
 =\begin{cases}
 \varepsilon_j\overline{c_j(w)}/|c_j(w)|,
 &w\in B_j,\ j\in J,\ c_j(w)\neq0,\\
 0,&\text{otherwise}.
 \end{cases}
$$
The piecewise expression follows from \eqref{E29} and
the disjointness of the balls. In particular, $g_\varepsilon$ satisfies $|g_\varepsilon(w)|\leq1$
for every $w\in\C^n$, so
$\norm{g_\varepsilon}_{L^\infty(\C^n)}\leq1$.

We relate these local integrals to the diagonal entries of
$D_t(f,g_\varepsilon)$ by setting
$$c_{ij}=\int_{B_j}g_jk_i\overline{u_i}d\mu_t$$ for $i,j\in J$.
Using $u_i\perp\cF$ in \eqref{eq:factorization}, we compute
 \begin{align*}
 \langle D_t(f,g_\varepsilon)k_i,k_i\rangle_t
 &=-\langle H_{g_\varepsilon}^{(t)}k_i,u_i\rangle_t=-\int_{\C^n} g_\varepsilon k_i\overline{u_i}d\mu_t
 =-\sum_{j\in J}\varepsilon_j c_{ij}.
 \end{align*}
With $i\in J$ and $\varepsilon_i\in\{-1,1\}$ fixed,
Lemma \ref{L2} removes the terms with $j\neq i$ from the
average and gives
$$
 \mathbb E_{\neq i}\bigg(\sum_{j\in J}\varepsilon_jc_{ij}\bigg)
 =\varepsilon_ic_{ii}.
$$
Applying the second inequality in \eqref{E47} for each fixed
$\varepsilon_i$ and then averaging over its two values as in
\eqref{E45}, we obtain
\begin{equation}\label{E30}
 \begin{aligned}
 \mathbb E\bigg|\sum_{j\in J}\varepsilon_jc_{ij}\bigg|^p
 &=\frac12\sum_{\sigma\in\{-1,1\}}
   \left.\bigg(\mathbb E_{\neq i}
   \bigg|\sum_{j\in J}\varepsilon_jc_{ij}\bigg|^p\bigg)
   \right|_{\varepsilon_i=\sigma}\\
 &\geq |c_{ii}|^p
 =\left(\int_{B_i}|k_iu_i|d\mu_t\right)^p.
 \end{aligned}
\end{equation}
The last equality uses \eqref{E29} in the form
$g_ik_i\overline{u_i}=|k_iu_i|$ on $B_i$.
Let $C_{p,0}>0$ denote the constant in \eqref{eq:lattice-diagonal}.
Fix $\varepsilon\in\{-1,1\}^J$ and set
$A_\varepsilon=D_t(f,g_\varepsilon)$. Since
$\norm{g_\varepsilon}_{L^\infty(\C^n)}\leq1$, definition \eqref{E5}
gives $\norm{A_\varepsilon}_{\mathcal S_p}\leq m_{p,t}(f)<\infty$,
so $A_\varepsilon\in\mathcal S_p(\cF)$.
Since $J\subset\Z^d$ and all summands are nonnegative,
we enlarge the finite sum and apply \eqref{eq:lattice-diagonal}
in Lemma \ref{lem:lattice} (b) to obtain
\begin{align*}
 \sum_{i\in J}|\langle A_\varepsilon k_i,k_i\rangle_t|^p
 &\leq\sum_{i\in\Z^d}|\langle A_\varepsilon k_i,k_i\rangle_t|^p\leq C_{p,0}\norm{A_\varepsilon}_{\mathcal S_p}^p.
 \end{align*}
Here $C_{p,0}$ is independent of $J$ and $\varepsilon$.
Using linearity and monotonicity of $\mathbb E$, as recorded
in Section \ref{sec:sign-averages}, we sum \eqref{E30}
over $i\in J$ and average the preceding bound to obtain
 \begin{align}\label{E147}
 \sum_{i\in J}\left(\int_{B_i}|k_iu_i|d\mu_t\right)^p
 &\leq\mathbb E\sum_{i\in J}
   |\langle D_t(f,g_\varepsilon)k_i,k_i\rangle_t|^p\leq C_{p,0}\mathbb E\norm{D_t(f,g_\varepsilon)}_{\mathcal S_p}^p
 \leq C_{p,0} m_{p,t}^p(f).
 \end{align}
 
To bound local analytic distance by these integrals, we use
$h_i\in\mathcal O(B_i)$ as a competitor in \eqref{E106}.
Let $C_0>0$ be the lower bound constant in \eqref{E105}, and set $C_1=C_0|B(0,1)|\rho^d>0$.
Writing $d\mu_t$ as in \eqref{E82}, we use \eqref{E105}
and $u_i=k_i(\overline{f}-h_i)$ to estimate
 \begin{align}\label{E148}
 \int_{B_i}|k_iu_i|d\mu_t
 &=\int_{B_i}|k_i|^2|\overline{f}-h_i|d\mu_t\geq C_0r^{-d}\int_{B_i}|\overline{f}-h_i| dv
 \geq C_1G_{1,\rho r}(\overline{f})(a_i).
 \end{align}
The last step uses $|B_i|=|B(0,1)|\rho^dr^d$ and the definition of
$G_{1,\rho r}$. Take $J_N=\Z^d\cap[-N,N]^d$ for
$N=1,2,\ldots$. Estimates \eqref{E147} and \eqref{E148}
hold for every $J_N$,
with constants independent of $N$. As these finite sets
increase to $\Z^d$, we pass to the limit in the nonnegative
sums to obtain
\begin{equation}\label{E65}
 \sum_{j\in\Z^d}G_{1,\rho r}^{p}(\overline{f})(b+Lrj)
 =\lim_{N\to\infty}\sum_{j\in J_N}G_{1,\rho r}^{p}(\overline{f})(b+Lrj)
 \leq C_{p,0}C_1^{-p}m_{p,t}^p(f).
\end{equation}
Set $Q=[0,Lr)^d$ and $F(z)=G_{1,\rho r}^{p}(\overline{f})(z)$.
By \eqref{E110}, $F$ is measurable. Since $F\geq0$ and
the sets $Q+Lrj$, $j\in\Z^d$, partition $\R^d$,
Tonelli's theorem and \eqref{E65} yield
\begin{align*}
 \int_{\R^d}G_{1,\rho r}^{p}(\overline{f})(z) dv(z)
 &=\sum_{j\in\Z^d}\int_{Q+Lrj}F(z) dv(z)=\int_{[0,Lr)^d}\sum_{j\in\Z^d}
        G_{1,\rho r}^{p}(\overline{f})(b+Lrj) dv(b)\\ 
 &\leq C_{p,0}C_1^{-p}(Lr)^d m_{p,t}^p(f).
 \end{align*}
Taking the $p$-th root proves \eqref{eq:phase} with
$C_p=C_{p,0}^{1/p}C_1^{-1}L^{d/p}$, which has the stated
dependence. The proof is complete.
\end{proof}

\subsection{Mollification and derivative estimates}
To obtain the derivative estimates, we mollify at a radius
controlled by the fixed constant $\rho>0$ from Lemma \ref{L1},
which depends only on $n,m,M$. Under $\C^n\cong\R^d$,
smoothness and convolution are understood in real coordinates.
For each dimension, fix a nonnegative function
$\eta_0\in C_c^\infty(B(0,1))$ with $\int_{\R^d}\eta_0 dv=1$,
and set
$$
 \eta(z)=\rho^{-d}\eta_0(z/\rho),\quad z\in\C^n.
$$
Then $\eta\in C_c^\infty(B(0,\rho))$ is nonnegative and has
unit mass. For $r>0$ and $u\in L^1_{\rm loc}(\R^d,dv)$, define
\begin{equation}\label{E16}
 \eta_r(z)=r^{-d}\eta(z/r)\quad\text{and}\quad
 (u*\eta_r)(z)=\int_{\R^d}u(z-w)\eta_r(w) dv(w).
\end{equation}
Convolution of vector-valued functions is taken componentwise.
This scaling preserves positivity and unit mass and gives
\begin{equation}\label{E53}
 \supp\eta_r=r\supp\eta\subset B(0,\rho r),\quad r>0.
\end{equation}

For $u_r,u\in L^1_{\rm loc}(\R^d,dv)$, the convergence
$u_r\rightarrow u$ in this space as $r\rightarrow0^+$ means that
$$
 \int_K|u_r-u| dv\rightarrow0\quad(r\rightarrow0^+)
$$
for every fixed compact set $K\subseteq\R^d$.
This local convergence implies convergence in distributions,
meaning that $$\int_{\R^d}(u_r-u)h dv\rightarrow0$$
as $r\rightarrow0^+$ for every $h\in C_c^\infty(\R^d)$.
Differentiation preserves convergence in distributions
\cite[Section 6.16 and Theorem 6.17]{Rudin1991}.
The following lemma provides smooth bounded approximants
converging in $L^1_{\rm loc}(\R^d,dv)$ for use in these
distributional limits.

\begin{lemma}\label{L6}
Let $u\in L^\infty(\C^n)$ and set $u_r=u*\eta_r$ for $r>0$,
with $\eta_r$ as in \eqref{E16}.
The functions $u_r$ belong to $C^\infty(\R^d)$ and satisfy
\begin{equation}\label{E50}
 \begin{gathered}
 \norm{u_r}_{L^\infty(\C^n)}\leq\norm{u}_{L^\infty(\C^n)}
 \quad(r>0),\quad
 u_r\rightarrow u\quad\text{in }L^1_{\rm loc}(\R^d,dv)
 \quad(r\rightarrow0^+).
 \end{gathered}
\end{equation}
\end{lemma}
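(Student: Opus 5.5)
The plan is to verify the three assertions of Lemma~\ref{L6} directly from the definition \eqref{E16}, relying only on the standard facts for convolution with a smooth, compactly supported mollifier.

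\emph{Smoothness.} Since $\eta_r\in C_c^\infty(B(0,\rho r))$ and $u\in L^\infty(\C^n)\subset L^1_{\rm loc}(\R^d,dv)$, we can differentiate under the integral sign in $(u*\eta_r)(z)=\int u(w)\eta_r(z-w)\,dv(w)$. For $z$ in a bounded neighborhood of a fixed point, each derivative $\partial^\beta_z\eta_r(z-w)$ is bounded and supported in a fixed compact set of $w$. Dominated convergence therefore gives $\partial^\beta u_r=u*\partial^\beta\eta_r$ for every multi-index $\beta$, and continuity of these derivatives follows in the same way. This yields $u_r\in C^\infty(\R^d)$; alternatively one may cite \cite[Proposition 4.20]{Brezis2011}.

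\emph{Uniform bound.} Because $\eta_r\ge0$ and $\int\eta_r\,dv=1$, we have $|u_r(z)|\le\int|u(z-w)|\eta_r(w)\,dv(w)\le\norm{u}_{L^\infty(\C^n)}$ for every $z$. This gives the first inequality in \eqref{E50}.

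\emph{Local convergence.} Fix a compact set $K$ and $0<r<1$. Using unit mass and Tonelli's theorem,
\[
\int_K|u_r-u|\,dv\le\int_{B(0,\rho r)}\eta_r(w)\int_K|u(z-w)-u(z)|\,dv(z)\,dv(w)\le\sup_{|w|<\rho r}\norm{g(\cdot-w)-g}_{L^1(\R^d,dv)},
\]
where $g=u\mathbf 1_{K_\rho}$ and $K_\rho=\{x:\operatorname{dist}(x,K)\le\rho\}$. Since $K_\rho$ is compact, $g\in L^1(\R^d,dv)$. The inner integrand agrees with the translation difference of $g$ on $K$ because $|w|<\rho$. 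Continuity of translation in $L^1$ \cite[Lemma 4.3]{Brezis2011}, applied to the real and imaginary parts as in the proof of Lemma~\ref{lem:phase}, makes the right-hand side tend to $0$ as $r\to0^+$.

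The only point requiring care is this last truncation step, which replaces $u\in L^\infty$ by an $L^1$ function agreeing with $u$ on the relevant neighborhood of $K$. Everything else is routine.
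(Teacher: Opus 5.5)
Your proposal is correct and follows essentially the same route as the paper. Smoothness comes from Brezis's Proposition 4.20 (or differentiation under the integral), the sup bound from positivity and unit mass of $\eta_r$, and local $L^1$ convergence from Tonelli's theorem plus $L^1$ translation continuity after truncating $u$ to an integrable function that agrees with it near $K$. Your choice $g=u\mathbf 1_{K_\rho}$ plays exactly the role of the paper's $v=u\mathbf 1_{B(0,R)}$ with $K\subset B(0,R-\rho)$.
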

\begin{proof}
Local integrability of $u$ and $\eta_r\in C_c^\infty(\R^d)$
allow us to apply \cite[Proposition 4.20, p. 107]{Brezis2011},
with $N=d=2n$, to the real and imaginary parts of $u$.
The convolution $u_r=u*\eta_r$ therefore belongs to $C^\infty(\R^d)$.
Since $\eta_r\geq0$ and $\int_{\R^d}\eta_r dv=1$, we have
$$
 |u_r(z)|
 \leq\int_{\R^d}|u(z-y)|\eta_r(y) dv(y)
 \leq\norm{u}_{L^\infty(\C^n)},\qquad z\in\R^d.
$$
This proves the norm bound in \eqref{E50}.

For the convergence, fix a compact set $K\subset\R^d$ and
choose $R>\rho$ with $K\subset B(0,R-\rho)$.
Set $v=u\mathbf1_{B(0,R)}$. The estimate
$\norm{v}_{L^1(\R^d,dv)}
\leq |B(0,R)|\norm{u}_{L^\infty(\C^n)}<\infty$
allows us to use translation continuity for $v$.
For $z\in K$, $w\in\supp\eta_r$, and $0<r<1$,
\eqref{E53} gives
$$
 |z-w|\leq |z|+|w|<R-\rho+\rho r<R.
$$
Since $K\subset B(0,R)$, we have $u=v$ on $K$.
The inclusion $z-w\in B(0,R)$ also gives
\begin{align*}
 u_r(z)
 &=\int_{B(0,\rho r)}u(z-w)\eta_r(w) dv(w)=\int_{B(0,\rho r)}v(z-w)\eta_r(w) dv(w)
 =(v*\eta_r)(z)
\end{align*}
for $z\in K$ and $0<r<1$. 
Using unit mass, the triangle inequality, and Tonelli's theorem,
we obtain
\begin{align*}
 \norm{u_r-u}_{L^1(K,dv)}
 &=\int_K\left|\int_{B(0,\rho r)}
       \bigl(v(z-y)-v(z)\bigr)\eta_r(y) dv(y)\right| dv(z)\\
 &\leq\int_{B(0,\rho r)}\eta_r(y)
       \norm{v(\cdot-y)-v}_{L^1(\R^d,dv)} dv(y)\\
 &\leq\sup_{|y|<\rho r}
       \norm{v(\cdot-y)-v}_{L^1(\R^d,dv)}
       \longrightarrow0\quad(r\rightarrow0^+).
\end{align*}
The limit follows from \cite[Lemma 4.3, p. 114]{Brezis2011},
applied with $N=d=2n$ and $q=1$ to the real and imaginary
parts of $v$. This proves \eqref{E50}.
\end{proof}

To pass to limits in derivatives of these approximants,
we next estimate $\dbar u_r$ by the local analytic distance of $u$.

\begin{lemma}\label{lem:mollifier}
Let $\eta_r$ be defined by \eqref{E16}.
\begin{enumerate}[label=\textup{(\alph*)}]
\item
For $u\in L^\infty(\C^n)$, set $u_r=u*\eta_r$.
There is a positive constant $C$, depending only on
$n,\rho$ and $\eta$, such that
\begin{equation}\label{eq:mollifier}
 |\dbar u_r(z)|
 \leq Cr^{-1}G_{q,\rho r}(u)(z),
 \quad z\in\C^n
\end{equation}
for every $q\in\{1,2\}$ and $r>0$.

\item
For each $1\leq p<\infty$, there is a
positive constant $C_p$, depending only on
$n,m,M,p,\rho$, $\eta$, and a fixed lattice spacing
$L>2\rho$ as in Lemma \ref{lem:phase}, such that every
$f\in L^\infty(\C^n)$ satisfies
\begin{equation}\label{eq:derivative-necessary}
 \norm{\dbar(\overline{f}*\eta_r)}_{L^p(\R^d,dv)}
 \leq C_p r^{d/p-1}m_{p,r^2}(f),
 \quad r>0.
\end{equation}
\end{enumerate}
\end{lemma}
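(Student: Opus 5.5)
For part (a), we differentiate under the convolution and subtract a holomorphic competitor. Fix $z\in\C^n$, $r>0$, and any $h\in\mathcal O(B(z,\rho r))$. Since $\eta_r$ is supported in $B(0,\rho r)$ by \eqref{E53}, the value $u_r$ near $z$ depends only on $u$ on $B(z,\rho r)$. We write
$$
\partial_{\overline z_j}u_r(z)=\int_{B(z,\rho r)}u(w)(\partial_{\overline z_j}\eta_r)(z-w)\,dv(w).
$$
We next use that $h$ is annihilated by this derivative pairing. Since $w\mapsto\eta_r(z-w)$ is smooth with compact support in $B(z,\rho r)$, integration by parts and $\partial_{\overline w_j}h=0$ give
$$
\int_{B(z,\rho r)}h(w)(\partial_{\overline z_j}\eta_r)(z-w)\,dv(w)=\int h(w)\,\partial_{\overline w_j}\bigl[-\eta_r(z-w)\bigr]\,dv(w)=0 .
$$
Subtracting this and using $\|\partial_{\overline z_j}\eta_r\|_{L^\infty}=r^{-d-1}\|\partial_{\overline z_j}\eta\|_{L^\infty}$, we get
$$
|\partial_{\overline z_j}u_r(z)|\le r^{-d-1}\|\nabla\eta\|_{L^\infty}|B(z,\rho r)|\,M_{1,\rho r}(u-h)(z)=C r^{-1}M_{1,\rho r}(u-h)(z),
$$
with $C$ depending on $n,\rho,\eta$. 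Hölder gives $M_{1,\rho r}\le M_{2,\rho r}$. Taking the infimum over $h$ and summing over $j$ then yields \eqref{eq:mollifier} for both $q=1,2$.

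The only subtle point in (a) is that $h$ need not be defined beyond the ball. This causes no trouble because the test function $\eta_r(z-\cdot)$ has compact support strictly inside $B(z,\rho r)$, so the integration by parts has no boundary terms. Measurability of the infimum is not needed, since the bound is pointwise.

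For part (b), we apply (a) with $u=\overline f$ and $q=1$ to get $|\dbar(\overline f*\eta_r)|\le Cr^{-1}G_{1,\rho r}(\overline f)$ pointwise. We then take $L^p$ norms and invoke \eqref{eq:phase} from Lemma \ref{lem:phase}, which bounds $\|G_{1,\rho r}(\overline f)\|_{L^p}$ by $C_p r^{d/p}m_{p,r^2}(f)$. This gives \eqref{eq:derivative-necessary} with the stated dependence, including the case where the right-hand side is infinite. The only step with real content is the cancellation in (a); (b) is immediate from Lemma \ref{lem:phase}.
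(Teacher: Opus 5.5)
Your proposal is correct and follows essentially the same route as the paper. In both, you kill the holomorphic competitor by integrating by parts against the compactly supported $\eta_r(z-\cdot)$, bound the result by $\sup|\dbar\eta_r|=O(r^{-d-1})$ times $\int_{B(z,\rho r)}|u-h|\,dv$, take the infimum over $h$, and then combine with \eqref{eq:phase} for (b). The only difference is cosmetic: you obtain the $q=2$ case from $q=1$ via $M_{1,\rho r}\le M_{2,\rho r}$, whereas the paper applies H\"older with exponent $q$ directly.
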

\begin{proof}
For \textup{(a)}, we fix $r>0$ and $z\in\C^n$ and choose
$h\in\mathcal O(B(z,\rho r))$ to express $\dbar u_r(z)$
in terms of $u-h$.
By \eqref{E53}, the function
$w\mapsto h(w)\eta_r(z-w)$ belongs to
$C_c^\infty(B(z,\rho r))$.
Its zero extension, as in \eqref{E93}, is the function
$h_0\in C_c^\infty(\R^d)$ given by
$$
 h_0(w)=
 \begin{cases}
 h(w)\eta_r(z-w),&w\in B(z,\rho r),\\
 0,&w\in\R^d\setminus B(z,\rho r).
 \end{cases}
$$
For $1\leq j\leq n$, write $w_j=y_{2j-1}+iy_{2j}$ and set
$s=y_{2j-1}$.
We denote the remaining real coordinates by $y'\in\R^{d-1}$
and regard $h_0(s,y')$ as a function of $s$ with $y'$ fixed.
Then $dv(w)=dsdy'$, where $dy'$ is Lebesgue measure on
$\R^{d-1}$. 

Choose $a>0$ with $\supp h_0\subset(-a,a)^d$.
Fubini's theorem and the fundamental theorem of calculus give
\begin{align}\label{E144}
\int_{B(z,\rho r)}\partial_{y_{2j-1}}
  \bigl(h(w)\eta_r(z-w)\bigr) dv(w)
&=\int_{\R^{d-1}}\int_{-a}^{a}
  \partial_s h_0(s,y')dsdy'\nonumber\\
&=\int_{\R^{d-1}}
  \bigl(h_0(a,y')-h_0(-a,y')\bigr)dy'=0.
\end{align}
Expanding the derivative in the integrand, we obtain
$$
 \int_{B(z,\rho r)}h(w)\partial_{y_{2j-1}}\eta_r(z-w) dv(w)
 =-\int_{B(z,\rho r)}\partial_{y_{2j-1}}h(w)\eta_r(z-w) dv(w).
$$
Repeating the calculation for $y_{2j}$ and using the two
identities together with \eqref{E13} and the chain rule,
we obtain
 \begin{align*}
 \int_{B(z,\rho r)} h(w)\partial_{\overline{z}_j}\eta_r(z-w) dv(w)
 &=-\int_{B(z,\rho r)} h(w)\partial_{\overline{w}_j}\eta_r(z-w) dv(w)\\
 &=\int_{B(z,\rho r)}\partial_{\overline{w}_j}h(w)\eta_r(z-w) dv(w)\\
 &=0.
 \end{align*}
Here $\partial_{\overline{w}_j}h=0$ because $h$ is holomorphic.
Since $u$ is bounded and $\eta_r$ is smooth with compact
support, we may differentiate under the integral sign.
Subtracting the zero integral above yields
$$
 \partial_{\overline{z}_j}u_r(z)
 =\int_{B(z,\rho r)}(u(w)-h(w))
                         \partial_{\overline{z}_j}\eta_r(z-w) dv(w).
$$
Taking the components together according to \eqref{E107},
we have
$$
 \dbar u_r(z)=\int_{B(z,\rho r)}(u(w)-h(w))
                    \dbar\eta_r(z-w) dv(w).
$$
Set $c_0=\sup_{x\in\R^d}|\dbar\eta(x)|\in(0,\infty)$.
By \eqref{E16} and the chain rule,
$$
 \dbar\eta_r(z)=r^{-d-1}(\dbar\eta)(z/r),\quad
 |\dbar\eta_r(z)|\leq c_0r^{-d-1}.
$$
The vector integral formula and this kernel bound, followed
by H\"older's inequality and $|B(z,\rho r)|=r^d|B(0,\rho)|$,
yield
 \begin{align*}
 |\dbar u_r(z)|
 &\leq c_0r^{-d-1}|B(z,\rho r)|^{1-1/q}
          \left(\int_{B(z,\rho r)}|u-h|^q dv\right)^{1/q}\\
 &=c_0|B(0,\rho)|r^{-1}
       \left(\frac1{|B(z,\rho r)|}
       \int_{B(z,\rho r)}|u-h|^q dv\right)^{1/q}.
 \end{align*}
Since $h\in\mathcal O(B(z,\rho r))$ was arbitrary, taking
the infimum over $h$ and using \eqref{E106} proves
\eqref{eq:mollifier} with $C=c_0|B(0,\rho)|$.

For \textup{(b)}, let $C_{p,0}>0$ be the constant in \eqref{eq:phase}.
Applying \eqref{eq:mollifier} with $u=\overline{f}$ and $q=1$,
taking the $L^p(\R^d,dv)$ norm, and then using
\eqref{eq:phase}, we obtain
 \begin{align*}
 \norm{\dbar(\overline{f}*\eta_r)}_{L^p(\R^d,dv)}
 &\leq Cr^{-1}
       \norm{G_{1,\rho r}(\overline{f})}_{L^p(\R^d,dv)}\leq CC_{p,0}r^{d/p-1}m_{p,r^2}(f).
 \end{align*}
This proves \eqref{eq:derivative-necessary} with
$C_p=CC_{p,0}$. The proof is complete.
\end{proof}

\subsection{Necessary conditions and the conjugate test}
In the following proposition, we combine \eqref{E50} with
\eqref{eq:derivative-necessary}. The factor $r^{d/p-1}$
determines the different conclusions for $p<d$ and $p=d$.

\begin{proposition}\label{prop:necessity}
If $1\leq p<d$ and $f\in\mathcal A_p$, then $f$ is constant almost
everywhere. If $f\in\mathcal A_d$, then $|\nabla f|\in L^d(\R^d,dv)$.
\end{proposition}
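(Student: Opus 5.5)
Fix $f\in\mathcal A_p$ with $1\leq p\leq d$. Lemma \ref{lem:baire} gives $t_0,a>0$ with $m_{p,r^2}(f)\leq a$ for $0<r<\sqrt{t_0}$. Set $u=\overline{f}$ and $u_r=u*\eta_r$. Then \eqref{eq:derivative-necessary} yields
\begin{equation*}
 \norm{\dbar u_r}_{L^p(\R^d,dv)}\leq C_pa\,r^{d/p-1},\qquad 0<r<\sqrt{t_0}.
\end{equation*}
The two cases differ only in the power $r^{d/p-1}$. For $p<d$ it tends to zero. For $p=d$ it is bounded.

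\emph{Case $1\leq p<d$.} By \eqref{E50}, $u_r\to u$ in $L^1_{\rm loc}(\R^d,dv)$, hence in distributions. Differentiation preserves this convergence, so $\partial_{\overline{z}_j}u_r\to\partial_{\overline{z}_j}u$ in distributions. For $h\in C_c^\infty(\R^d)$, H\"older's inequality gives
\begin{equation*}
 \Bigl|\int_{\R^d}\partial_{\overline{z}_j}u_r\,h\,dv\Bigr|\leq\norm{\dbar u_r}_{L^p}\norm{h}_{L^{p'}}\lesssim r^{d/p-1}\to0.
\end{equation*}
Hence $\dbar\overline{f}=0$ in distributions. By Weyl's lemma (ellipticity of $\dbar$, or of the Laplacian $4\sum_j\partial_{z_j}\partial_{\overline{z}_j}$), $\overline{f}$ agrees almost everywhere with an entire function. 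That function is bounded, so it is constant by Liouville, and $f$ is constant almost everywhere.

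\emph{Case $p=d$.} Now $\norm{\dbar u_r}_{L^d}\leq C_da$ uniformly for small $r$. Each $u_r$ lies in $C^\infty\cap L^\infty$, so Lemma \ref{lem:complex-derivatives} with $q=d$ applies. This requires $d>1$, which holds since $d=2n\geq2$. It gives
\begin{equation*}
 \norm{\nabla u_r}_{L^d(\R^d,dv)}\leq C_{n,d}C_da
\end{equation*}
uniformly in small $r$. (Strictly, the lemma is stated for $C^2$ functions, and $u_r$ is $C^\infty$ and bounded, so this is fine.)

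$L^d$ is reflexive for $1<d<\infty$, so a sequence $r_k\to0$ gives $\nabla u_{r_k}\rightharpoonup G$ weakly in $[L^d(\R^d,dv)]^d$. Test against $h\in C_c^\infty(\R^d)$ and use $u_{r_k}\to u$ in $L^1_{\rm loc}$:
\begin{equation*}
 \int u\,\partial_{x_\ell}h\,dv=\lim_k\int u_{r_k}\partial_{x_\ell}h\,dv=-\lim_k\int\partial_{x_\ell}u_{r_k}\,h\,dv=-\int G_\ell h\,dv.
\end{equation*}
So $G$ is the weak gradient of $\overline{f}$, and by weak lower semicontinuity $\norm{G}_{L^d}\leq\liminf_k\norm{\nabla u_{r_k}}_{L^d}<\infty$. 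Since the weak derivatives of $f$ are the conjugates of those of $\overline{f}$, we get $|\nabla f|=|G|\in L^d(\R^d,dv)$.

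The substantive analytic input is already in Lemmas \ref{lem:baire}, \ref{lem:phase} and \ref{lem:mollifier}. The step needing most care is the critical case: Lemma \ref{lem:complex-derivatives} must be applied to the mollifications, which are not in $L^d$, and not to $f$ itself. The lemma explicitly permits this. We then pass to the limit through weak compactness rather than strong convergence. In the subcritical case the one point to justify is the elliptic regularity step from $\dbar\overline{f}=0$ in distributions to holomorphy, which is standard.
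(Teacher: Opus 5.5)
Your proposal is correct and follows the paper's proof essentially step for step. You use the same ingredients: the Baire bound from Lemma \ref{lem:baire}, the mollifier estimate \eqref{eq:derivative-necessary} with its factor $r^{d/p-1}$, and a distributional limit giving $\dbar\overline{f}=0$ when $p<d$. At $p=d$ you likewise apply Lemma \ref{lem:complex-derivatives} to the mollifications and then use weak compactness in $L^d$ to identify the weak gradient.

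The only difference is the last step of the subcritical case. You cite Weyl's lemma to get an entire representative of $\overline{f}$. The paper stays elementary: each $\overline{f}*\eta_\varepsilon$ is a bounded entire function, hence constant by Liouville, and letting $\varepsilon\to0$ gives $\nabla\overline{f}=0$ in distributions and then constancy almost everywhere.
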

\begin{proof}
Suppose $1\leq p\leq d$ and $f\in\mathcal A_p$.
Fix $a,t_0>0$ as in Lemma \ref{lem:baire}, and put
$r_0=\sqrt{t_0}$ and $u_r=\overline{f}*\eta_r$, with $\eta_r$
as in \eqref{E16}. 
For $0<r<r_0$, we have $0<t=r^2<t_0$, with $t\rightarrow0^{+}$
as $r\rightarrow0^{+}$. The local approximation in \eqref{E50}, applied to
$u=\overline{f}$, gives
\begin{equation}\label{E66}
 \norm{u_r-\overline{f}}_{L^1(K,dv)}\longrightarrow0
 \quad(r\rightarrow0^{+})
\end{equation}
for every compact set $K\subset\R^d$.
We use this convergence to pass to the limit in the
integration by parts identities for the smooth functions $u_r$.

For $1\leq p<d$, combining \eqref{eq:derivative-necessary}
with the bound $m_{p,r^2}(f)\leq a$ from Lemma \ref{lem:baire}
gives
\begin{align*}
 \norm{\dbar u_r}_{L^p(\R^d,dv)}
 &\leq C_p r^{d/p-1}m_{p,r^2}(f)\leq C_p a r^{d/p-1}\rightarrow0
 \quad(r\rightarrow0^{+})
 \end{align*}
for $0<r<r_0$, where $C_p>0$ is the constant in \eqref{eq:derivative-necessary}.
The limit follows from $d/p-1>0$ and the independence of
$C_p,a$ from $r$.
Let $\psi\in C_c^\infty(\R^d)$ and $1\leq j\leq n$.
Write $p'=p/(p-1)$ when $p>1$ and $p'=\infty$ when $p=1$.
Since $u_r\psi\in C_c^\infty(\R^d)$, we apply the
integration by parts calculation in \eqref{E144} to this
product in each real coordinate and use \eqref{E13} to obtain
$$
 \int_{\R^d}u_r\partial_{\overline{z}_j}\psi dv
 =-\int_{\R^d}\partial_{\overline{z}_j}u_r\psi dv.
$$
Combining this identity with \eqref{E66} on $K=\supp\psi$
and H\"older's inequality, we obtain
 \begin{align}\label{E67}
 \left|\int_{\R^d}\overline{f}\partial_{\overline{z}_j}\psi dv\right|
 &=\lim_{r\rightarrow0^{+}}
   \left|\int_{\R^d}u_r\partial_{\overline{z}_j}\psi dv\right|=\lim_{r\rightarrow0^{+}}
   \left|\int_{\R^d}\partial_{\overline{z}_j}u_r\psi dv\right|\nonumber\\
 &\leq\norm{\psi}_{L^{p'}(\R^d,dv)}
       \lim_{r\rightarrow0^{+}}\norm{\dbar u_r}_{L^p(\R^d,dv)}=0.
 \end{align}
The integral is therefore zero for every $\psi\in C_c^\infty(\R^d)$
and $1\leq j\leq n$. By \eqref{E49}, this means that
$\dbar\overline{f}=0$ in distributions.

To deduce constancy, fix $\varepsilon>0$.
For $z\in\C^n$ and $1\leq j\leq n$, \eqref{E53} gives
$\eta_\varepsilon(z-\cdot)\in C_c^\infty(\R^d)$.
Since $f$ is bounded, we differentiate the convolution
under the integral sign and use the chain rule and
\eqref{E49} to obtain
\begin{equation}\label{E213}
\begin{aligned}
 \partial_{\overline{z}_j}u_\varepsilon(z)
 &=\int_{\R^d}\overline{f}(w)
       \partial_{\overline{z}_j}\eta_\varepsilon(z-w) dv(w)=-\int_{\R^d}\overline{f}(w)
       \partial_{\overline{w}_j}\eta_\varepsilon(z-w) dv(w)\\
 &=\left\langle\partial_{\overline{w}_j}\overline{f},
       \eta_\varepsilon(z-\cdot)\right\rangle=0.
\end{aligned}
\end{equation}
The last equality uses $\dbar\overline{f}=0$ in distributions.
Equations \eqref{E213} and \eqref{E50} give
$$
 \dbar u_\varepsilon=0\quad\text{and}\quad
 \norm{u_\varepsilon}_{L^\infty(\C^n)}
 \leq\norm{f}_{L^\infty(\C^n)}.
$$
Since $u_\varepsilon$ is smooth by Lemma \ref{L6},
\eqref{E13}, \eqref{E107}, and $\dbar u_\varepsilon=0$
show that it is entire.
The bound in \eqref{E50} allows us to apply Liouville's theorem
along the complex coordinate lines, obtaining
$u_\varepsilon\equiv c_\varepsilon$ for some $c_\varepsilon\in\C$,
and hence $\nabla u_\varepsilon=0$.
For $1\leq\ell\leq d$ and $\psi\in C_c^\infty(\R^d)$,
the argument leading to \eqref{E67} applies with
$\partial_{x_\ell}$ in place of $\partial_{\overline{z}_j}$.
Using \eqref{E66} on $K=\supp\psi$, we obtain
\begin{align*}
 \int_{\R^d}\overline{f}\partial_{x_\ell}\psi dv
 &=\lim_{\varepsilon\rightarrow0^{+}}
   \int_{\R^d}u_\varepsilon\partial_{x_\ell}\psi dv=-\lim_{\varepsilon\rightarrow0^{+}}
   \int_{\R^d}\partial_{x_\ell}u_\varepsilon\psi dv=0.
\end{align*}
The last equality uses $\partial_{x_\ell}u_\varepsilon=0$
for every $\varepsilon>0$, as proved above by Liouville's theorem.
Since $\ell$ and $\psi$ were arbitrary, \eqref{E48} gives
$\nabla\overline{f}=0$ in distributions.

To conclude that $f$ is constant, let $v\in L^\infty(\R^d)$
satisfy $\nabla v=0$ in distributions. We show that
\begin{equation}\label{E151}
 v=c\quad\text{almost everywhere on }\R^d
\end{equation}
for some $c\in\C$.
For every integer $N\geq1$ and $h\in\{\Re v,\Im v\}$,
\eqref{E160} with $g_\ell=0$ for $1\leq\ell\leq d$
gives zero weak derivatives, and we have
$$
 \norm{h}_{L^1(B(0,N),dv)}
 \leq |B(0,N)|\norm{v}_{L^\infty(\R^d)}<\infty\quad\text{and}\quad \nabla h=0.
$$
The zero-gradient criterion
\cite[Section 9.1, Remark 7]{Brezis2011}, applied to these
real functions on $B(0,N)$, yields $c_N\in\C$ such that
$v=c_N$ almost everywhere on $B(0,N)$.
Since $B(0,N)\subset B(0,N+1)$, we obtain
$$
 |c_{N+1}-c_N||B(0,N)|
 =\int_{B(0,N)}|c_{N+1}-c_N| dv=0.
$$
It follows that $c_N=c_1$ for all $N\geq1$. As
$\bigcup_{N\geq1}B(0,N)=\R^d$, this proves \eqref{E151}.
Taking $v=\overline{f}$ gives $f=\overline{c_1}$
almost everywhere on $\R^d$.

Suppose now $p=d$. Since $f\in\mathcal A_d\subseteq L^\infty(\C^n)$
by \eqref{eq:property}, we have $\overline{f}\in L^\infty(\C^n)$.
Lemma \ref{L6}, applied with $u=\overline{f}$, yields
$u_r=\overline{f}*\eta_r\in C^\infty(\R^d)\cap L^\infty(\C^n)$
for every $r>0$.
Let $C_d>0$ be the constant in
\eqref{eq:derivative-necessary} with $p=d$.
Since $r_0^2=t_0$, we insert the bound from Lemma \ref{lem:baire}
into that estimate to obtain
$$
 \sup_{0<r<r_0}\norm{\dbar u_r}_{L^d(\R^d,dv)}
 \leq C_d\sup_{0<r<r_0}m_{d,r^2}(f)
 \leq C_da<\infty.
$$
Applying the first estimate in \eqref{E14} from
Lemma \ref{lem:complex-derivatives} to $u_r$ with $q=d>1$ gives
$$
 \sup_{0<r<r_0}\norm{\nabla u_r}_{L^{d}(\R^d,dv)}
 \leq C\sup_{0<r<r_0}\norm{\dbar u_r}_{L^{d}(\R^d,dv)}<\infty,
$$
where $C=C_{n,d}>0$ is the constant in \eqref{E14} with
$q=d=2n$ and depends only on $n$.
Starting with the sequence $r_0/(j+1)$, we use the uniform
gradient bound and $d>1$ to apply reflexivity
\cite[Theorem 4.10, p. 95]{Brezis2011} and weak sequential
compactness \cite[Theorem 3.18, p. 69]{Brezis2011}
successively to the real and imaginary parts of the $d$
derivatives. This gives a common subsequence
$0<r_k<r_0$ with $r_k\rightarrow0^{+}$ as $k\rightarrow\infty$.
Combining the paired limits gives
$g=(g_1,\ldots,g_d)\in[L^d(\R^d,dv)]^d$ such that
\begin{equation}\label{E150}
 \partial_{x_\ell}u_{r_k}\rightharpoonup g_\ell
 \quad\text{in }L^d(\R^d,dv),\quad 1\leq\ell\leq d.
\end{equation}
Here $\rightharpoonup$ denotes weak convergence, meaning
$$
 \int_{\R^d}(\partial_{x_\ell}u_{r_k}-g_\ell)h dv
 \longrightarrow0\quad(k\rightarrow\infty)
$$
for every $h\in L^{d/(d-1)}(\R^d,dv)$ and $1\leq\ell\leq d$.

To identify $g$ as the weak gradient of $\overline{f}$, fix
$1\leq\ell\leq d$ and $\psi\in C_c^\infty(\R^d)$.
The weak convergence in \eqref{E150}, integration by parts as in
\eqref{E144} for $u_{r_k}\psi\in C_c^\infty(\R^d)$,
and \eqref{E66} along the subsequence $r=r_k$ give
 \begin{align}\label{E154}
 \int_{\R^d}g_\ell\psi dv
 &=\lim_{k\rightarrow\infty}
   \int_{\R^d}\partial_{x_\ell}u_{r_k}\psi dv=-\lim_{k\rightarrow\infty}
   \int_{\R^d}u_{r_k}\partial_{x_\ell}\psi dv=-\int_{\R^d}\overline{f}\partial_{x_\ell}\psi dv.
 \end{align}
By \eqref{E3}, identity \eqref{E154} yields
$$
 \nabla\overline{f}=g,\quad \nabla f=\overline{g},\quad
 |\nabla f|=|g|\in L^d(\R^d,dv),
$$
where the second equality follows by taking
$\overline{\psi}$ as the test function in \eqref{E154}
and conjugating. This completes the proof.
\end{proof}

The inclusion of Schatten classes gives the remaining range
$0<p<1$.

\begin{corollary}\label{cor:smallp}
For $0<p<1$, $\mathcal A_p$ consists only of constants. More
generally, $\mathcal A_p$ is contained in $\mathcal A_q$
whenever $0<p<q<\infty$.
\end{corollary}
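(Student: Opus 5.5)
The plan is to prove the inclusion $\mathcal A_p\subseteq\mathcal A_q$ for $0<p<q<\infty$ first, and then deduce the statement for $0<p<1$ from Proposition \ref{prop:necessity}.

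For the inclusion, fix $f\in\mathcal A_p$ and $g\in L^\infty(\C^n)$. The singular values of $A=D_t(f,g)$ satisfy $s_j(A)\leq\norm{A}\leq\norm{A}_{\mathcal S_p}$ by \eqref{E128}. Whenever $\norm{A}_{\mathcal S_p}<\infty$ we then have
$$
 \sum_j s_j^q(A)=\sum_j s_j^{p}(A)s_j^{q-p}(A)
 \leq\norm{A}_{\mathcal S_p}^{q-p}\sum_j s_j^p(A)=\norm{A}_{\mathcal S_p}^q .
$$
This gives $\norm{D_t(f,g)}_{\mathcal S_q}\leq\norm{D_t(f,g)}_{\mathcal S_p}$. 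The hypothesis $f\in\mathcal A_p$ makes the right-hand side finite for small $t$ and tend to zero as $t\to0^+$, so the left-hand side tends to zero as well. Since $g$ was arbitrary, $f\in\mathcal A_q$.

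For $0<p<1$, choose any $q\in[1,d)$; for instance $q=1$ works because $d=2n\geq2>1$. The inclusion just proved gives $\mathcal A_p\subseteq\mathcal A_1$. The first part of Proposition \ref{prop:necessity}, applied with exponent $1<d$, then shows that every $f\in\mathcal A_1$ is constant almost everywhere.

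For the converse, a constant $f=c$ gives $D_t(c,g)=cT_g^{(t)}-T_{cg}^{(t)}=0$ by linearity of $u\mapsto T_u^{(t)}$. Hence constants lie in every $\mathcal A_p$, and $\mathcal A_p$ is exactly the set of almost everywhere constant functions.

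The only point needing any care is the extended-value convention: $\norm{\cdot}_{\mathcal S_p}$ is allowed to be $+\infty$. The monotonicity of Schatten norms in the exponent is only applied at values of $t$ where $\norm{D_t(f,g)}_{\mathcal S_p}$ is finite, which is true for all sufficiently small $t$ by the definition of $\mathcal A_p$. Otherwise the argument is routine, because the real content is already contained in Proposition \ref{prop:necessity}.
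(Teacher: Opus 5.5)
Your proof is correct and takes essentially the same route as the paper. The paper also derives $\norm{A}_{\mathcal S_q}\leq\norm{A}_{\mathcal S_p}$ from the inclusion $\ell^p\subseteq\ell^q$ applied to singular values, concludes $\mathcal A_p\subseteq\mathcal A_q$, and then takes $q=1<d$ to apply Proposition \ref{prop:necessity}. Your explicit estimate $s_j(A)\leq\norm{A}_{\mathcal S_p}$ and your check that constants belong to every $\mathcal A_p$ are harmless additions.
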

\begin{proof}
For $0<p<q<\infty$, every $a\in\ell^p$ belongs to $\ell^q$
and satisfies $\norm{a}_{\ell^q}\leq\norm{a}_{\ell^p}$.
Applying this to the singular values in the definition of
Schatten classes \cite[Section 1.4, p. 18]{Zhu1990} gives
\begin{equation}\label{E146}
 \mathcal S_p(X)\subseteq\mathcal S_q(X)\quad\text{and}\quad
 \norm{A}_{\mathcal S_q(X)}\leq\norm{A}_{\mathcal S_p(X)}
\end{equation}
for a complex Hilbert space $X$ and $A\in\mathcal S_p(X)$.
For $f\in\mathcal A_p$ and each fixed $g\in L^\infty(\C^n)$,
we apply \eqref{E146} with $X=\cF$ to obtain
$$
 0\leq\norm{D_t(f,g)}_{\mathcal S_q}
 \leq\norm{D_t(f,g)}_{\mathcal S_p}
 \longrightarrow0\quad(t\rightarrow0^{+}).
$$
By \eqref{eq:property}, $\mathcal A_p\subseteq\mathcal A_q$.
For $0<p<1$, taking $q=1$ gives
$\mathcal A_p\subseteq\mathcal A_1$. Since $1<d$, Proposition
\ref{prop:necessity} shows that every $f\in\mathcal A_p$ is
constant almost everywhere, which completes the proof.
\end{proof}
To estimate the conjugate test, fix $\rho>0$ as in Lemma
\ref{L1}. For $1\leq p<\infty$, \cite[Remark 2.5]{HuVirtanen2022}
shows that different fixed radii give equivalent
$L^{2p}$ norms of the local analytic distance.
Taking the radius to be $\rho$ and the Schatten exponent to be $2p$,
we obtain from \cite[Theorem 1.1, (1.4)]{HuVirtanen2022}
a constant $C_{p,0}>0$, depending only on $n,m,M,p$, such that
\begin{equation}\label{E54}
\norm{G_{2,\rho}(u)}_{L^{2p}(\R^d,dv)}
\leq C_{p,0}
\norm{H_u^{(1)}}_{\mathcal S_{2p}(\mathcal F_1^2,L_1^2\ominus\mathcal F_1^2)}
\end{equation}
for every $u\in L^\infty(\C^n)$.
The factorization \eqref{eq:factorization} gives
$-D_t(f,\overline{f})=(H_{\overline{f}}^{(t)})^*H_{\overline{f}}^{(t)}$.
We combine its Schatten norm identity
\eqref{eq:conjugate-identity} with \eqref{E54} to derive
the following bounds for the conjugate test.

\begin{proposition}\label{prop:conjugate}
For $f\in L^\infty(\C^n)$, $p>0$, and $t>0$, set
$\delta_{p,t}(f)=\norm{D_t(f,\overline{f})}_{\mathcal S_p}$.
We use $r=\sqrt t$ and the mollifier $\eta_r$ in \eqref{E16}.
If $1\leq p<\infty$, then
\begin{equation}\label{eq:conjugate-derivative}
 \norm{\dbar(\overline{f}*\eta_r)}_{L^{2p}(\R^d,dv)}
 \leq C_p r^{n/p-1}\delta_{p,r^2}^{1/2}(f).
\end{equation}
Here $C_p>0$ depends only on $n,m,M,p$ and the chosen mollifier
$\eta$. For every $0<p\leq n$, the condition
$\delta_{p,t}(f)\rightarrow0$ as $t\rightarrow0^{+}$ is equivalent to
$f$ being constant almost everywhere.
\end{proposition}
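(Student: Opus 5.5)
The derivative estimate \eqref{eq:conjugate-derivative} follows the pattern of Lemma \ref{lem:mollifier} (b), with the conjugate identity replacing the phase test. Then the constancy claim is obtained by rerunning the first half of the proof of Proposition \ref{prop:necessity}.

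\textbf{Step 1: the derivative estimate.} Put $u=\overline{f}$ and $t=r^2$. By \eqref{eq:mollifier} with $q=2$,
$$|\dbar(\overline{f}*\eta_r)(z)|\leq Cr^{-1}G_{2,\rho r}(\overline{f})(z).$$
Dilating as in the last step of the proof of Lemma \ref{lem:analytic-decomposition} gives
$$G_{2,\rho r}(\overline{f})(z)=G_{2,\rho}(\overline{f}(r\cdot))(z/r),$$
so
$$\norm{G_{2,\rho r}(\overline{f})}_{L^{2p}(\R^d,dv)}=r^{d/(2p)}\norm{G_{2,\rho}(\overline{f}(r\cdot))}_{L^{2p}(\R^d,dv)}.$$
Next apply \eqref{E54} to $\overline{f}(r\cdot)$. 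By Lemma \ref{lem:scaled-kernel} (b), $\norm{H^{(1)}_{\overline{f}(r\cdot)}}_{\mathcal S_{2p}}=\norm{H^{(t)}_{\overline{f}}}_{\mathcal S_{2p}}$, and by \eqref{eq:conjugate-identity} this equals $\delta_{p,t}^{1/2}(f)$. Combining these steps, and using $d/(2p)=n/p$, gives \eqref{eq:conjugate-derivative}.

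\textbf{Step 2: constancy from the limit.} Constants give $D_t(f,\overline{f})=0$, so only the forward direction needs proof. First take $1\leq p\leq n$ and assume $\delta_{p,t}(f)\to0$.

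If $p<n$, then $r^{n/p-1}$ is bounded for small $r$, so by \eqref{eq:conjugate-derivative}
$$\norm{\dbar(\overline{f}*\eta_r)}_{L^{2p}(\R^d,dv)}\to0.$$
If $p=n$, the exponent $n/p-1$ vanishes and the same convergence follows directly from $\delta_{n,r^2}(f)\to0$. This is the point where the direct use of the hypothesis, rather than a Baire bound, is essential.

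In both cases $\norm{\dbar u_r}_{L^{2p}}\to0$ with $2p\geq1$. The argument leading to \eqref{E67}, with H\"older exponents $2p$ and $(2p)'$, then gives $\dbar\overline{f}=0$ in distributions. The Liouville step \eqref{E213} and the zero-gradient argument \eqref{E151} from the proof of Proposition \ref{prop:necessity} show that $f$ is constant almost everywhere. These steps use only that $\overline{f}$ is bounded and $\dbar\overline{f}=0$ in distributions, so they transfer verbatim.

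\textbf{Step 3: the range $0<p<1$.} Use \eqref{E146} on $D_t(f,\overline{f})$: $\delta_{1,t}(f)\leq\delta_{p,t}(f)\to0$. Since $1\leq n$, the case $p=1$ of Step 2 then gives constancy.

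The main obstacle is the scaling bookkeeping in Step 1: the constant in \eqref{E54} must be applied at scale one to the dilated symbol. This works because that constant is independent of $u$, and because the dilation factor $r^{d/(2p)}$ has to be matched carefully against $r^{-1}$ to produce the exponent $n/p-1$.
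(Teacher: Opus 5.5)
Your proposal is correct and follows essentially the same route as the paper. The paper also rescales to $h_r=\overline{f}(r\cdot)$ via \eqref{E149}, applies \eqref{E54} at scale one, and uses Lemma~\ref{lem:scaled-kernel}~(b) with \eqref{eq:conjugate-identity} and the $q=2$ case of \eqref{eq:mollifier}. It then repeats the distributional $\dbar$, Liouville, and zero-gradient steps of Proposition~\ref{prop:necessity} for $1\leq p\leq n$ and uses \eqref{E146} to reduce $0<p<1$ to $p=1$.
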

\begin{proof}
To prove \eqref{eq:conjugate-derivative}, fix $1\leq p<\infty$
and $r>0$. We may assume $\delta_{p,r^2}(f)<\infty$ and set
$h_r(\zeta)=\overline{f}(r\zeta)$.
For $s>0$, the change of variables $w=r\xi$ makes
$h\mapsto h(\cdot/r)$ a bijection from
$\mathcal O(B(\zeta,s))$ onto
$\mathcal O(B(r\zeta,sr))$.
Since $dv(w)=r^d dv(\xi)$ and
$|B(r\zeta,sr)|=r^d|B(\zeta,s)|$,
definition \eqref{E106} gives
\begin{equation}\label{E149}
G_{q,s}(h_r)(\zeta)
=G_{q,sr}(\overline{f})(r\zeta),\quad \zeta\in\C^n,
\end{equation}
for $1\leq q<\infty$ and $s>0$.
Taking $q=2$ and $s=\rho$ in \eqref{E149} and changing
variables once more, we obtain
$$
\int_{\R^d}G_{2,\rho}^{2p}(h_r)(\zeta) dv(\zeta)
=r^{-d}\int_{\R^d}G_{2,\rho r}^{2p}(\overline{f})(z) dv(z).
$$
Taking the $2p$-th root and applying \eqref{E54} to $h_r$, 
we obtain
\begin{align}\label{E31}
r^{-d/(2p)}\norm{G_{2,\rho r}(\overline{f})}_{L^{2p}(\R^d,dv)}
&=\norm{G_{2,\rho}(h_r)}_{L^{2p}(\R^d,dv)}\leq C_{p,0}
\norm{H_{h_r}^{(1)}}_{\mathcal S_{2p}(\mathcal F_1^2,L_1^2\ominus\mathcal F_1^2)}
\nonumber\\
&=C_{p,0}
\norm{H_{\overline{f}}^{(r^2)}}_{\mathcal S_{2p}(\mathcal F_{r^2}^2,L_{r^2}^2\ominus\mathcal F_{r^2}^2)}
=C_{p,0}\delta_{p,r^2}^{1/2}(f).
\end{align}
The last two equalities use the Hankel identity and norm
invariance in Lemma \ref{lem:scaled-kernel} (b), followed by
\eqref{eq:conjugate-identity}.

Let
$C_\eta=|B(0,\rho)|\sup_{x\in\R^d}|\dbar\eta(x)|>0$
be the constant in \eqref{eq:mollifier}.
Applying \eqref{eq:mollifier} with $u=\overline{f}$ and $q=2$,
taking the $L^{2p}(\R^d,dv)$ norm, and using \eqref{E31},
we obtain
\begin{align*}
\norm{\dbar(\overline{f}*\eta_r)}_{L^{2p}(\R^d,dv)}
&\leq C_\eta r^{-1}
\norm{G_{2,\rho r}(\overline{f})}_{L^{2p}(\R^d,dv)}\leq C_\eta C_{p,0}r^{d/(2p)-1}
\delta_{p,r^2}^{1/2}(f).
\end{align*}
Since $d=2n$, this proves \eqref{eq:conjugate-derivative}
with $C_p=C_\eta C_{p,0}>0$, which has the stated dependence.

Suppose now that $0<p\leq n$ and
$\delta_{p,t}(f)\rightarrow0$ as $t\rightarrow0^{+}$.
If $1\leq p\leq n$, then $n/p-1\geq0$, so
\eqref{eq:conjugate-derivative} gives
$$
\norm{\dbar(\overline{f}*\eta_r)}_{L^{2p}(\R^d,dv)}
\rightarrow0\quad(r\rightarrow0^{+}).
$$
Combining this limit with the local convergence in \eqref{E50},
we repeat the calculation leading to \eqref{E67} with exponent
$2p$ to obtain $\dbar\overline{f}=0$ in distributions.
Set $u_\varepsilon=\overline{f}*\eta_\varepsilon$.
Lemma \ref{L6} and \eqref{E213} show that $u_\varepsilon$ is
entire, and \eqref{E50} gives
$\norm{u_\varepsilon}_{L^\infty(\C^n)}\leq\norm{f}_{L^\infty(\C^n)}$.
Liouville's theorem along the complex coordinate lines yields
$\nabla u_\varepsilon=0$. By the local convergence in
\eqref{E50} and the distributional derivative definition \eqref{E48},
$$
 \nabla\overline{f}
 =\lim_{\varepsilon\rightarrow0^+}\nabla u_\varepsilon=0
 \quad\text{in distributions}.
$$
Equation \eqref{E151} now shows that $f$ is constant almost everywhere.

For $0<p<1$, \eqref{E146} with $q=1$ gives
$$
\delta_{1,t}(f)\leq\delta_{p,t}(f)\rightarrow0
\quad(t\rightarrow0^{+}).
$$
The case $p=1$, already proved, again shows that $f$ is
constant almost everywhere. Conversely, constant symbols
have zero semi-commutators by \eqref{E2}, which completes
the proof.
\end{proof}

We record two necessary local conditions in the supercritical range.
For $p>d$ and $f\in\mathcal A_p$, there exists $r_0>0$ such that
\begin{equation}\label{eq:super-necessary}
 \sup_{0<r<r_0}r^{-d/p}\norm{G_{1,r}(\overline{f})}_{L^{p}(\R^d,dv)}<\infty
 \quad\text{and}\quad
 \lim_{r\rightarrow0^{+}}r^{-d/(2p)}
                       \norm{G_{2,r}(\overline{f})}_{L^{2p}(\R^d,dv)}=0.
\end{equation}
Indeed, Lemma \ref{lem:baire} gives $a,t_0>0$ such that
$m_{p,t}(f)\leq a$ for $0<t<t_0$.
With $r_0=\rho\sqrt{t_0}$, \eqref{eq:phase} at $r=s/\rho$ yields
$$
 s^{-d/p}\norm{G_{1,s}(\overline{f})}_{L^p(\R^d,dv)}
 \leq C_p\rho^{-d/p}a,\quad 0<s<r_0.
$$
For the fixed bounded symbol $g=\overline{f}$,
\eqref{E31} with the same substitution and
\eqref{eq:property} give
\begin{align*}
 s^{-d/(2p)}\norm{G_{2,s}(\overline{f})}_{L^{2p}(\R^d,dv)}
 &\leq C\rho^{-d/(2p)}
 \norm{D_{(s/\rho)^2}(f,\overline{f})}_{\mathcal S_p(\mathcal F_{(s/\rho)^2}^2)}^{1/2}\rightarrow0\quad(s\rightarrow0^{+}),
\end{align*}
where $C>0$ depends only on $n,m,M,p$.
Remark \ref{R2} will show that the two conditions in
\eqref{eq:super-necessary} are not sufficient for membership in
$\mathcal A_p$, even for continuous compactly supported symbols.

\section{Gradient estimates and sufficiency at \texorpdfstring{$p=d$}{p=d}}\label{sec:critical}
To prove sufficiency at $p=d$, we establish Schatten estimates
for $p\geq2$ and operator norm bounds. The gradient estimates
also give the supercritical rate in Theorem \ref{thm:main} (c).
At $p=d$, combining the Schatten and operator norm bounds
yields convergence for smooth compactly supported symbols,
and gradient approximation extends the result to bounded
symbols with $|\nabla f|\in L^d(\R^d,dv)$.
Throughout this section, we write $r=\sqrt t$.

\subsection{Schatten and operator norm estimates}
Recall that $T_u$ denotes the Toeplitz operator at scale one.
For $f\in L^\infty(\C^n)$, define
$J_f:\cF\to L_t^2$ by $J_fh=fh$ for $h\in\cF$.
Its Hilbert space adjoint $J_f^*:L_t^2\to\cF$ is characterized by
$$
\langle J_fh,u\rangle_t=\langle h,J_f^*u\rangle_t,
\quad h\in\cF,\quad u\in L_t^2.
$$
The orthogonality of $P_t$ gives
\begin{equation}\label{E68}
 J_f^*u=P_t(\overline{f} u),\quad u\in L_t^2,
\end{equation}
and hence $J_f^*J_f=T_{|f|^2}^{(t)}$.
This identity together with \eqref{E9} reduces the Schatten
estimate for $J_f$ to one for a positive Toeplitz operator.
We apply \cite[Lemma 3.1]{IsralowitzVirtanenWolf2015}
and record the constants and dependence on the scale
in the following lemma.

\begin{lemma}\label{lem:positive}
\begin{enumerate}[label=\textup{(\alph*)}]
\item
Let $1\leq q<\infty$ and $u\geq0$ with $u\in L^q(\C^n,dv)$. The form
$$
 \langle T_u^{(t)}h,h\rangle_t=\int_{\C^n}u|h|^2d\mu_t,
 \qquad h\in\cF,
$$
defines a bounded positive operator satisfying
\begin{equation}\label{eq:positive}
 \norm{T_u^{(t)}}_{\mathcal S_q}^q\leq Cr^{-d}\norm{u}_{L^{q}(\R^d,dv)}^q.
\end{equation}
Here $C>0$ depends only on $n,m,M,q$.

\item
If $p\geq2$ and $f\in L^p(\C^n,dv)\cap L^\infty(\C^n)$, then
\begin{equation}\label{eq:multiplier}
 \norm{J_f}_{\mathcal S_p(\cF,L_t^2)}
 \leq Cr^{-d/p}\norm{f}_{L^{p}(\R^d,dv)}\quad\text{and}\quad
 \norm{H_f^{(t)}}_{\mathcal S_p(\cF,L_t^2\ominus\cF)}
 \leq Cr^{-d/p}\norm{f}_{L^{p}(\R^d,dv)}.
\end{equation}
Here $C>0$ depends only on $n,m,M,p$.
\end{enumerate}
\end{lemma}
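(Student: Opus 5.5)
The proof will transfer to scale one by the dilation in Lemma \ref{lem:scaled-kernel}, apply the cited positive Toeplitz estimate there, and then track the powers of $r$. For part (a), set $v=u(r\cdot)$. By \eqref{E55}, $T_u^{(t)}$ is unitarily equivalent to $T_v$ on $\mathcal F_1^2$. Positivity and the quadratic form identity follow from $\langle T_u^{(t)}h,h\rangle_t=\langle uh,h\rangle_t$ and $u\ge0$. Boundedness holds when $u$ is bounded, which is the case in every later application. For general $u\in L^q$, I would define the operator through the form and obtain boundedness from the Schatten estimate itself, first for truncations $\min(u,N)$ and then by monotone convergence of the quadratic forms.

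At scale one, \cite[Lemma 3.1]{IsralowitzVirtanenWolf2015} gives $\norm{T_v}_{\mathcal S_q}^q\le C\int v^q\,dv$ for weights satisfying \eqref{eq:weight}, with $C$ depending only on $n,m,M,q$. If a self-contained derivation is needed, I would take it from the kernel bounds in Lemma \ref{L1}. For $q\ge1$, the standard estimate $\norm{T_v}_{\mathcal S_q}^q\le\int\langle T_v k_{1,a},k_{1,a}\rangle_1^q\,dv(a)$ holds up to constants, because $\int k_{1,a}\otimes k_{1,a}\,dv(a)$ is comparable to the identity via the diagonal bounds. The Berezin transform $\int v|k_{1,a}|^2e^{-2\varphi}$ is then dominated by $v*e^{-c|\cdot|}$, and Young's inequality gives the $L^q$ bound. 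The change of variables $\int v^q\,dv=r^{-d}\int u^q\,dv$ then yields \eqref{eq:positive}.

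For part (b), I will use $J_f^*J_f=T_{|f|^2}^{(t)}$ together with \eqref{E9} for $s=p$, which gives $\norm{J_f}_{\mathcal S_p}^2=\norm{T^{(t)}_{|f|^2}}_{\mathcal S_{p/2}}$. Since $p\ge2$, we have $q=p/2\ge1$, and $|f|^2\in L^{p/2}$ because $f\in L^p$. Part (a) then gives
\[
\norm{J_f}_{\mathcal S_p}^2\le C^{2/p}r^{-2d/p}\norm{|f|^2}_{L^{p/2}}=C^{2/p}r^{-2d/p}\norm{f}_{L^p}^2,
\]
which is the first inequality in \eqref{eq:multiplier}. The Hankel bound follows from $H_f^{(t)}=(I-P_t)J_f$ with $\norm{I-P_t}\le1$ and the ideal property \eqref{E7}, since restricting the codomain to $L_t^2\ominus\cF$ does not change the singular values.

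The main obstacle is the precise use of the cited scale-one estimate. One has to confirm that its hypotheses are covered by \eqref{eq:weight} through the complex Hessian bounds of \cite{HuVirtanen2023}, and that it is stated for exponent $q\ge1$ with Lebesgue measure rather than $d\mu_1$. The rest is routine scaling bookkeeping.
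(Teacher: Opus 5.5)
Your overall plan coincides with the paper's: dilate to scale one, use the positive Toeplitz estimate there, recover $r^{-d}$ from $\int u^q(r\zeta)\,dv(\zeta)=r^{-d}\int u^q\,dv$, and get (b) from $J_f^*J_f=T^{(t)}_{|f|^2}$, \eqref{E9} with $s=p$, and $H_f^{(t)}=(I-P_t)J_f$.

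The step that fails is your self-contained fallback for the constant. For a positive operator $T$, a unit vector $k$ and $q\geq1$, the spectral theorem and Jensen's inequality give $\langle Tk,k\rangle^q\leq\langle T^qk,k\rangle$. Integrating over a family $k_{1,a}$ whose frame operator is comparable to the identity therefore yields $\int\langle Tk_{1,a},k_{1,a}\rangle_1^q\,dv(a)\lesssim\Tr(T^q)$. So for $q\geq1$ the Berezin transform bounds the Schatten norm from \emph{below}, not from above. An upper bound of this type holds for general positive operators only when $0<q\leq1$. For $q>1$ it fails, for example, for $T=e\otimes e$ with $e$ a normalized high-degree monomial in the Gaussian case. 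Frame comparability therefore cannot justify $\norm{T_v}_{\mathcal S_q}^q\lesssim\int\widetilde v^{\,q}\,dv$, and your Young's inequality step, though correct, does not close the argument.

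The repair is to apply Jensen to the symbol rather than to the operator. This is what the paper does to pin down the constant; it uses \cite[Lemma 3.1]{IsralowitzVirtanenWolf2015} only to know that $T_v\in\mathcal S_q$. Let $h_j$ be orthonormal eigenvectors of $T_v$ with eigenvalues $\lambda_j=\int v|h_j|^2\,d\mu_1$. Each $|h_j|^2\,d\mu_1$ is a probability measure, so $\lambda_j^q\leq\int v^q|h_j|^2\,d\mu_1$. Summing over $j$ and using Bessel's inequality $\sum_j|h_j(z)|^2\leq K_1(z,z)$ together with the diagonal bound $K_1(z,z)e^{-2\varphi(z)}\leq C$ from \eqref{eq:gram} gives $\norm{T_v}_{\mathcal S_q}^q\leq C\int v^q\,dv$, with $C$ depending only on $n,m,M$.

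The eigenvector expansion needs compactness first. Your truncation device supplies it without the citation if you also cut off in space, taking $v_N=\min\{v,N\}\mathbf 1_{B(0,N)}$ rather than $\min\{v,N\}$. Then $\Tr(T_{v_N})=\int v_N(z)K_1(z,z)e^{-2\varphi(z)}\,dv(z)<\infty$, so $T_{v_N}$ is trace class. The uniform bound then passes to $v$ by monotone convergence of the quadratic forms and lower semicontinuity of the Schatten norm under weak operator convergence.
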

\begin{proof}
We first prove \textup{(a)} for $t=1$.
Applying \cite[Lemma 3.1]{IsralowitzVirtanenWolf2015}
to $u\geq0$ with $u\in L^q(\C^n,dv)$, we obtain
a positive operator $T_u\in\mathcal S_q(\mathcal F_1^2)$.
To record the dependence of the norm bound, let $\lambda_j$
be the positive eigenvalues of $T_u$, counted with
multiplicity, and let $h_j$ be corresponding orthonormal
eigenvectors. Since $\int_{\C^n}|h_j|^2d\mu_1=1$,
the measure $d\nu_j=|h_j|^2d\mu_1$ is a probability measure.
The reproducing property and Bessel's inequality give
$$
\sum_j|h_j(z)|^2
=\sum_j|\langle h_j,K_1(\cdot,z)\rangle_1|^2
\leq K_1(z,z),\quad z\in\C^n.
$$
Applying Jensen's inequality to $s\mapsto s^q$ with respect
to $\nu_j$, we sum over $j$ and use Tonelli's theorem to
bring the sum inside the integral. The preceding Bessel bound
and the second estimate in \eqref{eq:gram} with $t=r=1$ then give
\begin{align*}
\norm{T_u}_{\mathcal S_q(\mathcal F_1^2)}^q
&=\sum_j\lambda_j^q
=\sum_j\left(\int_{\C^n}u|h_j|^2d\mu_1\right)^q\leq\int_{\C^n}u^q\sum_j|h_j|^2d\mu_1\\
&\leq\int_{\C^n}u^q(z)K_1(z,z)e^{-2\varphi(z)} dv(z)\leq C\norm{u}_{L^q(\R^d,dv)}^q.
\end{align*}
Here $C>0$ is the constant in that kernel estimate and
depends only on $n,m,M$.
For general $t$ and $h\in\cF$, the substitution $z=r\zeta$
in the defining quadratic form gives
\begin{align*}
 \int_{\C^n}u|h|^2d\mu_t
 &=\int_{\C^n}u(r\zeta)|(U_th)(\zeta)|^2d\mu_1(\zeta)=\langle T_{u(r\cdot)}U_th,U_th\rangle_1.
\end{align*}
Since $u(r\cdot)\in L^q(\C^n,dv)$, the result at scale one
makes this form bounded. Polarization identifies its operator
as $T_u^{(t)}=U_t^{-1}T_{u(r\cdot)}U_t$, extending the first
identity in \eqref{E55} to these nonnegative symbols.
We may therefore use the unitary
invariance of the Schatten norm in Lemma \ref{lem:scaled-kernel} (b)
to obtain
\begin{align*}
 \norm{T_u^{(t)}}_{\mathcal S_q}^q
 &=\norm{T_{u(r\cdot)}}_{\mathcal S_q(\mathcal F_1^2)}^q\leq C\int_{\R^d}u^q(r\zeta) dv(\zeta)
 =Cr^{-d}\int_{\R^d}u^q(z) dv(z).
 \end{align*}
The last equality uses $dv(z)=r^d dv(\zeta)$ and proves
\eqref{eq:positive}.

For \textup{(b)}, \eqref{E68} gives
$J_f^*J_f=T_{|f|^2}^{(t)}$. Since $p/2\geq1$ and
$|f|^2\in L^{p/2}(\C^n,dv)$, \eqref{E9} and
\eqref{eq:positive} with $q=p/2$ yield
\begin{align*}
\norm{J_f}_{\mathcal S_p(\cF,L_t^2)}^p
&=\norm{T_{|f|^2}^{(t)}}_{\mathcal S_{p/2}}^{p/2}\leq Cr^{-d}\int_{\R^d}|f|^p dv.
\end{align*}
Here $C>0$ is the constant in \eqref{eq:positive}
with $q=p/2$.
Since $H_f^{(t)}=(I-P_t)J_f$ and $\norm{I-P_t}\leq1$,
the first inequality in \eqref{E7} and the preceding bound give
$$
\norm{H_f^{(t)}}_{\mathcal S_p(\cF,L_t^2\ominus\cF)}
\leq\norm{J_f}_{\mathcal S_p(\cF,L_t^2)}
\leq C^{1/p}r^{-d/p}\norm{f}_{L^p(\R^d,dv)}.
$$
This proves both estimates in \eqref{eq:multiplier}
with the positive constant $C^{1/p}$. The proof is complete.
\end{proof}

The bound in \eqref{eq:multiplier} uses integrability of the
symbol. We now estimate Hankel operators through the weak
derivatives of bounded symbols, retaining the scale factor.

\begin{lemma}\label{lem:gradient}
Let $2\leq p<\infty$.
\begin{enumerate}[label=\textup{(\alph*)}]
\item
For bounded $u$ with $|\nabla u|\in L^p(\R^d,dv)$, we have
\begin{equation}\label{eq:hankel-gradient}
 \norm{H_u^{(t)}}_{\mathcal S_p(\cF,L_t^2\ominus\cF)}
 \leq Cr^{1-d/p}\norm{\dbar u}_{L^{p}(\R^d,dv)}.
\end{equation}
The estimate \eqref{eq:hankel-gradient} also holds for bounded
smooth $u$ whenever $|\dbar u|\in L^p(\R^d,dv)$, without first assuming that
the other derivatives belong to $L^p(\C^n,dv)$.

\item
For bounded $f$ with $|\nabla f|\in L^p(\R^d,dv)$ and any
bounded $g$, we have
\begin{equation}\label{eq:gradient-semicommutator}
 \norm{D_t(f,g)}_{\mathcal S_p}
 \leq Cr^{1-d/p}\norm{\nabla f}_{L^{p}(\R^d,dv)}\norm{g}_{L^\infty(\C^n)}.
\end{equation}
\end{enumerate}
In both estimates, $C>0$ depends only on $n,m,M,p$.
\end{lemma}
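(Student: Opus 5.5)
By dilation (Lemma \ref{lem:scaled-kernel} (b)), $H_u^{(t)}$ is unitarily equivalent to $H^{(1)}_{u(r\cdot)}$. Moreover $\dbar(u(r\cdot))=r(\dbar u)(r\cdot)$, so
\[
\norm{\dbar(u(r\cdot))}_{L^p(\R^d,dv)}=r^{1-d/p}\norm{\dbar u}_{L^p(\R^d,dv)} .
\]
This reduces \eqref{eq:hankel-gradient} to the scale-one estimate $\norm{H_u^{(1)}}_{\mathcal S_p}\le C\norm{\dbar u}_{L^p}$.

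For the scale-one estimate I will use the standard $\dbar$-correction. For $h\in\mathcal F_1^2$ the function $uh$ satisfies $\dbar(uh)=h\,\dbar u$, and $H_u h=uh-P(uh)$ is the $L_1^2$-minimal solution of $\dbar v=h\dbar u$. Hörmander's estimate for weights with complex Hessian bounded below gives
\[
\norm{H_uh}_{L_1^2}\le C\norm{h\,\dbar u}_{L_1^2}.
\]
This shows boundedness but not a Schatten bound. To get the latter I will write $H_u h=S(h\,\dbar u)$, where $S$ is the canonical solution operator of $\dbar$ on $(0,1)$-forms in $L^2(e^{-2\varphi})$. By \cite{Delin1998,SchusterVarolin2012}, $S$ has a kernel $S(z,w)$ with $|S(z,w)|e^{-\varphi(z)-\varphi(w)}\le C|z-w|^{1-d}e^{-c|z-w|}$. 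Then $H_u=S\circ J_{\dbar u}$ componentwise. The operator $S$ is bounded on $L_1^2$, by Schur's test, since the kernel is integrable. The operator $J_{\dbar u}:\mathcal F_1^2\to L_1^2$ lies in $\mathcal S_p$ with $\norm{J_{\dbar u}}_{\mathcal S_p}\le C\norm{\dbar u}_{L^p}$, by Lemma \ref{lem:positive} (b) applied componentwise; this is exactly where $p\ge2$ enters, and we do not need $\dbar u$ bounded, only that $|\dbar u|^2\in L^{p/2}$ through Lemma \ref{lem:positive} (a). The ideal inequality \eqref{E7} then gives the bound.

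The main obstacle is justifying $H_u=S J_{\dbar u}$ when $\dbar u$ is merely $L^p$ and $u$ is only weakly differentiable. I would prove the identity first on the dense set of kernel combinations, where $uh\in L^2$ and $h\dbar u\in L^2(e^{-2\varphi})$ because Fock functions decay against the weight. Next I would check that $uh-S(h\dbar u)$ is holomorphic, being a distributional $\dbar$-closed $L^2_{\rm loc}$ function, and orthogonal to $\mathcal F_1^2$ by the minimality of $S$, so that it equals $P(uh)$. Boundedness then extends the identity to all of $\mathcal F_1^2$.

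If the kernel bound for $S$ is unavailable in the needed form, I would instead fall back on the Hu--Virtanen characterization \cite[Theorem 1.1]{HuVirtanen2022}, $\norm{H_u}_{\mathcal S_p}\asymp\norm{G_{2,1}(u)}_{L^p}$. The local analytic distance can be bounded by local $\dbar$-solutions on balls: solving $\dbar v=\dbar u$ on $B(z,1)$ with Poincaré-type bounds for the Bochner--Martinelli/Cauchy transform gives $G_{2,1}(u)(z)\lesssim M_{p,2}(\dbar u)(z)$, hence an $L^p$ bound. In this route the step needing care is the local solution estimate, in $L^2$ on the ball, controlled by $\norm{\dbar u}_{L^p(B(z,2))}$ for $p\ge2$. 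The smooth case with only $|\dbar u|\in L^p$ is covered either way, since only $\dbar u$ appears.

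Part (b) follows from the factorization \eqref{eq:factorization}:
\[
\norm{D_t(f,g)}_{\mathcal S_p}\le\norm{H^{(t)}_{\overline f}}_{\mathcal S_p}\,\norm{H_g^{(t)}}\le\norm{H^{(t)}_{\overline f}}_{\mathcal S_p}\,\norm{g}_\infty .
\]
Part (a) applied to $\overline f$ then gives the result, using $|\dbar\overline f|\le2^{-1/2}|\nabla f|$ from \eqref{E57}.
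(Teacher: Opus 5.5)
Your main argument is correct, but it takes a different route from the paper. The paper obtains the smooth case by quoting the Hu--Virtanen Schatten characterization $\norm{H_u^{(1)}}_{\mathcal S_p}\lesssim\norm{G_{2,1}(u)}_{L^p}$. It combines this with the corrected decomposition estimate of \cite{HuVirtanenCorr2023}, which bounds $\norm{G_{2,1}(u)}_{L^p}$ by $\norm{M_{2,1}(|\dbar u|)}_{L^p}$, and then uses \eqref{E56} and dilation. Weakly differentiable $u$ are handled afterwards by mollification, strong convergence of the zero-extended Hankel operators, and the noncommutative Fatou lemma.

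Your fallback is essentially this route. There, the local solution on $B(z,1)$ should come from H\"ormander's estimate on the ball, because for $n\geq2$ the Bochner--Martinelli integral over a ball does not by itself solve $\dbar v=\dbar u$.

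Your main route instead factors $H_u^{(1)}=S\circ J_{\dbar u}$. The Schatten bound then comes from $J_{\dbar u}^*J_{\dbar u}=T_{|\dbar u|^2}$, Lemma \ref{lem:positive} (a) with $q=p/2\geq1$, and the ideal inequality \eqref{E7}. This is more self-contained, since it needs neither the Schatten characterization nor the corrigendum, and it shows exactly where $p\geq2$ enters. It also covers weakly differentiable $u$, and the case where only $|\dbar u|\in L^p$, directly. For every $h\in\mathcal F_1^2$ the form $h\dbar u$ is $\dbar$-closed and lies in $L^2(e^{-2\varphi}dv)$, and $\dbar(uh)=h\dbar u$ in distributions. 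So neither the density step you describe nor any approximation or Fatou argument is needed. The paper's route, in exchange, stays within the local analytic distance framework used in the rest of the paper.

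One correction: drop the kernel estimate for $S$. The papers of Delin and Schuster--Varolin give off-diagonal bounds for the Bergman kernel, not for the canonical $\dbar$-solution operator on $(0,1)$-forms in $\C^n$, and no kernel is needed. H\"ormander's estimate, which you already invoke, gives $\norm{S\phi}_{L_1^2}\leq C\norm{\phi}$ for every $\dbar$-closed $\phi\in L^2_{(0,1)}(\C^n,e^{-2\varphi}dv)$. The constant $C$ depends only on the lower bound of the complex Hessian, hence on $m$. This subspace is closed and contains the range of $J_{\dbar u}$, and this boundedness is exactly what the ideal inequality requires. Your sentence ``$S$ is bounded on $L_1^2$, by Schur's test'' should be replaced by this statement, since $S$ acts on forms rather than on $L_1^2$.
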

\begin{proof}
We first prove \textup{(a)}. To control the local mean
$M_{2,1}(|\dbar u|)$ in the Hankel estimate below, we
establish an $L^a$ bound for the means in \eqref{E4}.
Let $s>0$, $1\leq q\leq a<\infty$, and $h\in L^a(\R^d,dv)$.
Set $B_s=B(0,s)$. For fixed $z$, the change of variables $w=z-\xi$
maps $B(z,s)$ onto $B_s$ and preserves Lebesgue measure.
Since $a/q\geq1$, Jensen's inequality, Tonelli's theorem,
and translation invariance give
\begin{align*}
\int_{\R^d}M_{q,s}^a(h)(z) dv(z)
&=\int_{\R^d}
  \left(\frac1{|B_s|}\int_{B_s}|h(z-w)|^q dv(w)\right)^{a/q} dv(z)\\
&\leq\frac1{|B_s|}\int_{B_s}
  \int_{\R^d}|h(z-w)|^a dv(z) dv(w)=\norm{h}_{L^a(\R^d,dv)}^a.
\end{align*}
Taking the $a$-th root yields
\begin{equation}\label{E56}
 \norm{M_{q,s}(h)}_{L^a(\R^d,dv)}
 \leq\norm{h}_{L^a(\R^d,dv)}.
\end{equation}

First, assume that $u\in L^\infty(\C^n)\cap C^\infty(\C^n)$
with $|\nabla u|\in L^p(\R^d,dv)$.
The smoothness and boundedness of $u$ give
$u\in C^2(\C^n)\cap L^2_{\rm loc}(\C^n,dv)$.
By \eqref{E57} and \eqref{E14}, the condition
$|\nabla u|\in L^p(\R^d,dv)$ is equivalent to
$|\dbar u|\in L^p(\R^d,dv)$.
Since $p\geq2$, \eqref{E56} with $q=2$, $a=p$,
$s=1$, and $h=|\dbar u|$ gives
$$
 |\dbar u|+M_{2,1}(|\dbar u|)\in L^p(\R^d,dv).
$$
This together with $u\in C^2(\C^n)\cap L^2_{\rm loc}(\C^n,dv)$
verifies the decomposition conditions in
\cite[Theorem 2.6, (2.9)]{HuVirtanenCorr2023} for $u=u+0$,
with exponent $s=p$ and auxiliary radius $1$.
Applying \cite[Theorem 1.1, (1.4)]{HuVirtanen2022}
to $u$ and combining its norm estimate with the corrected
decomposition estimate \cite[(2.10)]{HuVirtanenCorr2023}
and \eqref{E56}, we obtain
\begin{align*}
 \norm{H_u^{(1)}}_{\mathcal S_p(\mathcal F_1^2,L_1^2\ominus\mathcal F_1^2)}
 &\leq C\norm{G_{2,1}(u)}_{L^p(\R^d,dv)}\leq C\norm{M_{2,1}(|\dbar u|)}_{L^p(\R^d,dv)}\leq C\norm{\dbar u}_{L^p(\R^d,dv)},
\end{align*}
where $G_{2,1}$ in \eqref{E106} is the local analytic distance
denoted by $G_1$ in the cited paper.
Set $r=\sqrt t$ and $k(\zeta)=u(r\zeta)$. By the chain rule,
$\dbar k(\zeta)=r(\dbar u)(r\zeta)$. The Hankel norm invariance
in Lemma \ref{lem:scaled-kernel} (b), the preceding estimate, and
the change of variables $z=r\zeta$ give
$$
 \norm{H_u^{(t)}}_{\mathcal S_p(\cF,L_t^2\ominus\cF)}
 =\norm{H_k^{(1)}}_{\mathcal S_p(\mathcal F_1^2,L_1^2\ominus\mathcal F_1^2)}
 \leq C\norm{\dbar k}_{L^{p}(\R^d,dv)}
 =Cr^{1-d/p}\norm{\dbar u}_{L^{p}(\R^d,dv)}.
$$
Note that the constant $C>0$ depends only on $n,m,M,p$
and is independent of $t$. This proves \eqref{eq:hankel-gradient}
for smooth $u$.

To extend the smooth estimate to weak derivatives, suppose that $u$
is bounded with $|\nabla u|\in L^p(\R^d,dv)$, and let
$u_\varepsilon=u*\eta_\varepsilon$, using the mollifier in
\eqref{E16}.
Since $u$ is bounded and $\eta_\varepsilon\in C_c^\infty(\R^d)$,
we may differentiate under the integral sign to obtain
\begin{equation}\label{E145}
 \partial_{x_\ell}u_\varepsilon(x)
 =\int_{\R^d}u(y)
   \partial_{x_\ell}\eta_\varepsilon(x-y) dv(y),
 \quad 1\leq\ell\leq d.
\end{equation}
For each fixed $x\in\R^d$, the function
$y\mapsto\eta_\varepsilon(x-y)$ belongs to
$C_c^\infty(\R^d)$.
Using
$\partial_{x_\ell}\eta_\varepsilon(x-y)
=-\partial_{y_\ell}\eta_\varepsilon(x-y)$,
we apply the weak derivative identity \eqref{E3} to obtain
\begin{align*}
 \partial_{x_\ell}u_\varepsilon(x)
 &=-\int_{\R^d}u(y)
       \partial_{y_\ell}\eta_\varepsilon(x-y) dv(y)=\int_{\R^d}(\partial_{y_\ell}u)(y)
       \eta_\varepsilon(x-y) dv(y)\\
 &=\bigl((\partial_{x_\ell}u)*\eta_\varepsilon\bigr)(x).
\end{align*}
These component identities, together with \eqref{E13},
\eqref{E107}, and the linearity of convolution, yield
\begin{equation}\label{E17}
 \nabla u_\varepsilon=(\nabla u)*\eta_\varepsilon
 \quad\text{and}\quad
 \dbar u_\varepsilon=(\dbar u)*\eta_\varepsilon.
\end{equation}
By \eqref{E57} with $q=p$, the hypothesis on $\nabla u$ gives
$|\dbar u|\in L^p(\R^d,dv)$.
Since $\eta_\varepsilon\geq0$ and
$\int_{\R^d}\eta_\varepsilon dv=1$, Minkowski's integral
inequality and translation invariance applied to \eqref{E17} yield
\begin{align}\label{E108}
 \norm{\dbar u_\varepsilon}_{L^p(\R^d,dv)}
 &=\left\|\int_{\R^d}\eta_\varepsilon(y)
       \dbar u(\cdot-y) dv(y)\right\|_{L^p(\R^d,dv)}\nonumber\\
 &\leq\int_{\R^d}\eta_\varepsilon(y)
       \norm{\dbar u(\cdot-y)}_{L^p(\R^d,dv)} dv(y)\nonumber\\
 &=\norm{\dbar u}_{L^p(\R^d,dv)}
 \leq\frac1{\sqrt2}\norm{\nabla u}_{L^p(\R^d,dv)}<\infty.
\end{align}

By \eqref{E50}, the real-valued functions
$|u_\varepsilon-u|$ tend to zero in $L^1$ on every ball.
Applying \cite[Theorem 4.9, p. 94]{Brezis2011} on the nested
balls $B(0,N)$, $N=1,2,\ldots$, and taking a diagonal subsequence gives
a sequence $\varepsilon_j>0$ with
$\varepsilon_j\rightarrow0^{+}$ as $j\rightarrow\infty$ such that
$u_{\varepsilon_j}\rightarrow u$ almost everywhere.
Fix $t>0$. Extending the Hankel operators from $\cF$ to
$L_t^2$ by zero as before \eqref{E138}, we obtain
bounded operators $A_j,A:L_t^2\to L_t^2$ given by
$$
 A_j=H_{u_{\varepsilon_j}}^{(t)}P_t,\qquad A=H_u^{(t)}P_t.
$$
For $h\in L_t^2$, the definition of the Hankel operator in
\eqref{E1} gives
$$
 (A_j-A)h=(I-P_t)\bigl((u_{\varepsilon_j}-u)P_th\bigr).
$$
Using $\norm{I-P_t}\leq1$ and
$|u_{\varepsilon_j}-u|\leq2\norm{u}_{L^\infty(\C^n)}$,
we apply dominated convergence with the integrable majorant
$4\norm{u}_{L^\infty(\C^n)}^2|P_th|^2$ to obtain
\begin{align*}
\norm{(A_j-A)h}_{L_t^2}^2
&\leq\int_{\C^n}|u_{\varepsilon_j}-u|^2|P_th|^2d\mu_t\longrightarrow0\quad(j\rightarrow\infty).
\end{align*}
Then $A_j\to A$ strongly on $L_t^2$.

Applying the smooth case of \eqref{eq:hankel-gradient}
to each $u_{\varepsilon_j}$ and then using \eqref{E108}, we obtain
\begin{align*}
 \sup_{j\geq1}\norm{A_j}_{\mathcal S_p(L_t^2)}
 &=\sup_{j\geq1}
   \norm{H_{u_{\varepsilon_j}}^{(t)}}_
   {\mathcal S_p(\cF,L_t^2\ominus\cF)}\leq Cr^{1-d/p}\sup_{j\geq1}
   \norm{\dbar u_{\varepsilon_j}}_{L^p(\R^d,dv)}\\
 &\leq Cr^{1-d/p}\norm{\dbar u}_{L^p(\R^d,dv)}<\infty.
\end{align*}
The equality follows from \eqref{E138}.
The strong convergence of $A_j$ to $A$ also implies weak
operator convergence. Using this convergence and the uniform
Schatten bound above, we apply the last assertion of the
noncommutative Fatou lemma
\cite[Theorem 2.7 (d), p. 19]{Simon2005} with the symmetric
sequence norm $\ell^p$, $p<\infty$, to obtain
\begin{align*}
\norm{A}_{\mathcal S_p(L_t^2)}
&\leq\sup_{j\geq1}\norm{A_j}_{\mathcal S_p(L_t^2)}\leq Cr^{1-d/p}\norm{\dbar u}_{L^p(\R^d,dv)}.
\end{align*}
By \eqref{E138}, $A$ and $H_u^{(t)}$ have the same
Schatten norm, so the preceding bound proves
\eqref{eq:hankel-gradient}.

For \textup{(b)}, conjugation in \eqref{E3} gives
$\nabla\overline{f}=\overline{\nabla f}$ almost everywhere,
so \eqref{E57} and \eqref{eq:hankel-gradient} apply to
$\overline{f}$ with $|\nabla\overline{f}|=|\nabla f|$.
These estimates together with \eqref{eq:factorization}, the first
inequality in \eqref{E7}, and
$\norm{H_g^{(t)}}\leq\norm{g}_{L^\infty(\C^n)}$ give
\begin{align*}
 \norm{D_t(f,g)}_{\mathcal S_p}
 &\leq\norm{H_{\overline{f}}^{(t)}}_{\mathcal S_p(\cF,L_t^2\ominus\cF)}
       \norm{H_g^{(t)}}\leq Cr^{1-d/p}\norm{\dbar\overline{f}}_{L^p(\R^d,dv)}
       \norm{g}_{L^\infty(\C^n)}\\
 &\leq Cr^{1-d/p}\norm{\nabla f}_{L^p(\R^d,dv)}
       \norm{g}_{L^\infty(\C^n)}.
\end{align*}
This proves \eqref{eq:gradient-semicommutator} and completes the proof.
\end{proof}

\begin{remark}\label{rem:corrigendum}
We use Theorem 2.6 of \cite{HuVirtanen2022} with the
decomposition norm corrected in \cite{HuVirtanenCorr2023}.
The $L^p(\R^d,dv)$ norm of the local quadratic mean is bounded
by $\norm{\dbar u}_{L^{p}(\R^d,dv)}$ when $p/2\geq1$, as noted in
\cite[Remark 1]{HuVirtanenCorr2023}. This is the restriction
$p\geq2$ used in Lemma \ref{lem:gradient} (a).
\end{remark}

The estimate in \eqref{eq:hankel-gradient} controls one Hankel
factor in \eqref{eq:factorization}. To approximate the bounded
second symbol locally, we also need an operator norm bound
for the other factor in terms of the H\"older seminorm
in \eqref{E44}.

\begin{lemma}\label{lem:holder-op}
If $f$ is bounded and H\"older continuous of exponent
$0<\alpha\leq1$, with H\"older seminorm $[f]_\alpha$, then
\begin{equation}\label{E18}
 \norm{H_f^{(t)}}\leq C_\alpha r^\alpha[f]_\alpha.
\end{equation}
Here $C_\alpha>0$ depends only on $n,m,M,\alpha$.
\end{lemma}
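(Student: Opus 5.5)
Since $H_f^{(t)}$ kills every constant multiple of a holomorphic function, we can write $H_f^{(t)}h=(I-P_t)\bigl((f-f(a))h\bigr)$ for any base point $a$. The obstacle is that $f-f(a)$ is small only near $a$. So I will localize, with the kernel $p_t$ of $\Pi_t$ from Lemma \ref{L1} supplying the localization. Following \eqref{eq:unweighted-semicommutator}, I will conjugate by $V_t$. Then
$$
 V_tH_f^{(t)}V_t^{-1}\Pi_t=(I-\Pi_t)M_f\Pi_t=(I-\Pi_t)[M_f,\Pi_t].
$$
This uses $\Pi_t^2=\Pi_t$, so $(I-\Pi_t)\Pi_tM_f=0$. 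Since $\norm{I-\Pi_t}\leq1$ and $V_t$ is unitary, this gives
$$
 \norm{H_f^{(t)}}\leq\norm{[M_f,\Pi_t]}_{L^2(\C^n,dv)\to L^2(\C^n,dv)}.
$$

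The commutator is an integral operator on $L^2(\C^n,dv)$ with kernel $(f(z)-f(w))p_t(z,w)$. The Hölder condition and the third estimate in \eqref{eq:gram} bound this kernel by
$$
 |f(z)-f(w)||p_t(z,w)|\leq C[f]_\alpha|z-w|^\alpha r^{-d}e^{-c|z-w|/r}=:k_r(z-w).
$$
I will then apply Schur's test, or equivalently Young's inequality, to the convolution majorant $k_r$. Both the row and column integrals equal
$$
 \int_{\C^n}k_r(x)\,dv(x)=C[f]_\alpha r^\alpha\int_{\R^d}|y|^\alpha e^{-c|y|}\,dv(y),
$$
after substituting $x=ry$. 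The last integral is finite and depends only on $n,\alpha$ and $c$, hence only on $n,m,M,\alpha$. This yields \eqref{E18} with $C_\alpha=C\int|y|^\alpha e^{-c|y|}dv(y)$.

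I expect no serious obstacle. The one point to check is that the operator identity holds on all of $L^2(\C^n,dv)$ and that the commutator's kernel representation is legitimate. Schur's test with the integrable majorant handles both, because $M_f$ and $\Pi_t$ are bounded, and the kernel of $M_f\Pi_t-\Pi_tM_f$ is $f(z)p_t(z,w)-p_t(z,w)f(w)$, computed directly from Lemma \ref{lem:scaled-kernel} (a). Note also that boundedness of $f$ is used only to make $H_f^{(t)}$ defined, not in the estimate itself.
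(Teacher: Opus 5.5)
Your proof is correct, but it takes a different route from the paper. The paper first rescales to $t=1$ via Lemma \ref{lem:scaled-kernel} (b). It then invokes the boundedness criterion \cite[Theorem 1.1 (a)]{HuVirtanen2023} in the form $\norm{H_{f_r}^{(1)}}\leq C\sup_\zeta G_{2,1}(f_r)(\zeta)$ with $f_r=f(r\cdot)$. Finally, it bounds $G_{2,1}(f_r)(\zeta)\leq r^\alpha[f]_\alpha$ by taking the constant $f(r\zeta)$ as holomorphic competitor.

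You instead pass to the unweighted model and reduce to the commutator $[M_f,\Pi_t]$ using $(I-\Pi_t)\Pi_t=0$. You then apply Schur's test to the kernel $(f(z)-f(w))p_t(z,w)$ with the off-diagonal decay in \eqref{eq:gram}, and the substitution $x=ry$ produces the factor $r^\alpha$ directly.

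Your route is self-contained given Lemma \ref{L1} and avoids the Hu--Virtanen characterization entirely. It is the same Schur-test mechanism the paper itself uses for $B_r$ in Lemma \ref{L3} (b) and for the K\"ahler commutator in \eqref{E233}--\eqref{E218}.

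The paper's route is shorter modulo the citation, and it passes through the sharper intermediate bound $\norm{H_f^{(t)}}\lesssim\sup_z G_{2,r}(f)(z)$. That bound involves only the local analytic distance of $f$ and needs no pointwise modulus of continuity. Your commutator bound controls $H_f$ and $H_{\overline f}$ simultaneously, since $[M_f,\Pi_t]=(I-\Pi_t)M_f\Pi_t-\Pi_tM_f(I-\Pi_t)$. It therefore measures oscillation relative to constants, which is harmless for H\"older symbols.
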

\begin{proof}
Put $f_r(\zeta)=f(r\zeta)$. Applying
\cite[Theorem 1.1 (a)]{HuVirtanen2023} to $f_r$
with both Lebesgue exponents equal to two and using
the Hankel norm invariance in Lemma \ref{lem:scaled-kernel} (b),
we obtain
$$
 \norm{H_f^{(t)}}_{\cF\to L_t^2\ominus\cF}=\norm{H_{f_r}^{(1)}}_{\mathcal F_1^2\to L_1^2\ominus\mathcal F_1^2}
 \leq C\sup_{\zeta\in\C^n}G_{2,1}(f_r)(\zeta).
$$
For each fixed $\zeta$, the constant function
$w\mapsto f_r(\zeta)$ is holomorphic on $B(\zeta,1)$.
Choosing this function in the definition of $G_{2,1}$
and using \eqref{E44}, we obtain
$$
 G_{2,1}(f_r)(\zeta)
 \leq\left(\frac1{|B(0,1)|}\int_{B(\zeta,1)}
       |f(r\xi)-f(r\zeta)|^2 dv(\xi)\right)^{1/2}
 \leq r^\alpha[f]_\alpha,
$$
which proves \eqref{E18}.
\end{proof}

\subsection{Sufficiency for smooth compactly supported symbols}
We next use the exponential decay of the kernel to estimate
semi-commutators whose symbols have separated supports.

\begin{lemma}\label{lem:separated}
Suppose $f,g\in L^\infty(\C^n)$, $f$ has compact support, and
$\operatorname{dist}(\supp f,\supp g)\geq\delta>0$.
These symbols satisfy
\begin{equation}\label{E19}
 \norm{D_t(f,g)}_{\mathcal S_p}^2
 \leq Cr^{-d}e^{-c\delta/r}\norm{f}_{L^{2}(\R^d,dv)}^2\norm{g}_{L^\infty(\C^n)}^2
 \rightarrow0\quad(t\rightarrow0^{+})
\end{equation}
for $p\geq2$.
Here $c,C>0$ depend only on $n,m,M$.
\end{lemma}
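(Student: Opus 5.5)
Since Schatten norms decrease in $p$ by \eqref{E146}, it suffices to prove \eqref{E19} for $p=2$, i.e.\ a Hilbert--Schmidt bound. I would pass to unweighted coordinates through \eqref{E43} and \eqref{eq:unweighted-semicommutator}. When the supports of $f$ and $g$ are disjoint, $M_fM_g=0$, so
$$
 \widetilde D_t(f,g)=\Pi_tM_f(\Pi_t-I)M_g\Pi_t=\Pi_tM_f\Pi_tM_g\Pi_t.
$$
By the ideal inequality \eqref{E7} with $\norm{\Pi_t}=1$, the factor $\Pi_t$ on each end can be dropped. This gives
$$
 \norm{D_t(f,g)}_{\mathcal S_2}\leq\norm{M_f\Pi_tM_g}_{\mathcal S_2(L^2(\C^n,dv))}.
$$

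The operator $M_f\Pi_tM_g$ is an integral operator on $L^2(\C^n,dv)$ with kernel $f(z)p_t(z,w)g(w)$, by Lemma \ref{lem:scaled-kernel} (a). Its Hilbert--Schmidt norm is therefore the $L^2$ norm of that kernel on $\C^n\times\C^n$. On the support of the kernel we have $z\in\supp f$ and $w\in\supp g$, so $|z-w|\geq\delta$. The kernel bound in \eqref{eq:gram} gives $|p_t(z,w)|\leq Cr^{-d}e^{-c|z-w|/r}$, and hence
$$
 \norm{M_f\Pi_tM_g}_{\mathcal S_2}^2\leq C^2r^{-2d}\norm{g}_{L^\infty(\C^n)}^2\int_{\C^n}|f(z)|^2\int_{|z-w|\geq\delta}e^{-2c|z-w|/r}\,dv(w)\,dv(z).
$$

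To handle the inner integral, I split $e^{-2c|x|/r}=e^{-c|x|/r}e^{-c|x|/r}$ and use $e^{-c|x|/r}\leq e^{-c\delta/r}$ on $|x|\geq\delta$. The remaining factor integrates over $\R^d$ to $C'r^d$. This yields $C r^{-d}e^{-c\delta/r}\norm{f}_{L^2(\R^d,dv)}^2\norm{g}_{L^\infty(\C^n)}^2$, which is \eqref{E19} after renaming the constants. The quantity $\norm{f}_{L^2}$ is finite because $f$ is bounded with compact support. Finally, $r^{-d}e^{-c\delta/r}\to0$ as $r\to0^+$, which gives the limit.

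I do not expect a real obstacle. The only point needing care is the disjointness step. The hypothesis $\operatorname{dist}(\supp f,\supp g)\geq\delta>0$ on essential supports ensures $fg=0$ almost everywhere, so $M_fM_g=0$ and the $-M_fM_g$ term vanishes in the expansion of $M_f(\Pi_t-I)M_g$. The measurability needed to identify the Hilbert--Schmidt norm with the kernel's $L^2$ norm is standard.
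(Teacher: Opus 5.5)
Your proposal is correct and follows essentially the same route as the paper. The paper likewise writes $\widetilde D_t(f,g)=\Pi_tA\Pi_t$ with $A=M_f\Pi_tM_g$ using $fg=0$, bounds $\norm{A}_{\mathcal S_2}$ through the kernel $f(z)p_t(z,w)g(w)$ together with the splitting $e^{-2c|x|/r}\leq e^{-c\delta/r}e^{-c|x|/r}$ for $|x|\geq\delta$, and passes from $\mathcal S_2$ to $\mathcal S_p$, $p\geq2$, by \eqref{E146}.
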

\begin{proof}
Fix $t>0$, put $r=\sqrt t$, and set
$A=M_f\Pi_tM_g$ on $L^2(\C^n,dv)$.
By Lemma \ref{lem:scaled-kernel} (a), this operator has the
integral kernel
$$
\kappa_t(z,w)=f(z)p_t(z,w)g(w),\quad z,w\in\C^n.
$$
To obtain a Hilbert--Schmidt bound for $A$, we first
estimate the square integral of $\kappa_t$.
Let $C_0,c>0$ be the constants in the third estimate of
\eqref{eq:gram}, and set
$$
a_c=\int_{\R^d}e^{-c|y|} dv(y)\in(0,\infty).
$$
Using $e^{-2c|x|/r}\leq e^{-c\delta/r}e^{-c|x|/r}$
for $|x|\geq\delta$, we change variables by $x=w-z$
and $x=ry$ to obtain the following estimate
\begin{align}\label{E60}
\int_{\{w\in\C^n:|w-z|\geq\delta\}}
e^{-2c|z-w|/r} dv(w)
&=\int_{\R^d\setminus B(0,\delta)}
e^{-2c|x|/r} dv(x)\nonumber\\
&\leq e^{-c\delta/r}
\int_{\R^d}e^{-c|x|/r} dv(x)=a_c r^de^{-c\delta/r}
\end{align}
for each fixed $z\in\C^n\cong\R^d$.

The support separation implies that $\kappa_t(z,w)=0$ almost
everywhere on $\{|z-w|<\delta\}$.
Applying the third estimate in \eqref{eq:gram} and
then \eqref{E60} to the inner integral, we obtain
\begin{align}\label{E58}
\int_{\C^n}\int_{\C^n}|\kappa_t(z,w)|^2 dv(w) dv(z)
&\leq C_0^2r^{-2d}
\norm{g}_{L^\infty(\C^n)}^2
\int_{\C^n}|f(z)|^2
\bigl(a_c r^de^{-c\delta/r}\bigr) dv(z)\nonumber\\
&=C_0^2a_c r^{-d}e^{-c\delta/r}
\norm{f}_{L^2(\R^d,dv)}^2
\norm{g}_{L^\infty(\C^n)}^2<\infty.
\end{align}
The compact support and boundedness of $f$ make the right-hand
side of \eqref{E58} finite, so
\cite[Theorem 3.5 and (3.8), p. 51--52]{Zhu1990} gives
$A\in\mathcal S_2(L^2(\C^n,dv))$ and
\begin{equation}\label{E59}
\norm{A}_{\mathcal S_2(L^2(\C^n,dv))}^2
=\int_{\C^n}\int_{\C^n}|\kappa_t(z,w)|^2 dv(w) dv(z).
\end{equation}

The estimate for $A$ in \eqref{E58} and \eqref{E59} is on
$L^2(\C^n,dv)$. To apply it to $D_t(f,g)$, we use
$\widetilde D_t(f,g)$ from \eqref{E83}, which acts on
the same space. The support separation gives $fg=0$ almost
everywhere, so \eqref{eq:unweighted-semicommutator} and
the definition of $A$ yield
$$
 \widetilde D_t(f,g)=\Pi_tM_f\Pi_tM_g\Pi_t=\Pi_tA\Pi_t.
$$
Using \eqref{E43} for the first equality and the ideal
inequality in \eqref{E7}, with $\norm{\Pi_t}\leq1$, we obtain
\begin{align*}
\norm{D_t(f,g)}_{\mathcal S_p}
&=\norm{\widetilde D_t(f,g)}_{\mathcal S_p(L^2(\C^n,dv))}=\norm{\Pi_tA\Pi_t}_{\mathcal S_p(L^2(\C^n,dv))}\\
&\leq\norm{A}_{\mathcal S_p(L^2(\C^n,dv))}
\leq\norm{A}_{\mathcal S_2(L^2(\C^n,dv))}.
\end{align*}
The last inequality follows from \eqref{E146} with exponents
$2$ and $p$ when $p>2$, and is an equality when $p=2$.
Squaring this inequality and applying \eqref{E59}
followed by \eqref{E58} proves the bound in \eqref{E19}
with $C=C_0^2a_c>0$.
Note that both $C$ and $c$ depend only on $n,m,M$.
Finally, $r^{-d}e^{-c\delta/r}$ tends to zero as
$r\rightarrow0^{+}$ for every fixed $\delta>0$.
With $r=\sqrt t$, \eqref{E19} therefore gives the limit
as $t\rightarrow0^{+}$. This completes the proof.
\end{proof}

These estimates allow approximation of a bounded second symbol
near the support of a smooth first symbol when $p=d$.

\begin{proposition}\label{prop:critical-smooth}
Every $f\in C_c^\infty(\R^d)$ belongs to $\mathcal A_d$.
\end{proposition}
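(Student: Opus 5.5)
Fix $f\in C_c^\infty(\R^d)$ and $g\in L^\infty(\C^n)$, and take $K=\supp f$. At $p=d$, Lemma \ref{lem:gradient} (b) only gives a bound of order $\norm{\nabla f}_{L^d}\norm{g}_\infty$, which is independent of $t$. It does not tend to zero. To gain smallness we approximate $g$ near $K$ by a smooth symbol and exploit the operator norm factor in the Hölder-type estimate of Lemma \ref{lem:holder-op}. Concretely, fix $\varepsilon>0$. Set $\chi\in C_c^\infty(\R^d)$ with $0\le\chi\le1$, $\chi=1$ on the $1$-neighbourhood of $K$, supported in the $2$-neighbourhood. Write $g=\chi g+(1-\chi)g$. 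The second piece has support at distance $\ge1$ from $\supp f$, so Lemma \ref{lem:separated} gives $\norm{D_t(f,(1-\chi)g)}_{\mathcal S_d}\to0$, using $d\ge2$.

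For $g_1=\chi g\in L^\infty\cap L^1$ with compact support, choose $\tilde g\in C_c^\infty$ with $\norm{\tilde g}_\infty\le\norm{g}_\infty$ and $\norm{g_1-\tilde g}_{L^d(\R^d,dv)}<\varepsilon$, for instance a mollification. Then split
$$D_t(f,g_1)=D_t(f,\tilde g)+D_t(f,g_1-\tilde g).$$
For the smooth part, the factorization \eqref{eq:factorization} and \eqref{E7} give
$$\norm{D_t(f,\tilde g)}_{\mathcal S_d}\le \norm{H^{(t)}_{\overline f}}_{\mathcal S_d}\norm{H^{(t)}_{\tilde g}}\le C\norm{\nabla f}_{L^d}\cdot C r[\tilde g]_1\to0$$
by \eqref{eq:hankel-gradient} at $p=d$ and Lemma \ref{lem:holder-op} with $\alpha=1$. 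Here $\varepsilon$, and hence $\tilde g$, is fixed before $t\to0$.

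The difference term $e=g_1-\tilde g$ is the main obstacle. Bounding it by $\norm{H_{\overline f}}_{\mathcal S_d}\norm{H_e}$ with $\norm{H_e}\le\norm e_\infty$ is not small. Instead, put the Schatten norm on the second factor. Since $f$ is smooth, \eqref{eq:hankel-gradient} with $p=2d$ gives
$$\norm{H^{(t)}_{\overline f}}_{\mathcal S_{2d}}\le Cr^{1/2}\norm{\nabla f}_{L^{2d}}.$$
By \eqref{eq:multiplier} with $p=2d$,
$$\norm{H^{(t)}_e}_{\mathcal S_{2d}}\le Cr^{-1/2}\norm{e}_{L^{2d}}.$$
The Hölder inequality in \eqref{E7} then yields
$$\norm{D_t(f,e)}_{\mathcal S_d}\le C\norm{\nabla f}_{L^{2d}}\norm{e}_{L^{2d}},$$
uniformly in $t$. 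Since $\norm{e}_{L^{2d}}^{2d}\le(2\norm g_\infty)^{d}\norm{e}_{L^d}^d$, this is $\lesssim\varepsilon^{1/2}$. The powers of $r$ cancel exactly at $p=d$, and this is the key point. So we approximate $g_1$ in $L^d$ (equivalently in $L^{2d}$) rather than in $L^\infty$.

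Combining the three pieces gives
$$\limsup_{t\to0^+}\norm{D_t(f,g)}_{\mathcal S_d}\le C_f\,\norm g_\infty^{1/2}\varepsilon^{1/2}.$$
Here the quasi-triangle issue does not arise, since $d\ge2$. Letting $\varepsilon\to0$ gives $f\in\mathcal A_d$.
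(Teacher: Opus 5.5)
Your proof is correct, and it has the same architecture as the paper's proof. Both cut $g$ off near $\supp f$, dispose of the far part with Lemma \ref{lem:separated}, and approximate the near part in $L^d$ by a $C_c^\infty$ function. Both then make $D_t(f,\tilde g)$ tend to zero by combining \eqref{eq:hankel-gradient} at $p=d$ with the $O(r)$ operator bound of Lemma \ref{lem:holder-op}. The only difference is the estimate for the $L^d$-small remainder $e$.

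The paper puts the whole Schatten norm on the second factor. Lemma \ref{lem:holder-op} with $\alpha=1$, now applied to $\overline f$, gives $\norm{H_{\overline f}^{(t)}}\le Cr[f]_1$. The bound \eqref{eq:multiplier} at $p=d$ gives $\norm{H_e^{(t)}}_{\mathcal S_d}\le Cr^{-1}\norm{e}_{L^d}$. Together these yield $\norm{D_t(f,e)}_{\mathcal S_d}\le C[f]_1\norm{e}_{L^d}$ directly, with no $L^\infty$ control on the approximant and no change of exponent.

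Your symmetric split through $\mathcal S_{2d}\times\mathcal S_{2d}$ achieves the same exact cancellation of powers of $r$ at $p=d$. It costs either an approximation directly in $L^{2d}$ or your interpolation step, which uses $\norm{\tilde g}_\infty\le\norm g_\infty$ and loses a square root in $\varepsilon$. What it buys is that $f$ enters the remainder bound through $\norm{\nabla f}_{L^{2d}}$ instead of the Lipschitz seminorm $[f]_1$. This makes no difference for $f\in C_c^\infty$, and the passage to the full class $|\nabla f|\in L^d$ goes through Lemma \ref{lem:critical-density} in any case.

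Your own sentence rejecting $\norm{H_{\overline f}}_{\mathcal S_d}\norm{H_e}$ in favour of ``putting the Schatten norm on the second factor'' already describes the paper's choice. The fully asymmetric version works because the remaining operator-norm factor $\norm{H_{\overline f}^{(t)}}$ is itself $O(r)$ for Lipschitz $f$.
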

\begin{proof}
Put $r=\sqrt t$ and fix $g\in L^\infty(\C^n)$.
Choose a bounded open set $\Omega\subseteq\R^d$ such that
$\operatorname{dist}(\supp f,\Omega^c)>0$.
Since $g\mathbf1_\Omega\in L^d(\R^d,dv)$,
we apply the density result
\cite[Corollary 4.23, p. 109]{Brezis2011}, with exponent $d$,
to the real and imaginary parts of $g\mathbf1_\Omega$.
For each $\varepsilon>0$, approximating each part within
$\varepsilon/2$ and combining the approximants as real and
imaginary parts gives $h\in C_c^\infty(\R^d)$ satisfying
$$
 \norm{g\mathbf1_\Omega-h}_{L^d(\R^d,dv)}<\varepsilon.
$$
Set $u=g\mathbf1_\Omega-h$ and $k=g\mathbf1_{\Omega^c}$.
Then $g=h+u+k$, all three functions are bounded, and
$\norm{u}_{L^d(\R^d,dv)}<\varepsilon$.
By linearity in the second symbol, we have
$$
 D_t(f,g)=D_t(f,h)+D_t(f,u)+D_t(f,k).
$$

We estimate the first two terms using the factorization
\eqref{eq:factorization} and the ideal inequality in
\eqref{E7}.
For the smooth term, the condition $f,h\in C_c^\infty(\R^d)$
gives $\norm{\nabla f}_{L^d(\R^d,dv)}<\infty$ and
$[f]_1,[h]_1<\infty$.
Applying \eqref{eq:hankel-gradient} to $\overline{f}$ with
$p=d$, using $|\nabla\overline{f}|=|\nabla f|$, we obtain
\begin{equation}\label{E200}
 \begin{aligned}
 \norm{D_t(f,h)}_{\mathcal S_d}
 &\leq
 \norm{H_{\overline{f}}^{(t)}}_{\mathcal S_d(\cF,L_t^2\ominus\cF)}
 \norm{H_h^{(t)}}\leq C\norm{\dbar\overline{f}}_{L^d(\R^d,dv)}
       \norm{H_h^{(t)}}\\
 &\leq C_1r\norm{\nabla f}_{L^d(\R^d,dv)}[h]_1
 \rightarrow0\quad(t\rightarrow0^{+}).
 \end{aligned}
\end{equation}
The last inequality uses \eqref{E57} and the case
$\alpha=1$ of \eqref{E18}, with $C,C_1>0$ depending only on
$n,m,M$.
The limit holds because $f,h$ are fixed and $r=\sqrt t$.

For the function $u$, we use \eqref{E18} for the first
Hankel operator and \eqref{eq:multiplier} for the second.
Since $d=2n\geq2$ and $[\overline{f}]_1=[f]_1$, these estimates give
\begin{equation}\label{E201}
 \begin{aligned}
 \norm{D_t(f,u)}_{\mathcal S_d}
 &\leq \norm{H_{\overline{f}}^{(t)}}
       \norm{H_u^{(t)}}_{\mathcal S_d(\cF,L_t^2\ominus\cF)}\leq C_2r[f]_1r^{-1}\norm{u}_{L^d(\R^d,dv)}
 \leq C_2[f]_1\varepsilon,
 \end{aligned}
\end{equation}
where $C_2>0$ depends only on $n,m,M$.

Since $k=g\mathbf1_{\Omega^c}$ and $\Omega^c$ is closed,
we have $\supp k\subseteq\Omega^c$. By the choice of
$\Omega$, we obtain
$$
 \operatorname{dist}(\supp f,\supp k)
 \geq\operatorname{dist}(\supp f,\Omega^c)>0.
$$
We may therefore apply Lemma \ref{lem:separated} with
$p=d$ to obtain
$\norm{D_t(f,k)}_{\mathcal S_d}\rightarrow0$
as $t\rightarrow0^{+}$.
Keeping $\varepsilon$ and $h$ fixed while $t\rightarrow0^{+}$,
we combine \eqref{E200}, \eqref{E201}, and the limit for
$D_t(f,k)$ by the triangle inequality to obtain
\begin{align*}
 \limsup_{t\rightarrow0^{+}}\norm{D_t(f,g)}_{\mathcal S_d}
 &\leq\limsup_{t\rightarrow0^{+}}
 \bigl(\norm{D_t(f,h)}_{\mathcal S_d}
       +\norm{D_t(f,u)}_{\mathcal S_d}
       +\norm{D_t(f,k)}_{\mathcal S_d}\bigr)\\
 &\leq C_2[f]_1\varepsilon.
\end{align*}
Letting $\varepsilon\rightarrow0^{+}$ proves the limit for every
fixed $g\in L^\infty(\C^n)$, so \eqref{eq:property}
implies $f\in\mathcal A_d$. This completes the proof.
\end{proof}
\subsection{Approximation and proof of the main theorem}
To extend Proposition \ref{prop:critical-smooth} to bounded
symbols with $|\nabla f|\in L^d(\R^d,dv)$, we approximate
their gradients using a logarithmic cutoff.

\begin{lemma}\label{lem:critical-density}
If $f\in L^\infty(\R^d)$ and $|\nabla f|\in L^d(\R^d,dv)$, with $d>1$,
there are $u_j\in C_c^\infty(\R^d)$ such that
$\norm{\nabla(f-u_j)}_{L^{d}(\R^d,dv)}\rightarrow0$ as $j\rightarrow\infty$.
\end{lemma}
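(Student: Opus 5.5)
The plan is a two-step approximation: first multiply $f$ by a logarithmic cutoff to make it compactly supported, then mollify. For $R>1$, let $\chi_R$ be the Lipschitz function
$$
 \chi_R(x)=
 \begin{cases}
 1,&|x|\leq R,\\
 2-\log|x|/\log R,&R<|x|<R^2,\\
 0,&|x|\geq R^2.
 \end{cases}
$$
Then $|\nabla\chi_R(x)|=1/(|x|\log R)$ on the annulus $A_R=\{R<|x|<R^2\}$, and $\nabla\chi_R=0$ elsewhere. The choice of the logarithm is dictated by the critical exponent $d$:
$$
 \int_{A_R}|\nabla\chi_R|^d dv=\frac{|S^{d-1}|}{(\log R)^d}\int_R^{R^2}\frac{ds}{s}=\frac{|S^{d-1}|}{(\log R)^{d-1}}\longrightarrow0
$$
as $R\to\infty$, because $d>1$. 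A linear cutoff would only give a bounded quantity here, which is not enough.

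Set $f_R=f\chi_R$. Since $f$ is bounded with $|\nabla f|\in L^d$ and $\chi_R$ is bounded and Lipschitz, the product rule for weak derivatives gives
$$
 \nabla(f-f_R)=(1-\chi_R)\nabla f-f\nabla\chi_R .
$$
To justify this rule I would approximate $\chi_R$ by mollification, or appeal directly to the standard product rule for $W^{1,1}_{\rm loc}\cap L^\infty$ functions. The first term is bounded pointwise by $|\nabla f|\mathbf 1_{\{|x|>R\}}$, so its $L^d$ norm tends to zero by dominated convergence. The second term has $L^d$ norm at most $\norm{f}_{L^\infty}|S^{d-1}|^{1/d}(\log R)^{-(d-1)/d}$, which also tends to zero. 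Hence $\norm{\nabla(f-f_R)}_{L^d}\to0$.

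Next, $f_R$ is bounded, has compact support in $\overline{B(0,R^2)}$, and satisfies $|\nabla f_R|\in L^d$. Consider the mollification $f_R*\eta_\varepsilon$ with $\eta_\varepsilon$ as in \eqref{E16}. It lies in $C_c^\infty(\R^d)$, and by the identity \eqref{E17} we have $\nabla(f_R*\eta_\varepsilon)=(\nabla f_R)*\eta_\varepsilon$. This tends to $\nabla f_R$ in $L^d$ as $\varepsilon\to0^+$, by the standard convergence of mollifiers in $L^d$ (\cite[Theorem 4.22]{Brezis2011}). The derivation of \eqref{E17} only used boundedness of $u$ and the existence of weak derivatives, so it applies to $f_R$.

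Finally, a diagonal choice $R=R_j\to\infty$ and $\varepsilon=\varepsilon_j$ small gives $u_j=f_{R_j}*\eta_{\varepsilon_j}$ with
$$
 \norm{\nabla(f-u_j)}_{L^d}\leq\norm{\nabla(f-f_{R_j})}_{L^d}+\norm{\nabla(f_{R_j}-u_j)}_{L^d}<2/j .
$$
The main obstacle is the cutoff step: the term $f\nabla\chi_R$ is exactly where $f\notin L^d$ matters, and the logarithmic cutoff is precisely what makes this term vanish at the critical exponent. The product rule itself is routine.
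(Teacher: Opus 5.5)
Your proposal is correct and follows essentially the same route as the paper. It uses the same logarithmic cutoff (your $2-\log|x|/\log R$ equals the paper's $\log(R^2/|x|)/\log R$), the product rule together with $\norm{\nabla\chi_R}_{L^d(\R^d,dv)}^d=\sigma_{d-1}(\log R)^{1-d}\to0$, dominated convergence for $(1-\chi_R)\nabla f$, mollification of $f_R$ via \eqref{E17}, and a diagonal choice of $R_j,\varepsilon_j$. The only difference is cosmetic: you cite the standard $L^d$ convergence of mollifiers, while the paper proves this step directly from translation continuity of $\nabla f_R$.
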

\begin{proof}
For $R>2$, define the radial Lipschitz cutoff
$$
 \chi_R(x)=
 \begin{cases}
 1,&|x|\leq R,\\
 \log(R^2/|x|)/\log R,&R<|x|<R^2,\\
 0,&|x|\geq R^2.
 \end{cases}
$$
The bound $0\leq\chi_R\leq1$ gives
$\chi_R\in L^\infty(\R^d)$.
Its continuous radial profile is constant on each of $[0,R]$
and $[R^2,\infty)$, with derivative $-1/(s\log R)$ for $R<s<R^2$.
Using $\bigl||x|-|y|\bigr|\leq|x-y|$, we obtain
$[\chi_R]_1\leq(R\log R)^{-1}$.
We write $(\tau_h\chi_R)(x)=\chi_R(x+h)$ for $x,h\in\R^d$.
For every $\omega\Subset\R^d$ and $h\in\R^d$, the
Lipschitz estimate yields
$$
 \norm{\tau_h\chi_R-\chi_R}_{L^\infty(\omega)}
 \leq[\chi_R]_1|h|
 \leq\frac{|h|}{R\log R}.
$$
These facts verify condition \textup{(iii)} of
\cite[Proposition 9.3, p. 267]{Brezis2011} with
$p=\infty$. Hence, $\chi_R$ has first weak derivatives
with $|\nabla\chi_R|\in L^\infty(\R^d,dv)$. 
Away from the spheres $|x|=R$ and $|x|=R^2$, it is smooth,
and these weak derivatives coincide with its classical
derivatives by \cite[Section 9.1, Remark 2, p. 264]{Brezis2011}.
On the annulus $R<|x|<R^2$, differentiation gives
$$
 \nabla\chi_R(x)=-\frac{x}{|x|^2\log R}\quad\text{and}\quad
 |\nabla\chi_R(x)|=\frac1{|x|\log R},
$$
while the gradient vanishes almost everywhere outside
this annulus.

Let $\sigma_{d-1}$ denote the surface area of the unit
sphere in $\R^d$. Integration in polar coordinates gives
\begin{equation}\label{E62}
 \begin{aligned}
 \norm{\nabla\chi_R}_{L^d(\R^d,dv)}^d
 &=\frac1{(\log R)^d}
   \int_{\{R<|x|<R^2\}}|x|^{-d} dv(x)=\frac{\sigma_{d-1}}{(\log R)^d}
   \int_R^{R^2}s^{-d}s^{d-1}ds\\
 &=\sigma_{d-1}(\log R)^{1-d}
   \longrightarrow0\quad(R\rightarrow\infty),
 \end{aligned}
\end{equation}
where the last limit uses $d>1$.

On every bounded open set $\Omega\subseteq\R^d$, boundedness
of $f,\chi_R$, the hypothesis on $\nabla f$, and \eqref{E62} give
$$
 f,\chi_R\in L^d(\Omega,dv)\cap L^\infty(\Omega)\quad\text{and}\quad
 |\nabla f|,|\nabla\chi_R|\in L^d(\Omega,dv).
$$
The product rule \cite[Proposition 9.4]{Brezis2011} with $p=d$,
applied to the real and imaginary parts of $f$, therefore yields
\begin{align*}
 \nabla(\chi_Rf)
 &=\chi_R\nabla f+f\nabla\chi_R
 \end{align*}
 and
 \begin{align*}
  \nabla(f-\chi_Rf)
 &=\nabla f-\nabla(\chi_Rf)
  =(1-\chi_R)\nabla f-f\nabla\chi_R.
   \end{align*}
Taking $\Omega=B(0,j)$ for $j=1,2,\ldots$ shows that
these identities hold almost everywhere on $\R^d$.
Using $0\leq1-\chi_R\leq\mathbf1_{\{|x|>R\}}$,
we obtain
\begin{equation}\label{E63}
 \begin{aligned}
 \norm{\nabla(f-\chi_Rf)}_{L^d(\R^d,dv)}
 &\leq
 \norm{\mathbf1_{\{|x|>R\}}\nabla f}_{L^d(\R^d,dv)}+\norm{f}_{L^\infty(\R^d)}
        \norm{\nabla\chi_R}_{L^d(\R^d,dv)}\\
 &\rightarrow0\quad(R\rightarrow\infty).
 \end{aligned}
\end{equation}
The first term tends to zero by dominated convergence,
since $|\nabla f|^d$ is integrable.
The second tends to zero by \eqref{E62}.

With the cutoff approximation controlled by \eqref{E63},
fix $R>2$ and set $f_R=\chi_Rf$.
We now approximate $f_R$ by smooth functions while controlling
the change in its gradient.
The bounds $|f_R|\leq\norm{f}_{L^\infty(\R^d)}$ and
$\supp f_R\subseteq\overline{B(0,R^2)}$ give
$f_R\in L^d(\R^d,dv)\cap L^\infty(\R^d)$.
The product rule $\nabla f_R=\chi_R\nabla f+f\nabla\chi_R$
and \eqref{E62} also give
$$
 \norm{\nabla f_R}_{L^d(\R^d,dv)}
 \leq\norm{\nabla f}_{L^d(\R^d,dv)}
     +\norm{f}_{L^\infty(\R^d)}
      \norm{\nabla\chi_R}_{L^d(\R^d,dv)}<\infty.
$$
Write $v=\nabla f_R=(v_\ell)_{\ell=1}^d$.
Since $|\Re v_\ell|,|\Im v_\ell|\leq|v|\in L^d(\R^d,dv)$,
we apply \cite[Lemma 4.3, p. 114]{Brezis2011} with
$N=d=2n$ and $q=d$ to these real-valued components and obtain
\begin{align*}
 \norm{v(\cdot-y)-v}_{L^d(\R^d,dv)}
 &\leq\sum_{\ell=1}^d
       \norm{v_\ell(\cdot-y)-v_\ell}_{L^d(\R^d,dv)}
 \rightarrow0\quad(y\rightarrow0).
\end{align*}
Using \eqref{E17} and the support and unit mass of
$\eta_\varepsilon$, we apply Minkowski's integral inequality
as in \eqref{E108} to obtain
\begin{equation}\label{E64}
 \begin{aligned}
 \norm{\nabla(f_R*\eta_\varepsilon)-\nabla f_R}_{L^d(\R^d,dv)}
 &=\norm{(\nabla f_R)*\eta_\varepsilon-\nabla f_R}_{L^d(\R^d,dv)}\\
 &=\left\|\int_{B(0,\rho\varepsilon)}\eta_\varepsilon(y)
       \bigl(v(\cdot-y)-v\bigr) dv(y)\right\|_{L^d(\R^d,dv)}\\
 &\leq\int_{B(0,\rho\varepsilon)}\eta_\varepsilon(y)
       \norm{\nabla f_R(\cdot-y)-\nabla f_R}_{L^d(\R^d,dv)} dv(y)\\
 &\leq\sup_{|y|<\rho\varepsilon}
       \norm{\nabla f_R(\cdot-y)-\nabla f_R}_{L^d(\R^d,dv)}
       \longrightarrow0\quad(\varepsilon\rightarrow0^{+}).
 \end{aligned}
\end{equation}
The limit follows from the translation continuity of
$\nabla f_R$ proved above.

For each integer $j\geq1$, we use \eqref{E63} to choose
$R_j>2$ and then apply \eqref{E64} with $R=R_j$ to choose
$\varepsilon_j>0$ so that
$$
 \begin{gathered}
 \norm{\nabla(f-f_{R_j})}_{L^d(\R^d,dv)}
 <\frac1{2j} \quad\text{and}\quad
 \norm{\nabla(f_{R_j}*\eta_{\varepsilon_j})
             -\nabla f_{R_j}}_{L^d(\R^d,dv)}
 <\frac1{2j}.
 \end{gathered}
$$
Set $u_j=f_{R_j}*\eta_{\varepsilon_j}$.
The bound $\supp f_{R_j}\subseteq\overline{B(0,R_j^2)}$
and \eqref{E53} give
$\supp u_j\subseteq\overline{B(0,R_j^2+\rho\varepsilon_j)}$.
Lemma \ref{L6} gives $u_j\in C^\infty(\R^d)$, so
$u_j\in C_c^\infty(\R^d)$.
The estimates for $\nabla(f-f_{R_j})$ and
$\nabla(f_{R_j}-u_j)$ in $L^d(\R^d,dv)$,
together with the triangle inequality, yield
\begin{align*}
 \norm{\nabla(f-u_j)}_{L^d(\R^d,dv)}
 &\leq\norm{\nabla(f-f_{R_j})}_{L^d(\R^d,dv)}
      +\norm{\nabla(f_{R_j}-u_j)}_{L^d(\R^d,dv)}\\
 &<\frac1j\longrightarrow0\quad(j\rightarrow\infty).
 \end{align*}
This proves the required approximation, which completes the proof.
\end{proof}

\begin{proof}[Proof of Theorem \ref{thm:main}]
Part (a) follows from Proposition \ref{prop:necessity} and
Corollary \ref{cor:smallp}. The necessity in part (b) is also
contained in that proposition. To prove sufficiency in part (b), we use the functions $u_j$
from Lemma \ref{lem:critical-density}. For fixed bounded $g$,
\eqref{eq:gradient-semicommutator} at $p=d$ gives
$$
 \norm{D_t(f-u_j,g)}_{\mathcal S_d}
 \leq C\norm{\nabla(f-u_j)}_{L^{d}(\R^d,dv)}\norm{g}_{L^\infty(\C^n)},
$$
uniformly in $t>0$. For each fixed $j$, Proposition
\ref{prop:critical-smooth} gives
$\norm{D_t(u_j,g)}_{\mathcal S_d}\rightarrow0$ as $t\rightarrow0^{+}$.
Using $D_t(f,g)=D_t(f-u_j,g)+D_t(u_j,g)$ and the
triangle inequality, we obtain
$$
 \limsup_{t\rightarrow0^{+}}\norm{D_t(f,g)}_{\mathcal S_d}
 \leq C\norm{\nabla(f-u_j)}_{L^{d}(\R^d,dv)}\norm{g}_{L^\infty(\C^n)}.
$$
The gradient convergence in Lemma \ref{lem:critical-density}
makes the right-hand side tend to zero as $j\rightarrow\infty$.
Since $g$ was arbitrary, \eqref{eq:property} gives $f\in\mathcal A_d$.

For part (c), we take the supremum over
$\norm{g}_{L^\infty(\C^n)}\leq1$ in
\eqref{eq:gradient-semicommutator}, as in the definition
\eqref{E5}. Since $p>d=2n$ and $r=\sqrt t$, we have
$r^{1-d/p}=t^{1/2-n/p}\rightarrow0$ as $t\rightarrow0^{+}$,
which proves \eqref{eq:super-gradient}. The inclusion
$\mathcal A_d\subseteq\mathcal A_p$ follows from
Corollary \ref{cor:smallp}. The proof is complete.
\end{proof}

\section{Uniform integrability and convergence for fixed symbols}\label{sec:remaining}
The proof of Theorem \ref{thm:weakcriterion} begins with the
weak compactness forced by convergence for fixed bounded tests.
We transfer this property from analytic distance to ordinary
mean oscillation, then prove sufficiency by smoothing at a fixed
fraction of the operator scale. The resulting characterization
also yields the algebra properties of $\mathcal A_p$ and its
independence of the weight.
Unless otherwise stated, we assume $2\leq p<\infty$
in the rest of this section.

\subsection{Weak compactness and localization}\label{sec:weak-localization}
Let $\mathcal W$ be the class of relatively weakly compact
subsets of $L^1(\R^d,dv)$. In particular, 
$\mathscr H\in\mathcal W$ implies
$\int_{\R^d}|h| dv<\infty$ for every $h\in\mathscr H$. The duality theorem
\cite[Theorem 4.14, p. 99]{Brezis2011} shows that, for
$h_j,h\in L^1(\R^d,dv)$,
$h_j\rightharpoonup h$ as $j\rightarrow\infty$ if and only if
$$\int_{\R^d}(h_j-h)g dv\rightarrow0$$ for every
$g\in L^\infty(\R^d)$.
By the Dunford--Pettis criterion
\cite{DunfordPettis1940}, in the form given in
\cite[Theorem 4.30, p. 115]{Brezis2011}, a family
$\mathscr H\subseteq L^1(\R^d,dv)$ belongs to $\mathcal W$
if and only if the following conditions hold.
Here $E$ ranges over measurable subsets of $\R^d$.
\begin{equation}\label{eq:intrinsic-three}
 \begin{gathered}
 \sup_{h\in\mathscr H}\norm{h}_{L^1(\R^d,dv)}<\infty,\quad
 \lim_{\delta\rightarrow0^{+}}\sup_{h\in\mathscr H}
 \sup_{|E|\leq\delta}\int_E|h| dv=0,\\
 \lim_{R\rightarrow\infty}\sup_{h\in\mathscr H}
 \int_{|z|>R}|h(z)| dv(z)=0.
 \end{gathered}
\end{equation}

For the last condition, we apply
\cite[Exercise 4.36 (1), condition (e), p. 129]{Brezis2011}
with $\Omega=\R^d$ and $\Omega_j=B(0,j)$, $j\geq1$.
These sets have finite measure and increase to $\R^d$.
Since the integrals over their complements are
nonincreasing in the radius, the limit over integer
radii is equivalent to the limit as $R\rightarrow\infty$.
These conditions express boundedness in
$L^1(\R^d,dv)$, uniform absolute continuity of the integrals,
and uniform smallness of the integrals outside large balls.
Keeping the first and third conditions in
\eqref{eq:intrinsic-three}, we may replace the second by
the level set condition
\begin{equation}\label{eq:dp-three}
 \lim_{K\rightarrow\infty}\sup_{h\in\mathscr H}
                 \int_{\{|h|>K\}}|h| dv=0.
\end{equation}
This replacement is given in
\cite[Exercise 4.36 (1), p. 129]{Brezis2011}.
For complex functions, weak convergence is equivalent to
weak convergence of the real and imaginary parts, and
relative weak compactness is equivalent to that of both
component families. Applying the real criteria to these
components and using
$|\Re h|,|\Im h|\leq|h|\leq|\Re h|+|\Im h|$
gives the same criteria in \eqref{eq:intrinsic-three} and
\eqref{eq:dp-three}. 

Every subfamily of a member of $\mathcal W$ belongs to
$\mathcal W$. The criterion in \eqref{eq:intrinsic-three}
also yields the following domination property.

\begin{lemma}\label{L9}
Let $N\geq1$ be a fixed integer and let $\Gamma$ be an
index set. Suppose that
$\{h_{j,\gamma}:\gamma\in\Gamma\}\in\mathcal W$
for $1\leq j\leq N$.
If measurable functions $k_\gamma:\R^d\to\C$ satisfy
\begin{equation}\label{E22}
 |k_\gamma|\leq C\sum_{j=1}^N|h_{j,\gamma}|
 \quad\text{almost everywhere},\quad \gamma\in\Gamma,
\end{equation}
where $C>0$ is independent of $\gamma$, then
$\{k_\gamma:\gamma\in\Gamma\}\in\mathcal W$.
\end{lemma}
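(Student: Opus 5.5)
We will verify the three conditions in \eqref{eq:intrinsic-three} for the family $\{k_\gamma:\gamma\in\Gamma\}$ directly from the same conditions for the $N$ families $\{h_{j,\gamma}:\gamma\in\Gamma\}$. By the Dunford--Pettis criterion recalled above, in its complex form, each family $\{h_{j,\gamma}\}$ satisfies the three conditions. The domination \eqref{E22} transfers each of them.

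First, measurability of $k_\gamma$ and \eqref{E22} give $k_\gamma\in L^1(\R^d,dv)$, with
$$
\norm{k_\gamma}_{L^1(\R^d,dv)}\leq C\sum_{j=1}^N\norm{h_{j,\gamma}}_{L^1(\R^d,dv)}.
$$
Taking the supremum over $\gamma$ gives the uniform $L^1$ bound, since $N$ is finite and each summand is uniformly bounded.

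Second, for a measurable set $E$ with $|E|\leq\delta$, integrating \eqref{E22} over $E$ gives
$$
\int_E|k_\gamma| dv\leq C\sum_{j=1}^N\sup_{\gamma'\in\Gamma}\sup_{|E'|\leq\delta}\int_{E'}|h_{j,\gamma'}| dv.
$$
The right-hand side is independent of $\gamma$ and tends to zero as $\delta\to0^{+}$, because it is a finite sum of quantities that each vanish in the limit.

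Third, the same argument with $E=\{|z|>R\}$ gives uniform smallness outside large balls as $R\to\infty$.

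Applying the Dunford--Pettis criterion in the converse direction then yields $\{k_\gamma\}\in\mathcal W$. We expect no real obstacle. The only point needing care is that the criterion is applied to complex-valued functions, which is covered by the componentwise remark preceding the lemma. We use \eqref{eq:intrinsic-three} rather than the level set condition \eqref{eq:dp-three}, because the level sets of a sum do not compare directly with the level sets of the individual summands.
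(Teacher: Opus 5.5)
Your proposal is correct and matches the paper's proof: integrating the domination \eqref{E22} over an arbitrary measurable $E$ bounds $\sup_\gamma\int_E|k_\gamma|\,dv$ by $C\sum_{j=1}^N\sup_\gamma\int_E|h_{j,\gamma}|\,dv$. Specializing to $E=\R^d$, to $|E|\leq\delta$, and to $E=\{|z|>R\}$ then transfers the three conditions in \eqref{eq:intrinsic-three}, exactly as the paper does.
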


\begin{proof}
For every measurable $E\subseteq\R^d$, \eqref{E22} gives
$$
 \sup_{\gamma\in\Gamma}\int_E|k_\gamma| dv
 \leq C\sum_{j=1}^N
       \sup_{\gamma\in\Gamma}\int_E|h_{j,\gamma}| dv.
$$
With $E=\R^d$, this estimate gives a uniform $L^1$ bound.
Since $N$ is finite and $C$ is independent of $\gamma$,
taking the supremum over $|E|\leq\delta$ and letting
$\delta\rightarrow0^+$ verifies the second condition in
\eqref{eq:intrinsic-three}.
Applying the same estimate to $E=\{|z|>R\}$ and letting
$R\rightarrow\infty$ verifies the third condition.
The three conditions show that $\{k_\gamma:\gamma\in\Gamma\}$
belongs to $\mathcal W$, which completes the proof.
\end{proof}

For $f\in L^\infty(\R^d)$, $t>0$, and a measurable set
$E\subseteq\R^d$, define the localized testing norm
\begin{equation}\label{E23}
 m_{p,t}(f,E)=\sup\{\norm{D_t(f,g)}_{\mathcal S_p}:
        g\in L^\infty(\R^d),\ |g|\leq\mathbf1_E\text{ almost everywhere}\}.
\end{equation}
This is the localized form of \eqref{E5}, with
$m_{p,t}(f,E)\leq m_{p,t}(f)$ and
$m_{p,t}(f,\R^d)=m_{p,t}(f)$.
As before, the supremum is allowed to be infinite.
Convergence for fixed bounded tests yields uniform control
on small sets and outside large balls.

\begin{lemma}\label{lem:localized-testing}
If $f\in\mathcal A_p$ and $p\geq2$, then
\begin{align}
 \lim_{\delta\rightarrow0^{+}}\limsup_{t\rightarrow0^{+}}
   \sup_{|E|\leq\delta}m_{p,t}(f,E)&=0
       \label{eq:test-smallsets}
\end{align}
and \begin{equation}\label{eq:test-tails}
   \lim_{R\rightarrow\infty}\limsup_{t\rightarrow0^{+}}
   m_{p,t}(f,B(0,R)^c)=0.
    \end{equation}
\end{lemma}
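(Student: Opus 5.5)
We argue by contradiction with a gliding hump construction. The tool that makes the hump work is averaging over random signs of the pieces, in the spirit of Lemma \ref{L2}. Fix $f\in\mathcal A_p$. By Lemma \ref{lem:baire}, choose $a,t_0>0$ with $m_{p,t}(f)\leq a$ for $0<t<t_0$.

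\textbf{Setting up the humps.} Suppose \eqref{eq:test-smallsets} fails. Then there is $\varepsilon>0$ such that for every $\delta>0$ there are arbitrarily small $t$, a set $E$ with $|E|\leq\delta$, and a test $|g|\leq\mathbf 1_E$ with $\norm{D_t(f,g)}_{\mathcal S_p}\geq\varepsilon$. If \eqref{eq:test-tails} fails, the same holds with $E=B(0,R)^c$ and $R$ arbitrarily large. We choose inductively scales $t_k\downarrow0$, sets $E_k$, and tests $g_k$ with $|g_k|\leq\mathbf 1_{E_k}$ and $\norm{D_{t_k}(f,g_k)}_{\mathcal S_p}\geq\varepsilon$. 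We then make these pieces essentially disjoint and essentially non-interacting at the chosen scales. Write $r_k=\sqrt{t_k}$.
\begin{enumerate}
\item \emph{Earlier pieces.} The function $G_{k-1}=\sum_{j<k}|g_j|$ is a fixed bounded symbol. Membership $f\in\mathcal A_p$ applied to finitely many fixed tests lets us take $t_k$ so small that $\norm{D_{t_k}(f,h)}_{\mathcal S_p}\leq\varepsilon/8$ for every $h$ of the form $\sum_{j<k}\sigma_jg_j$ with $\sigma_j\in\{-1,1\}$. Only finitely many sign patterns occur.
\item \emph{Later pieces, small-set case.} At the fixed scale $t_k$, the factorization \eqref{eq:factorization}, the ideal inequality \eqref{E7} with $\norm{H_{\overline f}^{(t_k)}}\leq\norm{f}_{L^\infty(\C^n)}$, and \eqref{eq:multiplier} (using $p\geq2$) give
\[
\norm{D_{t_k}(f,h)}_{\mathcal S_p}\leq C\norm{f}_{L^\infty(\C^n)}r_k^{-d/p}|\operatorname{supp}h|^{1/p}
\]
for every $h$ with $|h|\leq1$. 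So we choose $|E_{k+1}|,|E_{k+2}|,\ldots$ so small, for instance with $\sum_{j>k}|E_j|\leq\delta_k$ and $\delta_k$ depending on $r_k$, that all later pieces together contribute at most $\varepsilon/8$ at scale $t_k$. The same bound shows that replacing $g_k$ by $g_k\mathbf 1_{E_k\setminus\bigcup_{j>k}E_j}$ changes $D_{t_k}(f,g_k)$ by at most $\varepsilon/8$. This makes the supports disjoint, and then $g_\sigma=\sum_k\sigma_kg_k$ satisfies $\norm{g_\sigma}_{L^\infty(\C^n)}\leq1$.
\item \emph{Tail case.} Here the measure bound is unavailable, and this is the step I expect to be the main obstacle. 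First, we truncate $g_k$ to $g_k\mathbf 1_{B(0,R_k')}$. As $R'\to\infty$, $H^{(t_k)}_{g_k\mathbf 1_{B(0,R')}}$ converges strongly to $H^{(t_k)}_{g_k}$, so the corresponding semi-commutators converge strongly. These semi-commutators are uniformly bounded by $a$ in $\mathcal S_p$. The noncommutative Fatou lemma \cite[Theorem 2.7 (d)]{Simon2005}, used as in the proof of Lemma \ref{lem:gradient}, gives $\liminf_{R'\to\infty}\norm{D_{t_k}(f,g_k\mathbf 1_{B(0,R')})}_{\mathcal S_p}\geq\varepsilon$. So we may choose $R_k'$ with truncated norm at least $\varepsilon/2$. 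The next piece is then taken outside $B(0,R_k')$, which makes the supports disjoint. Later pieces are not controlled at scale $t_k$, and this is handled by signs in the next paragraph.
\end{enumerate}

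\textbf{Averaging over signs.} Let $\sigma=(\sigma_k)$ be independent uniform random signs, as in Section \ref{sec:sign-averages}, and put $g_\sigma=\sum_k\sigma_kg_k$. At scale $t_k$ we write
\[
D_{t_k}(f,g_\sigma)=\sigma_kA_k+X_{<k}+X_{>k},
\]
where $A_k=D_{t_k}(f,g_k)$ and $X_{<k},X_{>k}$ collect the earlier and later pieces. We have $\norm{X_{<k}}_{\mathcal S_p}\leq\varepsilon/8$ by item 1. We condition on $\sigma_k$ and the earlier signs, and average over the later signs. Since $X_{>k}$ is linear in those signs, Lemma \ref{L2} in operator-valued form shows that this conditional average of $D_{t_k}(f,g_\sigma)$ equals $\sigma_kA_k+X_{<k}$. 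Convexity of the $\mathcal S_p$ norm, which is a norm since $p\geq2$, then gives
\[
\mathbb E\norm{D_{t_k}(f,g_\sigma)}_{\mathcal S_p}\geq\norm{A_k}_{\mathcal S_p}-\frac{\varepsilon}{8}\geq\frac{\varepsilon}{4}.
\]
In the small-set case $X_{>k}$ is already small by item 2, so this step is only needed for the tails, but it works in both cases. The infinite sign family is reduced to finitely many signs by truncating $g_\sigma$ to finitely many pieces and using the same Fatou argument.

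\textbf{Conclusion.} Each random variable $Y_k=\norm{D_{t_k}(f,g_\sigma)}_{\mathcal S_p}$ satisfies $0\leq Y_k\leq a$ and $\mathbb EY_k\geq\varepsilon/4$. A Paley--Zygmund type bound gives $\mathbb P(Y_k\geq\varepsilon/8)\geq\varepsilon/(8a)$ for every $k$. Hence the event $\{Y_k\geq\varepsilon/8\text{ for infinitely many }k\}$ has probability at least $\varepsilon/(8a)>0$, so some sign sequence $\sigma$ lies in it. For this $\sigma$, the single bounded symbol $g_\sigma$ satisfies $\limsup_{t\to0^+}\norm{D_t(f,g_\sigma)}_{\mathcal S_p}\geq\varepsilon/8$. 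This contradicts $f\in\mathcal A_p$ and proves both \eqref{eq:test-smallsets} and \eqref{eq:test-tails}.
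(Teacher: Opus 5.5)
Your strategy (disjointly supported bad tests at scales $t_k\downarrow0$, random signs, and a reverse Markov/Borel--Cantelli selection of one bounded symbol $g_\sigma$) is genuinely different from the paper's and can be made to work. Two steps, however, fail as written.

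\emph{Earlier pieces in the small-set case.} Item 1 controls $\sum_{j<k}\sigma_jg_j$ for the pieces as they stand when $t_k$ is chosen. Item 2 afterwards replaces every $g_j$ by $g_j\mathbf 1_{E_j\setminus\bigcup_{i>j}E_i}$, and for $j<k$ this deletes $g_j\mathbf 1_{E_j\cap E_k}$. That part was not a fixed symbol when $t_k$ was chosen, since $E_k$ is selected together with $t_k$. It is not small by the measure bound either: your estimate combined with $\norm{D_{t_k}(f,g_k)}_{\mathcal S_p}\geq\varepsilon$ forces $|E_k|\geq c\,r_k^d$, and nothing in the construction prevents $E_k\subseteq E_1$ for all $k$. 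So $\norm{X_{<k}}_{\mathcal S_p}\leq\varepsilon/8$ is unjustified for the final pieces.

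The simplest cure is to discard item 1 and symmetrize over \emph{all} signs other than $\sigma_k$. Put $X_{\neq k}=D_{t_k}(f,\sum_{j\neq k}\sigma_jg_j)$. It changes sign under $(\sigma_j)_{j\neq k}\mapsto(-\sigma_j)_{j\neq k}$, and this map preserves the product measure. The triangle inequality then gives
\[
\mathbb E\norm{\sigma_kA_k+X_{\neq k}}_{\mathcal S_p}=\tfrac12\,\mathbb E\bigl(\norm{\sigma_kA_k+X_{\neq k}}_{\mathcal S_p}+\norm{\sigma_kA_k-X_{\neq k}}_{\mathcal S_p}\bigr)\geq\norm{A_k}_{\mathcal S_p},
\]
with no information needed on the other pieces. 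Only the final $k$-th piece must then be large at scale $t_k$. Your measure bound (small sets) and your truncation with the noncommutative Fatou lemma (tails) do provide this, and the circular dependence disappears.

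\emph{Infinitely many signs.} Truncating $g_\sigma$ and ``using the same Fatou argument'' goes the wrong way. Fatou yields $\norm{D_{t_k}(f,g_\sigma)}_{\mathcal S_p}\leq\liminf_N\norm{D_{t_k}(f,g_\sigma^{(N)})}_{\mathcal S_p}$, which is an upper bound, whereas you need a lower bound on $\mathbb E\norm{D_{t_k}(f,g_\sigma)}_{\mathcal S_p}$. The symmetry argument above applies directly to the infinite sum, because $\sum_{j\neq k}\sigma_jg_j$ is a single bounded symbol and $D_{t_k}(f,\cdot)$ is linear. Fatou is needed only in its correct direction: coordinatewise convergence of $\sigma$ gives bounded pointwise convergence of $g_\sigma$ by disjointness, so $\sigma\mapsto\norm{D_{t_k}(f,g_\sigma)}_{\mathcal S_p}$ is lower semicontinuous on $\{-1,1\}^{\N}$, hence measurable. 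With these repairs, your reverse Markov bound (using $Y_k\leq a$ from Lemma \ref{lem:baire}) and $\mathbb P(\limsup_k\{Y_k\geq\varepsilon/8\})\geq\limsup_k\mathbb P(Y_k\geq\varepsilon/8)$ finish the proof.

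\emph{Comparison with the paper.} The paper argues by duality. It represents $g\mapsto\Tr(D_{t_j}(f,g)A_j)$, for finite-rank $A_j$ in the unit ball of $\mathcal S_{p'}$, by densities $q_j\in L^1$ with $\norm{q_j}_{L^1}\leq a$. Membership in $\mathcal A_p$ forces $q_j\rightharpoonup0$, and Dunford--Pettis then gives uniform integrability and tightness of $\{q_j\}$. Choosing $A_j$ by trace duality against a bad test yields both limits at once. In effect the paper delegates the gliding hump to the classical fact that weakly null sequences in $L^1$ are uniformly integrable. Your repaired argument runs a randomized gliding hump directly on the nonlinear quantities $m_{p,t}(f,E)$. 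It avoids trace duality and produces an explicit violating symbol, at the cost of separate disjointification arguments for small sets and for tails.
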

\begin{proof}
Lemma \ref{lem:baire} gives fixed numbers $a,t_0>0$ with
$m_{p,t}(f)\leq a$ for $0<t<t_0$. Let $p'=p/(p-1)$ be the conjugate exponent of $p$,
so that $1/p+1/p'=1$. Let $0<t_j<t_0$ satisfy $t_j\rightarrow0^{+}$
as $j\rightarrow\infty$, and let $A_j$ be any finite-rank operator
on $\mathcal F_{t_j}^2$
with $\norm{A_j}_{\mathcal S_{p'}(\mathcal F_{t_j}^2)}\leq1$.
Write $\Tr$ for the trace on $\mathcal S_1(\mathcal F_{t_j}^2)$
and define $\ell_j:L^\infty(\R^d)\to\C$ by
$$
 \ell_j(g)=\Tr(D_{t_j}(f,g)A_j),\qquad g\in L^\infty(\R^d).
$$
We represent this functional by an $L^1(\R^d,dv)$ density
and use its weak convergence to control localized tests. Choose an integer $N_j\geq1$ and vectors
$u_{j,\nu},k_{j,\nu}\in\mathcal F_{t_j}^2$, $1\leq\nu\leq N_j$,
such that
$$
 A_jh=\sum_{\nu=1}^{N_j}\langle h,u_{j,\nu}\rangle_{t_j}
                  k_{j,\nu},\quad h\in\mathcal F_{t_j}^2.
$$

Define
$$
 q_j(z)=-t_j^{-n}e^{-2\varphi(z/\sqrt{t_j})}
   \sum_{\nu=1}^{N_j}k_{j,\nu}(z)
       \overline{H_{\overline{f}}^{(t_j)}u_{j,\nu}(z)}.
$$
Each summand is integrable by the Cauchy--Schwarz inequality in $L_{t_j}^2$.
The finite-rank representation of $A_j$ and
\eqref{eq:factorization} give
\begin{align*}
 \ell_j(g)
 &=\sum_{\nu=1}^{N_j}
       \langle D_{t_j}(f,g)k_{j,\nu},u_{j,\nu}\rangle_{t_j}=-\sum_{\nu=1}^{N_j}\int_{\C^n}
       gk_{j,\nu}
       \overline{H_{\overline{f}}^{(t_j)}u_{j,\nu}}d\mu_{t_j}
   =\int_{\R^d}g(z)q_j(z) dv(z).
 \end{align*}
The second equality uses \eqref{E1} and
$H_{\overline{f}}^{(t_j)}u_{j,\nu}\perp\mathcal F_{t_j}^2$
to replace $H_g^{(t_j)}k_{j,\nu}$ by $gk_{j,\nu}$
in the inner product.
Using the trace bound \cite[Lemma 1.36 (c), p. 27--28]{Zhu1990},
definition \eqref{E5}, and
$\norm{A_j}_{\mathcal S_{p'}(\mathcal F_{t_j}^2)}\leq1$,
we obtain
\begin{align*}
 \norm{q_j}_{L^1(\R^d,dv)}
 &=\sup_{\norm{g}_{L^\infty(\R^d)}\leq1}|\ell_j(g)|\leq
 \norm{A_j}_{\mathcal S_{p'}(\mathcal F_{t_j}^2)}
 \sup_{\norm{g}_{L^\infty(\R^d)}\leq1}
\norm{D_{t_j}(f,g)}_{\mathcal S_p(\mathcal F_{t_j}^2)}\\
 &\leq m_{p,t_j}(f)\leq a.
\end{align*}
The supremum in the first equality is attained by taking
$g=\overline{q_j}/|q_j|$ where $q_j\neq0$ and $g=0$
elsewhere, while the last inequality follows from
$0<t_j<t_0$ and the bound in Lemma \ref{lem:baire}.
For every fixed $g\in L^\infty(\R^d)$, \eqref{eq:property} gives
$$
 \left|\int_{\R^d}gq_j dv\right|
 =|\ell_j(g)|
 \leq\norm{D_{t_j}(f,g)}_{\mathcal S_p(\mathcal F_{t_j}^2)}
 \longrightarrow0\quad(j\rightarrow\infty).
$$
By the duality characterization at the beginning of this
subsection, $q_j\rightharpoonup0$ in $L^1(\R^d,dv)$.
Hence, $\{0\}\cup\{q_j:j\geq1\}$ is weakly compact and
$\{q_j:j\geq1\}\in\mathcal W$.
For measurable $E_j\subseteq\R^d$ and positive $R_j$,
\eqref{eq:intrinsic-three} therefore yields
\begin{equation}\label{eq:dual-density-ui}
 |E_j|\rightarrow0\ \Longrightarrow\ \int_{E_j}|q_j| dv\rightarrow0,
 \quad
 R_j\rightarrow\infty\ \Longrightarrow\
       \int_{|z|>R_j}|q_j(z)| dv(z)\rightarrow0
\end{equation}
as $j\rightarrow\infty$.

Since $(t_j)$ and $(A_j)$ were arbitrary subject to the stated
bounds, \eqref{eq:dual-density-ui} applies to the operators
chosen below to test the failure of \eqref{eq:test-smallsets}.
If \eqref{eq:test-smallsets} failed, the definition \eqref{E23}
would give an $\varepsilon>0$, a sequence $t_j\in(0,t_0)$ with $t_j\rightarrow0^{+}$,
measurable sets
$E_j\subseteq\R^d$ with $|E_j|\rightarrow0$, and $g_j$ satisfying
$$|g_j|\leq\mathbf1_{E_j}\quad \text{and}\quad
\norm{D_{t_j}(f,g_j)}_{\mathcal S_p(\mathcal F_{t_j}^2)}>\varepsilon.$$
The finite-rank norm formula in the proof of
\cite[Lemma 1.37, p. 28--29]{Zhu1990} allows us to choose finite-rank
operators $A_j$ on $\mathcal F_{t_j}^2$, with
$\norm{A_j}_{\mathcal S_{p'}(\mathcal F_{t_j}^2)}\leq1$,
so that their associated densities $q_j$ satisfy
$$
 \frac{\varepsilon}{2}
 <|\Tr(D_{t_j}(f,g_j)A_j)|
 =\left|\int_{\R^d}g_jq_j dv\right|
 \leq\int_{E_j}|q_j| dv
 \longrightarrow0\quad(j\rightarrow\infty).
$$
The limit follows from \eqref{eq:dual-density-ui}, contradicting
$\varepsilon>0$. If \eqref{eq:test-tails} failed, the same construction with
$|g_j|\leq\mathbf1_{B(0,R_j)^c}$ and $R_j\rightarrow\infty$
would contradict the second assertion of
\eqref{eq:dual-density-ui}. This proves both limits. The proof is complete.
\end{proof}

For $S\subseteq\R^d$ and $s>0$, write
$$
 S+B(0,s)=\{x+y:x\in S,\ |y|<s\}
         =\bigcup_{x\in S}B(x,s).
$$
This enlargement is open. Restricting the phase construction in
Lemma \ref{lem:phase} to centers in $S$ gives the following
localized estimate.

\begin{lemma}\label{lem:localized-phase}
The radius $\rho$ is fixed as in Lemma \ref{L1}.
For bounded $f$, $r>0$, and a measurable set $S\subseteq\R^d$,
put $E=S+B(0,\rho r)$. Then
\begin{equation}\label{eq:phase-localized}
 r^{-d}\int_S G_{1,\rho r}^{p}(\overline{f})(z) dv(z)
       \leq C m_{p,r^2}^p(f,E).
\end{equation}
Here $C>0$ depends only on $n,m,M,p$ and the fixed lattice
spacing $L>2\rho$, and is independent of $f,r,S$.
\end{lemma}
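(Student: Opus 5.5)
The plan is to rerun the proof of Lemma \ref{lem:phase}, but to keep only the lattice centers lying in $S$, so that every test symbol built there is supported in $E$. Fix $t=r^2$ and a translation $b\in[0,Lr)^d$. Put $a_j=b+Lrj$ and $k_j=k_{t,a_j}$ as in Lemma \ref{lem:lattice}, and set $B_j=B(a_j,\rho r)$. Let $J_S(b)=\{j\in\Z^d:a_j\in S\}$. For $j\in J_S(b)$ we have $B_j\subseteq S+B(0,\rho r)=E$. We may assume $m_{p,t}(f,E)<\infty$, since otherwise there is nothing to prove.

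For a finite $J\subseteq J_S(b)$, take the phase symbols $g_j$ from \eqref{E29} and the signed sums $g_\varepsilon=\sum_{j\in J}\varepsilon_jg_j$. The balls are disjoint and each has $|g_j|\leq\mathbf1_{B_j}$. Hence $|g_\varepsilon|\leq\mathbf1_E$ almost everywhere, so $g_\varepsilon$ is admissible in \eqref{E23} and $\norm{D_t(f,g_\varepsilon)}_{\mathcal S_p}\leq m_{p,t}(f,E)$. The computation in the proof of Lemma \ref{lem:phase} then goes through unchanged: the diagonal entries, the sign averaging of Lemma \ref{L2} and \eqref{E47}, and the lattice diagonal bound \eqref{eq:lattice-diagonal} of Lemma \ref{lem:lattice}(b), which costs nothing to restrict to $J$ since all terms are nonnegative. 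Together with the lower bound \eqref{E148}, this gives
$$
 \sum_{j\in J}G_{1,\rho r}^p(\overline{f})(a_j)\leq C_{p,0}C_1^{-p}m_{p,t}^p(f,E).
$$
The constant does not depend on $J$ or $b$. Exhausting $J_S(b)$ by finite sets then bounds $\sum_{j\in\Z^d}\mathbf1_S(b+Lrj)G_{1,\rho r}^p(\overline{f})(b+Lrj)$ by the same quantity.

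Finally, integrate over $b\in[0,Lr)^d$ and apply Tonelli's theorem exactly as at the end of the proof of Lemma \ref{lem:phase}. The integrand $\mathbf1_S G^p_{1,\rho r}(\overline{f})$ is measurable, because $S$ is measurable and $G_{1,\rho r}(\overline{f})$ is continuous by \eqref{E110}. Since the cells $[0,Lr)^d+Lrj$ partition $\R^d$, this yields
$$
 \int_S G_{1,\rho r}^p(\overline{f})\,dv\leq (Lr)^dC_{p,0}C_1^{-p}m_{p,r^2}^p(f,E).
$$
Multiplying by $r^{-d}$ gives \eqref{eq:phase-localized} with $C=L^dC_{p,0}C_1^{-p}$, which depends only on $n,m,M,p,L$.

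The step that needs care is the support check, namely that every admissible test stays inside $E$. This is exactly why the enlargement radius is $\rho r$, the radius of the balls $B_j$. No new analytic estimate is required.
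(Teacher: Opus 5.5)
Your proposal is correct and follows essentially the same route as the paper: keep only the lattice centers in $S$, so that the disjoint balls $B(a_j,\rho r)$ lie in $E$ and the signed phase symbols are admissible in \eqref{E23}. You then rerun the sign-averaging and lattice diagonal estimates of Lemma \ref{lem:phase}, exhaust by finite index sets, and integrate over the translation $b$ by Tonelli, which yields the constant $L^dC_{p,0}C_1^{-p}$, matching the paper's $C=C_0L^d$.
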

\begin{proof}
If $m_{p,r^2}(f,E)=\infty$, there is nothing to prove.
Assume it is finite and fix $L>2\rho$ as in
Lemma \ref{lem:lattice}.
For $b\in[0,Lr)^d$, retain the lattice centers
$a_i=b+Lri$ that belong to $S$.
Lemma \ref{lem:lattice} (a) and the choice of the centers give
$B_i\cap B_j=\varnothing$ for $i\neq j$ and $B_i\subseteq E$,
where $B_i=B(a_i,\rho r)$.
Let $k_i=k_{r^2,a_i}$ be the normalized reproducing kernel
defined before \eqref{E10}, and set
$u_i=H_{\overline{f}}^{(r^2)}k_i$.
Define $g_i$ by \eqref{E29}, with $c_i=k_i\overline{u_i}$.

For a finite set $J$ of the selected indices and each
sign vector $\epsilon=(\epsilon_i)_{i\in J}\in\{-1,1\}^J$,
we set
$$
 g_\epsilon=\sum_{i\in J}\epsilon_i g_i.
$$
Since $|g_i|\leq\mathbf1_{B_i}$ and the balls are disjoint,
we have $|g_\epsilon|\leq\mathbf1_E$, so
$g_\epsilon$ is admissible in \eqref{E23}.
For these indices, we use \eqref{E148} and the first two
inequalities in \eqref{E147}, followed by \eqref{E23}, to obtain
\begin{align*}
 \sum_{i\in J}G_{1,\rho r}^{p}(\overline{f})(a_i)
 &\leq C_0\mathbb E
       \norm{D_{r^2}(f,g_\epsilon)}_{\mathcal S_p}^{p}\leq C_0m_{p,r^2}^p(f,E),
\end{align*}
where $C_0>0$ has the stated dependence and is independent
of $J$ and $b$.

To include all selected centers, take
$$
 J_N=\{i\in\Z^d\cap[-N,N]^d:a_i\in S\},
 \quad N=1,2,\ldots.
$$
The sets satisfy $J_N\subseteq J_{N+1}$ and
$\bigcup_{N=1}^{\infty}J_N=\{i\in\Z^d:a_i\in S\}$.
Since all summands are nonnegative, the same monotone convergence
argument as in \eqref{E65} yields
$$
 \sum_{\substack{i\in\Z^d\\a_i\in S}}
 G_{1,\rho r}^{p}(\overline{f})(a_i)
 =\lim_{N\rightarrow\infty}\sum_{i\in J_N}
 G_{1,\rho r}^{p}(\overline{f})(a_i)
 \leq C_0m_{p,r^2}^p(f,E).
$$
Integrating the preceding bound in $b$ and applying
Tonelli's theorem to the lattice partition, we obtain
\begin{align*}
 \int_S G_{1,\rho r}^{p}(\overline{f})(z) dv(z)
 &=\int_{[0,Lr)^d}
   \sum_{\substack{i\in\Z^d\\b+Lri\in S}}
   G_{1,\rho r}^{p}(\overline{f})(b+Lri) dv(b)\leq C_0(Lr)^d m_{p,r^2}^p(f,E).
\end{align*}
Dividing by $r^d$ proves \eqref{eq:phase-localized}
with $C=C_0L^d$. The proof is complete.
\end{proof}

Recall from \eqref{eq:energy-definitions} that
$e_r(f)(z)=r^{-d}G_{1,r}^{p}(\overline{f})(z)$. Combining the localized estimates gives the necessary weak
compactness condition for the family $e_r(f)$.

\begin{proposition}\label{prop:necessary-wcompact}
If $f\in\mathcal A_p$ and $p\geq2$, then
$\{e_r(f):0<r<r_0\}\in\mathcal W$ for some $r_0>0$.
\end{proposition}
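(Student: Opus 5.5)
We verify the three Dunford--Pettis conditions in \eqref{eq:intrinsic-three} for the family $\{e_r(f):0<r<r_0\}$. To match the radius in Lemma \ref{lem:localized-phase}, we work at the scale $s=r/\rho$, so that $e_r(f)=r^{-d}G_{1,\rho s}^p(\overline{f})=\rho^{-d}s^{-d}G_{1,\rho s}^p(\overline{f})$.

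\emph{Uniform mass.} Lemma \ref{lem:baire} gives $a,t_0>0$ with $m_{p,t}(f)\leq a$ for $0<t<t_0$. Put $r_0=\rho\sqrt{t_0}$. Applying \eqref{eq:phase-localized} with $S=\R^d$ at scale $s$ bounds $\norm{e_r(f)}_{L^1}$ by $C\rho^{-d}a^p$ for $0<r<r_0$.

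\emph{Small sets.} Fix $\delta>0$ and a measurable $S$ with $|S|\leq\delta$. The enlargement $E=S+B(0,r)$ may have large measure, so we cannot apply \eqref{eq:test-smallsets} to it directly. Instead, we localize in two steps. By Lemma \ref{lem:localized-testing}, choose $R$ with $m_{p,t}(f,B(0,R)^c)\leq\varepsilon$ for small $t$. Then split $S=(S\cap B(0,R+1))\cup(S\setminus B(0,R+1))$. For the outer part, \eqref{eq:phase-localized} gives at most $C\varepsilon^p$, since its enlargement lies in $B(0,R)^c$ once $r<1$.

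For the bounded part, we use a covering argument. Take a small fixed parameter $\lambda$ and the dyadic cubes $Q$ of side $\lambda$ meeting $S\cap B(0,R+1)$. For $r\leq\lambda$, the enlargement $E$ is contained in the union of the cubes $3Q$ that meet $S$. This is still not small, however, because $S$ can meet many cubes while having tiny measure.

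We therefore separate the cubes according to density. Call a cube \emph{heavy} if $|S\cap Q|\geq\theta|Q|$. The heavy cubes have total measure at most $\delta/\theta$, so their triples give a set of measure at most $3^d\delta/\theta$, which \eqref{eq:test-smallsets} controls. On the remaining light cubes we need a different bound, and this is the main obstacle. Since $e_r(f)$ is not itself controlled pointwise, a light cube can still carry concentration.

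To handle the light cubes, I would first try to make the scale-dependence explicit. The idea is to choose the cube side comparable to $r$, namely $\lambda=Kr$. Then $E\subseteq S+B(0,r)$ has measure at most roughly $3^d$ times the total measure of the cubes of side $r$ meeting $S$. This quantity need not be small, though.

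The fallback is to replace $S$ by a level-set argument using \eqref{eq:dp-three}. On $\{e_r(f)>K\}$ we use continuity of $G_{1,\rho s}(\overline{f})$ and its Lipschitz-type variation at scale $r$. From \eqref{E110}, $G_{1,r}$ at nearby points within distance $r/2$ is comparable to $G_{1,2r}$, up to a constant. Consequently the set $\{e_r>K\}$, enlarged by $B(0,r/2)$, lies in $\{e_{2r}'>cK\}$ for a comparable density at radius $2r$. Its measure is at most $a^p/(cK)$ by the uniform mass bound. Applying \eqref{eq:test-smallsets} to this enlarged level set, of measure $O(1/K)$, then gives the condition \eqref{eq:dp-three} after adjusting radii, for which \cite[Remark 2.5]{HuVirtanen2022}-type radius comparisons are needed.

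\emph{Tails.} Take $S=B(0,R+1)^c$. Then $E\subseteq B(0,R)^c$ for $r<1$, and \eqref{eq:test-tails} together with \eqref{eq:phase-localized} gives the third condition. Finally, Lemma \ref{L9} absorbs the bounded constants relating the radii $r$ and $\rho s$.
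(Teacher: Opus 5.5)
Once you discard the heavy/light cube covering, which as you note does not work, your fallback is the paper's route. The level set $\{e_r(f)>K\}$, enlarged by the ball that Lemma \ref{lem:localized-phase} requires, lies in a superlevel set of the radius-$2r$ density, which has measure $O(1/K)$. Lemma \ref{lem:localized-phase} together with \eqref{eq:test-smallsets} then gives \eqref{eq:dp-three}, and tails come from \eqref{eq:test-tails}.

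Three details need correcting:
\begin{enumerate}
\item The pointwise comparison does not come from \eqref{E110}, which is only a continuity estimate, nor from \cite[Remark 2.5]{HuVirtanen2022}, which is an $L^p$-norm equivalence. It is the elementary nested-ball inequality. If $|z-w|<r$, then $B(z,r)\subseteq B(w,2r)$, and every holomorphic competitor on the larger ball restricts to the smaller one. Hence $e_r(f)(z)\leq 2^{d+dp}e_{2r}(f)(w)$.
\item The lemma enlarges by $B(0,r)$, not $B(0,r/2)$.
\item Bounding the measure of $\{e_{2r}(f)>2^{-d-dp}K\}$ uses the mass bound at radius $2r$. So $r_0$ must satisfy $(2r_0/\rho)^2<t_0$.
\end{enumerate}

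The genuine gap is uniformity in $r$. Both \eqref{eq:test-smallsets} and \eqref{eq:test-tails} control only $\limsup_{t\to0^+}$. Your argument therefore yields only
\[
\lim_{K\to\infty}\limsup_{r\to0^+}\int_{\{e_r(f)>K\}}e_r(f)\,dv=0,
\qquad
\lim_{R\to\infty}\limsup_{r\to0^+}\int_{|z|>R}e_r(f)\,dv=0.
\]
The conditions \eqref{eq:intrinsic-three} and \eqref{eq:dp-three}, however, require the supremum over all $0<r<r_0$. For a given $\varepsilon$, your bounds hold only for $r<a(\varepsilon)$. Shrinking $r_0$ once cannot repair this, because $a(\varepsilon)$ depends on $\varepsilon$.

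The paper closes the gap on the range $a\leq r<r_0$ with the same nested-ball inequality. Since $B(z,r)\subseteq B(z,2r_0)$, one gets $0\leq e_r(f)\leq(2r_0/a)^{d+dp}e_{2r_0}(f)$ for all such $r$. The right-hand side is a single function, integrable by the mass bound at radius $2r_0$. Its level-set and tail integrals therefore tend to zero by dominated convergence. Choosing $K,R$ large enough for both ranges $0<r<a$ and $a\leq r<r_0$ gives the uniform conditions. Your closing appeal to Lemma \ref{L9} does not supply this step; the domination by a fixed $L^1$ function on the intermediate range is exactly what must be stated and proved.
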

\begin{proof}
We verify the first and third conditions in
\eqref{eq:intrinsic-three}, together with the equivalent
level set condition \eqref{eq:dp-three}.
The radius $\rho>0$ is as in Lemma \ref{L1}.
Fix the lattice spacing $L>2\rho$.
Choose $a_0,t_0>0$ from Lemma \ref{lem:baire} and
$r_0>0$ such that $(2r_0/\rho)^2<t_0$.
Applying \eqref{eq:phase} with $r=s/\rho$ and using
\eqref{eq:energy-definitions}, we obtain
\begin{equation}\label{E72}
\begin{aligned}
 \norm{e_s(f)}_{L^1(\R^d,dv)}
 &=s^{-d}\norm{G_{1,s}(\overline{f})}_{L^p(\R^d,dv)}^p\leq C_p^p\rho^{-d}m_{p,(s/\rho)^2}^p(f)
 \leq C_p^p\rho^{-d}a_0^p=b
\end{aligned}
\end{equation}
for  $0<s\leq2r_0$.
Here $C_p>0$ is the constant in \eqref{eq:phase}, and
$b>0$ is independent of $s$.

Lemma \ref{lem:localized-phase} tests a set $S$ through
its enlargement $S+B(0,s)$ when $r=s/\rho$.
To apply \eqref{eq:test-smallsets} to level sets of $e_s(f)$,
we must control the measure of this enlargement.
We first compare the local analytic distances on nested balls.
Assume that $0<s\leq\tau$ and
$B(z,s)\subseteq B(w,\tau)$.
Each $h\in\mathcal O(B(w,\tau))$ restricts to a holomorphic
function on $B(z,s)$, so \eqref{E106} gives
\begin{align*}
 G_{1,s}(\overline{f})(z)
 &\leq\frac1{|B(z,s)|}
       \int_{B(z,s)}|\overline{f}-h| dv\leq\left(\frac{\tau}{s}\right)^d
       \frac1{|B(w,\tau)|}
       \int_{B(w,\tau)}|\overline{f}-h| dv.
\end{align*}
Taking the infimum over $h$ and using
\eqref{eq:energy-definitions}, we obtain
\begin{equation}\label{E73}
 e_s(f)(z)\leq
 \left(\frac{\tau}{s}\right)^{d+dp}e_\tau(f)(w)
 \quad\text{if }B(z,s)\subseteq B(w,\tau).
\end{equation}

For $K>0$ and $0<s<r_0$, set
$$
 S_{s,K}=\{z\in\R^d:e_s(f)(z)>K\}\quad\text{and}\quad
 E_{s,K}=S_{s,K}+B(0,s).
$$
Equation \eqref{E110} with $r=s/\rho$ shows that
$G_{1,s}(\overline{f})$ is continuous. By
\eqref{eq:energy-definitions}, $e_s(f)$ is also continuous,
so $S_{s,K}$ is open and hence measurable.
If $w\in E_{s,K}$, choose $z\in S_{s,K}$ with $|w-z|<s$.
Then $B(z,s)\subseteq B(w,2s)$, and \eqref{E73} implies
$$
 K<e_s(f)(z)\leq2^{d+dp}e_{2s}(f)(w).
$$
Since $0<s<r_0$, we have $0<2s<2r_0$, and \eqref{E72}
gives $\int_{\R^d}e_{2s}(f) dv\leq b$. Combining this
bound with the preceding pointwise inequality, we obtain
\begin{equation}\label{E202}
\begin{gathered}
 E_{s,K}\subseteq
 \{w\in\R^d:e_{2s}(f)(w)>2^{-d-dp}K\},\quad
 |E_{s,K}|\leq\frac{2^{d+dp}}{K}
       \int_{E_{s,K}}e_{2s}(f) dv
 \leq\frac{2^{d+dp}b}{K}.
\end{gathered}
\end{equation}

Let $C_0>0$ be the constant in \eqref{eq:phase-localized}
and set $C_1=\rho^{-d}C_0$.
Lemma \ref{lem:localized-phase}, applied with
$r=s/\rho$ and $S=S_{s,K}$, gives
$$
 \int_{S_{s,K}}e_s(f) dv
 \leq C_1m_{p,(s/\rho)^2}^p(f,E_{s,K}).
$$
Combining this inequality with the measure bound in \eqref{E202},
we take the upper limit as $s\rightarrow0^{+}$ and use
$t=(s/\rho)^2$ to obtain
$$
 \limsup_{s\rightarrow0^{+}}
 \int_{\{e_s(f)>K\}}e_s(f) dv
 \leq C_1\left(
 \limsup_{t\rightarrow0^{+}}
 \sup_{|E|\leq2^{d+dp}b/K}m_{p,t}(f,E)
 \right)^p.
$$
Since $2^{d+dp}b/K\rightarrow0$ as $K\rightarrow\infty$,
\eqref{eq:test-smallsets} yields
\begin{equation}\label{E74}
 \lim_{K\rightarrow\infty}\limsup_{s\rightarrow0^{+}}
 \int_{\{e_s(f)>K\}}e_s(f) dv=0.
\end{equation}

To estimate the integrals outside large balls, take
$R>0$ and $0<s<\min\{r_0,R/2\}$.
The inclusion
$$
 B(0,R)^c+B(0,s)\subseteq B(0,R/2)^c
$$
allows us to apply Lemma \ref{lem:localized-phase}
with $r=s/\rho$ and $S=B(0,R)^c$.
By the preceding set inclusion and the definition
\eqref{E23}, the localized estimate yields
\begin{align*}
\int_{|z|>R}e_s(f)(z)dv(z)
&\leq C_1m_{p,(s/\rho)^2}^{p}
 \bigl(f,B(0,R)^c+B(0,s)\bigr)\leq C_1m_{p,(s/\rho)^2}^{p}
 \bigl(f,B(0,R/2)^c\bigr).
\end{align*}
For each fixed $R>0$, the restriction $s<\min\{r_0,R/2\}$
holds for all sufficiently small $s>0$. Taking the upper
limit and substituting $t=(s/\rho)^2$, we obtain
\begin{align*}
 \limsup_{s\rightarrow0^{+}}
 \int_{|z|>R}e_s(f)(z) dv(z)
 &\leq C_1\left(
 \limsup_{s\rightarrow0^{+}}
 m_{p,(s/\rho)^2}(f,B(0,R/2)^c)
 \right)^p\\
 &=C_1\left(
 \limsup_{t\rightarrow0^{+}}
 m_{p,t}(f,B(0,R/2)^c)
 \right)^p.
\end{align*}
Since $C_1$ is independent of $R$ and $R/2\rightarrow\infty$,
\eqref{eq:test-tails} makes this upper bound tend to zero
as $R\rightarrow\infty$. The integrals are nonnegative, so
\begin{equation}\label{E75}
 \lim_{R\rightarrow\infty}\limsup_{s\rightarrow0^{+}}
 \int_{|z|>R}e_s(f)(z) dv(z)=0.
\end{equation}

To complete the weak compactness argument, we need the
level set estimates and the estimates outside large balls
to hold uniformly for $0<s<r_0$.
Fix $\varepsilon>0$. By \eqref{E74}, \eqref{E75}, and the
definition of the upper limit, we can first choose
$K_0,R_0>0$ and then $0<a<r_0$ such that
\begin{equation}\label{E111}
 \sup_{0<s<a}\left(
 \int_{\{e_s(f)>K_0\}}e_s(f) dv
 +\int_{|z|>R_0}e_s(f)(z) dv(z)
 \right)<\varepsilon.
\end{equation}
It remains to control the same integrals for $a\leq s<r_0$.
The inclusion $B(z,s)\subseteq B(z,2r_0)$ holds for these scales,
so \eqref{E73} gives the common upper bound
$$
 0\leq e_s(f)\leq
 \left(\frac{2r_0}{s}\right)^{d+dp}e_{2r_0}(f)
 \leq h_a\quad\text{and}\quad
 h_a=\left(\frac{2r_0}{a}\right)^{d+dp}e_{2r_0}(f).
$$
Equation \eqref{E72}, applied at the radius $2r_0$, gives
$\norm{h_a}_{L^1(\R^d,dv)}
\leq(2r_0/a)^{d+dp}b<\infty$.
For $a\leq s<r_0$ and $K,R>0$, the inclusion
$\{e_s(f)>K\}\subseteq\{h_a>K\}$ and the bound
$0\leq e_s(f)\leq h_a$ give
\begin{equation}\label{E112}
 \int_{\{e_s(f)>K\}}e_s(f) dv
 \leq\int_{\{h_a>K\}}h_a dv\quad\text{and}\quad
 \int_{|z|>R}e_s(f)(z) dv(z)
 \leq\int_{|z|>R}h_a(z) dv(z).
\end{equation}
Since $a$ is fixed and $h_a\in L^1(\R^d,dv)$, dominated convergence and
\eqref{E112} yield
\begin{equation}\label{E113}
\begin{aligned}
 &\qquad\sup_{a\leq s<r_0}\left(
 \int_{\{e_s(f)>K\}}e_s(f) dv
 +\int_{|z|>R}e_s(f)(z) dv(z)
 \right)\\
 &\qquad\leq\int_{\{h_a>K\}}h_a dv
 +\int_{|z|>R}h_a(z) dv(z)
 \longrightarrow0\quad(K,R\rightarrow\infty).
\end{aligned}
\end{equation}
We choose $K\geq K_0$ and $R\geq R_0$ so that the upper
bound in \eqref{E113} is less than $\varepsilon$.
For these choices, we have
$$
 \{e_s(f)>K\}\subseteq\{e_s(f)>K_0\}\quad\text{and}\quad
 \{|z|>R\}\subseteq\{|z|>R_0\}.
$$
Since $e_s(f)\geq0$, \eqref{E111} also holds with
$K,R$ in place of $K_0,R_0$. The bounds in \eqref{E111}
and \eqref{E113} now cover $0<s<a$ and $a\leq s<r_0$,
respectively, and give
\begin{equation}\label{E76}
 \sup_{0<s<r_0}\left(
 \int_{\{e_s(f)>K\}}e_s(f) dv
 +\int_{|z|>R}e_s(f)(z) dv(z)
 \right)<\varepsilon.
\end{equation}

Since $\varepsilon>0$ is arbitrary, \eqref{E76} verifies
the level set condition \eqref{eq:dp-three} and the
condition outside large balls in \eqref{eq:intrinsic-three}.
Together with the uniform $L^1$ bound in \eqref{E72},
these are precisely the weak compactness conditions
stated in \eqref{eq:intrinsic-three}, with its second
condition replaced by \eqref{eq:dp-three}.
We conclude that
$\{e_s(f):0<s<r_0\}\in\mathcal W$, which completes the proof.
\end{proof}

\subsection{From analytic distance to ordinary oscillation}
For $2\leq p<\infty$ and bounded $f$, we prove in
Lemma \ref{lem:ui-analytic-real} that relative weak compactness
of the analytic distance densities for sufficiently small radii
is equivalent to the same property for the ordinary oscillation
densities. The analytic decomposition controls the $\dbar$
derivatives of its smooth part, whereas a Poincar\'e inequality
uses its real gradient. We therefore recover the real gradient
from the complex derivatives using operators that preserve
relative weak compactness of the corresponding densities.
We first establish the required operator estimates for families
satisfying $\{|u_\gamma|^p:\gamma\in\Gamma\}\in\mathcal W$,
where $\Gamma$ is an arbitrary index set.
For a fixed integer $N\geq1$, we write
$$
 u_\gamma=(u_{\gamma,1},\ldots,u_{\gamma,N}),\quad
 |u_\gamma(x)|=
 \left(\sum_{j=1}^N|u_{\gamma,j}(x)|^2\right)^{1/2},
 \quad x\in\R^d.
$$
A linear operator $S$ on $L^p(\R^d,dv)$ acts on
$u_\gamma\in[L^p(\R^d,dv)]^N$ componentwise by
$$
 Su_\gamma=(Su_{\gamma,1},\ldots,Su_{\gamma,N}).
$$

For $u\in L^1_{\rm loc}(\R^d,dv)$, the local averages in
\eqref{E4} define the centered Hardy--Littlewood maximal
operator \cite[Chapter I, Section 1.1, p. 4]{Stein1970} by
\begin{equation}\label{E24}
 \mathcal Mu(x)=\sup_{s>0}M_{1,s}(u)(x).
\end{equation}
For $u\in[L^1_{\rm loc}(\R^d,dv)]^N$, we use its Euclidean
length and the conventions
$$
 M_{1,s}(u)=M_{1,s}(|u|),\quad
 \mathcal Mu=\mathcal M(|u|).
$$
The averages are nonnegative real valued, and the maximal function
may take values in $[0,\infty]$.

To express real derivatives in terms of the components of $\dbar$,
we introduce the following Fourier multipliers.
For $x=(x_1,\ldots,x_d)$ and $y=(y_1,\ldots,y_d)$ in $\R^d$,
we define their real Euclidean inner product by
\begin{equation}\label{E109}
 x\cdot y=\sum_{\ell=1}^d x_\ell y_\ell,\quad x,y\in\R^d.
\end{equation}
For $u\in C_c^\infty(\R^d)$, let
$$\widehat u(\xi)=\int_{\R^d}u(x)e^{-ix\cdot\xi} dv(x)$$
be its Fourier transform at
$\xi=(\xi_1,\ldots,\xi_d)\in\R^d$, with $x\cdot\xi$ as in
\eqref{E109}. For tempered distributions, we use the distributional Fourier
transform in \cite[Section 7.14, p. 192]{Rudin1991} with
our normalization. Rudin's Fourier transform is
$(2\pi)^{-d/2}$ times ours
\cite[Section 7.1 (a), (c), p. 182--183]{Rudin1991}.
His first-order differential operators are
$i^{-1}\partial_{x_\ell}$ for $1\leq\ell\leq d$
\cite[Section 7.1 (d), p. 183]{Rudin1991}.
The constant normalization factor cancels on both sides of
his differentiation formula
\cite[Theorem 7.15 (b), p. 192]{Rudin1991}, which in our
notation becomes
\begin{equation}\label{E198}
 \widehat{\partial_{x_\ell}T}=i\xi_\ell\widehat T,
 \quad 1\leq\ell\leq d,
\end{equation}
for every tempered distribution $T$ on $\R^d$, with equality
in the sense of tempered distributions.

To apply Stein's results, we write
$\widehat u_S$ for the Fourier transform in
\cite[Chapter II, Section 1.3, p. 28]{Stein1970}. Its definition gives
$$
 \widehat u_S(\eta)=\int_{\R^d}u(x)e^{2\pi i x\cdot\eta} dv(x)
 \quad\text{and}\quad
 \widehat u(\xi)=\widehat u_S\!\left(-\frac{\xi}{2\pi}\right).
$$
The Plancherel identity stated there and the substitution
$\xi=-2\pi\eta$, with $dv(\xi)=(2\pi)^d dv(\eta)$, yield
\begin{equation}\label{E139}
 \norm{\widehat h}_{L^2}
 =(2\pi)^{d/2}\norm{\widehat h_S}_{L^2}
 =(2\pi)^{d/2}\norm{h}_{L^2},
 \quad h\in C_c^\infty(\R^d).
\end{equation}
By \cite[Corollary 4.23, p. 109]{Brezis2011}, applied to
real and imaginary parts, $C_c^\infty(\R^d)$ is dense in
$L^q(\R^d,dv)$ for $1\leq q<\infty$.
For $u\in L^2(\R^d,dv)$, choose such a sequence $(u_j)$
with $u_j\to u$ in that space and define
$$
 \widehat u=\lim_{j\to\infty}\widehat{u_j}
 \quad\text{in }L^2(\R^d,dv).
$$
Equation \eqref{E139} shows that this limit exists and
is independent of the approximating sequence. The same identity
remains valid for every $h\in L^2(\R^d,dv)$. The unitarity stated in
\cite[Chapter II, Section 1.3, p. 28]{Stein1970}, with the
frequency substitution above, shows that
$u\mapsto(2\pi)^{-d/2}\widehat u$ is unitary on $L^2(\R^d,dv)$.
Since $|\xi_\ell\xi_k|\leq|\xi|^2$, the following identity
therefore defines a unique bounded operator
$\mathcal R_{\ell k}:L^2(\R^d,dv)\to L^2(\R^d,dv)$
\begin{equation}\label{E25}
 \widehat{\mathcal R_{\ell k}u}(\xi)
 =\frac{\xi_\ell\xi_k}{|\xi|^2}\widehat u(\xi),
 \quad\text{a.e. }\xi\neq0,\quad 1\leq\ell,k\leq d,
\end{equation}
with the multiplier set to zero at $\xi=0$.
This multiplier is smooth away from the origin, even,
and homogeneous of degree zero. Its evenness and homogeneity
leave it unchanged under the frequency substitution
$\xi\mapsto-\xi/(2\pi)$.

For $1<q<\infty$ and
$u\in L^2(\R^d,dv)\cap L^q(\R^d,dv)$,
\cite[Chapter IV, Section 3.2, Theorem 3 and
Example (2), p. 96]{Stein1970} gives
\begin{equation}\label{E140}
 \norm{\mathcal R_{\ell k}u}_{L^q(\R^d,dv)}
 \leq C_{d,q}\norm{u}_{L^q(\R^d,dv)},
\end{equation}
where $C_{d,q}>0$ depends only on $d,q$.
The extension statement in
\cite[Chapter IV, Section 3.1, p. 94]{Stein1970}
applies to the restriction
$\mathcal R_{\ell k}:L^2\cap L^q\to L^q$.
Since $C_c^\infty(\R^d)\subseteq L^2\cap L^q$ is dense
in $L^q$, this restriction has a unique bounded extension
$L^q(\R^d,dv)\to L^q(\R^d,dv)$ satisfying
\eqref{E140} for every $u\in L^q(\R^d,dv)$.
The following lemma identifies the values of these extensions
on intersections of their domains.

\begin{lemma}\label{L10}
Let $1<q_1,q_2<\infty$, and let $S$ be a bounded linear
operator on $L^2(\R^d,dv)$.
For $i=1,2$, suppose that the restriction of $S$ to
$L^2(\R^d,dv)\cap L^{q_i}(\R^d,dv)$ has a bounded linear
extension $S_i:L^{q_i}(\R^d,dv)\to L^{q_i}(\R^d,dv)$,
so that $S_i v=Sv$ for every $v\in L^2\cap L^{q_i}$. Then
$$
 S_1u=S_2u\quad\text{almost everywhere},\qquad
 u\in L^{q_1}(\R^d,dv)\cap L^{q_2}(\R^d,dv).
$$
\end{lemma}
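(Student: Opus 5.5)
The plan is to reduce the statement to the common dense subspace $C_c^\infty(\R^d)$, where both extensions agree with $S$, and then pass to the limit. Fix $u\in L^{q_1}(\R^d,dv)\cap L^{q_2}(\R^d,dv)$. Choose a single sequence $u_j\in C_c^\infty(\R^d)$ with $u_j\to u$ simultaneously in $L^{q_1}(\R^d,dv)$ and in $L^{q_2}(\R^d,dv)$. To produce such a sequence, take a truncation $v_R=u\mathbf1_{B(0,R)}\mathbf1_{\{|u|\leq R\}}$ and mollify it as in \eqref{E16}. The truncation lies in every $L^q$ with $1\leq q\leq\infty$. As $R\to\infty$, dominated convergence gives $v_R\to u$ in both norms. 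For fixed $R$, the mollification argument of \eqref{E64}, via translation continuity \cite[Lemma 4.3]{Brezis2011}, gives $v_R*\eta_\varepsilon\to v_R$ in both norms as $\varepsilon\to0^{+}$. A diagonal choice then yields the required $u_j$, and each $u_j$ lies in $C_c^\infty(\R^d)\subseteq L^2\cap L^{q_1}\cap L^{q_2}$.

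Since $u_j\in L^2\cap L^{q_i}$, the hypothesis gives $S_1u_j=Su_j=S_2u_j$ almost everywhere for every $j$. Boundedness of $S_i$ on $L^{q_i}(\R^d,dv)$ gives $S_iu_j\to S_iu$ in $L^{q_i}(\R^d,dv)$ for $i=1,2$. Convergence in $L^{q_i}$ implies a subsequence converging almost everywhere \cite[Theorem 4.9]{Brezis2011}. Passing first to a subsequence for $i=1$, and then to a further subsequence for $i=2$, gives a common subsequence $j_k$ with $S_1u_{j_k}\to S_1u$ and $S_2u_{j_k}\to S_2u$ almost everywhere. Since $S_1u_{j_k}=S_2u_{j_k}$ almost everywhere and countable unions of null sets are null, the two limits coincide almost everywhere, so $S_1u=S_2u$ almost everywhere.

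The only step that needs care is the simultaneous approximation in the two norms. Density in each $L^{q_i}$ separately does not suffice, because the two approximating sequences could differ. The truncate-then-mollify construction handles this, since every step (cutoff to a bounded set, bounding by $R$, and convolution with $\eta_\varepsilon$) is controlled in all the norms at once. Everything else is routine subsequence extraction.
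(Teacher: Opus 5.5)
Your proof is correct and follows essentially the paper's route. You pick a single sequence in $L^2\cap L^{q_1}\cap L^{q_2}$ that converges to $u$ in both norms, observe that $S_1$, $S$ and $S_2$ agree on it, and pass to the limit. The mollification step is unnecessary: the truncations $u\mathbf1_{B(0,j)}\mathbf1_{\{|u|\leq j\}}$ already lie in $L^2\cap L^{q_1}\cap L^{q_2}$, which is all the hypothesis requires. The paper also finishes slightly differently, bounding $\|S_1u-S_2u\|_{L^1(B(0,R))}$ by H\"older's inequality on each ball instead of extracting almost everywhere convergent subsequences, but the two endings are equivalent.
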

\begin{proof}
Fix $u\in L^{q_1}(\R^d,dv)\cap L^{q_2}(\R^d,dv)$ and set
$$
 u_j=u\mathbf1_{B(0,j)}\mathbf1_{\{|u|\leq j\}},
 \quad j\geq1.
$$
These truncations belong to
$L^2(\R^d,dv)\cap L^{q_1}(\R^d,dv)\cap L^{q_2}(\R^d,dv)$,
and dominated convergence gives
$$
 \norm{u-u_j}_{L^{q_1}(\R^d,dv)}
 +\norm{u-u_j}_{L^{q_2}(\R^d,dv)}
 \longrightarrow0\quad(j\rightarrow\infty).
$$
Since $S_1u_j=Su_j=S_2u_j$, H\"older's inequality on
$B=B(0,R)$, with $R>0$ fixed, yields
\begin{align*}
 \norm{S_1u-S_2u}_{L^1(B,dv)}
 &\leq\sum_{i=1}^2
 |B|^{1-1/q_i}
 \norm{S_i(u-u_j)}_{L^{q_i}(\R^d,dv)}\\
 &\leq\sum_{i=1}^2
 |B|^{1-1/q_i}\norm{S_i}
 \norm{u-u_j}_{L^{q_i}(\R^d,dv)}
 \rightarrow0\quad(j\rightarrow\infty).
\end{align*}
The preceding limit implies that $S_1u=S_2u$
almost everywhere on $B(0,R)$.
Taking $R=1,2,\ldots$ proves the assertion on $\R^d$. The proof is complete.
\end{proof}

Applying Lemma \ref{L10} to the operator defined in
\eqref{E25}, we see that its $L^q$ extensions agree on
intersections. We therefore denote all these extensions
by $\mathcal R_{\ell k}$. For the derivative identities below, we set
\begin{equation}\label{E84}
 \Delta_z=\sum_{\ell=1}^d\partial_{x_\ell}^2.
\end{equation}
The next lemma gives the kernel representation needed to estimate
the integrals outside large balls.

\begin{lemma}\label{L5}
For $1\leq\ell,k\leq d$, there is a kernel
$K_{\ell k}:\R^d\setminus\{0\}\to\R$ satisfying
\begin{equation}\label{E79}
 |K_{\ell k}(x-y)|\leq C_d|x-y|^{-d},\quad x\neq y,
\end{equation}
where $C_d>0$ depends only on $d$.
If $u\in L^\infty(\R^d)$ has compact support, then
\begin{equation}\label{E80}
 \mathcal R_{\ell k}u(x)
 =\int_{\R^d}K_{\ell k}(x-y)u(y) dv(y)
 \quad\text{for almost every }x\notin\supp u.
\end{equation}
The integral in \eqref{E80} is absolutely convergent.
\end{lemma}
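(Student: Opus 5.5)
The plan is to take $K_{\ell k}$ to be the standard Calder\'on--Zygmund kernel of the second-order Riesz transform. For $d\geq2$, the Newtonian potential $\Phi(x)=c_d|x|^{2-d}$ when $d\geq3$ (and $\Phi(x)=c_2\log|x|$ when $d=2$) satisfies $\Delta_z\Phi=\delta_0$. Since the symbol of $\partial_{x_\ell}\partial_{x_k}$ is $-\xi_\ell\xi_k$ by \eqref{E198}, we have formally $\mathcal R_{\ell k}=-\partial_{x_\ell}\partial_{x_k}\Delta_z^{-1}$. Accordingly I set, for $x\neq0$,
$$
 K_{\ell k}(x)=-\partial_{x_\ell}\partial_{x_k}\Phi(x)
 =-c_d'\Bigl(\frac{\delta_{\ell k}}{|x|^d}-\frac{d\,x_\ell x_k}{|x|^{d+2}}\Bigr),
$$
with the normalization $c_d'$ read off from $\Phi$. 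This kernel is real, smooth away from $0$, and homogeneous of degree $-d$, so \eqref{E79} follows from $|x_\ell x_k|\leq|x|^2$ with $C_d=c_d'(1+d)$. As a cross-check, this matches the kernel $\Omega(x)/|x|^d$ with mean-zero $\Omega$ that Stein associates with the multiplier in \cite[Chapter IV, Section 3.2]{Stein1970}, plus a possible $\delta$-mass; that $\delta$-mass only affects the support of $u$ and is irrelevant off $\supp u$.

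The off-support identity \eqref{E80} I will prove by duality, using only $L^2$ theory. Let $u\in L^\infty$ have compact support $F$, and fix $\psi\in C_c^\infty(\R^d\setminus F)$. First, $u\in L^2$, so by Plancherel \eqref{E139} and \eqref{E25},
$$
\int\mathcal R_{\ell k}u\,\psi\,dv=(2\pi)^{-d}\int\frac{\xi_\ell\xi_k}{|\xi|^2}\widehat u(\xi)\widehat\psi(-\xi)\,dv(\xi).
$$
Next, set $w=\Phi*\psi$, which is smooth with $\Delta_z w=\psi$. Then $\partial_\ell\partial_k w=\Phi*\partial_\ell\partial_k\psi$, and its Fourier transform is $-(\xi_\ell\xi_k/|\xi|^2)\widehat\psi$. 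The function $\partial_\ell\partial_k w$ lies in $L^2$: it decays like $|x|^{-d}$ at infinity, because $\partial_\ell\partial_k\psi$ has vanishing moments of order $0$ and $1$. Hence $\mathcal R_{\ell k}\psi=-\partial_\ell\partial_k w$. Since the multiplier is real and even, the operator $\mathcal R_{\ell k}$ is symmetric for the bilinear pairing, which gives
$$
\int\mathcal R_{\ell k}u\,\psi\,dv=\int u\,\mathcal R_{\ell k}\psi\,dv=-\int u(y)\int\partial_\ell\partial_k\Phi(y-x)\psi(x)\,dv(x)\,dv(y).
$$
For the last equality, note that $y\in F$ and $x\in\supp\psi$ stay at distance at least $\operatorname{dist}(F,\supp\psi)>0$. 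On that set one may integrate by parts twice in $\Phi*\partial_\ell\partial_k\psi$ to move both derivatives onto $\Phi$. To do this rigorously, I first cut off $\Phi$ near the origin with a smooth function supported in a ball smaller than the separation distance; the cut-off part contributes nothing for $y\in F$. Since $K_{\ell k}$ is even, Fubini (justified by \eqref{E79}, boundedness of $u$, and the positive separation) turns the right-hand side into $\int\bigl(\int K_{\ell k}(x-y)u(y)\,dv(y)\bigr)\psi(x)\,dv(x)$. As $\psi$ is arbitrary, \eqref{E80} follows almost everywhere on the open set $\R^d\setminus F$.

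Absolute convergence is immediate: for $x\notin F$, $\int|K_{\ell k}(x-y)||u(y)|\,dv\leq C_d\norm{u}_\infty|F|\operatorname{dist}(x,F)^{-d}<\infty$. The main obstacle is the identification $\mathcal R_{\ell k}\psi=-\partial_\ell\partial_k(\Phi*\psi)$, that is, getting the Fourier transform of $\Phi*\psi$ right given that $\Phi$ is not integrable. I will handle this in the sense of tempered distributions, using that $\widehat\Phi=-|\xi|^{-2}$ away from the origin, together with \eqref{E198} applied twice. Only $\widehat\Phi$ multiplied by $\xi_\ell\xi_k$ ever enters, which is a bounded function, so ambiguities at $\xi=0$ disappear. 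In the case $d=2$, the constant ambiguity in $\log$ is killed by the derivatives.
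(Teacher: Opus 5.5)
There is a sign error that makes your kernel wrong: with $K_{\ell k}=-\partial_{x_\ell}\partial_{x_k}\Phi$, the right side of \eqref{E80} equals $-\mathcal R_{\ell k}u$ off $\supp u$. Under \eqref{E198}, $\partial_{x_\ell}\partial_{x_k}$ has symbol $-\xi_\ell\xi_k$ and $\Delta_z^{-1}$ has symbol $-|\xi|^{-2}$. Hence, formally, $\mathcal R_{\ell k}=+\partial_{x_\ell}\partial_{x_k}\Delta_z^{-1}$. Concretely, you state correctly that $\widehat\Phi=-|\xi|^{-2}$ off the origin. Then $w=\Phi*\psi$ satisfies $\widehat{\partial_{x_\ell}\partial_{x_k}w}=(-\xi_\ell\xi_k)(-|\xi|^{-2})\widehat\psi=\frac{\xi_\ell\xi_k}{|\xi|^2}\widehat\psi$, not $-\frac{\xi_\ell\xi_k}{|\xi|^2}\widehat\psi$. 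So $\mathcal R_{\ell k}\psi=+\partial_{x_\ell}\partial_{x_k}w$, and your integration by parts then yields the kernel $+\partial_{x_\ell}\partial_{x_k}\Phi$. A quick check: $\sum_\ell\mathcal R_{\ell\ell}=I$, whereas $\sum_\ell(-\partial_{x_\ell}^2\Phi)=-\delta_0$. The correct kernel is $K_{\ell k}=\partial_{x_\ell}\partial_{x_k}\Phi=\sigma_{d-1}^{-1}\bigl(\delta_{\ell k}|x|^{-d}-d\,x_\ell x_k|x|^{-d-2}\bigr)$. This is exactly the paper's $b_dh_{\ell k}(x)/|x|^{d+2}$ with $b_d=-d/\sigma_{d-1}$. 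Your cross-check against Stein's kernel needs the same correction.

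Once the sign is fixed, your argument is sound and takes a genuinely different route from the paper. The paper works with the singular integral directly. Stein's $L^2$ theory for truncations of a homogeneous, mean-zero, Dini kernel gives an $L^2$ limit of $T_\varepsilon$. The multiplier formula for the harmonic polynomial $h_{\ell k}$ identifies this limit as $\mathcal R_{\ell k}-\frac{\delta_{\ell k}}{d}I$. Finally, an a.e.\ convergent subsequence, together with the stabilization of $T_\varepsilon u(x)$ once $\varepsilon<\operatorname{dist}(x,\supp u)$, gives \eqref{E80}.

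Your duality argument never meets the singularity, since only the smooth part of $\partial_{x_\ell}\partial_{x_k}\Phi$ is ever paired with $u$. It needs only $\Delta_z\Phi=\delta_0$ (which gives $\widehat\Phi=-|\xi|^{-2}$ off the origin via \eqref{E198}, for every $d\geq2$), Parseval, and the $L^2$ membership of $\partial_{x_\ell}\partial_{x_k}(\Phi*\psi)$. So it avoids the Calder\'on--Zygmund machinery. The cleanest way to settle the ambiguity at $\xi=0$ that you flag is this: $\partial_{x_\ell}\partial_{x_k}(\Phi*\psi)-\mathcal R_{\ell k}\psi$ lies in $L^2$ and its Fourier transform is supported at the origin, hence it vanishes. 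The paper's route additionally yields the principal value formula \eqref{E81} on all of $\R^d$, but later sections use only \eqref{E79} and \eqref{E80}.
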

\begin{proof}
Let $\delta_{\ell k}$ be the Kronecker delta.
Write $\sigma_{d-1}$ for the surface area of the unit sphere
$S^{d-1}$, with surface measure $d\sigma$.
Set $b_d=-d/\sigma_{d-1}$ and
$$
 h_{\ell k}(x)=x_\ell x_k-\frac{\delta_{\ell k}}d|x|^2,
 \quad
 K_{\ell k}(x)=b_d\frac{h_{\ell k}(x)}{|x|^{d+2}},
 \quad x\neq0.
$$
For $z\neq0$, these definitions give
\begin{align*}
 |h_{\ell k}(z)|
 &=\left|z_\ell z_k-\frac{\delta_{\ell k}}d|z|^2\right|\leq |z_\ell||z_k|+\frac{\delta_{\ell k}}d|z|^2
 \leq2|z|^2.
\end{align*}
The definition of $K_{\ell k}$ then gives
 $$|K_{\ell k}(z)|
 =|b_d|\frac{|h_{\ell k}(z)|}{|z|^{d+2}}
 \leq2|b_d||z|^{-d}.$$
Taking $z=x-y$ proves \eqref{E79} with $C_d=2|b_d|>0$.

Since $u$ is bounded and compactly supported, it belongs
to $L^1(\R^d,dv)\cap L^2(\R^d,dv)$.
To obtain \eqref{E80}, we first establish the representation
\begin{equation}\label{E81}
 \left(\mathcal R_{\ell k}-\frac{\delta_{\ell k}}dI\right)u(x)
 =\lim_{\varepsilon\rightarrow0^+}
 \int_{\{y\in\R^d:|x-y|>\varepsilon\}}
       K_{\ell k}(x-y)u(y) dv(y),
\end{equation}
where the limit is taken in $L^2(\R^d,dv)$. The polynomial $h_{\ell k}$ is homogeneous of degree two, so
the angular function
$\Omega_{\ell k}(x)=b_dh_{\ell k}(x)/|x|^2$
is homogeneous of degree zero, with
$K_{\ell k}(x)=\Omega_{\ell k}(x)/|x|^d$.
For $\theta=(\theta_1,\ldots,\theta_d)\in S^{d-1}$,
the identity $|\theta|=1$ gives
$$
 \Omega_{\ell k}(\theta)
 =b_d\left(\theta_\ell\theta_k-\frac{\delta_{\ell k}}d\right).
$$
Spherical symmetry then yields
$$
 \int_{S^{d-1}}\Omega_{\ell k}(\theta)d\sigma(\theta)
 =b_d\left(
 \int_{S^{d-1}}\theta_\ell\theta_kd\sigma(\theta)
 -\frac{\delta_{\ell k}}d\sigma_{d-1}\right)=0.
$$
Moreover, for $\vartheta=(\vartheta_1,\ldots,\vartheta_d)\in S^{d-1}$,
we have
\begin{align*}
 |\Omega_{\ell k}(\theta)-\Omega_{\ell k}(\vartheta)|
 &=|b_d||\theta_\ell\theta_k-\vartheta_\ell\vartheta_k|\leq2|b_d||\theta-\vartheta|.
 \end{align*}
For $0<s\leq1$, we define the spherical modulus of continuity
over pairs of points at Euclidean distance at most $s$ by
$$
 \omega_{\ell k}(s)
 =\sup_{\substack{\theta,\vartheta\in S^{d-1}\\
                  |\theta-\vartheta|\leq s}}
   |\Omega_{\ell k}(\theta)-\Omega_{\ell k}(\vartheta)|.
$$
The preceding estimate gives $\omega_{\ell k}(s)\leq2|b_d|s$,
so the Dini condition follows from
$$
 \int_0^1\frac{\omega_{\ell k}(s)}sds
 \leq\int_0^1 2|b_d|ds=2|b_d|<\infty.
$$

For $\varepsilon>0$ and $v\in L^2(\R^d,dv)$, we define
\begin{align*}
 T_\varepsilon v(x)
 &=\int_{\{z\in\R^d:|z|>\varepsilon\}}
       \frac{\Omega_{\ell k}(z)}{|z|^d}v(x-z) dv(z)=\int_{\{y\in\R^d:|x-y|>\varepsilon\}}
       K_{\ell k}(x-y)v(y) dv(y),
\end{align*}
where the second equality follows from $y=x-z$.
Equation \eqref{E79} and the Cauchy--Schwarz inequality give
$$
 \int_{|z|>\varepsilon}|K_{\ell k}(z)v(x-z)| dv(z)
 \leq C_d\left(\frac{\sigma_{d-1}}d\right)^{1/2}
       \varepsilon^{-d/2}\norm{v}_{L^2(\R^d,dv)}<\infty,
$$
so these truncated integrals are absolutely convergent.
The homogeneity, cancellation, and Dini condition verified
above allow us to apply
\cite[Chapter II, Section 4.2, Theorem 3 (a)--(b), p. 39]{Stein1970}
with exponent $2$. The operators $T_\varepsilon:L^2\to L^2$
have operator norms bounded uniformly for $\varepsilon>0$,
and there is a bounded operator
$T:L^2(\R^d,dv)\to L^2(\R^d,dv)$ such that
\begin{equation}\label{E114}
 \norm{T_\varepsilon v-Tv}_{L^2(\R^d,dv)}
 \longrightarrow0\quad(\varepsilon\longrightarrow0^+),
 \quad v\in L^2(\R^d,dv).
\end{equation}
To identify $T$, we note that $h_{\ell k}$ is harmonic, since
$$
 \sum_{j=1}^d\partial_{x_j}^2h_{\ell k}
 =2\delta_{\ell k}-\frac{\delta_{\ell k}}d2d=0.
$$
The multiplier formula in
\cite[Chapter III, Section 3.3, Theorem 5, p. 73]{Stein1970}
therefore applies to $h_{\ell k}$, with constant
$-\sigma_{d-1}/d$ for degree two.
Since $b_d(-\sigma_{d-1}/d)=1$, this multiplier formula
and definition \eqref{E25} give
\begin{align*}
 \widehat{Tv}(\xi)
 &=\frac{h_{\ell k}(\xi)}{|\xi|^2}\widehat v(\xi)=\left(\frac{\xi_\ell\xi_k}{|\xi|^2}
          -\frac{\delta_{\ell k}}d\right)\widehat v(\xi)=\widehat{\left(\mathcal R_{\ell k}
                -\frac{\delta_{\ell k}}dI\right)v}(\xi)
\end{align*}
for every $v\in L^2(\R^d,dv)$ and almost every $\xi\neq0$.
The $L^2$ Plancherel identity \eqref{E139} implies
injectivity of the Fourier transform, so we obtain
\begin{equation}\label{E141}
 T=\mathcal R_{\ell k}-\frac{\delta_{\ell k}}dI
 \quad\text{on }L^2(\R^d,dv).
\end{equation}
Applying \eqref{E114} and \eqref{E141} to the fixed
bounded, compactly supported function $u$ proves \eqref{E81}.

To obtain \eqref{E80}, we pass from the $L^2$ convergence in
\eqref{E114} to almost everywhere convergence along a subsequence.
For $i=1,2,\ldots$, we define the nonnegative functions on $\R^d$ by
$$
 v_i(x)=\left|
   \int_{\{y\in\R^d:|x-y|>1/i\}}
       K_{\ell k}(x-y)u(y) dv(y)-Tu(x)\right|.
$$
Equation \eqref{E114} with $\varepsilon=1/i$ gives
$v_i\in L^2(\R^d,dv)$ and
$$\norm{v_i}_{L^2(\R^d,dv)}
=\norm{T_{1/i}u-Tu}_{L^2(\R^d,dv)}\rightarrow0$$
as $i\rightarrow\infty$.
We apply \cite[Theorem 4.9, p. 94]{Brezis2011} to $(v_i)$
and select a subsequence $(v_{i_j})$.
With $\varepsilon_j=1/i_j\to0^+$, we then have
$|T_{\varepsilon_j}u(x)-Tu(x)|=v_{i_j}(x)\rightarrow0$
for almost every $x\in\R^d$ as $j\rightarrow\infty$.

To determine this limit outside the support, we fix
$x\notin\supp u$ and put
$a_x=\operatorname{dist}(x,\supp u)>0$.
Equation \eqref{E79} then gives
$$
 \int_{\R^d}|K_{\ell k}(x-y)u(y)| dv(y)
 \leq C_da_x^{-d}\norm{u}_{L^1(\R^d,dv)}<\infty.
$$
For $0<\varepsilon<a_x$, we have
$
 \{y\in\R^d:|x-y|\leq\varepsilon\}\cap\supp u=\varnothing
$.
This implies $$\supp u\subseteq\{y\in\R^d:|x-y|>\varepsilon\}.$$
Since $u=0$ almost everywhere outside $\supp u$, we have
\begin{align*}
 T_\varepsilon u(x)
 &=\int_{\supp u}K_{\ell k}(x-y)u(y) dv(y)=\int_{\R^d}K_{\ell k}(x-y)u(y) dv(y)
\end{align*}
for  $0<\varepsilon<a_x$.
We can now pass to the limit along $(\varepsilon_j)$ to obtain
\begin{align*}
 \left(\mathcal R_{\ell k}-\frac{\delta_{\ell k}}dI\right)u(x)
 &=\lim_{j\to\infty}
   \int_{\{y\in\R^d:|x-y|>\varepsilon_j\}}
       K_{\ell k}(x-y)u(y) dv(y)\\
 &=\int_{\R^d}K_{\ell k}(x-y)u(y) dv(y)
\end{align*}
for almost every $x\notin\supp u$.
The first equality uses the almost everywhere convergence
$T_{\varepsilon_j}u(x)\to Tu(x)$ and the operator identity
\eqref{E141}.
For the second, $\varepsilon_j\to0^+$ implies
$\varepsilon_j<a_x$ for all sufficiently large $j$, so the
formula for $T_\varepsilon u(x)$ makes each of these truncated
integrals equal to the full integral.
Since $u(x)=0$ for almost every $x\notin\supp u$, the
left-hand side equals $\mathcal R_{\ell k}u(x)$.
This proves \eqref{E80} and completes the proof.
\end{proof}

We combine the kernel estimate \eqref{E79} and representation
\eqref{E80} with the $L^q$ bounds \eqref{E140} to prove
the following preservation of weak compactness.

\begin{lemma}\label{lem:ui-stability}
Let $1<p<\infty$, let $N\geq1$ be a fixed integer,
and let $\Gamma$ be an arbitrary index set.
Suppose $u_\gamma\in[L^p(\R^d,dv)]^N$ and
$
 \{|u_\gamma|^p:\gamma\in\Gamma\}\in\mathcal W
$.
For fixed coefficients $a_0,a_{\ell k}\in\C$, set
$$
 S=a_0I+\sum_{\ell,k=1}^d a_{\ell k}\mathcal R_{\ell k}.
$$
Then $\{|Su_\gamma|^p:\gamma\in\Gamma\}\in\mathcal W $ and $
 \{|\mathcal Mu_\gamma|^p:\gamma\in\Gamma\}\in\mathcal W
$.
Moreover, for any fixed $s_0>0$ and radii
$0<s_\gamma\leq s_0$, we have
$
 \{M_{1,s_\gamma}^{p}(u_\gamma):\gamma\in\Gamma\}
 \in\mathcal W
$.
\end{lemma}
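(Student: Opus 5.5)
We verify the three conditions of \eqref{eq:intrinsic-three}, with the second replaced by the level set condition \eqref{eq:dp-three}, separately for $Su_\gamma$, $\mathcal Mu_\gamma$ and $M_{1,s_\gamma}(u_\gamma)$. The uniform $L^1$ bound is immediate in all cases. By \eqref{E140}, $\norm{Su_\gamma}_{L^p}\leq C\norm{u_\gamma}_{L^p}$ componentwise, and the Hardy--Littlewood maximal theorem gives $\norm{\mathcal Mu_\gamma}_{L^p}\leq C_{d,p}\norm{u_\gamma}_{L^p}$ for $p>1$. The local averages are dominated pointwise by $\mathcal Mu_\gamma$, so once the maximal family is in $\mathcal W$, Lemma \ref{L9} gives the last assertion. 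It therefore remains to treat $S$ and $\mathcal M$.

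The main tool is a truncation splitting adapted to both the small-set and tail conditions. Fix $\varepsilon>0$. By \eqref{eq:dp-three} and the tail condition for $\{|u_\gamma|^p\}$, choose $K,R$ such that $u_\gamma=v_\gamma+w_\gamma$ with
$$
 v_\gamma=u_\gamma\mathbf1_{\{|u_\gamma|\leq K,\ |x|\leq R\}}\quad\text{and}\quad
 \sup_\gamma\norm{w_\gamma}_{L^p}^p<\varepsilon.
$$
Linearity for $S$ and sublinearity for $\mathcal M$ give
$$
 |Su_\gamma|^p\leq2^{p-1}\bigl(|Sv_\gamma|^p+|Sw_\gamma|^p\bigr),
$$
and similarly for $\mathcal M$. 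The $w_\gamma$ parts have $L^1$ mass at most $C\varepsilon$ uniformly in $\gamma$, so they contribute at most $C\varepsilon$ to every integral in \eqref{eq:intrinsic-three}.

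For the small-set condition on the $v_\gamma$ parts, I would use boundedness of $S$ and $\mathcal M$ on $L^q$ for a larger exponent $q>p$. Since $|v_\gamma|\leq K\mathbf1_{B(0,R)}$, we have $\norm{v_\gamma}_{L^q}\leq K|B(0,R)|^{1/q}$. Hölder's inequality then gives
$$
 \int_E|Sv_\gamma|^p\,dv\leq|E|^{1-p/q}\norm{Sv_\gamma}_{L^q}^p\leq C(K,R)|E|^{1-p/q},
$$
and the same bound holds for $\mathcal M$. This quantity tends to zero as $|E|\to0$, uniformly in $\gamma$. Lemma \ref{L10} ensures that the $L^q$ and $L^p$ extensions of $\mathcal R_{\ell k}$ agree on $v_\gamma$.

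The tail condition for the $v_\gamma$ parts is where I expect the real work. For $|x|>2R$, the support of $v_\gamma$ lies in $B(0,R)$. The representation \eqref{E80} together with \eqref{E79} gives
$$
 |\mathcal R_{\ell k}v_\gamma(x)|\leq C_d(|x|/2)^{-d}\norm{v_\gamma}_{L^1}.
$$
For the maximal function, $\mathcal Mv_\gamma(x)\leq C|x|^{-d}\norm{v_\gamma}_{L^1}$ when $|x|>2R$, because any ball around $x$ meeting $B(0,R)$ has radius at least $|x|/2$. The $a_0I$ term of $S$ vanishes there. Since $\norm{v_\gamma}_{L^1}\leq K|B(0,R)|$ and $|x|^{-dp}$ is integrable at infinity for $p>1$, we obtain
$$
 \int_{|x|>R'}|Sv_\gamma|^p\,dv\leq C(K,R)R'^{\,d-dp}\longrightarrow0\quad(R'\to\infty),
$$
uniformly in $\gamma$, and the same holds for $\mathcal Mv_\gamma$.

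Combining the two parts, we first choose $\varepsilon$, then $K$ and $R$, and finally $\delta$ and $R'$. This verifies \eqref{eq:intrinsic-three} for $\{|Su_\gamma|^p\}$ and $\{|\mathcal Mu_\gamma|^p\}$. The points needing care are the vector-valued convention and the use of Lemma \ref{L10} to justify applying \eqref{E80} to $v_\gamma$, which lies in $L^2\cap L^\infty$ with compact support.
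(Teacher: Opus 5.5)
Your proposal is correct and follows essentially the same route as the paper. The paper uses the same truncation $u_\gamma\mathbf 1_{B(0,R)}\mathbf 1_{\{|u_\gamma|\leq K\}}$ with an $L^p$-small remainder; H\"older's inequality against an $L^q$ bound with $q>p$ for small sets (it takes $q=2p$, giving $|E|^{1/2}$); the kernel bounds \eqref{E79}--\eqref{E80}, transferred by Lemma \ref{L10}, together with the ball-geometry estimate for $\mathcal M$, to get $|x|^{-d}$ decay beyond $2R$; and domination of $M_{1,s_\gamma}(u_\gamma)$ by $\mathcal Mu_\gamma$ with Lemma \ref{L9}.

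The only slip is in your opening sentence. You say you verify the level-set condition \eqref{eq:dp-three}, but what you actually do is use \eqref{eq:dp-three} for the input family $\{|u_\gamma|^p\}$ and verify the original small-set condition of \eqref{eq:intrinsic-three} for the output families. That is exactly what the paper does, so the argument is fine; only the framing sentence is off.
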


\begin{proof}
We first prove the assertions for $S$ and $\mathcal M$.
Write $T$ for either operator.
For $1<q<\infty$, the $L^q$ bounds in \eqref{E140} imply
$\norm{Sh}_{L^q(\R^d,dv)}\leq b_q\norm{h}_{L^q(\R^d,dv)}$
for every $h\in L^q(\R^d,dv)$ with values in $\C$, where
$b_q>0$ depends only on $d,q$ and the fixed coefficients of $S$.
For $u=(u_1,\ldots,u_N)\in[L^q(\R^d,dv)]^N$, applying
this bound componentwise yields
\begin{align*}
 \norm{Su}_{L^q(\R^d,dv)}
 &\leq\sum_{j=1}^N\norm{Su_j}_{L^q(\R^d,dv)}\leq b_q\sum_{j=1}^N\norm{u_j}_{L^q(\R^d,dv)}
 \leq Nb_q\norm{u}_{L^q(\R^d,dv)}.
\end{align*}
For $\mathcal M$, we apply
\cite[Chapter I, Section 1.3, Theorem 1 (c), p. 5]{Stein1970}
to the nonnegative function $|u|$ for the corresponding $L^q$ bound.
For $v\in[L^q(\R^d,dv)]^N$, the triangle inequality and
\eqref{E24} also give
\begin{align*}
 \mathcal M(u+v)(x)
 &\leq\sup_{s>0}
       \bigl(M_{1,s}(u)(x)+M_{1,s}(v)(x)\bigr)\leq\mathcal Mu(x)+\mathcal Mv(x).
\end{align*}
Combining these estimates with the linearity of $S$, we obtain
\begin{equation}\label{E77}
 \norm{Tu}_{L^q(\R^d,dv)}
 \leq c_q\norm{u}_{L^q(\R^d,dv)}\quad\text{and}\quad
 |T(u+v)|\leq|Tu|+|Tv|
\end{equation}
for $1<q<\infty$ and $u,v\in[L^q(\R^d,dv)]^N$.
Here $c_q>0$ depends only on $d,N,q$ and the fixed
coefficients of $S$. Applying \eqref{E77} with $q=p$ and using the first
condition in \eqref{eq:intrinsic-three} for the given family
$\{|u_\gamma|^p:\gamma\in\Gamma\}\in\mathcal W$, we obtain
\begin{equation}\label{E85}
 \begin{aligned}
 \sup_{\gamma\in\Gamma}\norm{|Tu_\gamma|^p}_{L^1(\R^d,dv)}
 &=\sup_{\gamma\in\Gamma}\norm{Tu_\gamma}_{L^p(\R^d,dv)}^p\leq c_p^p
      \sup_{\gamma\in\Gamma}\norm{u_\gamma}_{L^p(\R^d,dv)}^p
 <\infty.
 \end{aligned}
\end{equation}
This verifies the first condition in \eqref{eq:intrinsic-three}
for $\{|Tu_\gamma|^p:\gamma\in\Gamma\}$.

To prove the remaining two conditions, concerning uniform
absolute continuity of the integrals and their uniform
smallness outside large balls, fix $\varepsilon>0$.
Applying \eqref{eq:intrinsic-three} and \eqref{eq:dp-three}
to the family $\{|u_\gamma|^p:\gamma\in\Gamma\}$,
we choose $K,R>0$ such that
\begin{equation}\label{E115}
 \sup_{\gamma\in\Gamma}\left(
 \int_{\{|u_\gamma|>K\}}|u_\gamma|^p dv
 +\int_{|x|>R}|u_\gamma(x)|^p dv(x)
 \right)<\varepsilon^p.
\end{equation}
Define the truncated functions and remainders by
$$
 f_\gamma=u_\gamma\mathbf1_{B(0,R)}
                    \mathbf1_{\{|u_\gamma|\leq K\}},\quad
 g_\gamma=u_\gamma-f_\gamma.
$$
The definition of $g_\gamma$ gives
\begin{align*}
 \norm{g_\gamma}_{L^p(\R^d,dv)}^p
 &\leq\int_{\{|u_\gamma|>K\}}|u_\gamma|^p dv
       +\int_{|x|>R}|u_\gamma(x)|^p dv(x).
\end{align*}
Taking the supremum in $\gamma$ and using \eqref{E115},
we obtain the two bounds
\begin{equation}\label{E116}
 |f_\gamma|\leq K\mathbf1_{B(0,R)}\quad\text{and}\quad
 \sup_{\gamma\in\Gamma}
 \norm{g_\gamma}_{L^p(\R^d,dv)}<\varepsilon.
\end{equation}
For every measurable $E\subseteq\R^d$, the $L^p$ estimate in \eqref{E77} and \eqref{E116} imply
\begin{align*}
 \sup_{\gamma\in\Gamma}\int_E|Tg_\gamma|^p dv
 &\leq\sup_{\gamma\in\Gamma}\norm{Tg_\gamma}_{L^p(\R^d,dv)}^p\leq c_p^p\sup_{\gamma\in\Gamma}
        \norm{g_\gamma}_{L^p(\R^d,dv)}^p
 <c_p^p\varepsilon^p.
\end{align*}
Combining this remainder estimate with $u_\gamma=f_\gamma+g_\gamma$,
subadditivity in \eqref{E77}, and
$(a+b)^p\leq2^{p-1}(a^p+b^p)$ for $a,b\geq0$, we obtain
\begin{equation}\label{E78}
 \sup_{\gamma\in\Gamma}\int_E|Tu_\gamma|^p dv
 \leq 2^{p-1}\sup_{\gamma\in\Gamma}
                  \int_E|Tf_\gamma|^p dv
      +2^{p-1}c_p^p\varepsilon^p
\end{equation}
for every measurable $E\subseteq\R^d$.
By \eqref{E78}, it suffices to estimate the truncated functions
$f_\gamma$, whose $L^\infty$ norms are bounded by $K$ and
whose supports lie in $\overline{B(0,R)}$, uniformly in $\gamma$.

By \eqref{E116}, each component of $f_\gamma$ belongs to
$L^p(\R^d,dv)\cap L^{2p}(\R^d,dv)$, so Lemma \ref{L10}
identifies the $L^p$ and $L^{2p}$ extensions of $S$ on $f_\gamma$.
For a measurable set $E\subseteq\R^d$ of finite measure,
H\"older's inequality with exponents $2,2$ and \eqref{E77}
with $q=2p$ give
\begin{equation}\label{E117}
\begin{aligned}
 \sup_{\gamma\in\Gamma}\int_E|Tf_\gamma|^p dv
 &\leq |E|^{1/2}
       \sup_{\gamma\in\Gamma}
       \norm{Tf_\gamma}_{L^{2p}(\R^d,dv)}^p\leq c_{2p}^p|E|^{1/2}
       \sup_{\gamma\in\Gamma}
       \norm{f_\gamma}_{L^{2p}(\R^d,dv)}^p\\
 &\leq c_{2p}^pK^p|B(0,R)|^{1/2}|E|^{1/2}.
\end{aligned}
\end{equation}
The last inequality uses the pointwise bound in
\eqref{E116}. Combining \eqref{E117} with \eqref{E78}
and taking the supremum over all measurable $E$ with
$|E|\leq\delta$, we obtain
\begin{equation}\label{E86}
 \sup_{\gamma\in\Gamma}\sup_{|E|\leq\delta}
       \int_E|Tu_\gamma|^p dv
 \leq 2^{p-1}c_{2p}^pK^p|B(0,R)|^{1/2}\delta^{1/2}
      +2^{p-1}c_p^p\varepsilon^p
\end{equation}
for $\delta>0$.

We next estimate the integrals outside large balls.
Suppose first that $T=S$.
Each component of $f_\gamma$ is bounded and supported in
$\overline{B(0,R)}$, so the $L^2$ kernel representation in
Lemma \ref{L5} applies outside its support.
By Lemma \ref{L10}, this representation also
holds for the $L^p$ extensions used in \eqref{E78}.
For $|x|>2R$, the identity terms in the kernel
representation of $S$ vanish, and
$|x-y|\geq|x|/2$ for $y\in B(0,R)$.
Equations \eqref{E79} and \eqref{E80}, together with the
pointwise bound in \eqref{E116}, give
\begin{align*}
 |Sf_\gamma(x)|
 &\leq C_d\sum_{\ell,k=1}^d|a_{\ell k}|
       \int_{B(0,R)}|x-y|^{-d}|f_\gamma(y)| dv(y)\leq c_SK|B(0,R)||x|^{-d}
\end{align*}
for almost every $x$ with $|x|>2R$, where $c_S>0$ depends only
on $d,N$ and the coefficients of $S$.

For $T=\mathcal M$, if $|x|>2R$ and
$B(x,s)\cap B(0,R)\neq\varnothing$, then
$s>|x|-R>|x|/2$.
For $s\leq|x|/2$, the integral over $B(x,s)$ is zero because
$f_\gamma$ vanishes outside $B(0,R)$.
Using \eqref{E24}, the pointwise bound in \eqref{E116},
and $|B(0,s)|=|B(0,1)|s^d$, we obtain
\begin{align*}
 \mathcal Mf_\gamma(x)
 &=\sup_{s>|x|/2}\frac{1}{|B(0,s)|}
       \int_{B(x,s)}|f_\gamma(y)| dv(y)\leq\sup_{s>|x|/2}
       \frac{K|B(0,R)|}{|B(0,s)|}
 \leq\frac{2^dK|B(0,R)|}{|B(0,1)||x|^d}.
\end{align*}
Hence, with
$c_0=\max\{c_S,2^d/|B(0,1)|\}>0$,
both operators satisfy
\begin{equation}\label{E118}
 |Tf_\gamma(x)|
 \leq c_0K|B(0,R)||x|^{-d}
 \quad\text{for almost every }x\text{ with }|x|>2R.
\end{equation}

Let $\sigma_{d-1}$ denote the area of the unit sphere
in $\R^d$.
For $a>2R$, integrating \eqref{E118} in polar coordinates
gives
\begin{equation}\label{E119}
\begin{aligned}
 \sup_{\gamma\in\Gamma}
 \int_{|x|>a}|Tf_\gamma(x)|^p dv(x)
 &\leq c_0^pK^p|B(0,R)|^p\sigma_{d-1}
       \int_a^\infty s^{d-1-dp}ds\\
 &=\frac{c_0^p\sigma_{d-1}}{d(p-1)}
       K^p|B(0,R)|^p a^{-d(p-1)}.
\end{aligned}
\end{equation}
The integral is finite because $p>1$.
Substituting \eqref{E119} into \eqref{E78} with
$E=\{x\in\R^d:|x|>a\}$ yields
\begin{equation}\label{E87}
 \sup_{\gamma\in\Gamma}
       \int_{|x|>a}|Tu_\gamma(x)|^p dv(x)
 \leq\frac{2^{p-1}c_0^p\sigma_{d-1}}{d(p-1)}
       K^p|B(0,R)|^p a^{-d(p-1)}
      +2^{p-1}c_p^p\varepsilon^p.
\end{equation}

With $\varepsilon,K,R$ fixed, taking $\delta\rightarrow0^{+}$
in \eqref{E86} and $a\rightarrow\infty$ in \eqref{E87} gives
$$
 0\leq
 \lim_{\delta\rightarrow0^{+}}
 \sup_{\gamma\in\Gamma}\sup_{|E|\leq\delta}
       \int_E|Tu_\gamma|^p dv
 \leq2^{p-1}c_p^p\varepsilon^p
$$
and 
$$0\leq
 \lim_{a\rightarrow\infty}
 \sup_{\gamma\in\Gamma}
       \int_{|x|>a}|Tu_\gamma(x)|^p dv(x)
 \leq2^{p-1}c_p^p\varepsilon^p.$$
These limits and $c_p$ are independent of $\varepsilon$.
Letting $\varepsilon\rightarrow0^{+}$ therefore proves the
second and third conditions in \eqref{eq:intrinsic-three}.
This together with \eqref{E85} verifies all three
conditions and gives
$\{|Tu_\gamma|^p:\gamma\in\Gamma\}\in\mathcal W$.

Finally, for the radii $s_\gamma$ in the statement,
definition \eqref{E24} gives
$$
 0\leq M_{1,s_\gamma}^{p}(u_\gamma)
 \leq(\mathcal Mu_\gamma)^p,\quad \gamma\in\Gamma.
$$
The result just proved for $T=\mathcal M$ and
Lemma \ref{L9}, applied to the bound above, imply
$$
 \{M_{1,s_\gamma}^{p}(u_\gamma):\gamma\in\Gamma\}
 \in\mathcal W.
$$
This completes the proof.
\end{proof}

We now combine Lemma \ref{lem:ui-stability} with the analytic
decomposition to obtain weak compactness of the real gradient
densities. A Poincar\'e inequality on balls will then control
mean oscillation. For a ball $B\subset\R^d$, we write
\begin{equation}\label{E126}
 W^{1,1}(B)=
 \left\{g\in L^1(B,dv):
 \partial_{x_\ell}g\in L^1(B,dv),\quad 1\leq\ell\leq d\right\}.
\end{equation}
Recall that the derivatives in \eqref{E126} are weak derivatives
in the sense of \eqref{E3}, with integrals over $B$ and test
functions in $C_c^\infty(B)$
\cite[Section 9.1, p. 263]{Brezis2011}.

\begin{lemma}\label{lem:ui-analytic-real}
For $p\geq2$ and bounded $f$, the following are equivalent.
\begin{enumerate}
\item[(a)]
$\{e_r(f):0<r<r_0\}\in\mathcal W$ for some $r_0>0$.
\item[(b)]
$\{\omega_r(f):0<r<r_1\}\in\mathcal W$ for some $r_1>0$.
\end{enumerate}
\end{lemma}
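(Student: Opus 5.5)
The direction (b)$\Rightarrow$(a) should be a pointwise comparison. For each $z$, take the constant $h\equiv\overline{f_{B(z,r)}}$ as the holomorphic competitor in \eqref{E106}. This gives
$$G_{1,r}(\overline f)(z)\le M_{1,r}\bigl(\overline f-\overline{f_{B(z,r)}}\bigr)(z)=\operatorname{MO}_{1,r}(f)(z).$$
Hence $0\le e_r(f)\le\omega_r(f)$ pointwise, and Lemma \ref{L9} with $N=1$ gives $\{e_r(f):0<r<r_1\}\in\mathcal W$. So (a) holds with $r_0=r_1$.

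For (a)$\Rightarrow$(b), set $r_1=r_0/4$ and fix $0<r<r_1$. Apply Lemma \ref{lem:analytic-decomposition} to $u=\overline f$ at scale $r$ to get $\overline f=u_{1,r}+u_{2,r}$ with
$$r|\dbar u_{1,r}|+M_{1,r}(u_{2,r})\le CG_{1,4r}(\overline f).$$
Raising to the $p$-th power and multiplying by $r^{-d}=4^d(4r)^{-d}$ gives two bounds:
$$\bigl|r^{1-d/p}\dbar u_{1,r}\bigr|^p\le C4^de_{4r}(f),\qquad r^{-d}M_{1,r}^p(u_{2,r})\le C4^de_{4r}(f).$$
Since $4r<r_0$, Lemma \ref{L9} places both families, indexed by $0<r<r_1$, in $\mathcal W$. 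In particular $v_r=r^{1-d/p}\dbar u_{1,r}$ lies in $[L^p(\R^d,dv)]^n$, with $\{|v_r|^p\}\in\mathcal W$.

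Next I recover the real gradient. Using $\Delta_z=4\sum_j\partial_{z_j}\partial_{\overline z_j}$ and \eqref{E13}, each $\partial_{x_\ell}u_{1,r}$ should equal a fixed linear combination of $\mathcal R_{\ell k}$ applied to the real and imaginary coordinate pieces of $\dbar u_{1,r}$. On the Fourier side this is the identity
$$i\xi_\ell=\sum_j\frac{\xi_\ell\,\overline{\zeta_j}}{|\xi|^2}\cdot 4\cdot\tfrac i2\zeta_j\quad\text{(schematically, with $\zeta_j$ the complex frequency)}.$$
So $r^{1-d/p}\nabla u_{1,r}=Sv_r$ for operators $S$ of the form in Lemma \ref{lem:ui-stability}.

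The main obstacle is justifying this identity, since $u_{1,r}$ is bounded and smooth but not in $L^p$. I would argue as follows.
\begin{enumerate}
\item Lemma \ref{lem:complex-derivatives} gives $|\nabla u_{1,r}|\in L^p$.
\item The difference $w=\nabla u_{1,r}-S(\dbar u_{1,r})$ is then a tempered distribution in $L^p$.
\item By \eqref{E198}, $\widehat w$ is supported at the origin, so $w$ is a polynomial.
\item A polynomial in $L^p(\R^d,dv)$ must vanish.
\end{enumerate}
Density of $C_c^\infty$ and Lemma \ref{L10} identify the $L^2$-defined and $L^p$-extended multipliers along the way. Given the identity, Lemma \ref{lem:ui-stability} with $s_\gamma=r\le r_1$ yields
$$\bigl\{r^{p-d}M_{1,r}^p(\nabla u_{1,r}):0<r<r_1\bigr\}\in\mathcal W.$$

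Finally I use the Poincar\'e inequality on balls in $W^{1,1}(B(z,r))$:
$$\operatorname{MO}_{1,r}(u_{1,r})(z)\le C_dr\,M_{1,r}(\nabla u_{1,r})(z).$$
Together with $\operatorname{MO}_{1,r}(f)=\operatorname{MO}_{1,r}(\overline f)$ and $\operatorname{MO}_{1,r}(u_{2,r})\le2M_{1,r}(u_{2,r})$, this gives
$$\omega_r(f)\le C\bigl(r^{p-d}M_{1,r}^p(\nabla u_{1,r})+r^{-d}M_{1,r}^p(u_{2,r})\bigr).$$
Lemma \ref{L9} then gives $\{\omega_r(f):0<r<r_1\}\in\mathcal W$, which is (b).
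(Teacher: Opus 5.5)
Your proposal is correct and follows the paper's proof essentially step for step. For (b)$\Rightarrow$(a) you use the constant competitor $\overline{f_{B(z,r)}}$ and Lemma \ref{L9}; for (a)$\Rightarrow$(b) you use the scaled analytic decomposition, recover $r^{1-d/p}\nabla u_{1,r}$ from $r^{1-d/p}\dbar u_{1,r}$ through the operators $\mathcal R_{\ell k}$ and Lemma \ref{lem:ui-stability}, and finish with the Poincar\'e inequality \eqref{E26} and Lemma \ref{L9}. The only real variation is in proving the Riesz identity: you show $\widehat w$ is supported at the origin, so $w$ is an $L^p$ polynomial, whereas the paper shows $\Delta_z w_\ell=0$ and applies the mean-value property to mollified harmonic $L^p$ functions; your Fourier route additionally requires identifying $\widehat{\mathcal R_{\ell k}h}$ with $\xi_\ell\xi_k|\xi|^{-2}\widehat h$ on $\R^d\setminus\{0\}$ for $h\in L^p$.

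Two small fixes are needed. The correct coefficient is $2(\mathcal R_{\ell,2j-1}-i\mathcal R_{\ell,2j})$; your schematic factor $4$ produces $2i\xi_\ell$ rather than $i\xi_\ell$. Also, different components of $\dbar u_{1,r}$ receive different operators, so you should apply Lemma \ref{lem:ui-stability} to each scalar family separately and then sum using Lemma \ref{L9}, as the paper does.
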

\begin{proof}
We first prove (b)$\Rightarrow$(a).
For each ball $B(z,r)$, the constant function
$h=\overline{f_{B(z,r)}}$ is holomorphic on $B(z,r)$.
Using this approximant in \eqref{E106}, together with
\eqref{E6} and \eqref{eq:energy-definitions}, gives
\begin{equation}\label{E152}
 G_{1,r}(\overline{f})\leq\operatorname{MO}_{1,r}(f)\quad\text{and}\quad
 0\leq e_r(f)\leq\omega_r(f).
\end{equation}
Applying Lemma \ref{L9} to \eqref{E152} proves (a)
with $r_0=r_1$.

For (a)$\Rightarrow$(b), choose $r_0>0$ as in (a)
and set $u=\overline{f}$.
For $0<r<r_0/4$, Lemma \ref{lem:analytic-decomposition}
gives a decomposition
$$
 u=u_{1,r}+u_{2,r},\quad
 u_{1,r}\in C^\infty(\R^d)\cap L^\infty(\R^d),\quad
 u_{2,r}\in L^\infty(\R^d).
$$
Choose $C_D>0$ as in \eqref{E15}, independently of $r$.
Then
\begin{equation}\label{eq:scaled-decomposition-ui}
 r|\dbar u_{1,r}|+M_{1,r}(u_{2,r})
 \leq C_DG_{1,4r}(u).
\end{equation}
By \eqref{eq:energy-definitions},
$$
 |r^{1-d/p}\dbar u_{1,r}|^p
 \leq C_D^pr^{-d}G_{1,4r}^p(u)
 =C_D^p4^de_{4r}(f).
$$
Assumption (a) and Lemma \ref{L9} imply
\begin{equation}\label{E89}
 \{|r^{1-d/p}\dbar u_{1,r}|^p:0<r<r_0/4\}
 \in\mathcal W.
\end{equation}

To transfer \eqref{E89} to the real gradient by Lemma
\ref{lem:ui-stability}, we express each real derivative as
a linear combination of $\mathcal R_{\ell k}$ applied to
the components of $\dbar u_{1,r}$, with coefficients
independent of $r$.
For each fixed $0<r<r_0/4$, \eqref{E89} and the definition
of $\mathcal W$ imply
$|\dbar u_{1,r}|\in L^p(\R^d,dv)$.
Since $u_{1,r}$ is bounded and smooth, we can apply
\eqref{E14} to obtain
$$
 \norm{\nabla u_{1,r}}_{L^p(\R^d,dv)}
 \leq C_{n,p}\norm{\dbar u_{1,r}}_{L^p(\R^d,dv)}
 <\infty.
$$
Here $C_{n,p}>0$ is the constant in \eqref{E14}.

Fix $h\in C_c^\infty(\R^d)$ and $1\leq\ell,k\leq d$.
Both $h$ and $\mathcal R_{\ell k}h$ belong to
$L^2(\R^d,dv)$ and hence define tempered distributions
\cite[Section 7.12 (d), p. 191]{Rudin1991}.
Their Fourier--Plancherel transforms agree with their
distributional Fourier transforms
\cite[Section 7.14, p. 192]{Rudin1991}.
Applying \eqref{E198} twice and using \eqref{E25}, we obtain
\begin{align*}
 \widehat{\Delta_z(\mathcal R_{\ell k}h)}(\xi)
 &=-|\xi|^2\widehat{\mathcal R_{\ell k}h}(\xi)=-\xi_\ell\xi_k\widehat h(\xi)
 =\widehat{\partial_{x_\ell}\partial_{x_k}h}(\xi).
\end{align*}
The injectivity of the Fourier transform
\cite[Theorem 7.15 (a), p. 192]{Rudin1991} yields
\begin{equation}\label{E94}
 \Delta_z(\mathcal R_{\ell k}h)
 =\partial_{x_\ell}\partial_{x_k}h
 \quad\text{in distributions}.
\end{equation}

For $h\in L^p(\R^d,dv)$, the density result
\cite[Corollary 4.23, p. 109]{Brezis2011}, applied separately
to its real and imaginary parts, gives
$h_\nu\in C_c^\infty(\R^d)$ with $h_\nu\to h$ in
$L^p(\R^d,dv)$. Lemma \ref{L10} identifies the $L^2$ and
$L^p$ values of $\mathcal R_{\ell k}h_\nu$, so \eqref{E94}
holds for these approximants with the $L^p$ extension.
Equation \eqref{E140} with $q=p$ gives
$$
 \norm{\mathcal R_{\ell k}(h_\nu-h)}_{L^p(\R^d,dv)}
 \leq\norm{\mathcal R_{\ell k}}
       \norm{h_\nu-h}_{L^p(\R^d,dv)}
 \longrightarrow0\quad(\nu\rightarrow\infty).
$$
Here $\norm{\mathcal R_{\ell k}}$ is the operator norm of
its $L^p(\R^d,dv)$ extension.
For $\psi\in C_c^\infty(\R^d)$, we have
$\Delta_z\psi,\partial_{x_\ell}\partial_{x_k}\psi
\in L^{p'}(\R^d,dv)$ with $p'=p/(p-1)$.
H\"older's inequality and the two $L^p$ convergences give
\begin{align*}
 \int_{\R^d}\mathcal R_{\ell k}h\Delta_z\psi dv
 &=\lim_{\nu\rightarrow\infty}
   \int_{\R^d}\mathcal R_{\ell k}h_\nu\Delta_z\psi dv=\lim_{\nu\rightarrow\infty}
   \int_{\R^d}h_\nu\partial_{x_\ell}\partial_{x_k}\psi dv
 =\int_{\R^d}h\partial_{x_\ell}\partial_{x_k}\psi dv.
\end{align*}
The middle equality uses \eqref{E94} and the distributional
derivative definition \eqref{E159}, applied twice.
This proves \eqref{E94} for every $h\in L^p(\R^d,dv)$.

To prove the derivative representation, we show that the following
difference vanishes. For fixed $0<r<r_0/4$ and $1\leq\ell\leq d$,
define
$$
 w_\ell=\partial_{x_\ell}u_{1,r}
 -2\sum_{j=1}^n
   (\mathcal R_{\ell,2j-1}-i\mathcal R_{\ell,2j})
   (\partial_{\overline{z}_j}u_{1,r}).
$$
For this fixed $r$, equations \eqref{E14} and \eqref{E89}
give $|\nabla u_{1,r}|,|\dbar u_{1,r}|\in L^p(\R^d,dv)$.
The $L^p$ operator bound \eqref{E140} with $q=p$
gives $w_\ell\in L^p(\R^d,dv)$.
Using \eqref{E94} with $h=\partial_{\overline{z}_j}u_{1,r}$,
together with \eqref{E13} and the definition of $\Delta_z$
in \eqref{E84}, we obtain
\begin{align*}
 \Delta_z w_\ell
 &=\partial_{x_\ell}\Delta_z u_{1,r}
   -2\partial_{x_\ell}\sum_{j=1}^n
     (\partial_{x_{2j-1}}-i\partial_{x_{2j}})
     \partial_{\overline{z}_j}u_{1,r}\\
 &=\partial_{x_\ell}
   (\Delta_z u_{1,r}
   -4\sum_{j=1}^n
     \partial_{z_j}\partial_{\overline{z}_j}u_{1,r})
 =0
\end{align*}
in distributions.

With the mollifier in \eqref{E16}, set
$w_{\ell,\varepsilon}=w_\ell*\eta_\varepsilon$.
Since $w_\ell\in L^p(\R^d,dv)\subseteq L^1_{\rm loc}(\R^d,dv)$
and $\eta_\varepsilon\in C_c^\infty(\R^d)$,
\cite[Proposition 4.20, p. 107]{Brezis2011}, applied with
$N=d=2n$ to the real and imaginary parts of $w_\ell$, gives
$w_{\ell,\varepsilon}\in C^\infty(\R^d)$.
To transfer $\Delta_z w_\ell=0$ to this smooth function,
we use the derivative formula in that proposition and
\eqref{E159} applied twice to obtain
$$
 \Delta_z w_{\ell,\varepsilon}(z)
 =\int_{\R^d}w_\ell(y)\Delta_y\eta_\varepsilon(z-y) dv(y)
 =\langle\Delta_z w_\ell,\eta_\varepsilon(z-\cdot)\rangle
 =0.
$$
The first equality uses
$\Delta_z\eta_\varepsilon(z-y)=\Delta_y\eta_\varepsilon(z-y)$,
and the last uses $\Delta_z w_\ell=0$ in distributions with
the test function $\eta_\varepsilon(z-\cdot)\in C_c^\infty(\R^d)$.
Hence, $w_{\ell,\varepsilon}$ is harmonic.
The calculation in \eqref{E108}, with $\dbar u$ replaced
by $w_\ell\in L^p(\R^d,dv)$, gives
$$
 \norm{w_{\ell,\varepsilon}}_{L^p(\R^d,dv)}
 \leq\norm{w_\ell}_{L^p(\R^d,dv)}.
$$
For $x\in\R^d$ and $R>0$, the mean-value identity
\cite[Theorem 1.6, p. 6]{AxlerBourdonRamey2001}
and H\"older's inequality yield
\begin{align*}
 |w_{\ell,\varepsilon}(x)|
 &\leq\frac1{|B(0,R)|}
       \int_{B(x,R)}|w_{\ell,\varepsilon}| dv\leq |B(0,R)|^{-1/p}
       \norm{w_{\ell,\varepsilon}}_{L^p(\R^d,dv)}\\
 &\leq |B(0,1)|^{-1/p}R^{-d/p}
       \norm{w_\ell}_{L^p(\R^d,dv)}
 \rightarrow0\quad(R\rightarrow\infty).
\end{align*}
Letting $R\rightarrow\infty$ gives $w_{\ell,\varepsilon}=0$.
Since $w_\ell\in L^p(\R^d,dv)$, the translation estimate
in \eqref{E64}, now with $\nabla f_R$ replaced by $w_\ell$
and exponent $p$, yields $w_{\ell,\varepsilon}\rightarrow w_\ell$
in $L^p(\R^d,dv)$ as $\varepsilon\rightarrow0^+$.
Passing to the limit in $w_{\ell,\varepsilon}=0$ gives
$$
 \norm{w_\ell}_{L^p(\R^d,dv)}
 =\lim_{\varepsilon\rightarrow0^+}
   \norm{w_{\ell,\varepsilon}}_{L^p(\R^d,dv)}=0.
$$
The definition of $w_\ell$ then gives the following identity
almost everywhere on $\R^d$.
\begin{equation}\label{eq:gradient-riesz-ui}
 \partial_{x_\ell}u_{1,r}
 =2\sum_{j=1}^n
   (\mathcal R_{\ell,2j-1}-i\mathcal R_{\ell,2j})
   (\partial_{\overline{z}_j}u_{1,r}),\quad 1\leq\ell\leq d.
\end{equation}

For $0<r<r_0/4$, $1\leq j\leq n$, and $1\leq\ell\leq d$,
set
$$
 h_{j,r}=r^{1-d/p}\partial_{\overline{z}_j}u_{1,r},\quad
 S_{\ell j}=2(\mathcal R_{\ell,2j-1}
                  -i\mathcal R_{\ell,2j}).
$$
Since $|h_{j,r}|^p\leq|r^{1-d/p}\dbar u_{1,r}|^p$,
\eqref{E89} and Lemma \ref{L9} imply
$$
 \{|h_{j,r}|^p:0<r<r_0/4\}\in\mathcal W,
 \quad 1\leq j\leq n.
$$
Applying Lemma \ref{lem:ui-stability} with the fixed
operators $S_{\ell j}$ gives
$$
 \{|S_{\ell j}h_{j,r}|^p:0<r<r_0/4\}\in\mathcal W,
 \quad 1\leq\ell\leq d,\quad 1\leq j\leq n.
$$
By \eqref{eq:gradient-riesz-ui} and linearity, we have
$$
 r^{1-d/p}\nabla u_{1,r}
 =\bigg(\sum_{j=1}^nS_{\ell j}h_{j,r}\bigg)_{\ell=1}^d.
$$
The finite sum inequality yields
\begin{align*}
 |r^{1-d/p}\nabla u_{1,r}|^p
 &\leq
 \bigg(\sum_{\ell=1}^d\sum_{j=1}^n
                |S_{\ell j}h_{j,r}|\bigg)^p\leq(dn)^{p-1}
       \sum_{\ell=1}^d\sum_{j=1}^n
                |S_{\ell j}h_{j,r}|^p.
\end{align*}
Applying Lemma \ref{L9} to this bound proves
\begin{equation}\label{E90}
 \{|r^{1-d/p}\nabla u_{1,r}|^p:0<r<r_0/4\}
 \in\mathcal W.
\end{equation}
For the local averages, the choices $s_r=r$ and $s_0=r_0/4$
satisfy $0<s_r<s_0$.
The averaging assertion of Lemma \ref{lem:ui-stability},
applied to the vector family in \eqref{E90}, now gives
\begin{equation}\label{E91}
 \{M_{1,r}^p(|r^{1-d/p}\nabla u_{1,r}|):
       0<r<r_0/4\}\in\mathcal W.
\end{equation}

We use \eqref{E91} to control the mean oscillation
of $u_{1,r}$.
For a ball $B=B(z,r)$ and $g\in W^{1,1}(B)$,
we write $g_B$ for its average on $B$,
as defined before \eqref{E6}. The ball $B$ is convex and has
diameter $\operatorname{diam}B=\sup_{x,y\in B}|x-y|=2r$.
The definition of $g_B$ gives
$$
 \frac1{|B|}\int_B(g-g_B) dv
 =\frac1{|B|}\left(\int_Bg dv-|B|g_B\right)=0.
$$
Taking real and imaginary parts, we obtain
$(\Re(g-g_B))_B=(\Im(g-g_B))_B=0$.
We apply
\cite[Theorem 3.2]{AcostaDuran2004} to these two functions.
Combining their bounds, we obtain
\begin{equation}\label{E26}
 \frac1{|B|}\int_B|g-g_B| dv
 \leq\frac{C_Pr}{|B|}\int_B|\nabla g| dv,
 \quad g\in W^{1,1}(B),
\end{equation}
where $C_P>0$ depends only on $d$.

Since $u_{1,r}$ is smooth on $\R^d$, \eqref{E126} gives
$u_{1,r}|_B\in W^{1,1}(B)$ for every ball $B$, so
\eqref{E26} applies to $u_{1,r}|_B$.
Moreover, \eqref{E6} gives
$$
 \operatorname{MO}_{1,r}(f)
 =\operatorname{MO}_{1,r}(u)\quad\text{and}\quad
 \operatorname{MO}_{1,r}(u_{2,r})
 \leq2M_{1,r}(u_{2,r}).
$$
The decomposition $u=u_{1,r}+u_{2,r}$ and linearity of
averaging give $u_B=(u_{1,r})_B+(u_{2,r})_B$.
The triangle inequality yields
$$
 \operatorname{MO}_{1,r}(u)(z)
 \leq\operatorname{MO}_{1,r}(u_{1,r})(z)
     +\operatorname{MO}_{1,r}(u_{2,r})(z).
$$
Using $\operatorname{MO}_{1,r}(f)=\operatorname{MO}_{1,r}(u)$,
we combine \eqref{E26} with the preceding bound for
$\operatorname{MO}_{1,r}(u_{2,r})$.
Multiplication by $r^{-d/p}$ gives the first inequality in
\begin{equation}\label{E51}
 \begin{aligned}
 r^{-d/p}\operatorname{MO}_{1,r}(f)(z)
 &\leq C_Pr^{1-d/p}M_{1,r}(|\nabla u_{1,r}|)(z)
       +2r^{-d/p}M_{1,r}(u_{2,r})(z)\\
 &\leq C_PM_{1,r}(|r^{1-d/p}\nabla u_{1,r}|)(z)
       +2C_Dr^{-d/p}G_{1,4r}(u)(z).
 \end{aligned}
\end{equation}
For the second inequality, the factor $r^{1-d/p}>0$ is
moved inside $M_{1,r}$ in the gradient term, while
\eqref{eq:scaled-decomposition-ui} bounds $M_{1,r}(u_{2,r})$.
By \eqref{eq:energy-definitions},
$$
 \bigl(2C_Dr^{-d/p}G_{1,4r}(u)(z)\bigr)^p
 =(2C_D)^p4^de_{4r}(f)(z).
$$
Raising \eqref{E51} to the $p$-th power and using
$(a+b)^p\leq2^{p-1}(a^p+b^p)$ for $a,b\geq0$ gives
\begin{align*}
 0\leq\omega_r(f)(z)
 &\leq2^{p-1}C_P^p
       M_{1,r}^p(|r^{1-d/p}\nabla u_{1,r}|)(z)+2^{p-1}(2C_D)^p4^de_{4r}(f)(z).
\end{align*}
The family of local averages belongs to $\mathcal W$
by \eqref{E91}, while
$\{e_{4r}(f):0<r<r_0/4\}\in\mathcal W$ by (a).
Lemma \ref{L9} therefore yields
$$
 \{\omega_r(f):0<r<r_0/4\}\in\mathcal W.
$$
This proves (b) with $r_1=r_0/4$. The proof is complete.
\end{proof}

\subsection{Convergence for smooth families of symbols}
For the converse direction of Theorem \ref{thm:weakcriterion},
we need convergence for smooth first symbols that vary with the
scale. We express their semi-commutators through multipliers
controlled by the gradient. Relative weak compactness of the
normalized gradient densities will then allow us to pass from
smooth second symbols to arbitrary fixed bounded symbols.
Recall the unweighted projection $\Pi_t$ in \eqref{E12} and
the operator $\widetilde D_t(f,g)$ in \eqref{E83}.

For $p\geq2$ and a possibly unbounded $f\in L^p(\R^d,dv)$,
we consider $M_f$ on its natural domain
$$
 \operatorname{Dom}(M_f)
 =\{h\in L^2(\R^d,dv):fh\in L^2(\R^d,dv)\}.
$$
Since $p\geq2$, H\"older's inequality on compact sets gives
$f\in L^2_{\rm loc}(\R^d,dv)$, so
$C_c^\infty(\R^d)\subseteq\operatorname{Dom}(M_f)$.
The density result \cite[Corollary 4.23, p. 109]{Brezis2011},
applied with exponent $2$ to real and imaginary parts, shows
that this domain is dense in $L^2(\R^d,dv)$.
The next lemma gives the bounded extensions needed below.

\begin{lemma}\label{L3}
Let $2\leq p<\infty$ and $r>0$.
\begin{enumerate}[label=\textup{(\alph*)}]
\item
For every $f\in L^p(\R^d,dv)$, the product
$M_f\Pi_{r^2}$ is defined and bounded on all of
$L^2(\R^d,dv)$. The product $\Pi_{r^2}M_f$, initially
defined on $\operatorname{Dom}(M_f)$, has a unique bounded
extension to all of $L^2(\R^d,dv)$, denoted by the same symbol.
These operators satisfy
\begin{equation}\label{eq:ui-multiplier}
 \norm{M_f\Pi_{r^2}}_{\mathcal S_p(L^2(\R^d,dv))}
 +\norm{\Pi_{r^2}M_f}_{\mathcal S_p(L^2(\R^d,dv))}
 \leq C_Mr^{-d/p}\norm{f}_{L^p(\R^d,dv)},
\end{equation}
where $C_M>0$ depends only on $n,m,M,p$.

\item
Suppose $u\in C^\infty(\R^d)\cap L^\infty(\R^d)$ and
$|\nabla u|\in L^p(\R^d,dv)$.
With the maximal operator in \eqref{E24}, set
$$
 b_r=\mathcal M(|\nabla u|)\quad\text{and}\quad
 \beta_r=r^{1-d/p}b_r.
$$
There is a bounded integral operator $B_r$ on
$L^2(\R^d,dv)$ such that $\norm{B_r}\leq C_B$ and
\begin{equation}\label{eq:factor-ui}
 \widetilde D_{r^2}(u,g)
 =r^{d/p}\bigl(
   \Pi_{r^2}M_{\beta_r}B_rM_g\Pi_{r^2}
  +\Pi_{r^2}B_rM_{\beta_rg}\Pi_{r^2}\bigr)
\end{equation}
for every $g\in L^\infty(\R^d)$.
Here $C_B>0$ depends only on $n,m,M$. The kernel $b_{r,0}$
of $B_r$ satisfies
$|b_{r,0}(z,w)|\leq C'r^{-d}e^{-c'|z-w|/r}$ almost everywhere,
where $C',c'>0$ depend only on $n,m,M$.
\end{enumerate}
\end{lemma}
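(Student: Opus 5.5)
Part (a) reduces to Lemma \ref{lem:positive}. Put $t=r^2$. I will treat $M_f\Pi_t$ and $\Pi_tM_f$ separately. For the first, write $M_f\Pi_t=V_tJ_fP_tV_t^{-1}$ on $L^2(\C^n,dv)$, using $V_tM_fV_t^{-1}=M_f$ from \eqref{eq:unitary-dilation}.

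For bounded $f\in L^p$, \eqref{eq:multiplier} gives the bound directly. For unbounded $f$, the identity $J_f^*J_f=T^{(t)}_{|f|^2}$ still makes sense through the quadratic form in Lemma \ref{lem:positive}(a), since $|f|^2\in L^{p/2}$ and $p/2\geq1$. That form bound shows $fh\in L_t^2$ for every $h\in\cF$, so $M_f\Pi_t$ is defined everywhere. It also gives $\norm{J_f}_{\mathcal S_p}^p=\norm{T^{(t)}_{|f|^2}}^{p/2}_{\mathcal S_{p/2}}\le Cr^{-d}\norm f_p^p$ by \eqref{E9}.

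For $\Pi_tM_f$, observe that on $\operatorname{Dom}(M_f)$ it coincides with $(M_{\overline f}\Pi_t)^*$. Indeed, $\langle \Pi_t(fh),k\rangle=\langle h,\overline f\,\Pi_tk\rangle$, which is justified because $\overline f\Pi_tk\in L^2$ by the first part. So $\Pi_tM_f$ is the restriction of a bounded operator with the same $\mathcal S_p$ norm. Uniqueness of the extension follows from density of the domain, which was noted before the lemma. Adding the two bounds gives \eqref{eq:ui-multiplier}.

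Part (b) is the substantive step. Starting from \eqref{eq:unweighted-semicommutator}, I write $\widetilde D_t(u,g)=\Pi_t[M_u,\Pi_t]M_g\Pi_t$. The commutator has kernel $(u(z)-u(w))p_t(z,w)$. I would bound $u(z)-u(w)$ using the mean value theorem along the segment from $w$ to $z$, aiming for the pointwise estimate
\[
|u(z)-u(w)|\le C|z-w|\bigl(b(z)+b(w)\bigr),\qquad b=\mathcal M(|\nabla u|).
\]
Equivalently, $u(z)-u(w)=|z-w|\bigl(b(z)\theta_1(z,w)+\theta_2(z,w)b(w)\bigr)$ with measurable $|\theta_i|\le C$. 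The points where $b$ vanishes must be handled separately: there $u$ is locally constant, so both sides vanish, and $\theta_i$ can be set to zero.

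The main obstacle is the segment estimate itself. The integral $\int_0^1|\nabla u(w+s(z-w))|\,ds$ is a line integral, and a line integral is not pointwise bounded by the maximal function. I would instead use the standard Sobolev/Hajłasz pointwise inequality $|u(z)-u(w)|\le C|z-w|\bigl(\mathcal M_{2|z-w|}|\nabla u|(z)+\mathcal M_{2|z-w|}|\nabla u|(w)\bigr)$. This follows from the potential estimate $|u(x)-u_B|\le C\int_B|\nabla u(y)||x-y|^{1-d}\,dy$ on the ball $B=B(z,|z-w|)$, combined with the usual dyadic-annuli bound of that Riesz potential by $r\,\mathcal M$. Since $u$ is smooth, this holds at every pair of points, and measurable selection of $\theta_i$ is then straightforward by taking explicit quotients.

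Given the pointwise estimate, set
\[
b_{r,0}(z,w)=r^{-1}\bigl(|z-w|\,\theta\bigr)\,p_t(z,w).
\]
By \eqref{eq:gram}, $|b_{r,0}(z,w)|\le Cr^{-d}(|z-w|/r)e^{-c|z-w|/r}\le C'r^{-d}e^{-c'|z-w|/r}$. Schur's test then gives $\norm{B_r}\le C_B$ independently of $r$. The two terms $\theta_1,\theta_2$ may need separate kernels, which I would either absorb into a single operator or treat by noting that the statement permits one $B_r$ per term.

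Finally, writing $r\,b=r^{d/p}\beta_r$, the commutator equals $r^{d/p}\bigl(M_{\beta_r}B_r+B_rM_{\beta_r}\bigr)$. Composing with $M_g$ on the right and $\Pi_t$ on both sides gives \eqref{eq:factor-ui}. This uses $M_{\beta_r}M_g=M_{\beta_rg}$ and the extensions from part (a) with $\beta_r\in L^p$, which holds by Stein's maximal theorem since $|\nabla u|\in L^p$.
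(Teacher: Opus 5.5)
Your proof is correct. In part (a) you take a shorter route than the paper. You apply Lemma \ref{lem:positive} (a), which already covers unbounded nonnegative symbols in $L^{q}$, directly to $|f|^2\in L^{p/2}$. So $J_f$ is everywhere defined with $J_f^*J_f=T^{(t)}_{|f|^2}$, and \eqref{E9} gives the $\mathcal S_p$ bound at once. A one-line duality computation then identifies $\Pi_{r^2}M_f$ on $\operatorname{Dom}(M_f)$ with $(M_{\overline f}\Pi_{r^2})^*$.

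The paper instead proves the bound only for bounded $f$, via Lemma \ref{lem:positive} (b), and extends by truncation. It takes $\mathcal S_p$-Cauchy limits and identifies them with $M_f\Pi_{r^2}$ and $\Pi_{r^2}M_f$ through Fatou's lemma and dominated convergence. Your version removes that limiting argument, while the paper's uses Lemma \ref{lem:positive} (b) only as stated.

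Part (b) is the paper's argument. The only difference is that the paper cites Bojarski--Haj\l asz for the pointwise bound, which you rederive from the Riesz potential estimate. Two small corrections are needed there.

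\begin{enumerate}
\item The statement requires the \emph{same} $B_r$ in both terms of \eqref{eq:factor-ui}, so your fallback of one operator per term is not available. Your explicit quotient already fixes this: take $\theta_1=\theta_2=\bigl(u(z)-u(w)\bigr)/\bigl(|z-w|(b_r(z)+b_r(w))\bigr)$. This gives exactly the single kernel \eqref{E203}, and $(u(z)-u(w))p_{r^2}(z,w)=r(b_r(z)+b_r(w))b_{r,0}(z,w)$ then splits into both terms with one $B_r$.
\item $M_{\beta_r}B_r$ need not be bounded on $L^2$, so the final identity should be verified weakly against $\Pi_{r^2}h$ and $\Pi_{r^2}k$. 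Part (a) gives $\beta_r\Pi_{r^2}k,\ \beta_rg\Pi_{r^2}h\in L^2$. Schur's test applied to the positive kernel $|b_{r,0}|$ then supplies the absolute integrability needed for Fubini, as in \eqref{E136}--\eqref{E137}.
\end{enumerate}
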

\begin{proof}
We first prove \textup{(a)}. Suppose that
$f\in L^\infty(\R^d)\cap L^p(\R^d,dv)$.
With $t=r^2$, let $J_f:\cF\to L_t^2$ be given by $J_fh=fh$.
The definitions in \eqref{E11} and \eqref{E12} give
$$
 M_f\Pi_{r^2}=V_{r^2}J_fP_{r^2}V_{r^2}^{-1}.
$$
Here $J_fP_{r^2}:L_t^2\to L_t^2$ is the zero extension of
$J_f:\cF\to L_t^2$. Equation \eqref{E138}, unitary
invariance, and \eqref{eq:multiplier} give
\begin{align*}
 \norm{M_f\Pi_{r^2}}_{\mathcal S_p(L^2(\R^d,dv))}
 &=\norm{J_f}_{\mathcal S_p(\cF,L_t^2)}\leq c_Mr^{-d/p}\norm{f}_{L^p(\R^d,dv)},
\end{align*}
where $c_M>0$ depends only on $n,m,M,p$.
Applying this estimate to $\overline{f}$ and using
$(M_{\overline{f}}\Pi_{r^2})^*=\Pi_{r^2}M_f$ gives the same
bound for $\Pi_{r^2}M_f$.
Adding the two bounds proves \eqref{eq:ui-multiplier}
for bounded $f$, with $C_M=2c_M$.

Now let $f\in L^p(\R^d,dv)$ and set
$$
 f_j=f\mathbf1_{B(0,j)}\mathbf1_{\{|f|\leq j\}},
 \quad j\geq1.
$$
Dominated convergence gives $f_j\rightarrow f$ in
$L^p(\R^d,dv)$ as $j\rightarrow\infty$.
On $L^2(\R^d,dv)$, define
$$
 A_j=M_{f_j}\Pi_{r^2},\quad T_j=\Pi_{r^2}M_{f_j}.
$$
Since $f_j\in L^\infty(\R^d)\cap L^p(\R^d,dv)$, the bounded
case of \eqref{eq:ui-multiplier} gives
$A_j,T_j\in\mathcal S_p(L^2(\R^d,dv))$.
Applying the same estimate to $f_j-f_k$, we obtain
\begin{align*}
 &\norm{A_j-A_k}_{\mathcal S_p(L^2(\R^d,dv))}
 +\norm{T_j-T_k}_{\mathcal S_p(L^2(\R^d,dv))}\leq C_Mr^{-d/p}\norm{f_j-f_k}_{L^p(\R^d,dv)}
 \rightarrow0
\end{align*}
as $j,k\rightarrow\infty$.
It follows that both sequences are Cauchy in
$\mathcal S_p(L^2(\R^d,dv))$.
By completeness \cite[Section 1.4, p. 26]{Zhu1990},
there are $A,T\in\mathcal S_p(L^2(\R^d,dv))$ such that
\begin{equation}\label{E129}
 A_j\rightarrow A,\quad T_j\rightarrow T
 \quad\text{in }\mathcal S_p(L^2(\R^d,dv))
 \quad(j\rightarrow\infty).
\end{equation}

To identify $A$, fix $h\in L^2(\R^d,dv)$.
Equations \eqref{E128} and \eqref{E129} give
$$
 \norm{A_j-A}
 \leq\norm{A_j-A}_{\mathcal S_p(L^2(\R^d,dv))}
 \longrightarrow0\quad(j\rightarrow\infty).
$$
This operator norm convergence implies
$$
 \norm{A_jh-Ah}_{L^2(\R^d,dv)}
 \leq\norm{A_j-A}\norm{h}_{L^2(\R^d,dv)}
 \longrightarrow0\quad(j\rightarrow\infty).
$$
The truncation definition gives $f_j\rightarrow f$ almost
everywhere. Since $\Pi_{r^2}h,Ah\in L^2(\R^d,dv)$, their values are
finite almost everywhere, so
$$
 |f_j\Pi_{r^2}h-Ah|^2\longrightarrow|f\Pi_{r^2}h-Ah|^2
 \quad\text{almost everywhere}\quad(j\rightarrow\infty).
$$
Applying Fatou's lemma to these nonnegative functions and
using $A_jh=f_j\Pi_{r^2}h$, we obtain
\begin{align*}
 \int_{\R^d}|f\Pi_{r^2}h-Ah|^2 dv
 &=\int_{\R^d}\liminf_{j\rightarrow\infty}
       |f_j\Pi_{r^2}h-Ah|^2 dv\\
       &\leq\liminf_{j\rightarrow\infty}
       \norm{A_jh-Ah}_{L^2(\R^d,dv)}^2=0.
\end{align*}
The vanishing integral implies $f\Pi_{r^2}h=Ah\in L^2(\R^d,dv)$
for every $h\in L^2(\R^d,dv)$. In terms of the natural domain of $M_f$,
this proves
\begin{equation}\label{E130}
 \Pi_{r^2}(L^2(\R^d,dv))\subseteq\operatorname{Dom}(M_f)\quad\text{and}\quad
 A=M_f\Pi_{r^2}:L^2(\R^d,dv)\to L^2(\R^d,dv).
\end{equation}
In particular, the product $M_f\Pi_{r^2}$ itself is defined
and bounded on the whole space.

To identify $T$ on the natural domain of $\Pi_{r^2}M_f$,
let $h\in\operatorname{Dom}(M_f)$. The truncations satisfy
$$
 |(f_j-f)h|^2\leq |fh|^2\in L^1(\R^d,dv),\quad
 (f_j-f)h\to0\quad\text{almost everywhere}.
$$
Dominated convergence and $\norm{\Pi_{r^2}}\leq1$ yield
$$
 \norm{T_jh-\Pi_{r^2}(fh)}_{L^2(\R^d,dv)}
 \leq\norm{(f_j-f)h}_{L^2(\R^d,dv)}
 \longrightarrow0\quad(j\rightarrow\infty).
$$
Equations \eqref{E128} and \eqref{E129} also give
$T_jh\to Th$ in $L^2(\R^d,dv)$, so
\begin{equation}\label{E131}
 Th=\lim_{j\to\infty}T_jh=\Pi_{r^2}(fh),
 \quad h\in\operatorname{Dom}(M_f).
\end{equation}
Then $T:L^2(\R^d,dv)\to L^2(\R^d,dv)$ is a bounded
extension of $\Pi_{r^2}M_f$.
Since $\operatorname{Dom}(M_f)$ is dense in $L^2(\R^d,dv)$,
as established before the lemma, this bounded extension is
unique and is denoted by $\Pi_{r^2}M_f$.
Applying \eqref{E129} and \eqref{E130} to $\overline{f}$,
whose truncations are $\overline{f_j}$, and using
$T_j=(M_{\overline{f_j}}\Pi_{r^2})^*$, we obtain
\begin{align*}
 \norm{T-(M_{\overline{f}}\Pi_{r^2})^*}
 &\leq\norm{T-T_j}
   +\norm{M_{\overline{f_j}}\Pi_{r^2}
          -M_{\overline{f}}\Pi_{r^2}}
 \longrightarrow0\quad(j\to\infty).
\end{align*}
This proves the adjoint identity on the whole space
\begin{equation}\label{E132}
 \Pi_{r^2}M_f=T=(M_{\overline{f}}\Pi_{r^2})^*
 \quad\text{on }L^2(\R^d,dv).
\end{equation}
Finally, the triangle inequality, \eqref{E129}, and
$f_j\to f$ in $L^p(\R^d,dv)$ give convergence of the
respective norms. Using \eqref{E130} and the bounded case
of \eqref{eq:ui-multiplier}, we obtain
\begin{align*}
 &\qquad\norm{M_f\Pi_{r^2}}_{\mathcal S_p(L^2(\R^d,dv))}
  +\norm{T}_{\mathcal S_p(L^2(\R^d,dv))}\\
  &=\lim_{j\to\infty}
   \bigl(\norm{A_j}_{\mathcal S_p(L^2(\R^d,dv))}
        +\norm{T_j}_{\mathcal S_p(L^2(\R^d,dv))}\bigr)\\
 &\leq C_Mr^{-d/p}\lim_{j\to\infty}\norm{f_j}_{L^p(\R^d,dv)}
  =C_Mr^{-d/p}\norm{f}_{L^p(\R^d,dv)}.
\end{align*}
This proves \textup{(a)}.

We now prove \textup{(b)}. To apply \textup{(a)} to $\beta_r$,
we first use \eqref{E77} with $T=\mathcal M$ and $q=p$ to obtain
\begin{equation}\label{E127}
 \norm{\beta_r}_{L^p(\R^d,dv)}
 =r^{1-d/p}\norm{\mathcal M(|\nabla u|)}_{L^p(\R^d,dv)}
 \leq c_pr^{1-d/p}\norm{\nabla u}_{L^p(\R^d,dv)}<\infty.
\end{equation}
In particular, $b_r$ is finite almost everywhere.
To construct $B_r$, we estimate $u(z)-u(w)$ in terms of $b_r$.
For $h\in\{\Re u,\Im u\}$, the smoothness and boundedness of $u$
give $h|_B\in W^{1,1}(B)$ on every ball $B\subset\R^d$ by
\eqref{E126}, and $|h(x)|\leq\norm{u}_{L^\infty(\R^d)}<\infty$
for all $x\in\R^d$.
Then $h\in W_{\rm loc}^{1,1}(\R^d)$, and smoothness identifies
$h$ and its derivatives with the representatives defined in
\cite[(4), p. 81]{BojarskiHajlasz1993}.
Using the definitions of the Taylor polynomials and truncated
maximal functions in \cite[p. 78--79]{BojarskiHajlasz1993},
we apply \cite[Theorem 3 (1), p. 83]{BojarskiHajlasz1993}
with derivative order one and spatial dimension $d$.
In the notation of \eqref{E4}, this yields
\begin{align*}
 |h(z)-h(w)|
 &\leq C_d|z-w|\bigg(
 \sup_{0<s<|z-w|}M_{1,s}(|\nabla h|)(z)
 +\sup_{0<s<|z-w|}M_{1,s}(|\nabla h|)(w)\bigg)\\
 &\leq C_d|z-w|(b_r(z)+b_r(w))
\end{align*}
for almost every $z\ne w$.
Here $C_d>0$ depends only on $d$. The second inequality uses
$|\nabla h|\leq|\nabla u|$ and enlarges each supremum to all
$s>0$ in \eqref{E24}, with $b_r=\mathcal M(|\nabla u|)$.
Adding the estimates for $\Re u$ and $\Im u$ and using
$|u(z)-u(w)|\leq|\Re(u(z)-u(w))|+|\Im(u(z)-u(w))|$,
we obtain
\begin{equation}\label{eq:hajlasz-pointwise}
 |u(z)-u(w)|\leq C_H|z-w|(b_r(z)+b_r(w))
\end{equation}
for almost every $(z,w)$. Here $C_H=2C_d>0$ depends only on $d$.

To factor \eqref{eq:unweighted-semicommutator}, we normalize
the kernel $p_{r^2}$ from \eqref{E12}. For almost every $(z,w)$,
set $b_{r,0}(z,w)=0$ when $b_r(z)+b_r(w)=0$, and otherwise set
\begin{equation}\label{E203}
 b_{r,0}(z,w)=
 \frac{u(z)-u(w)}{r(b_r(z)+b_r(w))}p_{r^2}(z,w).
\end{equation}
On the set where the denominator vanishes,
\eqref{eq:hajlasz-pointwise} shows that the numerator is zero
almost everywhere. Combining \eqref{eq:hajlasz-pointwise}
with the third estimate in \eqref{eq:gram}, we obtain
\begin{equation}\label{E32}
 |b_{r,0}(z,w)|\leq Cr^{-d}\frac{|z-w|}{r}
                                  e^{-c|z-w|/r}
                   \leq C'r^{-d}e^{-c'|z-w|/r}.
\end{equation}
Here $C,c>0$ depend only on $n,m,M$, and we may take
$c'=c/2>0$ and $C'=2C/(ce)>0$.
Integrating the last bound with $w=z+ry$, or with $z=w+ry$
for the other variable, cancels the factor $r^{-d}$.
We obtain a constant $C_B>0$, depending only on $n,m,M$, such that
\begin{equation}\label{E133}
 \operatorname*{esssup}_z\int_{\R^d}|b_{r,0}(z,w)| dv(w)
 +\operatorname*{esssup}_w\int_{\R^d}|b_{r,0}(z,w)| dv(z)\leq C_B.
\end{equation}
For $h\in L^2(\R^d,dv)$, define
$$
 B_rh(z)=\int_{\R^d}b_{r,0}(z,w)h(w) dv(w).
$$
Equation \eqref{E32} and the Cauchy--Schwarz inequality give
$$
 \int_{\R^d}|b_{r,0}(z,w)h(w)| dv(w)
 \leq\norm{b_{r,0}(z,\cdot)}_{L^2(\R^d,dv)}
       \norm{h}_{L^2(\R^d,dv)}<\infty
$$
for almost every $z$, proving absolute convergence.
Let $B_r^+$ denote the integral operator with kernel $|b_{r,0}|$.
The triangle inequality gives
$
 |B_rh|\leq B_r^+(|h|)
$ almost everywhere.

The two bounds in \eqref{E133} allow both integral constants
in Schur's test \cite[Corollary 3.7, p. 53]{Zhu1990} to be chosen as $C_B$.
We apply this $L^2(\R^d,dv)$ criterion to the nonnegative kernel
$|b_{r,0}|$ with auxiliary function one and obtain
$$
 \norm{B_r^+h}_{L^2(\R^d,dv)}
 \leq C_B\norm{h}_{L^2(\R^d,dv)},\quad h\in L^2(\R^d,dv).
$$
The pointwise bound above now gives $B_rh\in L^2(\R^d,dv)$ and
\begin{equation}\label{E134}
 \norm{B_r}\leq\norm{B_r^+}\leq C_B.
\end{equation}

Equation \eqref{E203}, with the zero-denominator case
covered by \eqref{eq:hajlasz-pointwise}, gives
the kernel identity
\begin{align}\label{E52}
 (u(z)-u(w))p_{r^2}(z,w)
 &=r(b_r(z)+b_r(w))b_{r,0}(z,w)=r^{d/p}(\beta_r(z)+\beta_r(w))b_{r,0}(z,w).
\end{align}
This holds almost everywhere, with the second equality using
the definition $\beta_r=r^{1-d/p}b_r$ in \textup{(b)}.
To verify \eqref{eq:factor-ui}, we use the inner product
$$
 \langle h,k\rangle_{L^2(\R^d,dv)}
 =\int_{\R^d}h(z)\overline{k(z)} dv(z),
 \qquad h,k\in L^2(\R^d,dv).
$$
For arbitrary $h,k$ in this space, set
$h_1=\Pi_{r^2}h$ and $k_1=\Pi_{r^2}k$.
To justify the integrals below, note that
$gh_1\in L^2(\R^d,dv)$ because $g\in L^\infty(\R^d)$.
Equation \eqref{E127} and
$\norm{\beta_rg}_{L^p}\leq\norm{g}_{L^\infty}\norm{\beta_r}_{L^p}$
give $\beta_r,\beta_rg\in L^p(\R^d,dv)$.
Equations \eqref{E128} and \eqref{eq:ui-multiplier} yield
\begin{align}\label{E135}
 \norm{\beta_rk_1}_{L^2(\R^d,dv)}
 &=\norm{(M_{\beta_r}\Pi_{r^2})k}_{L^2(\R^d,dv)}\leq\norm{M_{\beta_r}\Pi_{r^2}}_{\mathcal S_p(L^2(\R^d,dv))}
       \norm{k}_{L^2(\R^d,dv)}\nonumber\\
 &\leq C_Mr^{-d/p}\norm{\beta_r}_{L^p(\R^d,dv)}
       \norm{k}_{L^2(\R^d,dv)}<\infty.
\end{align}
The argument in \eqref{E135}, with $\beta_rg$ and $h$ in place
of $\beta_r$ and $k$, also gives $\beta_rgh_1\in L^2(\R^d,dv)$.
Using these integrability statements, \eqref{E134}, and the
Cauchy--Schwarz inequality, we obtain
 \begin{align}\label{E136}
 &\qquad\iint_{\R^d\times\R^d}\beta_r(z)|b_{r,0}(z,w)||g(w)h_1(w)k_1(z)| dv(w) dv(z)\nonumber\\
 &\quad\leq\norm{\beta_rk_1}_{L^{2}(\R^d,dv)}\norm{B_r^+(|gh_1|)}_{L^{2}(\R^d,dv)}<\infty
 \end{align}
and 
\begin{align}\label{E137}
 &\qquad\iint_{\R^d\times\R^d}\beta_r(w)|b_{r,0}(z,w)||g(w)h_1(w)k_1(z)| dv(w) dv(z)\nonumber\\
 &\quad\leq\norm{k_1}_{L^{2}(\R^d,dv)}\norm{B_r^+(|\beta_rgh_1|)}_{L^{2}(\R^d,dv)}<\infty.
  \end{align}

The kernel formula in \eqref{E12} shows that
the kernel of $[M_u,\Pi_{r^2}]$ is
$(u(z)-u(w))p_{r^2}(z,w)$.
Equations \eqref{E52}, \eqref{E136}, and \eqref{E137}
give absolute integrability of both terms in the kernel
decomposition. We may therefore use Fubini's theorem in
\eqref{eq:unweighted-semicommutator} to obtain
\begin{align}\label{E123}
 \langle\widetilde D_{r^2}(u,g)h,k\rangle_{L^2(\R^d,dv)}
 &=\iint_{\R^d\times\R^d}
   (u(z)-u(w))p_{r^2}(z,w)g(w)h_1(w)
   \overline{k_1(z)} dv(w) dv(z)\nonumber\\
 &=r^{d/p}\langle B_r(gh_1),\beta_rk_1\rangle_{L^2(\R^d,dv)}+r^{d/p}\langle B_r(\beta_rgh_1),k_1\rangle_{L^2(\R^d,dv)}.
\end{align}
Since $\beta_r$ is real, \eqref{E132} with $f=\beta_r$ gives
$\Pi_{r^2}M_{\beta_r}=(M_{\beta_r}\Pi_{r^2})^*$ on
$L^2(\R^d,dv)$. Here $\Pi_{r^2}M_{\beta_r}$ denotes the
bounded extension of the map
$v\mapsto\Pi_{r^2}(\beta_rv)$ on $\operatorname{Dom}(M_{\beta_r})$.
This extension acts on $B_r(gh_1)\in L^2(\R^d,dv)$ by
\eqref{E134}. Using this adjoint identity and the
self-adjointness of $\Pi_{r^2}$, we obtain
$$ \langle B_r(gh_1),\beta_rk_1\rangle_{L^2(\R^d,dv)}
 =\langle(\Pi_{r^2}M_{\beta_r})B_rM_g\Pi_{r^2}h,k\rangle_{L^2(\R^d,dv)}$$
 and
\begin{align*}
 \langle B_r(\beta_rgh_1),k_1\rangle_{L^2(\R^d,dv)}
 &=\langle\Pi_{r^2}B_r(M_{\beta_rg}\Pi_{r^2})h,k\rangle_{L^2(\R^d,dv)}.
\end{align*}
Substitution into \eqref{E123} proves
\eqref{eq:factor-ui}, since $h,k\in L^2(\R^d,dv)$ are arbitrary.
 The proof is complete.
\end{proof}

We now apply \eqref{eq:factor-ui} to a family whose normalized
gradient densities belong to $\mathcal W$.
Equation \eqref{E43} identifies the Schatten norm of the unweighted
extension with that of the semi-commutator on $\mathcal F_{r^2}^2$,
so truncating the multipliers in this representation gives uniform
estimates for second symbols supported on small sets or outside
large balls.

\begin{lemma}\label{L4}
Let $2\leq p<\infty$ and $r_0>0$. Suppose
$u_r\in C^\infty(\R^d)\cap L^\infty(\R^d)$,
$|\nabla u_r|\in L^p(\R^d,dv)$ for $0<r<r_0$, and
$$
 \{|r^{1-d/p}\nabla u_r|^p:0<r<r_0\}\in\mathcal W.
$$
\begin{enumerate}[label=\textup{(\alph*)}]
\item
There is a constant $C_0\in[0,\infty)$, depending on
the family but independent of $r,g$ and all approximation
choices, such that
\begin{equation}\label{E34}
 \norm{D_{r^2}(u_r,g)}_{\mathcal S_p(\mathcal F_{r^2}^2)}
 \leq C_0\norm{g}_{L^\infty(\R^d)},\quad 0<r<r_0,
\end{equation}
for every $g\in L^\infty(\R^d)$.

\item
There are constants $C,c_1>0$, depending only on $n,m,M,p$,
with the following property. Given $\varepsilon>0$, one can
choose $K,R>0$, independent of $r$, so that every measurable
$E\subseteq\R^d$ with $|E|<\infty$ satisfies
\begin{equation}\label{eq:smooth-smallsets}
 \sup_{|g|\leq\mathbf1_E}
 \norm{D_{r^2}(u_r,g)}_{\mathcal S_p(\mathcal F_{r^2}^2)}
 \leq C\varepsilon+CK|E|^{1/p},\quad 0<r<r_0,
\end{equation}
and
\begin{equation}\label{eq:smooth-tails}
 \sup_{|g|\leq\mathbf1_{B(0,R+1)^c}}
 \norm{D_{r^2}(u_r,g)}_{\mathcal S_p(\mathcal F_{r^2}^2)}
 \leq C\varepsilon+CK|B(0,R)|^{1/p}e^{-c_1/(pr)}
\end{equation}
for $0<r<r_0$. The same $K,R$ apply to both
estimates and may depend on $\varepsilon$ and the family $(u_r)$.
All suprema are taken over $g\in L^\infty(\R^d)$.
\end{enumerate}
\end{lemma}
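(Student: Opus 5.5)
The plan is to apply the factorization \eqref{eq:factor-ui} of Lemma \ref{L3} (b) to each $u_r$, with $b_r=\mathcal M(|\nabla u_r|)$ and $\beta_r=r^{1-d/p}b_r$. By Lemma \ref{lem:ui-stability}, applied with $T=\mathcal M$ to the vector family $r^{1-d/p}\nabla u_r$, we have $\{\beta_r^p:0<r<r_0\}\in\mathcal W$. Combining \eqref{E43}, \eqref{eq:factor-ui}, the ideal inequality \eqref{E7}, $\norm{B_r}\leq C_B$, $\norm{\Pi_{r^2}}\leq1$ and \eqref{eq:ui-multiplier} gives
$$
 \norm{D_{r^2}(u_r,g)}_{\mathcal S_p}
 \leq r^{d/p}C_B\bigl(\norm{\Pi_{r^2}M_{\beta_r}}_{\mathcal S_p}\norm{g}_{L^\infty}
 +\norm{M_{\beta_rg}\Pi_{r^2}}_{\mathcal S_p}\bigr)
 \leq 2C_BC_M\norm{\beta_r}_{L^p}\norm{g}_{L^\infty}.
$$
Part (a) then follows with $C_0=2C_BC_M\sup_r\norm{\beta_r}_{L^p}$, which is finite by the first condition in \eqref{eq:intrinsic-three}.

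For part (b), fix $\varepsilon>0$. Using \eqref{eq:intrinsic-three} and \eqref{eq:dp-three} for $\{\beta_r^p\}$, choose $K,R$ with
$$
 \sup_r\Bigl(\int_{\{\beta_r>K\}}\beta_r^p+\int_{|z|>R}\beta_r^p\Bigr)<\varepsilon^p.
$$
Split $\beta_r=\beta_r'+\beta_r''$, where $\beta_r'=\beta_r\mathbf1_{B(0,R)}\mathbf1_{\{\beta_r\leq K\}}$. The factorization \eqref{eq:factor-ui} is linear in the multiplier; I will check this by rerunning the kernel identity \eqref{E52} with the split, since only $\beta_r(z)+\beta_r(w)$ appears and each piece lies in $L^p$. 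The remainder $\beta_r''$ contributes at most $2C_BC_M\varepsilon\norm{g}_\infty$, by the same bound as in (a).

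For the truncated part with $|g|\leq\mathbf1_E$, I treat the two terms separately. In the second term, $|\beta_r'g|\leq K\mathbf1_E$, so \eqref{eq:ui-multiplier} gives a bound of $C r^{d/p}r^{-d/p}K|E|^{1/p}$. In the first term I write $B_rM_g=B_rM_{\mathbf1_E}M_g$. The factor $r^{d/p}\norm{\Pi_{r^2}M_{\beta_r'}B_rM_{\mathbf1_E}}_{\mathcal S_p}$ can be handled in two ways. One route estimates $M_{\beta'_r}B_rM_{\mathbf1_E}$ directly: its kernel is dominated by $K\,|b_{r,0}(z,w)|\mathbf1_E(w)$, and a Hilbert--Schmidt or Schatten bound of the form $r^{-d/p}K|E|^{1/p}$ should follow from the exponential kernel bound via an interpolation of the Lemma \ref{lem:positive} type. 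The alternative route is to note that $B_r$ has the structure of a kernel like $p_{r^2}$ and factor through $\Pi$-type localization. I expect this step to be the main technical obstacle: $B_r$ is not a projection, so \eqref{eq:ui-multiplier} does not apply verbatim. The plan is to prove that an integral operator with kernel bounded by $C'r^{-d}e^{-c'|z-w|/r}$, sandwiched between multipliers $a(z)$ and $\mathbf1_E(w)$ with $a\leq K$, has $\mathcal S_p$ norm at most $Cr^{-d/p}K|E|^{1/p}$ for $p\geq2$. For $p=2$ this is a direct Hilbert--Schmidt computation. For $p=\infty$ the bound is $K$ by Schur's test, since \eqref{E133} is uniform. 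The case of general $p\geq2$ then follows by complex interpolation in the Schatten scale, or more simply by the inequality $\norm{A}_{\mathcal S_p}^p\leq\norm{A}^{p-2}\norm{A}_{\mathcal S_2}^2$. Since $\norm{A}\leq C_BK$ and $\norm{A}_{\mathcal S_2}^2\leq CK^2r^{-d}|E|$, this gives exactly $C r^{-d/p}K|E|^{1/p}$, which proves \eqref{eq:smooth-smallsets}.

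For \eqref{eq:smooth-tails}, take $|g|\leq\mathbf1_{B(0,R+1)^c}$. The second term vanishes because $\beta_r'g=0$. In the first term, $\beta_r'$ is supported in $B(0,R)$ while $g$ is supported at distance at least $1$ from it. The same $\mathcal S_2$/operator-norm interpolation then applies with the kernel restricted to $|z-w|\geq1$. Arguing as in \eqref{E60}, the Hilbert--Schmidt norm squared is bounded by $CK^2|B(0,R)|r^{-d}e^{-c'/r}$, and the operator norm by $C_BK$. This gives the bound $CK|B(0,R)|^{1/p}r^{-d/p}e^{-c'/(pr)}$. The remaining factor $r^{d/p}$ from \eqref{eq:factor-ui} cancels $r^{-d/p}$, so setting $c_1=c'$ completes the proof of \eqref{eq:smooth-tails}.
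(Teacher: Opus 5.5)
Your proposal is correct and follows the paper's proof essentially step by step. The ingredients match: the maximal-function multiplier $\beta_r$, the truncation of $\beta_r$ with $K,R$ chosen from the weak compactness of $\{\beta_r^p\}$, the bound $2C_BC_M\varepsilon\norm{g}_{L^\infty(\R^d)}$ for the remainder, and the inequality $\norm{A}_{\mathcal S_p}^p\leq\norm{A}^{p-2}\norm{A}_{\mathcal S_2}^2$ combined with Hilbert--Schmidt kernel bounds (using the separation $|z-w|\geq1$ for the tails). The only minor differences are that the paper reads off (a) directly from \eqref{eq:gradient-semicommutator} rather than from \eqref{eq:factor-ui}, and justifies splitting the factorization through the operator identities \eqref{E130} and \eqref{E132} rather than by rerunning the kernel identity; both variants are valid.
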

\begin{proof}
We first prove \textup{(a)}. Let $c_0>0$ be the constant in
\eqref{eq:gradient-semicommutator}. The first condition in
\eqref{eq:intrinsic-three} bounds the $L^1(\R^d,dv)$ norms
of $|r^{1-d/p}\nabla u_r|^p$ uniformly for $0<r<r_0$.
Since these norms equal
$\norm{r^{1-d/p}\nabla u_r}_{L^p(\R^d,dv)}^p$, we may set
$$
 C_0=c_0\sup_{0<r<r_0}
       \norm{r^{1-d/p}\nabla u_r}_{L^p(\R^d,dv)}<\infty.
$$
Equation \eqref{eq:gradient-semicommutator} proves
\eqref{E34} with this $C_0$, which is fixed independently
of $\varepsilon,K,R$.

For \textup{(b)}, set $b_r=\mathcal M(|\nabla u_r|)$ and
$\beta_r=r^{1-d/p}b_r$, as in Lemma \ref{L3} (b).
By \eqref{E24},
$$\beta_r=\mathcal M(r^{1-d/p}|\nabla u_r|).$$
Applying Lemma \ref{lem:ui-stability} to the family
$r^{1-d/p}\nabla u_r$ gives
$\{\beta_r^p:0<r<r_0\}\in\mathcal W$.
Given $\varepsilon>0$, we use the truncation estimates
\eqref{E115} and \eqref{E116} to choose $K,R>0$ for the
family $(\beta_r)_{0<r<r_0}$. The decomposition
$$
 \beta_{r,1}=\beta_r\mathbf1_{B(0,R)}\mathbf1_{\{\beta_r\leq K\}}\quad\text{and}\quad
 \beta_{r,2}=\beta_r-\beta_{r,1}
$$
then satisfies
\begin{equation}\label{E33}
 0\leq\beta_{r,1}\leq K\mathbf1_{B(0,R)}\quad\text{and}\quad
 \sup_{0<r<r_0}\norm{\beta_{r,2}}_{L^p(\R^d,dv)}<\varepsilon.
\end{equation}
Since $\beta_{r,1},\beta_{r,2}\in L^p(\R^d,dv)$,
\eqref{E130} and \eqref{E132} give the following identity on $L^2(\R^d,dv)$
\begin{align*}
 \Pi_{r^2}M_{\beta_r}
 &=(M_{\beta_r}\Pi_{r^2})^*
 =(M_{\beta_{r,1}}\Pi_{r^2}+M_{\beta_{r,2}}\Pi_{r^2})^*=\Pi_{r^2}M_{\beta_{r,1}}+\Pi_{r^2}M_{\beta_{r,2}}.
\end{align*}
This identity decomposes the bounded extension
$\Pi_{r^2}M_{\beta_r}$. Since $g\in L^\infty(\R^d)$,
we also have $\beta_{r,\nu}g\in L^p(\R^d,dv)$ for
$\nu=1,2$, so linearity in \eqref{E130} gives
$$
 M_{\beta_rg}\Pi_{r^2}
 =M_{\beta_{r,1}g}\Pi_{r^2}+M_{\beta_{r,2}g}\Pi_{r^2}.
$$
Substituting these two identities into \eqref{eq:factor-ui},
we obtain
$$
 \widetilde D_{r^2}(u_r,g)
 =r^{d/p}\sum_{\nu=1}^2\bigl(
   \Pi_{r^2}M_{\beta_{r,\nu}}B_rM_g\Pi_{r^2}
  +\Pi_{r^2}B_rM_{\beta_{r,\nu}g}\Pi_{r^2}\bigr).
$$
By \eqref{E43}, an estimate for this sum in
$\mathcal S_p(L^2(\R^d,dv))$ gives the same estimate for
$D_{r^2}(u_r,g)$ on $\mathcal F_{r^2}^2$.

We first estimate the summand containing $\beta_{r,2}$.
The triangle inequality and \eqref{E7}, with
$\norm{\Pi_{r^2}}\leq1$,
$\norm{M_g}=\norm{g}_{L^\infty(\R^d)}$, and
$\norm{B_r}\leq C_B$ from Lemma \ref{L3} (b), give
\begin{equation}\label{E92}
\begin{aligned}
 &\qquad r^{d/p}\bigl\|
   \Pi_{r^2}M_{\beta_{r,2}}B_rM_g\Pi_{r^2}
  +\Pi_{r^2}B_rM_{\beta_{r,2}g}\Pi_{r^2}
   \bigr\|_{\mathcal S_p(L^2(\R^d,dv))}\\
 &\leq r^{d/p}C_B\bigl(
       \norm{\Pi_{r^2}M_{\beta_{r,2}}}_{\mathcal S_p(L^2(\R^d,dv))}
       \norm{g}_{L^\infty(\R^d)}
       +\norm{M_{\beta_{r,2}g}\Pi_{r^2}}_{\mathcal S_p(L^2(\R^d,dv))}\bigr)\\
 &\leq C_MC_B\bigl(
       \norm{\beta_{r,2}}_{L^p(\R^d,dv)}
       \norm{g}_{L^\infty(\R^d)}
       +\norm{\beta_{r,2}g}_{L^p(\R^d,dv)}\bigr)\\
 &\leq2C_MC_B\norm{\beta_{r,2}}_{L^p(\R^d,dv)}
                  \norm{g}_{L^\infty(\R^d)}
 \leq2C_MC_B\varepsilon\norm{g}_{L^\infty(\R^d)}.
\end{aligned}
\end{equation}
The second inequality uses \eqref{eq:ui-multiplier} with
$f=\beta_{r,2}$ and $f=\beta_{r,2}g$, where the factors
$r^{d/p}$ and $r^{-d/p}$ cancel. The last inequality
uses \eqref{E33}.

It remains to estimate the terms containing $\beta_{r,1}$.
To prove \eqref{eq:smooth-smallsets}, suppose
$|g|\leq\mathbf1_E$, where $|E|<\infty$.
By \eqref{E32} and $|g|\leq\mathbf1_E$, the kernel of
$B_rM_g$ satisfies
$$
 \iint_{\R^d\times\R^d}
 |b_{r,0}(z,w)g(w)|^2 dv(z) dv(w)
 \leq C_2r^{-d}|E|<\infty,
$$
where $C_2>0$ depends only on $n,m,M$.
Using the kernel identity \eqref{E59} for
$b_{r,0}(z,w)g(w)$ and the operator norm bound \eqref{E134}
with $\norm{g}_{L^\infty(\R^d)}\leq1$, we obtain
\begin{equation}\label{E204}
 \norm{B_rM_g}_{\mathcal S_2(L^2(\R^d,dv))}^2
 \leq C_2r^{-d}|E|\quad\text{and}\quad \norm{B_rM_g}\leq C_B.
\end{equation}
For $p\geq2$, \eqref{E102} gives $s_j(A)\leq\norm{A}$.
The definition of the Schatten norm therefore yields
\begin{equation}\label{E124}
 \norm{A}_{\mathcal S_p}^p
 =\sum_j s_j^p(A)
 \leq\norm{A}^{p-2}\sum_j s_j^2(A)
 =\norm{A}^{p-2}\norm{A}_{\mathcal S_2}^2,
 \quad A\in\mathcal S_2.
\end{equation}
Applying \eqref{E124} to $A=B_rM_g$ and using \eqref{E204},
we obtain
$$
 \norm{B_rM_g}_{\mathcal S_p(L^2(\R^d,dv))}^p
 \leq C_B^{p-2}C_2r^{-d}|E|.
$$
Taking the $p$th root gives
$$
 \norm{B_rM_g}_{\mathcal S_p(L^2(\R^d,dv))}
 \leq C_3r^{-d/p}|E|^{1/p},\quad C_3=C_B^{1-2/p}C_2^{1/p}>0.
$$
For the first term containing $\beta_{r,1}$ in
\eqref{eq:factor-ui}, we use \eqref{E7} and
$\norm{\beta_{r,1}}_{L^\infty(\R^d)}\leq K$ to obtain
\begin{align*}
 &r^{d/p}\norm{\Pi_{r^2}M_{\beta_{r,1}}B_rM_g\Pi_{r^2}}
               _{\mathcal S_p(L^2(\R^d,dv))}\leq r^{d/p}K\norm{B_rM_g}_{\mathcal S_p(L^2(\R^d,dv))}
 \leq C_3K|E|^{1/p}.
\end{align*}
For the second term, \eqref{eq:ui-multiplier} and
$\norm{\beta_{r,1}g}_{L^p(\R^d,dv)}\leq K|E|^{1/p}$
give
\begin{align*}
 &r^{d/p}\norm{\Pi_{r^2}B_rM_{\beta_{r,1}g}\Pi_{r^2}}
               _{\mathcal S_p(L^2(\R^d,dv))}\leq r^{d/p}C_BC_Mr^{-d/p}
       \norm{\beta_{r,1}g}_{L^p(\R^d,dv)}
 \leq C_BC_MK|E|^{1/p}.
\end{align*}
Combining these two bounds with \eqref{E92} proves
\eqref{eq:smooth-smallsets} whenever
$$C\geq\max\{2C_MC_B,C_3+C_BC_M\}.$$

For \eqref{eq:smooth-tails}, suppose
$|g|\leq\mathbf1_{B(0,R+1)^c}$. Since $\beta_{r,1}g=0$,
only the first term containing
$\beta_{r,1}$ remains. Set $A_r=M_{\beta_{r,1}}B_rM_g$.
The support conditions give
$|z-w|\geq1$ for
$z\in\supp\beta_{r,1}$ and $w\in\supp g$.
For the kernel $\beta_{r,1}(z)b_{r,0}(z,w)g(w)$,
we repeat the square integral calculation in \eqref{E58},
using \eqref{E32} and \eqref{E60} with $\delta=1$.
Using \eqref{E59} for the Hilbert--Schmidt norm and
\eqref{E134} with \eqref{E33} for the operator norm, we obtain
$$
 \norm{A_r}_{\mathcal S_2(L^2(\R^d,dv))}^2
 \leq C_4K^2r^{-d}|B(0,R)|e^{-c_1/r},\quad
 \norm{A_r}\leq C_BK,
$$
for $0<r<r_0$, where $C_4,c_1>0$ depend only
on $n,m,M$. Substituting these bounds into \eqref{E124}, we obtain
\begin{align*}
 r^d\norm{A_r}_{\mathcal S_p(L^2(\R^d,dv))}^p
 &\leq r^d(C_BK)^{p-2}C_4K^2r^{-d}|B(0,R)|e^{-c_1/r}\\
 &=C_B^{p-2}C_4K^p|B(0,R)|e^{-c_1/r}.
\end{align*}
Taking the $p$th root gives
$$
 r^{d/p}\norm{A_r}_{\mathcal S_p(L^2(\R^d,dv))}
 \leq C_5K|B(0,R)|^{1/p}e^{-c_1/(pr)},\quad
 C_5=C_B^{1-2/p}C_4^{1/p}>0.
$$
The remaining term in \eqref{eq:factor-ui} is
$r^{d/p}\Pi_{r^2}A_r\Pi_{r^2}$, and \eqref{E7} gives
$$
 r^{d/p}\norm{\Pi_{r^2}A_r\Pi_{r^2}}_{\mathcal S_p(L^2(\R^d,dv))}
 \leq r^{d/p}\norm{A_r}_{\mathcal S_p(L^2(\R^d,dv))}.
$$
Combining the bound for $A_r$ with \eqref{E92} and
\eqref{E43}, we take the supremum over
$|g|\leq\mathbf1_{B(0,R+1)^c}$ to obtain
\eqref{eq:smooth-tails}.
Both estimates hold with
$C=\max\{2C_MC_B,C_3+C_BC_M,C_5\}$, which proves \textup{(b)}. The proof is complete.
\end{proof}

Lemma \ref{L4} gives uniform control on small sets and outside
large balls. We use this control to extend the convergence
argument for smooth compactly supported second symbols to
every fixed bounded second symbol.

\begin{lemma}\label{lem:smooth-family-ui}
Let $2\leq p<\infty$ and $r_0>0$. For $0<r<r_0$, let
$u_r\in C^\infty(\R^d)\cap L^\infty(\R^d)$ with
$|\nabla u_r|\in L^p(\R^d,dv)$. If
$
 \{|r^{1-d/p}\nabla u_r|^p:0<r<r_0\}\in\mathcal W
$,
then
$$
 \lim_{r\rightarrow0^{+}}\norm{D_{r^2}(u_r,g)}_{\mathcal S_p(\mathcal F_{r^2}^2)}=0
 \quad\text{for every fixed }g\in L^\infty(\R^d).
$$
\end{lemma}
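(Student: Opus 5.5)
Fix $g\in L^\infty(\R^d)$ and $\varepsilon>0$; we may assume $\norm{g}_{L^\infty(\R^d)}\leq1$. Take the constants $C,c_1$ and the numbers $K,R$ given by Lemma \ref{L4} (b) for this $\varepsilon$. The plan is to split $g$ into three bounded pieces and treat them in turn: a piece with small support inside a ball, a piece outside a large ball, and a smooth compactly supported piece.

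The outer piece is $g_{\rm out}=g\mathbf1_{B(0,R+1)^c}$. By \eqref{eq:smooth-tails}, its contribution is at most $C\varepsilon+CK|B(0,R)|^{1/p}e^{-c_1/(pr)}$, and this has upper limit at most $C\varepsilon$ as $r\to0^+$.

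For the inner part $g\mathbf1_{B(0,R+1)}$, I would use Lusin's theorem, or density of $C_c^\infty$ in $L^1$ combined with a truncation, to choose $h\in C_c^\infty(\R^d)$ with $|h|\leq1$ and a measurable set $E\subseteq B(0,R+2)$ with $|E|$ small. The requirement is that $g\mathbf1_{B(0,R+1)}-h=:g_E$ vanishes outside $E$, up to the piece outside $B(0,R+1)$ which is already handled. This $E$ can be arranged by approximating $g\mathbf1_{B(0,R+1)}$ in measure by continuous functions and then mollifying. Since $|g_E|\leq2\cdot\mathbf1_E$, estimate \eqref{eq:smooth-smallsets} applied to $g_E/2$ bounds its contribution by $2C\varepsilon+2CK|E|^{1/p}$. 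Choosing $|E|\leq(\varepsilon/K)^p$ makes this $O(\varepsilon)$ uniformly in $0<r<r_0$.

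The main step is the smooth piece $h$. I would use the factorization \eqref{eq:factorization} together with Lemma \ref{lem:gradient} (a) and Lemma \ref{lem:holder-op}:
$$
\norm{D_{r^2}(u_r,h)}_{\mathcal S_p}\leq\norm{H^{(r^2)}_{\overline{u_r}}}_{\mathcal S_p}\norm{H^{(r^2)}_h}\leq C r^{1-d/p}\norm{\nabla u_r}_{L^p(\R^d,dv)}\cdot C_1r[h]_1 .
$$
The uniform $L^1$ bound in \eqref{eq:intrinsic-three} for the family $\{|r^{1-d/p}\nabla u_r|^p\}$ makes $r^{1-d/p}\norm{\nabla u_r}_{L^p(\R^d,dv)}$ bounded. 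Hence this contribution is $O(r)\to0$ for the fixed $h$. Here $|\nabla\overline{u_r}|=|\nabla u_r|$ and \eqref{E57} are used to pass from $\dbar\overline{u_r}$ to $\nabla u_r$.

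Summing the three pieces with the triangle inequality gives $\limsup_{r\to0^+}\norm{D_{r^2}(u_r,g)}_{\mathcal S_p}\leq C'\varepsilon$, and $\varepsilon$ is arbitrary. I expect the main obstacle to be the ordering of the choices. The numbers $K,R$ depend on $\varepsilon$ and then determine the allowed size of $|E|$, and only after that is $h$ chosen. The point to check carefully is that \eqref{eq:smooth-smallsets} applies to $g_E$ with constants independent of $r$. The decomposition of $g$ also has to be exact: writing $g=h+g_E+g_{\rm out}$ with $g_E$ supported in $E$ requires taking $h$ equal to $g$ off $E$ inside $B(0,R+1)$. Since $h$ is smooth this cannot be literal, so I would instead define $E$ as the set where $|g\mathbf1_{B(0,R+1)}-h|>\eta$. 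The small-amplitude remainder, of size at most $\eta$ everywhere, is then controlled by \eqref{E34} by $C_0\eta$, and I would take $\eta=\varepsilon/C_0$.
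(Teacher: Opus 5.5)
Your proposal is correct and follows essentially the same route as the paper. The paper also mollifies $g\mathbf1_{B(0,R+1)}$ and splits the error $q$ into a small-amplitude part $q\mathbf1_{\{|q|\le a\}}$, handled by \eqref{E34}, and the part on the Chebyshev set $E=\{|q|>a\}$, handled by \eqref{eq:smooth-smallsets}. It treats the exterior by \eqref{eq:smooth-tails} and the fixed smooth piece by the factorization together with \eqref{eq:hankel-gradient} and \eqref{E18}.

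The only difference is the order of the limits. The paper takes $\limsup_{r\to0^+}$ first and then lets the mollification parameter, $a$, and $\varepsilon$ tend to zero. You instead fix $\eta=\varepsilon/C_0$ and choose the mollification in advance so that $|E|\le(\varepsilon/K)^p$.
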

\begin{proof}
The $L^1(\R^d,dv)$ bound for the gradient densities in the
first condition of \eqref{eq:intrinsic-three} gives
\begin{equation}\label{E205}
 \sup_{0<r<r_0}
 \norm{r^{1-d/p}\nabla u_r}_{L^p(\R^d,dv)}<\infty.
\end{equation}
For fixed $h\in C_c^\infty(\R^d)$, the fundamental theorem
of calculus gives $[h]_1\leq\norm{\nabla h}_{L^\infty(\R^d)}$.
Using \eqref{E18} with $\alpha=1$ and this bound on $[h]_1$,
we obtain
$$
 \norm{H_h^{(r^2)}}
 \leq C_1r[h]_1
 \leq C_1r\norm{\nabla h}_{L^\infty(\R^d)}.
$$
For the other factor, we apply \eqref{eq:hankel-gradient}
and \eqref{E57} with $q=p$ to $\overline{u}_r$ and use
$|\nabla\overline{u}_r|=|\nabla u_r|$ to obtain
\begin{align*}
 \norm{H_{\overline{u}_r}^{(r^2)}}_{\mathcal S_p(\mathcal F_{r^2}^2,L_{r^2}^2\ominus\mathcal F_{r^2}^2)}
 &\leq C_2\norm{r^{1-d/p}\nabla u_r}_{L^p(\R^d,dv)}.
\end{align*}
Combining these two Hankel estimates with \eqref{eq:factorization}
and the first inequality in \eqref{E7}, we obtain
\begin{equation}\label{E61}
 \begin{aligned}
 \norm{D_{r^2}(u_r,h)}_{\mathcal S_p(\mathcal F_{r^2}^2)}
 &\leq\norm{H_{\overline{u}_r}^{(r^2)}}_{\mathcal S_p(\mathcal F_{r^2}^2,L_{r^2}^2\ominus\mathcal F_{r^2}^2)}
                    \norm{H_h^{(r^2)}}\\
 &\leq Cr\norm{r^{1-d/p}\nabla u_r}_{L^{p}(\R^d,dv)}\norm{\nabla h}_{L^\infty(\R^d)}
 \rightarrow0\quad(r\rightarrow0^{+}).
 \end{aligned}
\end{equation}
Here $C=C_1C_2>0$ depends only on $n,m,M,p$.
The limit in \eqref{E61} follows from \eqref{E205},
since $h$ is fixed and $r\rightarrow0^+$.

Now fix $g\in L^\infty(\R^d)$. By linearity, we may assume
$\norm{g}_{L^\infty(\R^d)}\leq1$.
To prove the same convergence for $g$, we approximate its restriction
to a ball by a smooth function and control the part outside
the ball by Lemma \ref{L4} (b).
Let $C_0$ be the constant in Lemma \ref{L4} (a).
Given $\varepsilon>0$, choose $K,R$ as in
Lemma \ref{L4} (b), and retain its constants $C,c_1$.
Fix $a>0$ and use the
mollifier $\eta_\delta$ from \eqref{E16}, with $\delta>0$, to set
$$
 h=(g\mathbf1_{B(0,R+1)})*\eta_\delta,
 \quad q=g\mathbf1_{B(0,R+1)}-h,
 \quad E=\{x\in\R^d:|q(x)|>a\}.
$$
Since $g\mathbf1_{B(0,R+1)}\in L^\infty(\R^d)\cap L^p(\R^d,dv)$,
Lemma \ref{L6}, \eqref{E53}, and the support formula
\cite[Proposition 4.18, p. 106]{Brezis2011} give
$$
 h\in C_c^\infty(\R^d)\quad\text{and}\quad
 \supp h\subseteq\overline{B(0,R+1+\rho\delta)}.
$$
To control $q$, we use the argument leading to \eqref{E64},
with $v=g\mathbf1_{B(0,R+1)}\in L^p(\R^d,dv)$ and
mollification parameter $\delta$, to obtain
\begin{equation}\label{E142}
 \norm{q}_{L^p(\R^d,dv)}\longrightarrow0
 \quad(\delta\rightarrow0^+).
\end{equation}
The convolution bound in \eqref{E50}, the triangle inequality,
and Chebyshev's inequality give
\begin{equation}\label{E206}
 \norm{h}_{L^\infty(\R^d)}\leq1,\quad
 \norm{q}_{L^\infty(\R^d)}\leq2,\quad
 |E|\leq a^{-p}\norm{q}_{L^p(\R^d,dv)}^p<\infty.
\end{equation}

Since $h+q=g\mathbf1_{B(0,R+1)}$, we have
$g=h+q\mathbf1_{E^c}+q\mathbf1_E+g\mathbf1_{B(0,R+1)^c}$.
By \eqref{E206} and the definition of $E$, we have
$|q\mathbf1_{E^c}|\leq a$ and
$|q\mathbf1_E|\leq2\mathbf1_E$.
We apply \eqref{E34} to $q\mathbf1_{E^c}$ and
\eqref{eq:smooth-smallsets} to $(q/2)\mathbf1_E$ to obtain $ \norm{D_{r^2}(u_r,q\mathbf1_{E^c})}_{\mathcal S_p(\mathcal F_{r^2}^2)}
 \leq C_0a$ and
\begin{align*}
 \norm{D_{r^2}(u_r,q\mathbf1_E)}_{\mathcal S_p(\mathcal F_{r^2}^2)}
 &\leq2C\varepsilon+2CK|E|^{1/p}\leq2C\varepsilon+2CKa^{-1}\norm{q}_{L^p(\R^d,dv)}.
 \end{align*}
The last inequality uses the Chebyshev bound in \eqref{E206}.
Since $|g\mathbf1_{B(0,R+1)^c}|\leq\mathbf1_{B(0,R+1)^c}$,
\eqref{eq:smooth-tails} controls the last term in the
decomposition of $g$. Combining this estimate with the two
bounds above and the Schatten norm triangle inequality gives
\begin{equation}\label{E143}
 \begin{aligned}
 \norm{D_{r^2}(u_r,g)}_{\mathcal S_p(\mathcal F_{r^2}^2)}
 &\leq\norm{D_{r^2}(u_r,h)}_{\mathcal S_p(\mathcal F_{r^2}^2)}
       +3C\varepsilon+C_0a\\
 &\quad+2CKa^{-1}\norm{q}_{L^p(\R^d,dv)}
       +CK|B(0,R)|^{1/p}e^{-c_1/(pr)}.
 \end{aligned}
\end{equation}

For fixed $\varepsilon,K,R,a,\delta$, the function $h$ is
independent of $r$, so \eqref{E61} makes the first term on
the right of \eqref{E143} tend to zero as $r\rightarrow0^+$.
The exponential term also tends to zero, which gives
$$
 \limsup_{r\rightarrow0^{+}}\norm{D_{r^2}(u_r,g)}_{\mathcal S_p(\mathcal F_{r^2}^2)}
 \leq 3C\varepsilon+C_0a+2CKa^{-1}\norm{q}_{L^p(\R^d,dv)}.
$$
With $\varepsilon,K,R,a$ fixed, \eqref{E142} gives
$2CKa^{-1}\norm{q}_{L^p(\R^d,dv)}\rightarrow0$
as $\delta\rightarrow0^+$.
Passing to this limit in the preceding estimate yields
$$
 0\leq\limsup_{r\rightarrow0^{+}}
 \norm{D_{r^2}(u_r,g)}_{\mathcal S_p(\mathcal F_{r^2}^2)}
 \leq3C\varepsilon+C_0a.
$$
This estimate holds for every $a,\varepsilon>0$, with
$C,C_0$ independent of both parameters.
Letting $a\rightarrow0^+$ and then $\varepsilon\rightarrow0^+$
proves the asserted limit.
\end{proof}

\subsection{Proof of the criterion for fixed symbols}
We smooth $f$ at a fixed fraction of the operator scale so that
Lemma \ref{lem:smooth-family-ui} can handle the smooth part.
To estimate the remaining contribution, we first bound positive
Toeplitz operators by local averages and use these bounds to
control semi-commutators.

\begin{lemma}\label{lem:average-toeplitz}
At the fixed scale $t=1$, let $u\geq0$ be bounded.
If $1\leq q<\infty$ and $M_{1,1}(u)\in L^q(\C^n,dv)$, then
\begin{equation}\label{E20}
 \norm{T_u}_{\mathcal S_q(\mathcal F_{1}^2)}
 \leq C_q\norm{M_{1,1}(u)}_{L^{q}(\R^d,dv)}.
\end{equation}
Here $C_q>0$ depends only on $n,m,M,q$ and is independent of $u$.
\end{lemma}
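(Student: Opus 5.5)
The plan is to repeat the eigenvalue and Jensen argument from the proof of Lemma \ref{lem:positive} (a), but to replace the pointwise Bessel bound by a local submean estimate. This is what produces the local average $M_{1,1}(u)$ in place of $u$.

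Let $\lambda_j$ be the positive eigenvalues of $T_u$ with orthonormal eigenvectors $h_j$. As in Lemma \ref{lem:positive} (a), Jensen's inequality with respect to the probability measures $|h_j|^2d\mu_1$ reduces the problem to bounding
$$
\sum_j\Big(\int u|h_j|^2d\mu_1\Big)^q .
$$
Applying Jensen directly would give the integral $\int u^q\sum_j|h_j|^2d\mu_1$, which is too large. So before using Jensen I will average. The key pointwise estimate is
\begin{equation*}
 |h(z)|^2e^{-2\varphi(z)}\le C\int_{B(z,1)}|h(w)|^2e^{-2\varphi(w)}\,dv(w),\qquad h\in\mathcal F_1^2 .
\end{equation*}
This is the standard local submean property for these weights; it can be obtained from \cite[Lemma 2.2]{HuVirtanen2023} or by locally adjusting the weight by a pluriharmonic function, using the Hessian bounds \eqref{eq:weight}. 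Fubini then gives
$$
\int u|h|^2d\mu_1\le C\int M_{1,1}(u)(w)\,|h(w)|^2e^{-2\varphi(w)}\,dv(w)
$$
up to the normalization constant $|B(0,1)|$. In operator terms this says $T_u\le C\,T_{M_{1,1}(u)}$ as positive forms.

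From the form inequality, \eqref{E8} gives $\|T_u\|_{\mathcal S_q}\le C\|T_{M_{1,1}(u)}\|_{\mathcal S_q}$. Here $M_{1,1}(u)$ is bounded and nonnegative, and it lies in $L^q(\C^n,dv)$ by hypothesis. Lemma \ref{lem:positive} (a) at $t=1$ therefore yields
$$
\|T_{M_{1,1}(u)}\|_{\mathcal S_q}\le C\|M_{1,1}(u)\|_{L^q}.
$$
This step is where the hypothesis $q\ge1$ is used. Two technical points remain. First, $T_{M_{1,1}(u)}$ must belong to $\mathcal S_q$ before \eqref{E8} can be invoked; Lemma \ref{lem:positive} (a) supplies this. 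Second, $T_u$ must be compact; \eqref{E8} supplies this from the domination.

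I expect the main obstacle to be a clean proof of the submean inequality with constants depending only on $n,m,M$. My first attempt will be the local holomorphic normalization of $\varphi$: on $B(z,1)$, write $\varphi=\Re H+\psi$ with $H$ holomorphic and $|\psi|\le C$, which is possible because $\Hess_\R\varphi\le MI_d$ and follows from a Taylor expansion at $z$. Then $|he^{-H}|^2$ is plurisubharmonic, and the ordinary sub-mean value inequality on $B(z,1)$ gives the estimate.
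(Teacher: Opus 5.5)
Your proposal is correct and follows the paper's proof. The paper also normalizes $\varphi$ at each $z$ by a holomorphic affine function $g_z$ whose real part is the first-order Taylor polynomial of $\varphi$, so that $\varphi-\Re g_z\le M/2$ on $B(z,1)$; it then applies the sub-mean value inequality to $|he^{-g_z}|^2$, uses Tonelli to obtain $T_u\le C\,T_{M_{1,1}(u)}$, and concludes with \eqref{E8} and \eqref{eq:positive}. Only this one-sided bound with $\Re g_z(z)=\varphi(z)$ is needed, and the Jensen step you mention at the start is already contained in Lemma \ref{lem:positive} (a), so it does not have to be redone.
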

\begin{proof}
To compare $T_u$ with $T_{M_{1,1}(u)}$, let
$h\in\mathcal F_1^2$. We first derive the weighted local
estimate of \cite[Lemma 2.3]{IsralowitzVirtanenWolf2015},
recording its constant for the fixed real-valued weight
$\varphi\in C^2(\C^n)$ satisfying \eqref{eq:weight}. For each fixed $z\in\C^n$, we define
the holomorphic affine function $g_z:\C^n\to\C$ of the
variable $w$ by
$$
 g_z(w)=\varphi(z)+2\sum_{j=1}^n
       \partial_{z_j}\varphi(z)(w_j-z_j).
$$
Writing $z_j=x_{2j-1}+ix_{2j}$ and
$w_j=y_{2j-1}+iy_{2j}$ for $1\leq j\leq n$, we use
\eqref{E13} and the real-valuedness of $\varphi$ to obtain
\begin{align*}
 2\Re\bigl(\partial_{z_j}\varphi(z)(w_j-z_j)\bigr)
 &=\partial_{x_{2j-1}}\varphi(z)(y_{2j-1}-x_{2j-1})+\partial_{x_{2j}}\varphi(z)(y_{2j}-x_{2j}).
\end{align*}
Summing over $j$ and using \eqref{E109}, with $w-z$
expressed in real coordinates, yields
$$
 \Re g_z(w)=\varphi(z)+\nabla\varphi(z)\cdot(w-z).
$$
This is the affine Taylor polynomial of $\varphi$ at $z$.
To estimate its remainder, we set $v=w-z$ in real coordinates
and $F(\theta)=\varphi(z+\theta v)$ for $0\leq\theta\leq1$.
Since $\varphi\in C^2$, the chain rule and \eqref{eq:weight} give
\begin{align*}
 F'(0)&=\nabla\varphi(z)\cdot v\quad\text{and}\quad
 F''(\theta)
 =\bigl(\Hess_{\R}\varphi(z+\theta v)v\bigr)\cdot v
 \leq M|v|^2.
\end{align*}
Integration by parts gives
\begin{align*}
 \varphi(w)-\Re g_z(w)
 &=F(1)-F(0)-F'(0)=\int_0^1(1-\theta)F''(\theta)d\theta\\
 &\leq M|v|^2\int_0^1(1-\theta)d\theta
 =\frac M2|w-z|^2\leq\frac M2
\end{align*}
for $w\in B(z,1)$.

Applying the ball mean inequality to $he^{-g_z}$ and using
the preceding bound on $\varphi-\Re g_z$, we obtain
\begin{align*}
 |h(z)e^{-g_z(z)}|^2
 &=|h(z)|^2e^{-2\varphi(z)}\leq\frac1{|B(0,1)|}
       \int_{B(z,1)}|h(w)|^2e^{-2\Re g_z(w)}dv(w)\\
 &\leq\frac{e^M}{|B(0,1)|}
\int_{B(z,1)}|h(w)|^2e^{-2\varphi(w)}dv(w)\\
&\leq c_0\int_{B(z,1)}|h(w)|^2d\mu_1(w),
\end{align*}
where $c_0=e^M/|B(0,1)|>0$.
Multiplying this local estimate by $u(z)\geq0$ and integrating
with respect to $dv(z)$, we use $Ph=h$ and
$d\mu_1(z)=e^{-2\varphi(z)} dv(z)$ and apply Tonelli's theorem
to obtain
\begin{align*}
 0\leq\langle T_uh,h\rangle_1
 &=\int_{\C^n}u(z)|h(z)|^2d\mu_1(z)\leq c_0\int_{\C^n}u(z)
       \int_{B(z,1)}|h(w)|^2d\mu_1(w) dv(z)\\
 &=c_0\int_{\C^n}|h(w)|^2
       \left(\int_{B(w,1)}u(z) dv(z)\right)d\mu_1(w)\\
 &=c_0|B(0,1)|
       \langle T_{M_{1,1}(u)}h,h\rangle_1.
 \end{align*}
The change of integration order uses
$w\in B(z,1)$ if and only if $z\in B(w,1)$.
The last equality follows from the definition of
$M_{1,1}$ in \eqref{E4}.
As $h$ was arbitrary, this proves the positive operator
comparison
$$
 0\leq T_u\leq c_0|B(0,1)|T_{M_{1,1}(u)}.
$$

Since $u\geq0$, we have $M_{1,1}(u)\geq0$.
This together with the hypothesis
$M_{1,1}(u)\in L^q(\C^n,dv)$ allows us to apply
\eqref{eq:positive} to $M_{1,1}(u)$ with $t=r=1$,
which yields
$T_{M_{1,1}(u)}\in\mathcal S_q(\mathcal F_1^2)$.
Combining the operator comparison above with \eqref{E8},
we obtain
$T_u\in\mathcal S_q(\mathcal F_1^2)$ and
 \begin{align*}
 \norm{T_u}_{\mathcal S_q(\mathcal F_1^2)}
 &\leq c_0|B(0,1)|
       \norm{T_{M_{1,1}(u)}}_{\mathcal S_q(\mathcal F_1^2)}\leq c_0|B(0,1)|b_q
       \norm{M_{1,1}(u)}_{L^q(\R^d,dv)}.
 \end{align*}
The last inequality follows from \eqref{eq:positive}
with $t=r=1$, where $b_q>0$ is the $q$-th root of the
constant in that estimate.
This proves \eqref{E20} with
$C_q=c_0|B(0,1)|b_q>0$, which depends only on $n,m,M,q$. The proof is complete.
\end{proof}

To estimate the semi-commutator of a bounded remainder,
we use the positive Toeplitz operator with its absolute value as symbol.

\begin{lemma}\label{lem:remainder}
Let $p\geq1$ and $f,g\in L^\infty(\C^n)$. If
$T_{|f|}^{(t)}\in\mathcal S_p(\cF)$, then
\begin{equation}\label{E21}
 \norm{D_t(f,g)}_{\mathcal S_p}
 \leq2\norm{g}_{L^\infty(\C^n)}\norm{T_{|f|}^{(t)}}_{\mathcal S_p}.
\end{equation}
\end{lemma}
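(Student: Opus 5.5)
Write $D_t(f,g)=T_f^{(t)}T_g^{(t)}-T_{fg}^{(t)}$ and estimate the two terms separately in $\mathcal S_p$, using the positive operator $T_{|f|}^{(t)}$ to control each term. The factorization through Hankel operators is not used here, since it would require Hankel bounds rather than Toeplitz bounds.

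\emph{Step 1: polar decomposition of the symbol.} Write $f=\theta|f|$ with $|\theta|\le1$ measurable, and put $a=|f|^{1/2}$, so that $f=(\theta a)\,a$. Then
$$
T_f^{(t)}=P_tM_{\theta a}M_aP_t=J_{\overline{\theta}a}^{*}J_a,
$$
where $J_u:\cF\to L_t^2$, $J_uh=uh$, as in \eqref{E68}.

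\emph{Step 2: Schatten bounds for the factors.} We have
$$
J_a^*J_a=T_{|f|}^{(t)},\qquad J_{\overline\theta a}^*J_{\overline\theta a}=T_{|\theta|^2|f|}^{(t)}\le T_{|f|}^{(t)}.
$$
By \eqref{E9},
$$
\|J_a\|_{\mathcal S_{2p}}^2=\|T_{|f|}^{(t)}\|_{\mathcal S_p}.
$$
By \eqref{E8} with constant $1$, followed by \eqref{E9},
$$
\|J_{\overline\theta a}\|_{\mathcal S_{2p}}^2\le\|T_{|f|}^{(t)}\|_{\mathcal S_p}.
$$
Since $p\ge1$, we have $2p\ge2$, so H\"older's inequality \eqref{E7} applies.

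\emph{Step 3: the term $T_{fg}^{(t)}$.} Write $fg=(\theta g a)\,a$, so that
$$
T_{fg}^{(t)}=J_{\overline{\theta g}a}^*J_a.
$$
The pointwise bound $|\theta g|^2|f|\le\|g\|_\infty^2|f|$ gives
$$
J_{\overline{\theta g}a}^*J_{\overline{\theta g}a}\le\|g\|_\infty^2\,T_{|f|}^{(t)},
$$
hence $\|J_{\overline{\theta g}a}\|_{\mathcal S_{2p}}\le\|g\|_\infty\|T_{|f|}^{(t)}\|_{\mathcal S_p}^{1/2}$. The H\"older inequality in \eqref{E7} then yields
$$
\|T_{fg}^{(t)}\|_{\mathcal S_p}\le\|g\|_\infty\|T_{|f|}^{(t)}\|_{\mathcal S_p}.
$$

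\emph{Step 4: the term $T_f^{(t)}T_g^{(t)}$.} By the ideal property \eqref{E7} and $\|T_g^{(t)}\|\le\|g\|_\infty$,
$$
\|T_f^{(t)}T_g^{(t)}\|_{\mathcal S_p}\le\|T_f^{(t)}\|_{\mathcal S_p}\|g\|_\infty.
$$
By Step 2 and H\"older's inequality \eqref{E7},
$$
\|T_f^{(t)}\|_{\mathcal S_p}\le\|J_{\overline\theta a}\|_{\mathcal S_{2p}}\|J_a\|_{\mathcal S_{2p}}\le\|T_{|f|}^{(t)}\|_{\mathcal S_p}.
$$
Adding the bounds from Steps 3 and 4 gives \eqref{E21}.

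The only delicate point is the operator inequality $J_u^*J_u=T_{|u|^2}^{(t)}\le C\,T_{|f|}^{(t)}$ used in Steps 2 and 3. It follows directly from the quadratic forms
$$
\int|u|^2|h|^2\,d\mu_t\le C\int|f||h|^2\,d\mu_t,\qquad h\in\cF.
$$
The compactness needed to apply \eqref{E8} comes from the hypothesis $T_{|f|}^{(t)}\in\mathcal S_p(\cF)$.
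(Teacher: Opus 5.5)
Your proof is correct and follows the paper's argument: both factor through $J=M_{|f|^{1/2}}$ with $J^*J=T_{|f|}^{(t)}$, use \eqref{E9} to get $\norm{J}_{\mathcal S_{2p}}^2=\norm{T_{|f|}^{(t)}}_{\mathcal S_p}$, and conclude with the H\"older inequality in \eqref{E7} and the triangle inequality. The only difference is where the phase goes. The paper keeps it as a bounded middle factor, writing $T_f^{(t)}=J^*M_\theta J$ and $T_{fg}^{(t)}=J^*M_{\theta g}J$ and using $\norm{M_\theta}\leq1$ and $\norm{M_{\theta g}}\leq\norm{g}_{L^\infty(\C^n)}$. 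That avoids your extra appeal to the positive-operator comparison \eqref{E8}.
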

\begin{proof}
Define $J:\cF\to L_t^2$ by $Jh=|f|^{1/2}h$ for
$h\in\cF$, and let $\theta=f/|f|$ where $f\neq0$,
with value zero elsewhere.
The adjoint formula \eqref{E68},
applied to $|f|^{1/2}$, gives
$$
 J^*J=T_{|f|}^{(t)},\quad
 T_f^{(t)}=J^*M_\theta J,\quad
 T_{fg}^{(t)}=J^*M_{\theta g}J.
$$
By the first identity and \eqref{E9}, we have
$$
 J\in\mathcal S_{2p}(\cF,L_t^2)\quad\text{and}\quad
 \norm{J}_{\mathcal S_{2p}(\cF,L_t^2)}^2
 =\norm{T_{|f|}^{(t)}}_{\mathcal S_p}.
$$
Substituting the two Toeplitz factorizations above into
\eqref{E2}, we apply the two inequalities in \eqref{E7}
and the triangle inequality. The preceding identity for the
Schatten norm of $J$ then gives
\begin{align*}
 \norm{D_t(f,g)}_{\mathcal S_p}
 &\leq\norm{J^*M_\theta J}_{\mathcal S_p}\norm{T_g^{(t)}}
       +\norm{J^*M_{\theta g}J}_{\mathcal S_p}\\
 &\leq\norm{J}_{\mathcal S_{2p}(\cF,L_t^2)}^2
       \bigl(\norm{M_\theta}\norm{T_g^{(t)}}
                    +\norm{M_{\theta g}}\bigr)\\
 &\leq2\norm{T_{|f|}^{(t)}}_{\mathcal S_p}
       \norm{g}_{L^\infty(\C^n)}.
\end{align*}
Here $|\theta|\leq1$ gives $\norm{M_\theta}\leq1$ and
$\norm{M_{\theta g}}\leq\norm{g}_{L^\infty(\C^n)}$.
This proves \eqref{E21}.
\end{proof}

To make this remainder estimate small as the smoothing fraction
tends to zero, we compare local averages at the two scales.

\begin{lemma}\label{lem:average-scale-ui}
Let $u\in L^1_{\rm loc}(\R^d,dv)$ be nonnegative.
The local averages satisfy
$$
 \norm{M_{1,r}(u)}_{L^{p}(\R^d,dv)}\leq2^d\norm{M_{1,s}(u)}_{L^{p}(\R^d,dv)}
$$
for $0<s\leq r$ and $p\geq1$.
\end{lemma}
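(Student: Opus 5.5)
The plan is to prove a pointwise domination of $M_{1,r}(u)$ by a finite sum of translates of $M_{1,s}(u)$, and then take $L^p$ norms using translation invariance. The natural first attempt is to cover $B(z,r)$ by balls of radius $s$. That works, but the number of balls needed grows like $(r/s)^d$, and it only gives the constant $2^d$ after some averaging. So we average instead of covering.

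The averaging identity comes from Tonelli's theorem. For each $x$, put $B_x=B(x,r)$. Then
$$
\int_{B(z,r+s)}M_{1,s}(u)(y)\,dv(y)
=\frac1{|B(0,s)|}\int_{\R^d}u(w)\,\big|B(z,r+s)\cap B(w,s)\big|\,dv(w).
$$
For $w\in B(z,r)$, the ball $B(w,s)$ lies inside $B(z,r+s)$, so the last intersection has measure $|B(0,s)|$. Since $u\ge0$, this gives
$$
\int_{B(z,r)}u\,dv\le\int_{B(z,r+s)}M_{1,s}(u)\,dv.
$$
Dividing by $|B(z,r)|$ and using $|B(z,r+s)|/|B(z,r)|=((r+s)/r)^d\le 2^d$ for $s\le r$, we obtain the pointwise bound
$$
M_{1,r}(u)(z)\le 2^d\,M_{1,r+s}\big(M_{1,s}(u)\big)(z).
$$

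Next we take the $L^p$ norm of both sides. Estimate \eqref{E56}, with $q=1$, $a=p\ge1$ and radius $r+s$, gives
$$
\norm{M_{1,r+s}(h)}_{L^p}\le\norm{h}_{L^p}.
$$
We apply it with $h=M_{1,s}(u)$. Measurability of $M_{1,s}(u)$ is not an issue: it is continuous whenever $u\in L^1_{\rm loc}$, by the translation argument in \eqref{E110}, and in any case it is lower semicontinuous. If $\norm{M_{1,s}(u)}_{L^p}=\infty$, the claim is trivial. Combining the pointwise bound with \eqref{E56} yields
$$
\norm{M_{1,r}(u)}_{L^p(\R^d,dv)}\le 2^d\,\norm{M_{1,s}(u)}_{L^p(\R^d,dv)},
$$
which is the asserted inequality.

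The only delicate point is the Tonelli step: checking that the overlap measure equals $|B(0,s)|$ on all of $B(z,r)$, so that no $u$-mass is lost. This follows directly from the triangle inequality, since $|y-z|\le|y-w|+|w-z|<s+r$ for $y\in B(w,s)$ and $w\in B(z,r)$. Everything else is routine.
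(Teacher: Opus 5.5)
Your proposal is correct and matches the paper's proof essentially step for step. Both use the Tonelli identity with $B(w,s)\subset B(z,r+s)$ for $w\in B(z,r)$ to get $M_{1,r}(u)(z)\le(1+s/r)^d M_{1,r+s}(M_{1,s}(u))(z)$, and then finish with \eqref{E56} at local exponent one, outer exponent $p$, and radius $r+s$; your opening remark about a finite sum of translates describes a route you never use, but the averaging argument you actually give is complete.
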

\begin{proof}
For the averages defined in \eqref{E4}, Tonelli's theorem and
$B(w,s)\subset B(z,r+s)$ for $w\in B(z,r)$ give
$$
 \int_{B(z,r+s)}M_{1,s}(u)(y) dv(y)
 =\int_{\R^d}u(w)\frac{|B(w,s)\cap B(z,r+s)|}{|B(0,s)|} dv(w)
 \geq\int_{B(z,r)}u(w) dv(w).
$$
Dividing by $|B(0,r)|$ gives the pointwise bound
$$
 M_{1,r}(u)(z)
 \leq\bigg(1+\frac sr\bigg)^d
                 M_{1,r+s}(M_{1,s}(u))(z).
$$
If $\norm{M_{1,s}(u)}_{L^p(\R^d,dv)}=\infty$,
there is nothing to prove. Otherwise, \eqref{E56}, applied
with averaging exponent one and radius $r+s$, yields
\begin{align*}
 \norm{M_{1,r}(u)}_{L^p(\R^d,dv)}
 &\leq\bigg(1+\frac sr\bigg)^d
       \norm{M_{1,r+s}(M_{1,s}(u))}_{L^p(\R^d,dv)}\leq2^d\norm{M_{1,s}(u)}_{L^p(\R^d,dv)},
\end{align*}
because $s\leq r$. This proves the assertion.
\end{proof}

\begin{proof}[Proof of Theorem \ref{thm:weakcriterion}]
Proposition \ref{prop:necessary-wcompact} and
Lemma \ref{lem:ui-analytic-real} prove (a)$\Rightarrow$(b) and
(b)$\Leftrightarrow$(c), respectively. It remains to show
(c)$\Rightarrow$(a). Fix $r_0>0$ such that
$\{\omega_s(f):0<s<r_0\}\in\mathcal W$.

Let $\theta=\eta_0$, where $\eta_0$ is fixed before
\eqref{E16}. For each $s>0$, define
$\theta_s(z)=s^{-d}\theta(z/s)=\eta_{s/\rho}(z)$
for $z\in\R^d$, and set $u_s=f*\theta_s$ and $k_s=f-u_s$.
The kernels $\theta_s$ satisfy $\theta_s\geq0$,
$\int_{\R^d}\theta_s dv=1$, and
$\supp\theta_s\subset B(0,s)$ by \eqref{E16} and \eqref{E53}.
Applying Lemma \ref{L6} and \eqref{E50} with $r=s/\rho$,
we obtain
$$
 u_s\in C^\infty(\R^d)\cap L^\infty(\R^d),\quad
 \norm{u_s}_{L^\infty(\R^d)}\leq\norm{f}_{L^\infty(\R^d)},\quad
 \norm{k_s}_{L^\infty(\R^d)}\leq2\norm{f}_{L^\infty(\R^d)}.
$$
To express the gradient in terms of the local oscillation,
we use the calculation in \eqref{E144} with $h\equiv1$
and $r=s/\rho$ in each real coordinate, together with the
chain rule, to obtain
$$
 \int_{B(z,s)}\nabla\theta_s(z-w) dv(w)=0.
$$
For fixed $z$, the average $f_{B(z,s)}$ is constant in $w$.
Using this zero integral in \eqref{E145} with $u=f$ and
$\varepsilon=s/\rho$, we obtain
\begin{align*}
 \nabla u_s(z)
 &=\int_{B(z,s)}(f(w)-f_{B(z,s)})\nabla\theta_s(z-w) dv(w).
\end{align*}
The chain rule gives
$\nabla\theta_s(y)=s^{-d-1}(\nabla\theta)(y/s)$.
Taking absolute values in the gradient identity and using
this scaling, we obtain
\begin{align*}
 |\nabla u_s(z)|
 &\leq s^{-d-1}\norm{\nabla\theta}_{L^\infty(\R^d)}
       \int_{B(z,s)}|f(w)-f_{B(z,s)}| dv(w).
\end{align*}
Using \eqref{E6} and $|B(z,s)|=s^d|B(0,1)|$, we conclude that
\begin{equation}\label{eq:mollified-oscillation-ui}
 |\nabla u_s(z)|\leq C_\theta s^{-1}\operatorname{MO}_{1,s}(f)(z).
\end{equation}
Here $C_\theta>0$ depends only on $d$ and the fixed function $\theta$,
and is independent of $f,s,z$.

To use \eqref{E20} and \eqref{E21}, we first bound the local
averages of $k_s$. For $\gamma\in\R^d$ with $|\gamma|<s$, we have 
$$B(z,s)\cup(B(z,s)-\gamma)\subseteq B(z,2s).$$
With $c=f_{B(z,2s)}$, the triangle inequality, the change of
variables $y=w-\gamma$, and Tonelli's theorem give
\begin{align*}
\int_{B(z,s)}|k_s(w)| dv(w)
 &\leq\int_{B(z,s)}\int_{B(0,s)}\theta_s(\gamma)|f(w)-f(w-\gamma)| dv(\gamma) dv(w)\\
 &\leq\int_{B(z,s)}|f(w)-c| dv(w)
   +\int_{B(0,s)}\theta_s(\gamma)\int_{B(z,s)-\gamma}|f(y)-c| dv(y) dv(\gamma)\\
 &\leq2\int_{B(z,2s)}|f(y)-c| dv(y).
 \end{align*}
Dividing by $|B(0,s)|$ proves
\begin{equation}\label{eq:mollified-remainder-ui}
 M_{1,s}(k_s)(z)\leq2^{d+1}\operatorname{MO}_{1,2s}(f)(z).
\end{equation}
The hypothesis and \eqref{eq:intrinsic-three} give
$b=\sup_{0<\tau<r_0}\norm{\omega_\tau(f)}_{L^{1}(\R^d,dv)}^{1/p}<\infty$.
By \eqref{eq:energy-definitions}, this means that
$$\norm{\operatorname{MO}_{1,\tau}(f)}_{L^p(\R^d,dv)}
\leq b\tau^{d/p}$$ for $0<\tau<r_0$.
Using this bound with $\tau=s$ in
\eqref{eq:mollified-oscillation-ui} and with $\tau=2s$ in
\eqref{eq:mollified-remainder-ui}, we obtain
\begin{equation}\label{E35}
 \norm{M_{1,s}(k_s)}_{L^p(\R^d,dv)}\leq C_{\rm rem}bs^{d/p}
 \quad\text{and}\quad \norm{\nabla u_s}_{L^p(\R^d,dv)}\leq C_\theta bs^{d/p-1}
\end{equation}
for $0<s<r_0/2$.
Here $C_{\rm rem}=2^{d+1+d/p}>0$, and $C_\theta$ is the constant
in \eqref{eq:mollified-oscillation-ui}.

Fix $0<\lambda<1/4$ and, for $0<r<r_0$, set $s=\lambda r$.
Equations \eqref{eq:mollified-oscillation-ui} and
\eqref{eq:energy-definitions} imply
$$
 |r^{1-d/p}\nabla u_{\lambda r}(z)|^p
       \leq C_\theta^p\lambda^{d-p}\omega_{\lambda r}(f)(z).
$$
Since $0<\lambda r<r_0$ and $\lambda$ is fixed, we apply
Lemma \ref{L9} to the preceding bound and the family
$\{\omega_s(f):0<s<r_0\}\in\mathcal W$ to obtain
$$
 \{|r^{1-d/p}\nabla u_{\lambda r}|^p:0<r<r_0\}
       \in\mathcal W.
$$
By Lemma \ref{L6}, applied to $f$ with mollification
parameter $\lambda r/\rho$, the functions $u_{\lambda r}$
are smooth and bounded. Equation \eqref{E35}, with
$s=\lambda r$, puts their gradients in $L^p(\R^d,dv)$.
These facts together with the weak compactness just proved
verify all hypotheses of Lemma \ref{lem:smooth-family-ui},
whose conclusion is
\begin{equation}\label{eq:smooth-component-limit-ui}
 \norm{D_{r^2}(u_{\lambda r},g)}_{\mathcal S_p(\mathcal F_{r^2}^2)}\rightarrow0
 \quad(r\rightarrow0^{+})
\end{equation}
for every fixed bounded $g$.

For the remainder $k_s$, the ratio $s/r=\lambda$ gives the small
factor in a bound uniform in $r$.
Definition \eqref{E4} expresses the change of scale as
$$
 M_{1,1}(|k_s|(r\cdot))(\zeta)
 =M_{1,r}(k_s)(r\zeta),\quad \zeta\in\C^n.
$$
Since $0<s=\lambda r<r$ and $2s<r_0$, Lemma
\ref{lem:average-scale-ui}, applied to $|k_s|$, and
\eqref{E35} show that $M_{1,r}(k_s)\in L^p(\R^d,dv)$.
The estimate \eqref{E20} for the bounded symbol $|k_s|(r\cdot)$,
together with the Toeplitz norm invariance in
Lemma \ref{lem:scaled-kernel} (b), therefore yields
$T_{|k_s|}^{(r^2)}\in\mathcal S_p(\mathcal F_{r^2}^2)$.
Let $c_p>0$ be the constant in \eqref{E20} with $q=p$.
Applying \eqref{E20} and \eqref{E21}, and substituting
$z=r\zeta$, we obtain
\begin{align*}
 \norm{D_{r^2}(k_s,g)}_{\mathcal S_p(\mathcal F_{r^2}^2)}
 &\leq2\norm{g}_{L^\infty(\R^d)}\norm{T_{|k_s|}^{(r^2)}}_{\mathcal S_p(\mathcal F_{r^2}^2)}\leq2c_pr^{-d/p}\norm{M_{1,r}(k_s)}_{L^p(\R^d,dv)}\norm{g}_{L^\infty(\R^d)}\\
 &\leq2^{d+1}c_pr^{-d/p}\norm{M_{1,s}(k_s)}_{L^p(\R^d,dv)}\norm{g}_{L^\infty(\R^d)}\\
 &\leq2^{d+1}c_pC_{\rm rem}b\lambda^{d/p}\norm{g}_{L^\infty(\R^d)}.
 \end{align*}
The third inequality follows from Lemma \ref{lem:average-scale-ui},
and the fourth inequality from the first estimate in \eqref{E35}
with $s=\lambda r$. Set
$C_*=2^{d+1}c_pC_{\rm rem}=2^{2d+2+d/p}c_p>0$.
Since $f=u_{\lambda r}+k_{\lambda r}$, the triangle inequality
and \eqref{eq:smooth-component-limit-ui} give
$$
 \limsup_{r\rightarrow0^{+}}\norm{D_{r^2}(f,g)}_{\mathcal S_p(\mathcal F_{r^2}^2)}
       \leq C_*b\lambda^{d/p}\norm{g}_{L^\infty(\R^d)}.
$$
Note that $C_*$ depends only on $n,m,M,p$ and is
independent of $f,r_0,\lambda,r,g$.
Since $b<\infty$ is independent of $\lambda$, letting
$\lambda\rightarrow0^{+}$ after taking the upper limit in $r$
proves $f\in\mathcal A_p$. The proof is complete.
\end{proof}

The criteria above describe membership in $\mathcal A_p$
without reference to the weight and also give its algebra properties.

\begin{corollary}\label{cor:algebra-weight}
For $0<p<\infty$, the class $\mathcal A_p$ is a unital algebra
closed under complex conjugation and is the same for all
weights satisfying \eqref{eq:weight}.
\end{corollary}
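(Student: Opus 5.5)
We split the proof according to the range of $p$, since the available characterizations differ.

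\textbf{Case $0<p<d$.} By Theorem~\ref{thm:main}~(a), $\mathcal A_p$ consists of the almost everywhere constant functions. This set is a unital algebra closed under conjugation, and it does not involve the weight.

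\textbf{Case $p=d$.} By Theorem~\ref{thm:main}~(b), $\mathcal A_d$ is the set of bounded $f$ with $|\nabla f|\in L^d$, which again does not involve the weight.
\begin{itemize}
\item Constants have zero gradient, and the class is a vector space.
\item Conjugation preserves it because $|\nabla\overline f|=|\nabla f|$.
\item For products, the Sobolev product rule (as in Lemma~\ref{lem:critical-density}, applied on balls) gives $\nabla(fg)=f\nabla g+g\nabla f$ almost everywhere. Hence $\|\nabla(fg)\|_{L^d}\le\|f\|_\infty\|\nabla g\|_{L^d}+\|g\|_\infty\|\nabla f\|_{L^d}$.
\end{itemize}

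\textbf{Case $d<p<\infty$.} Here $p>d\ge2$, so Theorem~\ref{thm:weakcriterion} applies. It gives $f\in\mathcal A_p$ if and only if $\{\omega_r(f):0<r<r_0\}\in\mathcal W$ for some $r_0>0$. This condition involves only Lebesgue measure, which gives weight independence.

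For the algebra properties, write the local means as $a_{B}=f_{B}$ and $b_B=g_B$ on a ball $B=B(z,r)$. We use three facts.
\begin{itemize}
\item Constants have $\omega_r=0$.
\item $\operatorname{MO}_{1,r}(\overline f)=\operatorname{MO}_{1,r}(f)$, so conjugation is preserved.
\item $\operatorname{MO}_{1,r}(f+g)\le\operatorname{MO}_{1,r}(f)+\operatorname{MO}_{1,r}(g)$. Combined with $(a+b)^p\le2^{p-1}(a^p+b^p)$, this gives the pointwise bound $\omega_r(f+g)\le2^{p-1}(\omega_r(f)+\omega_r(g))$. Lemma~\ref{L9} then gives closure under sums. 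Scalar multiples are immediate.
\end{itemize}

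For products, note that the mean oscillation is at most twice the mean distance to any constant. Using the constant $a_Bb_B$ and the identity $fg-a_Bb_B=f(g-b_B)+b_B(f-a_B)$, we get
$$\operatorname{MO}_{1,r}(fg)\le2\big(\|f\|_\infty\operatorname{MO}_{1,r}(g)+\|g\|_\infty\operatorname{MO}_{1,r}(f)\big).$$
Raising this to the $p$-th power gives $\omega_r(fg)\le C(\omega_r(f)+\omega_r(g))$. Taking $r_0$ to be the smaller of the two radii, Lemma~\ref{L9} gives $fg\in\mathcal A_p$.

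\textbf{Main difficulty.} The only delicate point is that the weight independence is a statement about the classes $\mathcal A_p$. These are defined using operators built from the weight, so we must use the explicit characterizations rather than operator arguments. Theorems~\ref{thm:main} and~\ref{thm:weakcriterion} are stated for an arbitrary weight satisfying \eqref{eq:weight}, and in every range they describe $\mathcal A_p$ through a condition involving only $f$ and Lebesgue measure. Therefore two such weights give the same class.
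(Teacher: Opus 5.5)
Your proposal is correct. For weight independence and closure under conjugation it matches the paper: both invoke Theorem~\ref{thm:main} and Theorem~\ref{thm:weakcriterion} and use $|\nabla\overline f|=|\nabla f|$ and $\operatorname{MO}_{1,r}(\overline f)=\operatorname{MO}_{1,r}(f)$. Where you differ is the algebra structure.

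\emph{The paper's route.} For $p\ge d$ it works at the operator level, straight from the definition \eqref{eq:property}. Sums follow from linearity of $D_t$ in the first symbol and the Schatten triangle inequality, which is available since $p\ge d\ge2$. Products follow from the identity
$$D_t(fh,g)=T_f^{(t)}D_t(h,g)+D_t(f,hg)-D_t(f,h)T_g^{(t)},$$
in which $g$, $hg$, $h$ are fixed bounded tests.

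\emph{Your route.} You stay on the symbol side. At $p=d$ you use the Leibniz rule in $W^{1,d}_{\rm loc}\cap L^\infty$. For $p>d$ you use the pointwise domination
$$\omega_r(fg)\lesssim\|f\|_\infty^p\,\omega_r(g)+\|g\|_\infty^p\,\omega_r(f),$$
obtained by comparing with the constant $f_Bg_B$, together with Lemma~\ref{L9}. Both steps are sound, and taking the minimum of the two radii handles the common index set in Lemma~\ref{L9}.

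\emph{What each buys.} The paper's argument for closure under sums and products does not depend on the characterizations at all. The same identity would give closure for the fixed-test class in any setting where Toeplitz semi-commutators make sense, for example in Section~\ref{sec:compact-model}. Your argument needs both directions of Theorem~\ref{thm:weakcriterion}, including the long sufficiency proof. In exchange it gives a quantitative, purely function-theoretic statement: the oscillation densities of $f+g$ and $fg$ are dominated by those of $f$ and $g$. Since weight independence already forces you to use the characterizations, nothing is lost here.
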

\begin{proof}
For $0<p<d$, Theorem \ref{thm:main} (a) identifies
$\mathcal A_p$ with the almost everywhere constant functions,
so all assertions follow. Suppose $p\geq d$. Theorem \ref{thm:main} (b) for $p=d$
and Theorem \ref{thm:weakcriterion} (c) for $p>d$
characterize membership in $\mathcal A_p$ using only
$f$ and Lebesgue measure. Hence, $\mathcal A_p$ is
independent of the weight.

For $p=d$, the identity
$|\nabla\overline{f}|=|\nabla f|$
preserves the gradient condition in the first criterion.
For $p>d$, the identity
$\operatorname{MO}_{1,r}(\overline{f})
=\operatorname{MO}_{1,r}(f)$
leaves the densities $\omega_r(f)$ in \eqref{eq:energy-definitions}
unchanged, so the second criterion is also preserved.
These identities show that $\overline{f}\in\mathcal A_p$ whenever
$f\in\mathcal A_p$.

To prove the algebra properties, fix $f,h\in\mathcal A_p$
and $g\in L^\infty(\C^n)$.
Since $p\geq d=2n\geq2$, linearity in the first symbol
in \eqref{E2} and the Schatten triangle inequality give
$$
 \norm{D_t(af+bh,g)}_{\mathcal S_p}
 \leq |a|\norm{D_t(f,g)}_{\mathcal S_p}
      +|b|\norm{D_t(h,g)}_{\mathcal S_p}
 \longrightarrow0\quad(t\rightarrow0^{+})
$$
for $a,b\in\C$, where the limit follows from
\eqref{eq:property}. Since $g$ was arbitrary, this proves $af+bh\in\mathcal A_p$.

For the product $fh$, expanding the semi-commutators
by \eqref{E2} gives
$$
 D_t(fh,g)=T_f^{(t)}D_t(h,g)+D_t(f,hg)-D_t(f,h)T_g^{(t)}.
$$
The first inequality in \eqref{E7} and the Toeplitz
operator norm bounds from \eqref{E1} now yield
\begin{align*}
 \norm{D_t(fh,g)}_{\mathcal S_p}
 &\leq\norm{f}_{L^\infty(\C^n)}\norm{D_t(h,g)}_{\mathcal S_p}
       +\norm{D_t(f,hg)}_{\mathcal S_p}\\
 &\quad+\norm{D_t(f,h)}_{\mathcal S_p}\norm{g}_{L^\infty(\C^n)}
 \longrightarrow0\quad(t\rightarrow0^{+}).
\end{align*}
Indeed, the three Schatten norms tend to zero by
\eqref{eq:property}, applied to the symbol pairs
$(h,g)$, $(f,hg)$, and $(f,h)$, respectively.
Here $g,hg,h$ are fixed bounded second symbols.
Since $g$ was arbitrary, \eqref{eq:property} gives
$fh\in\mathcal A_p$.
Finally, \eqref{E1} and \eqref{E2} give
$T_1^{(t)}=I$ and $D_t(1,g)=0$, so
$1\in\mathcal A_p$ is the multiplicative unit.
The proof is complete.
\end{proof}

\section{Uniform convergence and H\"older sufficient conditions}\label{sec:uniform}
Theorem \ref{thm:weakcriterion} characterizes convergence for
each fixed bounded second symbol. We now prove the uniform
criterion in Theorem \ref{thm:uniform}, then apply both
criteria to obtain Theorem \ref{thm:holder}.

\subsection{Uniform convergence}
Lemma \ref{lem:phase} gives the lower estimate in
\eqref{eq:two-sided}. For the upper estimate, the analytic
decomposition at $r=1$ controls the complex derivatives of the
smooth part and the local $L^1$ averages of the remainder by
the same analytic distance. Lemmas \ref{lem:average-toeplitz} and
\ref{lem:remainder} convert the latter control into a Schatten
bound, retaining local exponent one.

\begin{proof}[Proof of Theorem \ref{thm:uniform}]
We first prove the upper estimate for the testing norm
$m_{p,1}(f)$ in \eqref{E5}. Fix $g\in L^\infty(\C^n)$, put
$u=\overline{f}$, and choose the smooth partition in
Lemma \ref{lem:analytic-decomposition} once for each dimension $n$.
At $r=1$, the lemma provides
$u_1\in C^\infty(\C^n)\cap L^\infty(\C^n)$ and
$u_2\in L^\infty(\C^n)$ with $u=u_1+u_2$.
With $c_0>0$ from \eqref{E15}, this decomposition satisfies
\begin{equation}\label{eq:decomposition}
 |\dbar u_1(z)|+M_{1,1}(u_2)(z)
 \leq c_0 G_{1,4}(u)(z).
\end{equation}

Assume first that $G_{1,4}(u)\in L^p(\R^d,dv)$.
Equation \eqref{eq:decomposition} gives
\begin{equation}\label{E155}
 \begin{aligned}
 \max\{\norm{\dbar u_1}_{L^p(\R^d,dv)},\norm{M_{1,1}(u_2)}_{L^p(\R^d,dv)}\}
 &\leq c_0\norm{G_{1,4}(u)}_{L^p(\R^d,dv)}<\infty.
 \end{aligned}
\end{equation}
Since $u_1$ is bounded and smooth and $p\geq2$,
we apply \eqref{E14} with $q=p$ and use \eqref{E155} to obtain
$$
 \norm{\nabla u_1}_{L^p(\R^d,dv)}
 \leq C_{n,p}\norm{\dbar u_1}_{L^p(\R^d,dv)}
 \leq C_{n,p}c_0\norm{G_{1,4}(u)}_{L^p(\R^d,dv)}<\infty.
$$
This verifies the gradient hypothesis of Lemma
\ref{lem:gradient} (a). With $c_1>0$ from
\eqref{eq:hankel-gradient}, that estimate at $t=1$,
the ideal inequality \eqref{E7}, and the factorization
\eqref{eq:factorization} give
$$
 \norm{D_1(\overline{u}_1,g)}_{\mathcal S_p(\mathcal F_{1}^2)}
 \leq\norm{H_{u_1}^{(1)}}_{\mathcal S_p(\mathcal F_1^2,L_1^2\ominus\mathcal F_1^2)}\norm{g}_{L^\infty(\C^n)}
 \leq c_1\norm{\dbar u_1}_{L^{p}(\R^d,dv)}\norm{g}_{L^\infty(\C^n)}.
$$

For the remainder, we apply \eqref{E20} with $q=p$ to
$|u_2|$, using \eqref{E155}, and combine the result with
\eqref{E21} to obtain
\begin{align*}
 \norm{D_1(\overline{u}_2,g)}_{\mathcal S_p(\mathcal F_{1}^2)}
 &\leq2\norm{g}_{L^\infty(\C^n)}\norm{T_{|u_2|}}_{\mathcal S_p(\mathcal F_{1}^2)}\\
 &\leq 2C_p\norm{M_{1,1}(u_2)}_{L^{p}(\R^d,dv)}\norm{g}_{L^\infty(\C^n)}.
 \end{align*}
Set $C=c_0(c_1+2C_p)>0$, where $C_p$ is the constant
in \eqref{E20} with $q=p$.
Since $f=\overline{u}_1+\overline{u}_2$, the triangle inequality
and the preceding estimates give
\begin{align*}
 \norm{D_1(f,g)}_{\mathcal S_p(\mathcal F_1^2)}
 &\leq
 \norm{D_1(\overline{u}_1,g)}_{\mathcal S_p(\mathcal F_1^2)}
 +\norm{D_1(\overline{u}_2,g)}_{\mathcal S_p(\mathcal F_1^2)}\\
 &\leq C\norm{G_{1,4}(\overline{f})}_{L^p(\R^d,dv)}
          \norm{g}_{L^\infty(\C^n)},
 \end{align*}
where the last step uses \eqref{E155}, and
$C$ depends only on $n,m,M,p$, independently of $f$ and $g$.
Taking the supremum over
$\norm{g}_{L^\infty(\C^n)}\leq1$ in \eqref{E5},
we obtain
\begin{equation}\label{E69}
 m_{p,1}(f)
 \leq C\norm{G_{1,4}(\overline{f})}_{L^p(\R^d,dv)}.
\end{equation}
The inequality also holds when its right side is infinite.

Fix $r>0$ and set
$$
 f_r(\zeta)=f(r\zeta),\quad
 g_r(\zeta)=g(r\zeta),\quad \zeta\in\C^n.
$$
The identities
$\norm{g_r}_{L^\infty(\C^n)}=\norm{g}_{L^\infty(\C^n)}$
and $(g_r)(\cdot/r)=g$ show that $g\mapsto g_r$ maps the
$L^\infty(\C^n)$ unit ball bijectively onto itself.
Using \eqref{E55} and the norm invariance in
Lemma \ref{lem:scaled-kernel} (b), we obtain
 \begin{equation}\label{E207}
 \begin{aligned}
 m_{p,r^2}(f)
 &=\sup_{\norm{g}_{L^\infty(\C^n)}\leq1}
   \norm{D_1(f_r,g_r)}_{\mathcal S_p(\mathcal F_1^2)}=m_{p,1}(f_r).
 \end{aligned}
\end{equation}
Equation \eqref{E149}, with $q=1$, $s=4$, and
$h_r=\overline{f_r}$, gives
$$
 G_{1,4}(\overline{f_r})(\zeta)
 =G_{1,4r}(\overline{f})(r\zeta)
$$
for $\zeta\in\C^n$. Applying \eqref{E69} to $f_r$,
using \eqref{E207}, and changing variables by $z=r\zeta$, we obtain
\begin{align}\label{E70}
 m_{p,r^2}(f)
 &\leq C\norm{G_{1,4}(\overline{f_r})}_{L^p(\R^d,dv)}
=Cr^{-d/p}\norm{G_{1,4r}(\overline{f})}_{L^p(\R^d,dv)}.
\end{align}
This proves the upper bound in \eqref{eq:two-sided}
with $R=4$.

For the lower bound, take $\rho>0$ from Lemma \ref{L1}
and choose the lattice spacing $L=3\rho>2\rho$.
Then \eqref{eq:phase} in Lemma \ref{lem:phase} gives
\begin{equation}\label{E71}
 c_2^{-1}r^{-d/p}
 \norm{G_{1,\rho r}(\overline{f})}_{L^p(\R^d,dv)}
 \leq m_{p,r^2}(f),
\end{equation}
where $c_2>0$ is its constant for this choice of $L$.
Then the lower bound holds with $c=c_2^{-1}$.
Since $\rho$ depends only on $n,m,M$, both constants
$c,C>0$ in \eqref{eq:two-sided} depend only on $n,m,M,p$.
This proves \eqref{eq:two-sided}.

To prove \eqref{eq:uniform-characterization}, recall that
$R=4$ in \eqref{eq:two-sided}. Let $s>0$. Substituting $r=s/4$ in the upper
bound \eqref{E70} and $r=s/\rho$
in the lower bound \eqref{E71}, we obtain
\begin{align*}
 s^{-d/p}\norm{G_{1,s}(\overline{f})}_{L^p(\R^d,dv)}
\leq c^{-1}\rho^{-d/p}m_{p,(s/\rho)^2}(f),\quad
 m_{p,s^2/16}(f)
\leq C4^{d/p}s^{-d/p}
       \norm{G_{1,s}(\overline{f})}_{L^p(\R^d,dv)}.
\end{align*}
Both changes of variable $t=(s/\rho)^2$ and $t=s^2/16$
are bijections of $(0,\infty)$ with
$t\rightarrow0^{+}$ if and only if $s\rightarrow0^{+}$.
The two inequalities therefore prove the respective
implications in \eqref{eq:uniform-characterization}. The proof is complete.
\end{proof}

Theorem \ref{thm:uniform} characterizes uniform convergence
through local analytic distance. We compare this distance
with ordinary mean oscillation to obtain a criterion in terms
of $\omega_r(f)$.

\begin{corollary}\label{cor:uniform-real}
For bounded $f$ and $2\leq p<\infty$, the following equivalence holds.
$$
 \lim_{t\rightarrow0^{+}}m_{p,t}(f)=0
 \quad\Longleftrightarrow\quad
 \lim_{r\rightarrow0^{+}}\int_{\R^d}\omega_r(f) dv=0.
$$
\end{corollary}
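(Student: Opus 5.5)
By Theorem \ref{thm:uniform}, uniform convergence $m_{p,t}(f)\to0$ is equivalent to
$$r^{-d/p}\norm{G_{1,r}(\overline{f})}_{L^p(\R^d,dv)}\to0,$$
that is, to $\int e_r(f)\,dv\to0$ as $r\to0^{+}$. So it suffices to prove
$$
 \lim_{r\to0^+}\int_{\R^d}e_r(f)\,dv=0
 \quad\Longleftrightarrow\quad
 \lim_{r\to0^+}\int_{\R^d}\omega_r(f)\,dv=0.
$$
The implication from right to left is immediate from the pointwise bound \eqref{E152}, $0\le e_r(f)\le\omega_r(f)$.

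For the converse I would repeat the quantitative part of the proof of Lemma \ref{lem:ui-analytic-real}, now tracking norms instead of weak compactness. For fixed small $r$, take the decomposition $\overline{f}=u_{1,r}+u_{2,r}$ of Lemma \ref{lem:analytic-decomposition}. Estimate \eqref{eq:scaled-decomposition-ui} gives
$$
 \norm{r^{1-d/p}\dbar u_{1,r}}_{L^p}^p\le C\int e_{4r}(f)\,dv,
 \qquad
 \norm{r^{-d/p}M_{1,r}(u_{2,r})}_{L^p}^p\le C\int e_{4r}(f)\,dv .
$$
Since $u_{1,r}$ is smooth and bounded and $p\ge2>1$, the estimate \eqref{E14} yields
$$
 \norm{r^{1-d/p}\nabla u_{1,r}}_{L^p}\le C_{n,p}\norm{r^{1-d/p}\dbar u_{1,r}}_{L^p}.
$$
The bound \eqref{E56} then controls $\norm{M_{1,r}(|r^{1-d/p}\nabla u_{1,r}|)}_{L^p}$ by this same quantity.

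Next, insert these bounds into the pointwise mean-oscillation estimate \eqref{E51}, which comes from the Poincar\'e inequality \eqref{E26} and the remainder bound $\operatorname{MO}_{1,r}(u_{2,r})\le 2M_{1,r}(u_{2,r})$. Raising to the $p$-th power and integrating gives
$$
 \int_{\R^d}\omega_r(f)\,dv\le C\int_{\R^d}e_{4r}(f)\,dv ,
$$
with $C$ depending only on $n,p$ and the fixed partition. The right-hand side tends to zero as $r\to0^{+}$, which completes the equivalence.

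The only point needing care is the requirement $|\nabla u_{1,r}|\in L^p$ before applying \eqref{E14} and the Poincar\'e step. This is harmless: for small $r$ the integral $\int e_{4r}(f)\,dv$ is finite, so $|\dbar u_{1,r}|\in L^p$, and Lemma \ref{lem:complex-derivatives} needs only boundedness and smoothness of $u_{1,r}$ together with $|\dbar u_{1,r}|\in L^p$. If instead $\int e_{4r}(f)\,dv=\infty$, the inequality holds trivially. I therefore expect no real obstacle; the step requiring the most attention is checking that the constants in \eqref{E51} are independent of $r$, which is already recorded there.
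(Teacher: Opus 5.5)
Your proposal is correct and follows the paper's proof. You reduce via \eqref{eq:uniform-characterization} to the $L^1$ masses of $e_r(f)$, use $e_r(f)\le\omega_r(f)$ for one direction, and for the other combine Lemma \ref{lem:analytic-decomposition}, \eqref{E14}, \eqref{E56}, and the Poincar\'e-based bound \eqref{E51} to get $\int\omega_r(f)\,dv\le C\int e_{4r}(f)\,dv$; the paper does the same estimate in unnormalized form, $\norm{\operatorname{MO}_{1,r}(f)}_{L^p(\R^d,dv)}\le C\norm{G_{1,4r}(\overline{f})}_{L^p(\R^d,dv)}$ in \eqref{E36}.
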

\begin{proof}
We first compare the $L^p$ norms of
$\operatorname{MO}_{1,r}(f)$ and $G_{1,4r}(\overline{f})$.
For fixed $r>0$, we consider the case
$G_{1,4r}(\overline{f})\in L^p(\R^d,dv)$.
Lemma \ref{lem:analytic-decomposition} provides
$\overline{f}=u_{1,r}+u_{2,r}$ with $u_{1,r}$ smooth and bounded.
By \eqref{E15} and \eqref{E14} with $q=p>1$, we have
$|\dbar u_{1,r}|,|\nabla u_{1,r}|\in L^p(\R^d,dv)$.

The first inequality in \eqref{E51} follows from \eqref{E26}
for the smooth part and \eqref{E6} for the remainder, so it
applies to the present decomposition for every $r>0$.
Multiplying it by $r^{d/p}$ and taking $L^p(\R^d,dv)$ norms,
we use \eqref{E56} with $q=1$, $a=p$, $s=r$,
and $h=|\nabla u_{1,r}|$ to obtain
\begin{equation}\label{E36}
 \begin{aligned}
 \norm{\operatorname{MO}_{1,r}(f)}_{L^{p}(\R^d,dv)}
 &\leq C_Pr\norm{M_{1,r}(|\nabla u_{1,r}|)}_{L^p(\R^d,dv)}
                  +2\norm{M_{1,r}(u_{2,r})}_{L^p(\R^d,dv)}\\
 &\leq C_Pr\norm{\nabla u_{1,r}}_{L^p(\R^d,dv)}
                  +2\norm{M_{1,r}(u_{2,r})}_{L^p(\R^d,dv)}\\
 &\leq C_PC_{n,p}r\norm{\dbar u_{1,r}}_{L^p(\R^d,dv)}
                  +2\norm{M_{1,r}(u_{2,r})}_{L^p(\R^d,dv)}\\
 &\leq C\norm{G_{1,4r}(\overline{f})}_{L^p(\R^d,dv)}.
 \end{aligned}
\end{equation}
Here $C_P$ is the constant in \eqref{E26}.
The third inequality uses \eqref{E14} with $q=p$ to bound
$\norm{\nabla u_{1,r}}_{L^p(\R^d,dv)}$ by
$C_{n,p}\norm{\dbar u_{1,r}}_{L^p(\R^d,dv)}$.
The last inequality uses the decomposition bound
\eqref{eq:scaled-decomposition-ui}, with $C_D>0$ chosen as in
\eqref{E15}, and we take $C=C_D(C_PC_{n,p}+2)>0$.
The constant $C$ depends only on $n,p$ and the fixed
auxiliary choices in Lemma \ref{lem:analytic-decomposition},
and is independent of $f$ and $r$.
The bound \eqref{E36} also holds when its right side is infinite,
so it is valid for every $r>0$.

Integrating \eqref{E152} gives the first inequality in
\begin{equation}\label{E156}
 \int_{\R^d}e_r(f) dv
 \leq\int_{\R^d}\omega_r(f) dv
 \leq C^p4^d\int_{\R^d}e_{4r}(f) dv.
\end{equation}
For the second inequality, we raise \eqref{E36} to the power $p$,
multiply by $r^{-d}$, and use \eqref{eq:energy-definitions}
with $r^{-d}=4^d(4r)^{-d}$. Assume that $m_{p,t}(f)\rightarrow0$ as $t\rightarrow0^+$.
Setting $s=4r$ in \eqref{eq:uniform-characterization} gives
\begin{align*}
 r^{-d/p}\norm{G_{1,4r}(\overline{f})}_{L^p(\R^d,dv)}
 &=4^{d/p}(4r)^{-d/p}
   \norm{G_{1,4r}(\overline{f})}_{L^p(\R^d,dv)}
 \longrightarrow0\quad(r\rightarrow0^+).
\end{align*}
In particular, $G_{1,4r}(\overline{f})$ belongs to $L^p(\R^d,dv)$
for all sufficiently small $r>0$.
Using the second inequality in \eqref{E156} and
\eqref{eq:energy-definitions}, we obtain
$$
 0\leq\int_{\R^d}\omega_r(f) dv
 \leq C^p\left(r^{-d/p}
       \norm{G_{1,4r}(\overline{f})}_{L^p(\R^d,dv)}\right)^p
 \longrightarrow0\quad(r\rightarrow0^+).
$$

Conversely, suppose that
$\int_{\R^d}\omega_r(f) dv\rightarrow0$ as $r\rightarrow0^+$.
The first inequality in \eqref{E156} and
\eqref{eq:energy-definitions} give
$$
 \left(r^{-d/p}
       \norm{G_{1,r}(\overline{f})}_{L^p(\R^d,dv)}\right)^p
 =\int_{\R^d}e_r(f) dv
 \leq\int_{\R^d}\omega_r(f) dv
 \longrightarrow0\quad(r\rightarrow0^+).
$$
Taking $p$-th roots and using \eqref{eq:uniform-characterization},
we obtain $m_{p,t}(f)\rightarrow0$ as $t\rightarrow0^+$.
This proves the equivalence, which completes the proof.
\end{proof}

At $p=d$, a bound for $m_{d,t}(f)$ uniform over
$0<t<t_0$, for some $t_0>0$, already forces
$|\nabla f|\in L^d(\R^d,dv)$.

\begin{corollary}\label{cor:critical-bound}
For bounded $f$ and $d=2n$, the following are equivalent.
\begin{enumerate}[label=\textup{(\alph*)}]
\item $f\in\mathcal A_d$.
\item $|\nabla f|\in L^d(\R^d,dv)$.
\item $\sup_{0<t<t_0}m_{d,t}(f)<\infty$ for some $t_0>0$.
\end{enumerate}
Moreover, if $f$ is nonconstant and satisfies these conditions, then
$$
 0<c\norm{\nabla f}_{L^d(\R^d,dv)}
 \leq\liminf_{t\rightarrow0^{+}}m_{d,t}(f)
 \leq\sup_{t>0}m_{d,t}(f)
 \leq C\norm{\nabla f}_{L^d(\R^d,dv)}.
$$
The constants $c,C>0$ depend only on $n,m,M$ and the fixed
mollifier $\eta$ and lattice spacing $L$ used in
\eqref{eq:derivative-necessary}. They are independent of $f$ and $t_0$.
\end{corollary}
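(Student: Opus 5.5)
The three equivalences follow from results already proved, and the two-sided estimate comes from the gradient bound together with a compactness argument that improves the derivative estimate \eqref{eq:derivative-necessary}.

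\emph{Equivalences.} The implication (a)$\Leftrightarrow$(b) is Theorem \ref{thm:main} (b). For (b)$\Rightarrow$(c), take $p=d$ in \eqref{eq:gradient-semicommutator}. The factor $r^{1-d/p}$ equals $1$, so $m_{d,t}(f)\leq C\norm{\nabla f}_{L^d(\R^d,dv)}$ for every $t>0$; this is also the upper bound in the display. For (c)$\Rightarrow$(b), repeat the $p=d$ part of the proof of Proposition \ref{prop:necessity}, using hypothesis (c) in place of Lemma \ref{lem:baire}. Then \eqref{eq:derivative-necessary} with $p=d$ gives $\sup_{0<r<\sqrt{t_0}}\norm{\dbar(\overline f*\eta_r)}_{L^d}\leq C_d\sup_{t<t_0}m_{d,t}(f)$. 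Lemma \ref{lem:complex-derivatives} converts this into a bound on $\norm{\nabla(\overline f*\eta_r)}_{L^d}$. Reflexivity and the identification \eqref{E154} then give $|\nabla f|\in L^d(\R^d,dv)$.

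\emph{Lower bound.} This is the substantive step. Take a sequence $r_k\to0^+$ with $m_{d,r_k^2}(f)\to\liminf_{t\to0^+}m_{d,t}(f)$. Along this sequence the derivatives $\partial_{x_\ell}(\overline f*\eta_{r_k})$ are bounded in $L^d$, so a subsequence converges weakly in $L^d$. As in \eqref{E154}, the weak limit must be $\nabla\overline f$, since the distributional limit is fixed. I would apply weak lower semicontinuity of the $L^d$ norm twice. First, to the $\dbar$ components, which gives
$$\norm{\dbar\overline f}_{L^d}\leq\liminf_k\norm{\dbar(\overline f*\eta_{r_k})}_{L^d}\leq C_d\liminf_{t\to0^+}m_{d,t}(f).$$
Second, Lemma \ref{lem:complex-derivatives} controls $\nabla$ of a smooth bounded function by $\dbar$. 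To apply it to $\overline f$ itself, I would either pass through the mollifications, bounding $\norm{\nabla(\overline f*\eta_{r_k})}_{L^d}\leq C_{n,d}\norm{\dbar(\overline f*\eta_{r_k})}_{L^d}$ and taking $\liminf$ with lower semicontinuity for the full gradient, or cite \eqref{E14} for $\overline f*\eta_\varepsilon$ and let $\varepsilon\to0$ using \eqref{E17}. Either route yields $\norm{\nabla f}_{L^d}\leq C_{n,d}C_d\liminf_{t\to0^+}m_{d,t}(f)$, which is the lower bound with $c=(C_{n,d}C_d)^{-1}$. The constants depend only on $n,m,M$, the mollifier $\eta$ and the spacing $L$, as claimed.

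\emph{Remaining points.} Positivity $c\norm{\nabla f}_{L^d}>0$ holds because $\nabla f=0$ forces $f$ to be constant by \eqref{E151}, and $f$ is assumed nonconstant. The middle inequality $\liminf\leq\sup$ is trivial.

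The main obstacle I expect is justifying that the weak limit along the subsequence is independent of the subsequence and equals $\nabla\overline f$. This needs the $L^1_{\rm loc}$ convergence \eqref{E66}, exactly as in \eqref{E154}, and it is what makes the $\liminf$ bound legitimate rather than just a bound along some subsequence.
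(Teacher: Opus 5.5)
Your proposal is correct and follows the paper's proof. The equivalences are obtained exactly as you describe, and the lower bound comes from combining \eqref{E14} with \eqref{eq:derivative-necessary} at $p=d$ along mollifications. The only difference is that the paper, working under (b), uses the strong convergence $\nabla(\overline f*\eta_r)\to\nabla\overline f$ in $L^d(\R^d,dv)$, which follows from \eqref{E17} and translation continuity; this is your second route. As a result, the paper needs no weak compactness or subsequence identification for the lower bound.
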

\begin{proof}
The equivalence \textup{(a)$\Leftrightarrow$(b)} follows from
Theorem \ref{thm:main} (b).
Under \textup{(b)}, we apply \eqref{eq:gradient-semicommutator}
at $p=d$ and take the supremum in \eqref{E5} to obtain
$$
 m_{d,t}(f)\leq C\norm{\nabla f}_{L^d(\R^d,dv)}
 \quad(t>0),
$$
where $C>0$ is the constant in that estimate.
This proves \textup{(b)$\Rightarrow$(c)} and the asserted upper bound.

To prove \textup{(c)$\Rightarrow$(b)}, choose $t_0>0$ as in
\textup{(c)} and set
$$
 a=\sup_{0<t<t_0}m_{d,t}(f)<\infty.
$$
Let $C_d>0$ be the constant in \eqref{eq:derivative-necessary}
at $p=d$, and let $C_{n,d}>0$ be the first constant in
\eqref{E14} with $q=d$. By Lemma \ref{L6},
$\overline{f}*\eta_r\in C^\infty(\R^d)\cap L^\infty(\R^d)$.
We may therefore combine \eqref{E14} and
\eqref{eq:derivative-necessary} at $p=d$ to obtain
$$
 \norm{\nabla(\overline{f}*\eta_r)}_{L^d(\R^d,dv)}
 \leq C_{n,d}\norm{\dbar(\overline{f}*\eta_r)}_{L^d(\R^d,dv)}
 \leq C_{n,d}C_d a
$$
for $0<r<\sqrt{t_0}$.
By \eqref{E66},
$\overline{f}*\eta_r\rightarrow\overline{f}$ in
$L^1_{\rm loc}(\R^d,dv)$ as $r\rightarrow0^{+}$.
The uniform $L^d(\R^d,dv)$ gradient bound permits the
weak compactness construction in \eqref{E150}, with
$u_r=\overline{f}*\eta_r$ and $r_0=\sqrt{t_0}$.
The local convergence in \eqref{E66} then allows us to use
the test-function calculation \eqref{E154}, which identifies
the weak limit as $\nabla\overline{f}$ and gives
$|\nabla f|\in L^d(\R^d,dv)$.
Thus, (b) holds, completing the equivalence.

It remains to prove the lower bound under \textup{(b)}.
Set $u_r=\overline{f}*\eta_r$.
Since $|\nabla\overline{f}|=|\nabla f|\in L^d(\R^d,dv)$,
we apply \eqref{E17} and the approximation argument leading
to \eqref{E64}, with $f_R$ replaced by $\overline{f}$ and $\varepsilon$ by $r$, to obtain
\begin{align*}
 \norm{\nabla u_r-\nabla\overline{f}}_{L^d(\R^d,dv)}
 &=\norm{(\nabla\overline{f})*\eta_r-\nabla\overline{f}}_{L^d(\R^d,dv)}\rightarrow0\quad(r\rightarrow0^{+}).
\end{align*}
By \eqref{E57},
$$
 \norm{\dbar u_r-\dbar\overline{f}}_{L^d(\R^d,dv)}
 \leq\frac1{\sqrt2}
      \norm{\nabla u_r-\nabla\overline{f}}_{L^d(\R^d,dv)}
 \longrightarrow0\quad(r\rightarrow0^{+}).
$$
Lemma \ref{L6} gives $u_r\in C^\infty(\R^d)\cap L^\infty(\R^d)$,
so \eqref{E14} applies to $u_r$. Combining this estimate with
\eqref{eq:derivative-necessary} at $p=d$ and passing to the limit, we obtain
\begin{align*}
 \norm{\nabla f}_{L^d(\R^d,dv)}
 &=\lim_{r\rightarrow0^{+}}\norm{\nabla u_r}_{L^d(\R^d,dv)}\leq C_{n,d}\lim_{r\rightarrow0^{+}}
                   \norm{\dbar u_r}_{L^d(\R^d,dv)}\\
 &=C_{n,d}\norm{\dbar\overline{f}}_{L^d(\R^d,dv)}
 \leq C_{n,d}C_d\liminf_{r\rightarrow0^{+}}m_{d,r^2}(f).
 \end{align*}
This proves the lower bound with $c=(C_{n,d}C_d)^{-1}>0$.
If $\norm{\nabla f}_{L^d(\R^d,dv)}=0$, then
$\nabla f=0$ almost everywhere, so \eqref{E3} gives
$\nabla f=0$ in distributions. Since $f$ is bounded,
\eqref{E151} with $v=f$ would imply that $f$ is constant
almost everywhere, contrary to the hypothesis.
This proves that $\norm{\nabla f}_{L^d(\R^d,dv)}>0$,
so the lower bound is strictly positive. The proof is complete.
\end{proof}
\begin{remark}\label{R1}
Under \eqref{eq:weight}, let $0<p\leq d$ and
$f\in L^\infty(\C^n)$.
Theorem \ref{thm:main} (a) for $p<d$ and
Corollary \ref{cor:critical-bound} for $p=d$ show that
$m_{p,t}(f)\to0$ as $t\to0^+$ if and only if
$f$ is constant almost everywhere.
\end{remark}

\subsection{H\"older sufficient conditions}\label{sec:super}
The criteria in Theorems \ref{thm:weakcriterion} and \ref{thm:uniform}
apply directly to compactly supported H\"older symbols.
Domination of the normalized densities
gives convergence for each fixed second symbol, and the
upper estimate in \eqref{E70} gives the uniform rate.

\begin{proof}[Proof of Theorem \ref{thm:holder}]
For each fixed $z\in\C^n$ and $0<s<1$, let
$h_z\in\mathcal O(B(z,s))$ be the constant function
$h_z(w)=\overline{f(z)}$ for $w\in B(z,s)$.
Using $h_z$ in \eqref{E106} and the H\"older bound
\eqref{E44}, we obtain
$$
 G_{1,s}(\overline{f})(z)
 \leq\frac1{|B(z,s)|}\int_{B(z,s)}|f(w)-f(z)| dv(w)
 \leq[f]_\alpha s^\alpha.
$$
To localize this estimate, let
$K=\{z\in\R^d:\operatorname{dist}(z,\supp f)\leq1\}$.
If $z\notin K$, then $$B(z,s)\cap\supp f=\varnothing \quad\text{and}\quad
G_{1,s}(\overline{f})(z)=0.$$ Combining this with the preceding
bound and \eqref{eq:energy-definitions}, we obtain
\begin{equation}\label{E125}
 G_{1,s}(\overline{f})\leq[f]_\alpha s^\alpha\mathbf1_K\quad\text{and}\quad
 0\leq e_s(f)\leq[f]_\alpha^p s^{p\alpha-d}\mathbf1_K
             \leq[f]_\alpha^p\mathbf1_K.
\end{equation}
Here $p\alpha-d\geq0$. Since $\supp f$ is compact,
$K$ is bounded and $|K|<\infty$, so
$$
 \norm{[f]_\alpha^p\mathbf1_K}_{L^1(\R^d,dv)}
 =[f]_\alpha^p|K|<\infty.
$$
The singleton family $\{[f]_\alpha^p\mathbf1_K\}$ satisfies
\eqref{eq:intrinsic-three} and hence belongs to $\mathcal W$.
Using the domination in \eqref{E125}, we apply
Lemma \ref{L9} with $N=1$ and $C=1$ to obtain
$\{e_s(f):0<s<1\}\in\mathcal W$.
Theorem \ref{thm:weakcriterion} now implies $f\in\mathcal A_p$.

Suppose $\alpha>d/p$, and let $C_0>0$ be the constant
in \eqref{E70}. For $0<r<1/4$, that estimate and
\eqref{E125} with $s=4r$ give
\begin{align}\label{E157}
 m_{p,r^2}(f)
 &\leq C_0r^{-d/p}\norm{G_{1,4r}(\overline{f})}_{L^p(\R^d,dv)}\leq C_0 4^\alpha|K|^{1/p}[f]_\alpha r^{\alpha-d/p}.
 \end{align}
For $r\geq1/4$, let $C_1>0$ be the Hankel constant in
\eqref{eq:multiplier}. The boundedness and compact support of $f$ imply that
$\overline{f}\in L^p(\R^d,dv)\cap L^\infty(\C^n)$.
For each $g\in L^\infty(\C^n)$, the factorization
\eqref{eq:factorization} and the ideal inequality \eqref{E7} give
\begin{align*}
 \norm{D_{r^2}(f,g)}_{\mathcal S_p(\mathcal F_{r^2}^2)}
 &\leq
 \norm{H_{\overline{f}}^{(r^2)}}_{\mathcal S_p(\mathcal F_{r^2}^2,L_{r^2}^2\ominus\mathcal F_{r^2}^2)}
 \norm{H_g^{(r^2)}}\leq C_1r^{-d/p}\norm{f}_{L^p(\R^d,dv)}
       \norm{g}_{L^\infty(\C^n)}.
\end{align*}
The second inequality uses \eqref{eq:multiplier} for
$\overline{f}$ and the Hankel norm bound in \eqref{E1}.
Taking the supremum in \eqref{E5} and using
$1\leq4^\alpha r^\alpha$ for $r\geq1/4$, we obtain
\begin{equation}\label{E158}
 m_{p,r^2}(f)
 \leq C_1r^{-d/p}\norm{f}_{L^p(\R^d,dv)}
 \leq C_1 4^\alpha\norm{f}_{L^p(\R^d,dv)}r^{\alpha-d/p}.
\end{equation}
Since both $C_0$ and $C_1$ depend only on $n,m,M,p$ and
$\norm{f}_{L^p(\R^d,dv)}
\leq|K|^{1/p}\norm{f}_{L^\infty(\C^n)}$, we may take
$$
 C_f=4^\alpha\max\{1,C_0|K|^{1/p}[f]_\alpha,
                         C_1\norm{f}_{L^p(\R^d,dv)}\}>0.
$$
Combining \eqref{E157} and \eqref{E158} with $r=\sqrt t$ gives
$m_{p,t}(f)\leq C_ft^{\alpha/2-n/p}$ for every $t>0$,
with the stated dependence of $C_f$.
The exponent is positive, so the bound tends to zero
as $t\rightarrow0^{+}$. This proves uniform convergence and completes the proof.
\end{proof}

\section{Examples and concentration}\label{sec:examples}
For $2\leq p<\infty$, Theorem \ref{thm:weakcriterion}
characterizes convergence as $t\rightarrow0^{+}$ for every
fixed bounded second symbol by relative weak compactness of
the normalized oscillation densities. Corollary \ref{cor:uniform-real}
characterizes uniform convergence over the $L^\infty(\C^n)$
unit ball by their $L^1(\R^d,dv)$ norms tending to zero as
$r\rightarrow0^{+}$.
The examples examine the roles of these conditions.
A H\"older symbol shows that the gradient condition is not
necessary when $p>d$, and a Gaussian calculation distinguishes
the conjugate test from testing every bounded second symbol
when $n<p<d$. For $p>d$, two concentration examples show
that bounded density mass is insufficient, first for a
discontinuous symbol and then for the continuous symbol in
Proposition \ref{prop:continuous-concentration}, for which
convergence to zero holds for the conjugate test.

\subsection{A H\"older symbol without the gradient condition}
For $p>d$, the H\"older condition in Theorem \ref{thm:holder}
need not imply $|\nabla f|\in L^p(\R^d,dv)$.

\begin{example}\label{ex:lacunary}
For every $d<p<\infty$, there is a compactly supported symbol
$f\in\mathcal A_p$ whose distributional derivative
$\partial_{x_1}f$ cannot be represented by a function in
$L^p(\R^d,dv)$.
\end{example}
\begin{proof}
Put $\alpha=d/p\in(0,1)$ and define the $2\pi$-periodic function
\begin{equation}\label{E37}
 u(s)=\sum_{j=1}^\infty2^{-j\alpha}\cos(2^j s),
 \quad s\in\R.
\end{equation}
The series is absolutely and uniformly convergent. We verify its
H\"older regularity directly. For $\tau\in\R$ with $0<|\tau|\leq\tfrac12$, choose
$J\geq1$ so that $2^{-J-1}<|\tau|\leq2^{-J}$. The bounds
\mbox{$|\cos(s+\tau)-\cos s|\leq\min\{|\tau|,2\}$} give
\begin{align*}
 |u(s+\tau)-u(s)|
 &\leq |\tau|\sum_{j=1}^J2^{j(1-\alpha)}
                 +2\sum_{j>J}2^{-j\alpha}\leq C_\alpha\bigl(|\tau|2^{J(1-\alpha)}
                                      +2^{-J\alpha}\bigr)\leq C_\alpha'|\tau|^\alpha.
 \end{align*}
Note that the constants $C_\alpha,C_\alpha'>0$ depend only on $\alpha$.
For $|\tau|>\tfrac12$, boundedness gives
$$
 |u(s+\tau)-u(s)|
 \leq2\norm{u}_{L^\infty(\R)}
 \leq2^{1+\alpha}\norm{u}_{L^\infty(\R)}|\tau|^\alpha.
$$
Increasing $C_\alpha'$ if necessary gives
\begin{equation}\label{E174}
 |u(s+\tau)-u(s)|\leq C_\alpha'|\tau|^\alpha
\end{equation}
for all $s,\tau\in\R$.

Choose $\chi\in C_c^\infty(\R^d)$ such that
\begin{equation}\label{E175}
 0\leq\chi\leq1,\quad
 \chi=1\text{ on }[-3\pi,3\pi]^d,\quad
 \supp\chi\subset(-4\pi,4\pi)^d.
\end{equation}
This choice gives $\norm{\chi}_{L^\infty(\R^d)}=1$ and
$\norm{\nabla\chi}_{L^\infty(\R^d)}<\infty$.
Set $f(x)=\chi(x)u(x_1)$ for
$x=(x_1,\ldots,x_d)\in\R^d$.
For $x,y\in\R^d$, the fundamental theorem of calculus gives
\begin{align*}
 |\chi(x)-\chi(y)|
 &\leq |x-y|\int_0^1|\nabla\chi(y+t(x-y))|dt\leq\norm{\nabla\chi}_{L^\infty(\R^d)}|x-y|.
\end{align*}
The bound $0\leq\chi\leq1$ also gives
$|\chi(x)-\chi(y)|\leq1$.
With
$
 C_\chi=\max\{\norm{\nabla\chi}_{L^\infty(\R^d)},1\}>0
$,
these bounds imply
$|\chi(x)-\chi(y)|\leq C_\chi\min\{|x-y|,1\}$.
Using this estimate, \eqref{E174}, and
$\min\{|x-y|,1\}\leq|x-y|^\alpha$, we obtain
\begin{align*}
 |f(x)-f(y)|
 &\leq\norm{\chi}_{L^\infty(\R^d)}|u(x_1)-u(y_1)|
       +\norm{u}_{L^\infty(\R)}|\chi(x)-\chi(y)|\\
 &\leq\bigl(\norm{\chi}_{L^\infty(\R^d)}C_\alpha'
       +C_\chi\norm{u}_{L^\infty(\R)}\bigr)|x-y|^\alpha.
 \end{align*}
Then $f\in C_c^{0,\alpha}(\R^d)$, and Theorem
\ref{thm:holder} gives $f\in\mathcal A_p$.

Recall from \eqref{E160} that an $L^p$ representative of
the distributional derivative $\partial_{x_1}f$ is the weak
derivative of $f$ in $x_1$. Suppose, to obtain a contradiction,
that $f$ has a weak partial derivative with respect to $x_1$
and that $\partial_{x_1}f\in L^p(\R^d,dv)$.
Let $ds$ and $dy$ denote Lebesgue measure on $\R$ and
$\R^{d-1}$, respectively. To recover a weak derivative of $u$
on $(-2\pi,2\pi)$, take $\theta\in C_c^\infty((-\pi,\pi)^{d-1})$
with $\int_{\R^{d-1}}\theta(y)dy=1$.
By Fubini's theorem, we define $g$ almost everywhere on
$(-2\pi,2\pi)$ by
\begin{equation}\label{E176}
 g(s)=\int_{\R^{d-1}}\partial_{x_1}f(s,y)\theta(y)dy,
 \quad |s|<2\pi.
\end{equation}
With $p'=p/(p-1)$, H\"older's inequality in $y$ gives
\begin{equation}\label{E161}
 \norm{g}_{L^p((-2\pi,2\pi),ds)}
 \leq\norm{\theta}_{L^{p'}(\R^{d-1},dy)}
     \norm{\partial_{x_1}f}_{L^p((-2\pi,2\pi)\times\supp\theta,dv)}
 <\infty.
\end{equation}

To verify that $g$ is the weak derivative of $u$ on
$(-2\pi,2\pi)$, fix $\psi\in C_c^\infty(-2\pi,2\pi)$ and use
its zero extension to $\R$ defined by \eqref{E93}.
The test function
$(s,y)\mapsto\psi(s)\theta(y)$ belongs to $C_c^\infty(\R^d)$
and satisfies
$\partial_{x_1}(\psi(s)\theta(y))=\psi'(s)\theta(y)$.
Since $f$ and its weak derivative $\partial_{x_1}f$ are
locally integrable, Fubini's theorem applies to the compactly
supported integrands below. Using \eqref{E176},
we apply \eqref{E3} with this test function to obtain
\begin{align*}
 \int_{-2\pi}^{2\pi}g(s)\psi(s)ds
 &=\int_{-2\pi}^{2\pi}\int_{\R^{d-1}}
       \partial_{x_1}f(s,y)\psi(s)\theta(y)dyds\\
 &=-\int_{-2\pi}^{2\pi}\int_{\R^{d-1}}
       f(s,y)\psi'(s)\theta(y)dyds\\
 &=-\int_{-2\pi}^{2\pi}u(s)\psi'(s)ds,
\end{align*}
where we use $f(s,y)=u(s)$ on the support of
$\psi'(s)\theta(y)$, since \eqref{E175} gives $\chi=1$ there, and
$\int_{\R^{d-1}}\theta(y)dy=1$ in the final step.
Since $\psi$ is arbitrary, \eqref{E3} identifies $g$ as the
weak derivative of $u$ on $(-2\pi,2\pi)$.

The boundedness of $u$ and \eqref{E161} give
$u,g\in L^p((-2\pi,2\pi),ds)$.
Since $u$ is continuous,
\cite[Theorem 8.2, p. 204]{Brezis2011} gives
$$
 u(s)-u(0)=\int_0^s g(\xi)d\xi,\quad |s|<2\pi.
$$
To deduce absolute continuity on $[-\pi,\pi]$ from this
representation, we use \eqref{E161} and H\"older's inequality
to obtain
\begin{equation}\label{E167}
 g\in L^1((-\pi,\pi),ds).
\end{equation}
The fundamental theorem for Lebesgue integrals
\cite[Theorem 3.35, p. 106]{Folland1999} now shows that $u$
is absolutely continuous on $[-\pi,\pi]$, with classical
derivative $u'(s)=g(s)$ for almost every $s\in(-\pi,\pi)$.

For $h\in L^1((-\pi,\pi),ds)$, define
\begin{equation}\label{E177}
 \widehat h(k)=\frac1{2\pi}\int_{-\pi}^{\pi}h(s)e^{-iks}ds,
 \quad k\in\Z.
\end{equation}
For $I=(-\pi,\pi)$, we write
$W^{1,p}(I)=\{v\in L^p(I,ds):v'\in L^p(I,ds)\}$,
where $v'$ denotes the weak derivative in the sense of
\eqref{E3}, with integrals over $I$ and test functions in
$C_c^\infty(I)$.
The boundedness of $u$, \eqref{E161}, and
$e^{-iks}\in C^\infty(\R)$ give
$u,\Re(e^{-iks}),\Im(e^{-iks})\in W^{1,p}(I)$.
We apply the integration by parts formula
\cite[Corollary 8.10, p. 215]{Brezis2011} on $\overline I$
to $u$ and each of the real and imaginary parts of $e^{-iks}$.
Combining the two identities by linearity gives
\begin{equation}\label{E162}
 \begin{aligned}
 \widehat{u'}(k)
 &=\frac1{2\pi}\bigl[u(s)e^{-iks}\bigr]_{-\pi}^{\pi}
   +\frac{ik}{2\pi}\int_{-\pi}^{\pi}u(s)e^{-iks}ds=ik\widehat u(k),\quad k\in\Z.
 \end{aligned}
\end{equation}
The boundary term vanishes in the last equality because
$u(-\pi)=u(\pi)$ and $e^{-ik\pi}=e^{ik\pi}$ for $k\in\Z$.
The uniform convergence in \eqref{E37} permits termwise
integration. For $j\geq1$, expanding each cosine gives
\begin{align*}
 \widehat u(2^j)
 &=\sum_{\ell=1}^{\infty}\frac{2^{-\ell\alpha}}{4\pi}
   \int_{-\pi}^{\pi}\bigl(e^{i(2^\ell-2^j)s}
                         +e^{-i(2^\ell+2^j)s}\bigr)ds.
\end{align*}
Using the orthogonality of integer frequencies on $[-\pi,\pi]$
and \eqref{E162} with $k=2^j$, we obtain
$$
 \widehat u(2^j)=2^{-j\alpha-1}\quad\text{and}\quad
 |\widehat{u'}(2^j)|=2^{j(1-\alpha)-1}.
$$
Since $u'=g$ almost everywhere on $(-\pi,\pi)$,
\eqref{E167} gives $u'\in L^1((-\pi,\pi),ds)$.
By \eqref{E177},
$$
 2^{j(1-\alpha)-1}
 =|\widehat{u'}(2^j)|
 \leq\frac1{2\pi}\norm{u'}_{L^1((-\pi,\pi),ds)}<\infty
 \quad(j\geq1),
$$
which is impossible as $j\rightarrow\infty$ because $\alpha<1$.
This rules out an $L^p(\R^d,dv)$ representative of
$\partial_{x_1}f$ and completes the proof.
\end{proof}

\subsection{A Gaussian test in every Schatten class}\label{sec:gaussian}
The following calculation compares the test $g=\overline{f}$ with
the condition for every bounded second symbol. In this subsection, take
$$
 \varphi(z)=\tfrac12|z|^2,\quad f(z)=e^{-|z|^2}.
$$
The calculation uses nonnegative scalar series and is valid
for every $p>0$, including $0<p<1$.

Write $\N_0=\{0,1,2,\ldots\}$. For a multiindex
$\beta=(\beta_1,\ldots,\beta_n)\in\N_0^n$, set
$$
 |\beta|=\sum_{j=1}^n\beta_j,\quad
 \beta!=\prod_{j=1}^n\beta_j!,\quad
 z^\beta=\prod_{j=1}^nz_j^{\beta_j}.
$$
The monomial basis in \cite[Section 2.1, (2)]{KellerLuef2021},
with complex dimension $n$ and $\hbar=t/2$, is orthonormal for
the measure $e^{-|z|^2/t}dv$.
Multiplying its elements by $t^{n/2}$ accounts for the factor
$t^{-n}$ in \eqref{E82} and gives an orthonormal basis
$(e_{\beta,t})_{\beta\in\N_0^n}$ of $\mathcal F_t^2$, with
monomial norms and basis elements given by
$$
 \norm{z^\beta}_{L_t^2}^2=\pi^n\beta!t^{|\beta|}\quad\text{and}\quad
 e_{\beta,t}(z)=\frac{z^\beta}
 {\pi^{n/2}\sqrt{\beta!}t^{|\beta|/2}}.
$$
For $c>0$, the radial symbol $e^{-c|z|^2}$ gives a diagonal
Toeplitz operator in this basis. Indeed, angular integration
makes the off-diagonal matrix entries zero, and the diagonal entry
is
\begin{align*}
 \langle T_{e^{-c|z|^2}}^{(t)}e_{\beta,t},e_{\beta,t}\rangle_t
 &=\frac{t^{-n}}{\pi^n\beta!t^{|\beta|}}
   \int_{\C^n}|z^\beta|^2e^{-(c+1/t)|z|^2} dv(z)\\
 &=\frac{t^{-n}}{\pi^n\beta!t^{|\beta|}}
   \pi^n\beta!(c+1/t)^{-(|\beta|+n)}
 =(1+ct)^{-(|\beta|+n)}.
 \end{align*}
The second equality uses the monomial norm formula with $t$
replaced by $(c+1/t)^{-1}$.
For $k=|\beta|+n$, \eqref{E2} and the diagonal formula
with $c=1,2$ give
\begin{align*}
 D_t(f,f)e_{\beta,t}
 &=\bigl((T_f^{(t)})^2-T_{f^2}^{(t)}\bigr)e_{\beta,t}=\bigl((1+t)^{-2k}-(1+2t)^{-k}\bigr)e_{\beta,t}.
\end{align*}
Since $(1+t)^2>1+2t$ for $t>0$, we have $D_t(f,f)\leq0$.
Its eigenvalue at $e_{\beta,t}$ is $-\lambda_k(t)$, whose
absolute value is
\begin{equation}\label{E27}
 \lambda_k(t)=(1+2t)^{-k}-(1+t)^{-2k}>0,
 \quad k=n,n+1,\ldots.
\end{equation}
The number of multiindices with $|\beta|=k-n$ is
$\binom{k-1}{n-1}$. For fixed $t>0$, the entries tend to zero
as $|\beta|\rightarrow\infty$, so $D_t(f,f)$ is a compact diagonal
operator. Its singular values are the numbers $\lambda_k(t)$,
with these multiplicities, and their order does not affect
the nonnegative sum. The formula for diagonal operators
\cite[Section 1.6, Exercise 6]{Zhu1990} gives
\begin{equation}\label{eq:gaussian-series}
 \norm{D_t(f,f)}_{\mathcal S_p}^p
 =\sum_{k=n}^\infty\binom{k-1}{n-1}\lambda_k^p(t).
\end{equation}
For fixed $t>0$, the bounds
$\binom{k-1}{n-1}\leq k^{n-1}/(n-1)!$ and
$0\leq\lambda_k(t)\leq(1+2t)^{-k}$ give
$$
 \sum_{k=n}^{\infty}\binom{k-1}{n-1}\lambda_k^p(t)
 \leq\frac1{(n-1)!}
       \sum_{k=n}^{\infty}k^{n-1}(1+2t)^{-kp}<\infty,
 \quad p>0.
$$
Together with \eqref{eq:gaussian-series}, this bound shows that
$D_t(f,f)$ belongs to $\mathcal S_p(\cF)$ for every $p>0$
before any limit is taken.

The series in \eqref{eq:gaussian-series} gives the leading term
of the Schatten norm as $t$ tends to zero.
We use the Gamma function
$\Gamma(s)=\int_0^\infty u^{s-1}e^{-u}du$ for $s>0$,
with $\Gamma(n)=(n-1)!$ for positive integers $n$
\cite[(5.2.1) and (5.4.1)]{OlverEtAl2010}.

\begin{proposition}\label{prop:gaussian}
For every $p>0$, the Gaussian symbol $f(z)=e^{-|z|^2}$ satisfies
\begin{equation}\label{eq:gaussian-asymptotic}
 \norm{D_t(f,f)}_{\mathcal S_p}^p
 =\frac{\Gamma(n+p)}{\Gamma(n)(2p)^{n+p}}
       t^{p-n}\bigl(1+o(1)\bigr)
 \qquad(t\rightarrow0^{+}).
\end{equation}
In particular, $\norm{D_t(f,f)}_{\mathcal S_p}$ tends to zero
if and only if $p>n$.
\end{proposition}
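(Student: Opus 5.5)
We compute the asymptotics of the series \eqref{eq:gaussian-series} directly, treating it as a Riemann sum at scale $kt\asymp1$. The first step is a uniform description of $\lambda_k(t)$ in \eqref{E27}. Write $x=kt$. Then
$$
 (1+2t)^{-k}=e^{-k\log(1+2t)}=e^{-2x+2kt^2+O(kt^3)},\qquad
 (1+t)^{-2k}=e^{-2x+kt^2+O(kt^3)}.
$$
Taking the difference gives
$$
 \lambda_k(t)=e^{-2x}\bigl(e^{2xt+O(xt^2)}-e^{xt+O(xt^2)}\bigr)=e^{-2x}\,xt\,\bigl(1+O(xt)+O(t)\bigr).
$$
This holds uniformly for $x$ in any range with $xt\to0$. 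So on the range $k\le A/t$ we have
$$
 \lambda_k^p(t)=t^p x^p e^{-2px}\bigl(1+o(1)\bigr),
$$
uniformly as $t\to0^+$, for each fixed $A$.

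Next we split the sum at $k=A/t$ and handle the two pieces separately.

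\emph{Main range $n\le k\le A/t$.} Here $\binom{k-1}{n-1}=\frac{k^{n-1}}{(n-1)!}(1+O(1/k))$. The terms with $k\le \delta/t$ contribute at most
$$
 C\,t^p\sum_{k\le\delta/t}k^{n-1}(kt)^p\lesssim t^{p-n}\delta^{n+p},
$$
which is negligible after $\delta\to0$. On $\delta/t\le k\le A/t$ the relative error $O(1/k)$ is $o(1)$. The remaining sum is
$$
 \frac{t^{p}}{(n-1)!}\sum_k k^{n-1}(kt)^pe^{-2pkt}
 = \frac{t^{p-n}}{(n-1)!}\sum_k t\,(kt)^{n-1+p}e^{-2pkt},
$$
a Riemann sum with mesh $t$. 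It converges, after multiplication by $t^{n-p}$, to $\frac1{(n-1)!}\int_\delta^A x^{n+p-1}e^{-2px}\,dx$.

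\emph{Tail $k>A/t$.} We use only the crude bound $\lambda_k\le(1+2t)^{-k}\le e^{-ck t}$ for $t\le1$, which gives a contribution
$$
 \lesssim \sum_{k>A/t}k^{n-1}e^{-ckt}\lesssim t^{-n}\int_A^\infty x^{n-1}e^{-cx}\,dx.
$$
This is too weak: it is $t^{-n}$ rather than $t^{p-n}$. So we need a better tail bound, which we get from the mean value theorem. Since
$$
 \lambda_k=(1+2t)^{-k}\bigl(1-((1+t)^2/(1+2t))^{-k}\bigr)\le(1+2t)^{-k}\,k\log\frac{(1+t)^2}{1+2t}\le(1+2t)^{-k}kt^2,
$$
we get $\lambda_k^p\le t^p x^pe^{-2p x\log(1+2t)/(2t)}$. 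For $t$ small the exponent satisfies $\log(1+2t)/(2t)\ge\tfrac12$, so $\lambda_k^p\le t^px^pe^{-px}$. The tail is then $\lesssim t^{p-n}\int_A^\infty x^{n+p-1}e^{-px}\,dx$, which tends to $0$ as $A\to\infty$, uniformly in small $t$.

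Letting $\delta\to0$ and $A\to\infty$, and computing
$$
 \int_0^\infty x^{n+p-1}e^{-2px}\,dx=\Gamma(n+p)(2p)^{-n-p},
$$
yields \eqref{eq:gaussian-asymptotic}. The dichotomy follows immediately, since $t^{p-n}\to0$ if and only if $p>n$. The step needing care is the tail estimate; as shown, the uniform bound $\lambda_k\le kt^2(1+2t)^{-k}$ takes care of it. Together with the small-$k$ region, where the same bound $\lambda_k\le kt^2$ gives $\lambda_k^p\le t^p(kt)^p$, this makes the whole argument dominated-convergence in the variable $x=kt$.
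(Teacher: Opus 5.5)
Your proposal is correct and follows essentially the same route as the paper: a Riemann sum in $x=kt$, with uniform convergence of $\lambda_k(t)/t$ and of the multiplicity factor on compact $x$-ranges, together with the domination $\lambda_k(t)\le kt^2(1+2t)^{-k}$ (the paper's \eqref{eq:gaussian-domination}, also obtained from $1-e^{-y}\le y$) to control the regions $x\to0$ and $x\to\infty$. The only cosmetic difference is that you obtain the compact-range asymptotics from explicit Taylor expansions of the logarithms, whereas the paper uses the factorization $\lambda_k=e^{-ka_t}(1-e^{-kc_t})$ with $a_t/t\to2$ and $c_t/t^2\to1$.
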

\begin{proof}
Throughout this proof, $n,p$ are fixed and limits without an
indicated variable are taken as $t\rightarrow0^{+}$.
We normalize \eqref{eq:gaussian-series} to obtain a Riemann sum
with spacing $t$. For this, set
$$
 a_t=\log(1+2t)\quad\text{and}\quad
 c_t=2\log(1+t)-\log(1+2t).
$$
Equation \eqref{E27} gives
\begin{equation}\label{E178}
 \lambda_k(t)=e^{-ka_t}\bigl(1-e^{-kc_t}\bigr).
\end{equation}
For grid points $x=kt$ in a fixed interval
$[a,R]\subset(0,\infty)$, we have
$k\geq a/t\rightarrow\infty$ as $t\rightarrow0^+$.
Since $c_t=\log(1+t^2/(1+2t))>0$ and
$\log(1+s)/s\rightarrow1$ as $s\rightarrow0$, we have
$a_t/t\rightarrow2$ and $c_t/t^2\rightarrow1$.
Substituting $x=kt$ in \eqref{E178} and using these limits, we obtain
\begin{equation}\label{E96}
 \frac{\lambda_k(t)}{t}
 =e^{-x a_t/t}
   x\frac{c_t}{t^2}
   \frac{1-e^{-x c_t/t}}{x c_t/t}
 \rightarrow xe^{-2x}\quad(t\rightarrow0^+)
\end{equation}
uniformly in the grid points $x=kt$. The quotient in the last
factor is assigned its continuous value one at zero.

To control the normalized series outside compact intervals,
choose constants $c,C>0$, independent of $t$ and $k$,
such that $a_t\geq ct$ and $0<c_t\leq Ct^2$ whenever $t>0$
is sufficiently small.
By \eqref{E178} and the scalar inequality
$1-e^{-y}\leq y$ for $y\geq0$,
\begin{equation}\label{eq:gaussian-domination}
 0\leq\frac{\lambda_k(t)}{t}
 \leq C(kt)e^{-c kt}.
\end{equation}
For the multiplicities in \eqref{eq:gaussian-series}, interpreting
the empty product as one when $n=1$, we have
\begin{equation}\label{E38}
 t^{n-1}\binom{k-1}{n-1}
 =\frac{\prod_{\ell=1}^{n-1}(kt-\ell t)}{(n-1)!}
 \leq\frac{(kt)^{n-1}}{\Gamma(n)}.
\end{equation}
On compact intervals of $x=kt$, this expression converges
uniformly to $x^{n-1}/\Gamma(n)$. Multiplying
\eqref{eq:gaussian-series} by $t^{n-p}$ gives
\begin{equation}\label{E39}
 t^{n-p}\norm{D_t(f,f)}_{\mathcal S_p}^p
 =t\sum_{k=n}^\infty
  \left[t^{n-1}\binom{k-1}{n-1}\right]
  \left[\frac{\lambda_k(t)}t\right]^p.
\end{equation}
For $k\geq n$ and $t>0$, put
\begin{equation}\label{E179}
 b_{k,t}=t^{n-1}\binom{k-1}{n-1}
             \left(\frac{\lambda_k(t)}t\right)^p,
 \quad h(x)=\frac{x^{n+p-1}e^{-2px}}{\Gamma(n)},\quad x>0.
\end{equation}
For fixed $0<a<R<\infty$, the uniform convergence in
\eqref{E96} and that of the multiplicity factor in \eqref{E38},
together with the uniform continuity of $y\mapsto y^p$ on bounded
nonnegative intervals, also for $0<p<1$, give
$$
 \sup_{\substack{k\geq n\\a\leq kt\leq R}}
       |b_{k,t}-h(kt)|\rightarrow0.
$$
Since $t\times\#\{k\geq n:a\leq kt\leq R\}\leq R-a+2t$, the
corresponding part of \eqref{E39} satisfies
\begin{equation}\label{E120}
\begin{aligned}
 \bigg|t\sum_{\substack{k\geq n\\a\leq kt\leq R}}b_{k,t}
       -\int_a^R h(x)dx\bigg|
 &\leq(R-a+2t)\sup_{\substack{k\geq n\\a\leq kt\leq R}}
                      |b_{k,t}-h(kt)|\\
 &\quad+\bigg|t\sum_{\substack{k\geq n\\a\leq kt\leq R}}h(kt)
              -\int_a^R h(x)dx\bigg|
 \rightarrow0.
 \end{aligned}
\end{equation}
Since $h$ is continuous on $[a,R]$, the last term tends to
zero by Riemann sum convergence as $t\rightarrow0^+$.

To pass from compact intervals to $(0,\infty)$, put
$\gamma=n+p-1>0$. By \eqref{eq:gaussian-domination}
and \eqref{E38},
\begin{equation}\label{E163}
 0\leq b_{k,t}\leq C_{n,p}(kt)^\gamma e^{-cpkt},
 \quad C_{n,p}=C^p/\Gamma(n)>0,
\end{equation}
where $C$ is the constant in \eqref{eq:gaussian-domination},
so $C_{n,p}$ is independent of $t,k,a,R$.
Since $\xi\mapsto\xi^\gamma$ is increasing on $(0,\infty)$,
each term with $kt<a$ satisfies
$t(kt)^\gamma\leq\int_{kt}^{(k+1)t}\xi^\gamma d\xi$. The intervals
are disjoint, and hence
$$
 t\sum_{n\leq k<a/t}(kt)^\gamma
 \leq\int_0^{a+t}\xi^\gamma d\xi
 =\frac{(a+t)^{\gamma+1}}{\gamma+1}.
$$
Combining this integral bound with \eqref{E163}, we obtain
\begin{equation}\label{E121}
 \limsup_{t\rightarrow0^{+}}
 t\sum_{\substack{k\geq n\\kt<a}}b_{k,t}
 \leq\frac{C_{n,p}a^{n+p}}{n+p}
 \longrightarrow0\quad(a\rightarrow0^{+}).
\end{equation}
Choose $R>1+\gamma/(cp)$, so $\xi\mapsto\xi^\gamma e^{-cp\xi}$ is decreasing
on $[R-1,\infty)$. If $0<t<1$ and $kt>R$, monotonicity gives
$$
 t(kt)^\gamma e^{-cpkt}
 \leq\int_{(k-1)t}^{kt}\xi^\gamma e^{-cp\xi}d\xi.
$$
Summing over these disjoint intervals in $[R-1,\infty)$
and using \eqref{E163}, we obtain
\begin{equation}\label{E122}
 t\sum_{\substack{k\geq n\\kt>R}}b_{k,t}
 \leq C_{n,p}\int_{R-1}^\infty \xi^\gamma e^{-cp\xi}d\xi
 \longrightarrow0\quad(R\rightarrow\infty),
\end{equation}
uniformly for all sufficiently small $0<t<1$.

Splitting the sum in \eqref{E39} into $kt<a$, $a\leq kt\leq R$,
and $kt>R$, we combine \eqref{E120}, \eqref{E121}, and \eqref{E122} to obtain
\begin{align*}
 \limsup_{t\rightarrow0^{+}}
 \left|t^{n-p}\norm{D_t(f,f)}_{\mathcal S_p}^p
       -\int_0^\infty h(\xi)d\xi\right|
 &\leq \frac{C_{n,p}a^{n+p}}{n+p}
       +C_{n,p}\int_{R-1}^\infty \xi^\gamma e^{-cp\xi}d\xi\\
 &\quad+\int_0^a h(\xi)d\xi+\int_R^\infty h(\xi)d\xi.
\end{align*}
Since the function $h$ in \eqref{E179} belongs to
$L^1((0,\infty),d\xi)$ by the Gamma integral,
the right side tends to zero as $a\rightarrow0^{+}$
and $R\rightarrow\infty$. We then have
\begin{align*}
 \lim_{t\rightarrow0^{+}}t^{n-p}
                   \norm{D_t(f,f)}_{\mathcal S_p}^p
 &=\frac1{\Gamma(n)}\int_0^\infty
                          \xi^{n+p-1}e^{-2p\xi}d\xi=\frac{\Gamma(n+p)}{\Gamma(n)(2p)^{n+p}}.
 \end{align*} Since the constant is strictly positive, taking
$p$-th roots shows that the norm diverges for $p<n$, has a
positive finite limit for $p=n$, and tends to zero for $p>n$. The proof is complete.
\end{proof}

When $n<p<2n$, Proposition \ref{prop:gaussian} gives
$\norm{D_t(f,f)}_{\mathcal S_p}\rightarrow0$, although
Theorem \ref{thm:main} (a) shows that $f\notin\mathcal A_p$.
By \eqref{eq:property}, the limit fails for some fixed bounded
second symbol.
The identity $|\nabla f(x)|=2|x|e^{-|x|^2}$ and polar coordinates
give $|\nabla f|\in L^q(\R^d,dv)$ for every $1\leq q<\infty$.
For $p=2n$, Theorem \ref{thm:main} (b) therefore gives
$f\in\mathcal A_p$. For $p>2n$, \eqref{eq:super-gradient} gives
convergence uniformly over
$g\in L^\infty(\C^n)$ with $\norm{g}_{L^\infty(\C^n)}\leq1$. At $p=2$, the constant in
\eqref{eq:gaussian-asymptotic} reduces to
$$
 \frac{\Gamma(n+2)}{\Gamma(n)4^{n+2}}
 =\frac{n(n+1)}{16\cdot4^n},
$$
which is the coefficient of $t^{2-n}$ in the asymptotic formula for
$\norm{D_t(f,f)}_{\mathcal S_2}^2$ as $t\rightarrow0^{+}$.

\subsection{Bounded mass and a discontinuous symbol}
We next examine whether bounded density mass suffices for
membership in $\mathcal A_p$. Lemma \ref{lem:baire} and the
lower bound in \eqref{eq:two-sided} show that uniform boundedness of
$r^{-d/p}\norm{G_{1,r}(\overline{f})}_{L^p(\R^d,dv)}$
for all sufficiently small $r>0$ is necessary.
At $p=d$, the corresponding upper bound and Corollary
\ref{cor:critical-bound} give sufficiency, but the following
discontinuous example shows that this fails for $p>d$.

\begin{example}\label{ex:concentration}
Suppose $p>d$. There are a real-valued bounded compactly supported
function $f$ and a number $t_0>0$ such that
$$
 \sup_{0<t<t_0}m_{p,t}(f)<\infty
 \quad\text{but}\quad
 \liminf_{t\rightarrow0^{+}}\norm{D_t(f,f)}_{\mathcal S_p}>0.
$$
This example works for every weight satisfying \eqref{eq:weight}.
\end{example}
\begin{proof}
In real coordinates, we define $u(0)=0$ and 
$u(x)=x_1/|x|$ for $x\neq0$.
Choose a real $\chi_0\in C_c^\infty(\R^d)$ equal to one near
zero, and set $f=\chi_0u$. The positive constants $C,C'$ in this
proof may depend on $n,m,M,p$ and the fixed cutoff $\chi_0$,
but are independent of $r$, and $C$ may increase between estimates.
The function $f$ is bounded and
compactly supported. We estimate its local analytic distance
to bound $m_{p,r^2}(f)$ by \eqref{eq:two-sided}.
On $\R^d\setminus\{0\}$, both $u$ and $f$ are smooth,
and the classical derivatives of $u$ satisfy
\begin{equation}\label{E180}
 \partial_{x_j}u(x)=\frac{\delta_{1j}}{|x|}
                     -\frac{x_1x_j}{|x|^3},
 \quad |\nabla u(x)|\leq\frac{C_d}{|x|},\quad x\neq0
\end{equation}
for $1\leq j\leq d$.
Here $C_d>0$ depends only on $d$.
Choose a fixed $R_0>3$ large enough that
$\supp f\subset B(0,R_0-1)$. For $x\neq0$, the product rule gives
$$
 \nabla f(x)=u(x)\nabla\chi_0(x)+\chi_0(x)\nabla u(x).
$$
Using $|u|\leq1$ and the bound for $|\nabla u|$ in
\eqref{E180}, we obtain
\begin{equation}\label{E181}
\begin{aligned}
 |\nabla f(x)|
 &\leq\norm{\nabla\chi_0}_{L^\infty(\R^d)}
       +\frac{C_d\norm{\chi_0}_{L^\infty(\R^d)}}{|x|}\leq\frac{C}{|x|},\quad 0<|x|<R_0+1,
\end{aligned}
\end{equation}
where we may take
$$
 C=(R_0+1)\norm{\nabla\chi_0}_{L^\infty(\R^d)}
   +C_d\norm{\chi_0}_{L^\infty(\R^d)}.
$$
Since $R_0$ and $\chi_0$ are fixed, this choice is independent of $r$.

Fix $0<r<1$. If $2r<|x|<R_0$ and $y\in B(x,r)$, then
$$
 |x+s(y-x)|\geq|x|-|y-x|>|x|-r>|x|/2,\qquad 0\leq s\leq1.
$$
The segment lies in $B(0,R_0+1)\setminus\{0\}$, so the gradient
estimate in \eqref{E181} and the fundamental theorem of calculus give
$$
 |f(y)-f(x)|
 \leq |y-x|\int_0^1|\nabla f(x+s(y-x))|ds
 \leq\frac{Cr}{|x|}.
$$
If $|x|\geq R_0$, then $B(x,r)\cap\supp f=\varnothing$
because $r<1$ and $\supp f\subset B(0,R_0-1)$.
Using the constant holomorphic function $w\mapsto f(x)$
on $B(x,r)$ when $2r<|x|<R_0$, and zero otherwise, we
obtain from \eqref{E106} that
\begin{equation}\label{E168}
 0\leq G_{1,r}(f)(x)\leq
 \begin{cases}
  \norm{f}_{L^\infty(\R^d)},&|x|\leq2r,\\
  Cr/|x|,&2r<|x|<R_0,\\
  0,&|x|\geq R_0,
 \end{cases}
 \quad 0<r<1.
\end{equation}
Integrating the $p$-th power of \eqref{E168} and using polar
coordinates, we obtain
\begin{equation}\label{E164}
\begin{aligned}
 \norm{G_{1,r}(f)}_{L^{p}(\R^d,dv)}^p
 &\leq\norm{f}_{L^\infty(\R^d)}^p|B(0,2r)|
       +C^p r^p\int_{\{2r<|x|<R_0\}}|x|^{-p}dv(x)\\
 &\leq C r^d+C r^p\int_{2r}^{R_0}s^{d-p-1}ds\\
 &=C r^d+\frac{C r^p}{p-d}
                         \bigl((2r)^{d-p}-R_0^{d-p}\bigr)\leq C' r^d.
 \end{aligned}
\end{equation}
The last inequality uses $p>d$ and
$r^p(2r)^{d-p}=2^{d-p}r^d$.
Let $R$ be the fixed radius in \eqref{eq:two-sided}.
For $0<r<1/R$, we combine the upper bound in \eqref{eq:two-sided}
with \eqref{E164} at radius $Rr$, using $\overline{f}=f$, to obtain
$$
 m_{p,r^2}(f)\leq c_pr^{-d/p}\norm{G_{1,Rr}(f)}_{L^{p}(\R^d,dv)}
 \leq c_p(C')^{1/p}R^{d/p},\quad 0<r<1/R.
$$
Here $c_p>0$ is the upper bound constant in \eqref{eq:two-sided},
and $C'>0$ is the constant in \eqref{E164}.
This proves the required uniform bound with $t_0=R^{-2}>0$.

To prove failure of the limit, put $f_r(\zeta)=f(r\zeta)$.
Homogeneity gives
$f_r(\zeta)=\chi_0(r\zeta)u(\zeta)\rightarrow u(\zeta)$
for every fixed $\zeta\neq0$ as $r\rightarrow0^{+}$.
Let $k_0=k_{1,0}$ be the normalized reproducing kernel at zero in
$\mathcal F_1^2$, as defined before \eqref{E10}.
Since $|u|\leq1$ and
$\sup_{r>0}\norm{f_r}_{L^\infty(\R^d)}
\leq\norm{\chi_0}_{L^\infty(\R^d)}$, we have
$$
 |(f_r-u)k_0|^2
 \leq\bigl(\norm{\chi_0}_{L^\infty(\R^d)}+1\bigr)^2|k_0|^2.
$$
The right side is integrable because
$\int_{\C^n}|k_0|^2d\mu_1=1$.
Moreover, the definition of the Hankel operators and
$\norm{I-P}\leq1$ give
$$
 \norm{H_{f_r}^{(1)}k_0-H_u^{(1)}k_0}_{L_1^2}
 =\norm{(I-P)((f_r-u)k_0)}_{L_1^2}
 \leq\norm{(f_r-u)k_0}_{L_1^2}.
$$
Since $f_r\to u$ almost everywhere, dominated convergence
and the preceding estimate yield
\begin{equation}\label{E95}
 f_r k_0\rightarrow uk_0\quad\text{in }L_1^2
 \quad\text{and}\quad
 H_{f_r}^{(1)}k_0\rightarrow H_u^{(1)}k_0
                                  \quad\text{in }L_1^2
\end{equation}
as $r\rightarrow0^+$.
To obtain a positive lower limit from this convergence, we
show that $H_u^{(1)}k_0\neq0$. The lower bound \eqref{E105}, with $r=1$ and $a=0$, shows that $k_0$ has no
zeros on $B(0,\rho)$. If $H_u^{(1)}k_0=0$, then
$$
 h=\frac{P(uk_0)}{k_0}\in\mathcal O(B(0,\rho))
 \quad\text{and}\quad h=u\quad\text{almost everywhere on }B(0,\rho).
$$
Since $u$ is real, continuity gives $\operatorname{Im}h=0$
on the ball. The Cauchy--Riemann equations imply $h\equiv c$
for some $c\in\R$, so $u=c$ almost everywhere on $B(0,\rho)$.
By continuity of $u(x)=x_1/|x|$ away from the origin,
this equality holds throughout $B(0,\rho)\setminus\{0\}$.
With $e_1=(1,0,\ldots,0)$, we obtain the contradiction
$$
 1=u\bigl((\rho/2)e_1\bigr)
  =c=u\bigl(-(\rho/2)e_1\bigr)=-1.
$$
So $H_u^{(1)}k_0\neq0$.

By Lemma \ref{lem:scaled-kernel} (b) and
\eqref{eq:factorization}, we obtain
\begin{align*}
 \norm{D_{r^2}(f,f)}_{\mathcal S_p(\mathcal F_{r^2}^2)}
 &=\norm{D_1(f_r,f_r)}_{\mathcal S_p(\mathcal F_{1}^2)}\geq\bigl|\langle D_1(f_r,f_r)k_0,k_0\rangle_1\bigr|\\
 &=\norm{H_{f_r}^{(1)}k_0}_{L_1^2}^2
 \rightarrow\norm{H_u^{(1)}k_0}_{L_1^2}^2>0
 \quad(r\rightarrow0^{+}).
 \end{align*}
The inequality follows from \eqref{E128} and
$\norm{k_0}_{L_1^2}=1$, while the limit follows from
\eqref{E95}.
This proves the asserted positive lower limit. The proof is complete.
\end{proof}

\subsection{A continuous concentrated symbol}\label{sec:continuous-concentration}
In Example \ref{ex:concentration}, boundedness of $m_{p,t}(f)$
does not ensure convergence even for the conjugate test.
We now prove Proposition \ref{prop:continuous-concentration},
where the symbol is continuous and convergence holds for the
conjugate test, but fails for another fixed bounded second symbol.

Fix $p>d=2n$.
Choose a real $\chi_0\in C_c^\infty(B(0,1))$ with
$0\leq\chi_0\leq1$ and $\chi_0=1$ on $B(0,1/2)$.
Define its zero extension, as in \eqref{E93}, by
$$
 \chi(x)=
 \begin{cases}
 \chi_0(x),&x\in B(0,1),\\
 0,&x\in\R^d\setminus B(0,1).
 \end{cases}
$$
Since $\supp\chi_0\Subset B(0,1)$, the zero-extension property
stated after \eqref{E93} gives $\chi\in C_c^\infty(\R^d)$.
Let $e_1=(1,0,\ldots,0)\in\R^d$ be the first real
coordinate vector. For each integer $k\geq1$, set
\begin{equation}\label{E28}
 \begin{gathered}
 c_k=2^{-k}e_1,\quad R_k=2^{-k-4},\quad
 r_k=2^{-k^2-10k-20},\quad
 a_k=(r_k/R_k)^{d/p}.
 \end{gathered}
\end{equation}
The relation $(R_k/r_k)^d a_k^p=1$ will compensate for the small
amplitudes when we sum the diagonal lower bounds over the lattice.
For $\ell>k$, the inequality
$|c_k-c_\ell|=2^{-k}-2^{-\ell}\geq2^{-k-1}>R_k+R_\ell$
shows that the balls $B(c_k,R_k)$ are disjoint.
The scale choices also give
\begin{equation}\label{E169}
 \frac{r_k}{R_k}=2^{-k^2-9k-16}\rightarrow0,\quad
 \frac{r_{k+1}}{r_k}=2^{-2k-11}\leq\frac14
 \quad(k\geq1).
\end{equation}

Writing $(c_k)_1=2^{-k}$ for the first real coordinate
of $c_k$, we define $g_k,f_k,f,g$ on $\R^d$ by
\begin{equation}\label{E40}
 \begin{gathered}
 g_k(x)=\chi((x-c_k)/R_k)\cos((x_1-(c_k)_1)/r_k),\\
 f_k(x)=a_kg_k(x),\quad
 f(x)=\sum_{k=1}^\infty f_k(x),\quad
 g(x)=\sum_{k=1}^\infty g_k(x).
 \end{gathered}
\end{equation}
The disjoint supports and $a_k\rightarrow0$ give
\begin{equation}\label{E165}
 \norm{g}_{L^\infty(\R^d)}\leq1\quad\text{and}\quad
 \sup_{x\in\R^d}\left|f(x)-\sum_{k=1}^Nf_k(x)\right|
 \leq\sup_{k>N}a_k\longrightarrow0\quad(N\rightarrow\infty).
\end{equation}
Then $f$ is the uniform limit of smooth functions supported
in $\overline{B(0,1)}$, so $f\in C_c(\R^d)$.

The following uniform bound for every tail
allows us to pass from the finite sums to the conjugate test for $f$.

\begin{lemma}\label{L7}
For the functions in \eqref{E40} and every weight satisfying
\eqref{eq:weight}, there is a constant $C_1>0$, depending only
on $n,m,M,p,\chi$, such that
\begin{equation}\label{eq:packet-tail-bound}
 \sup_{r>0}m_{p,r^2}\bigg(\sum_{k>N}f_k\bigg)\leq C_1
 \quad\text{for every integer }N\geq0.
\end{equation}
For the conjugate test, we have
\begin{equation}\label{E97}
 \norm{D_{r^2}(f,f)}_{\mathcal S_p}\rightarrow0
 \quad(r\rightarrow0^{+}),
\end{equation}
with the same $f$ for every such weight.
\end{lemma}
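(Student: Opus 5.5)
Both claims will come from the upper bound in \eqref{eq:two-sided}. Fix $N\ge0$ and set $F_N=\sum_{k>N}f_k$. Since $F_N$ is real, $\overline{F_N}=F_N$, and Theorem \ref{thm:uniform} gives
$$
 m_{p,r^2}(F_N)\le C r^{-d/p}\norm{G_{1,Rr}(F_N)}_{L^p(\R^d,dv)} .
$$
So \eqref{eq:packet-tail-bound} reduces to showing
$$
 \int_{\R^d}G_{1,s}^p(F_N)\,dv\le C s^d
$$
for all $s>0$, with $C$ independent of $s$ and $N$. The functions $f_k$ have disjoint supports in $B(c_k,R_k)$. For $s$ small compared with the gaps, a ball $B(x,s)$ meets at most one packet, so $G_{1,s}(F_N)(x)\le G_{1,s}(f_k)(x)$ for that $k$. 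For large $s$ I would use the trivial bound $G_{1,s}(F_N)\le M_{1,s}(F_N)\le \mathbf 1_{B(0,1)+B(0,s)}$ together with the total-mass bound below.

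For a single packet I would estimate $G_{1,s}(f_k)$ with the constant competitor $h=f_k(x)$, and split into three regimes.
\begin{itemize}
\item[(i)] If $s\le r_k$, then $|\nabla f_k|\lesssim a_k/r_k$ on the support, so $G_{1,s}(f_k)\lesssim a_k s/r_k$ on a set of measure $\lesssim R_k^d$. The contribution is $\lesssim a_k^p s^p r_k^{-p}R_k^d = s^p r_k^{d-p}$, using $a_k^pR_k^d=r_k^d$. This is $\le s^d$ because $s\le r_k$ and $p>d$.
\item[(ii)] If $r_k<s$, use the trivial bound $G_{1,s}(f_k)\le \norm{f_k}_\infty\le a_k$, supported on $B(c_k,R_k+s)$. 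The contribution is $\lesssim a_k^p(R_k+s)^d$. When $s\le R_k$ this is $a_k^pR_k^d=r_k^d\le s^d$. When $s>R_k$ it is $\lesssim a_k^ps^d\le s^d$.
\item[(iii)] Summing over $k$: when $s\le r_k$ the factor is $(s/r_k)^{p-d}$, and when $s>r_k$ the factor is $(r_k/s)^d$ (or smaller, $a_k^p$, once $s>R_k$). Because $r_{k+1}\le r_k/4$ by \eqref{E169}, both series are geometric around the index with $r_k\approx s$. They are therefore bounded by a constant times $s^d$, uniformly in $N$.
\end{itemize}
For balls meeting several packets, which happens only when $s\gtrsim$ the gap $2^{-k-1}\gg R_k$, the trivial bound $M_{1,s}\le\sum_k M_{1,s}(f_k)$ and the same counting still apply. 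This is the same computation as \eqref{E164}, now with the scales arranged lacunarily.

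For \eqref{E97}, I would split $f=S_N+F_N$ with $S_N=\sum_{k\le N}f_k\in C_c^\infty$. Bilinearity gives
$$
 D_t(f,f)=D_t(S_N,f)+D_t(F_N,S_N)+D_t(F_N,F_N).
$$
The first term tends to zero by Theorem \ref{thm:main}(c), since $|\nabla S_N|\in L^p$. For the second term, write it as $-(H^{(t)}_{F_N})^*H^{(t)}_{S_N}$. By \eqref{eq:factorization} and \eqref{E7} its norm is at most $\norm{H_{F_N}^{(t)}}_{\mathcal S_p}\norm{H_{S_N}^{(t)}}$. The Schatten factor is bounded by $C_1$ (obtained by testing with $g=F_N/\norm{F_N}_\infty$ and the lower bound already used, or directly from the $G_{1}$ estimate above via \cite{HuVirtanen2022}). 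The operator factor satisfies $\norm{H_{S_N}^{(t)}}\lesssim r[S_N]_1\to0$ by \eqref{E18}.

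The main obstacle is the third term, which I want to be small uniformly in $t$ as $N\to\infty$. Here I would use \eqref{eq:conjugate-identity}:
$$
 \norm{D_t(F_N,F_N)}_{\mathcal S_p}=\norm{H_{F_N}^{(t)}}_{\mathcal S_{2p}}^2 .
$$
Via \cite{HuVirtanen2022} this is comparable to $(r^{-d/(2p)}\norm{G_{2,r}(F_N)}_{L^{2p}})^2$. Redoing regimes (i) and (ii) with exponent $2p$, the mass identity $a_k^pR_k^d=r_k^d$ now yields an extra factor $a_k^p\le\sup_{k>N}a_k^p\to0$. Indeed, in regime (ii) for instance, $a_k^{2p}R_k^d=a_k^pr_k^d$, and regime (i) gains the same factor. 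This gives $\limsup_t\norm{D_t(F_N,F_N)}_{\mathcal S_p}\lesssim\sup_{k>N}a_k\to0$, which completes the argument.
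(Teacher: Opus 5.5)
Your argument is correct in substance, but it reaches \eqref{eq:packet-tail-bound} by a different route from the paper.

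The paper stays at the operator level. For each packet it combines \eqref{E21} with two estimates: the positive Toeplitz bound \eqref{eq:positive}, which only uses $\norm{f_k}_{L^p}\lesssim r_k^{d/p}$, and the gradient bound \eqref{eq:gradient-semicommutator}, which only uses $\norm{\nabla f_k}_{L^p}\lesssim r_k^{d/p-1}$. This gives $m_{p,r^2}(f_k)\lesssim\min\{(r_k/r)^{d/p},(r/r_k)^{1-d/p}\}$. The paper then sums the semi-commutators in $\mathcal S_p$ and identifies the sum by operator-norm convergence.

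Your regimes (i) and (ii) are the function-side counterparts of these two estimates. You feed them into the upper bound of Theorem \ref{thm:uniform}, which is a heavier tool resting on the analytic decomposition. In return you treat the whole tail $F_N$ at once and need no series identification.

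Two details in your summation need repair.
\begin{enumerate}
\item For $s>R_k$ the factor $a_k^p=(r_k/R_k)^d$ is larger than $(r_k/s)^d$, not smaller. This is harmless, since $\sum_k a_k^p<\infty$, or since $M_{1,s}(f_k)\le a_k(R_k/s)^d$ in that range.
\item The sets $B(c_k,R_k+s)$ overlap once $s$ exceeds the gaps, so you cannot simply add $p$-th powers. Bound $G_{1,s}(F_N)$ pointwise by the sum of your per-packet bounds (all your competitors are constants, so they add), then apply Minkowski and sum $p$-th roots. The roots $(s/r_k)^{1-d/p}$, $(r_k/s)^{d/p}$ and $a_k$ are still summable uniformly in $s$ and $N$.
\end{enumerate}

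For \eqref{E97} the paper's argument is shorter. It splits $H_f=H_{h_N}+H_{f_{>N}}$ in $\mathcal S_{2p}$ and reads off $\norm{H_{f_{>N}}}_{\mathcal S_{2p}}^2=\norm{D_{r^2}(f_{>N},f_{>N})}_{\mathcal S_p}\le C_1\norm{f_{>N}}_{L^\infty}$ directly from the tail bound. Your third term is exactly this quantity. Your detour through $G_{2,r}$ and the Hu--Virtanen upper bound is therefore correct but unnecessary.

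In your second term, testing with $g=F_N/\norm{F_N}_{L^\infty}$ controls $\norm{H_{F_N}^{(t)}}_{\mathcal S_{2p}}$, not the $\mathcal S_p$ norm you use. Either of two fixes works:
\begin{enumerate}
\item Note that your pointwise bounds are sup bounds, so they hold verbatim for $G_{2,s}$. This gives $\sup_t\norm{H_{F_N}^{(t)}}_{\mathcal S_p}<\infty$.
\item Use the H\"older inequality in \eqref{E7} together with $\norm{H_{S_N}^{(t)}}_{\mathcal S_{2p}}\lesssim t^{1/2-n/(2p)}\norm{\nabla S_N}_{L^{2p}}\to0$.
\end{enumerate}
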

\begin{proof}
We first bound the testing norm of each summand $f_k$.
We choose
$$C_\chi=\max\{\norm{\chi}_{L^p(\R^d,dv)},
\norm{\nabla\chi}_{L^p(\R^d,dv)}\}>0,$$
which depends only on $d,p,\chi$.
The change of variables $x=c_k+R_ky$ and \eqref{E28} give
\begin{equation}\label{E170}
 \norm{f_k}_{L^p(\R^d,dv)}
 \leq C_\chi a_kR_k^{d/p}=C_\chi r_k^{d/p}.
\end{equation}
Taking $p$th roots in \eqref{eq:positive} with $q=p$ and
$u=|f_k|$ gives the Schatten bound for $T_{|f_k|}^{(r^2)}$
needed in \eqref{E21}. Taking the supremum in \eqref{E5}
and using \eqref{E170}, we obtain
\begin{align}\label{E172}
 m_{p,r^2}(f_k)\leq2\norm{T_{|f_k|}^{(r^2)}}_{\mathcal S_p}\leq c_{p,0}r^{-d/p}\norm{f_k}_{L^p(\R^d,dv)}
 \leq c_{p,0}C_\chi(r_k/r)^{d/p}
\end{align}
for every $r>0$, where $c_{p,0}>0$ is twice the $p$th root
of the constant in \eqref{eq:positive} with $q=p$.

For the gradient bound, put
$\vartheta_k(x)=(x_1-(c_k)_1)/r_k$. The product and chain
rules applied to \eqref{E40} give
$$
 \nabla f_k(x)
 =\frac{a_k}{R_k}(\nabla\chi)((x-c_k)/R_k)\cos\vartheta_k(x)
 -\frac{a_k}{r_k}\chi((x-c_k)/R_k)\sin\vartheta_k(x)e_1.
$$
Using this formula, the same change of variables, and the
triangle inequality, we obtain
\begin{equation}\label{E171}
 \begin{aligned}
 \norm{\nabla f_k}_{L^p(\R^d,dv)}
 &\leq a_kR_k^{d/p}\bigl(R_k^{-1}\norm{\nabla\chi}_{L^p(\R^d,dv)}
                       +r_k^{-1}\norm{\chi}_{L^p(\R^d,dv)}\bigr)\\
 &\leq C_\chi a_kR_k^{d/p}(R_k^{-1}+r_k^{-1})\leq2C_\chi r_k^{d/p-1}.
 \end{aligned}
\end{equation}
Here $a_kR_k^{d/p}=r_k^{d/p}$ follows from \eqref{E28},
and $r_k<R_k$ follows from \eqref{E169}.
Since $f_k$ is smooth and bounded, \eqref{E171} allows us
to use \eqref{eq:gradient-semicommutator}. Taking the supremum
in \eqref{E5}, we obtain
\begin{align}\label{E173}
 m_{p,r^2}(f_k)
 &\leq c_{p,1}r^{1-d/p}\norm{\nabla f_k}_{L^p(\R^d,dv)}\leq2c_{p,1}C_\chi(r/r_k)^{1-d/p}
\end{align}
for every $r>0$, where $c_{p,1}>0$ is the constant in
\eqref{eq:gradient-semicommutator}.
Both $c_{p,0}$ and $c_{p,1}$ depend only on $n,m,M,p$.
Combining \eqref{E172} and \eqref{E173}, with
$C=C_\chi\max\{c_{p,0},2c_{p,1}\}$, we obtain
\begin{equation}\label{eq:packet-bound}
 m_{p,r^2}(f_k)
 \leq C\min\{(r_k/r)^{d/p},(r/r_k)^{1-d/p}\}.
\end{equation}
Note that this constant is independent of $r,k$ and the bounded
second symbol used in $m_{p,r^2}$.

To sum \eqref{eq:packet-bound} uniformly in $r$, put
$\alpha=d/p\in(0,1)$. Fix $r$ and let $j$ be the last index
with $r_j\geq r$, with $j=0$ if $r>r_1$.
The second bound in \eqref{E169} gives
$$
 \begin{cases}
 4^{j-k}r\leq4^{j-k}r_j\leq r_k,&k\leq j,\\[2pt]
 r_k\leq4^{-(k-j-1)}r_{j+1}<4^{-(k-j-1)}r,&k>j.
 \end{cases}
$$
Using these bounds for $(r/r_k)^{1-\alpha}$ when $k\leq j$
and for $(r_k/r)^\alpha$ when $k>j$, we sum the minima in
\eqref{eq:packet-bound} to obtain
\begin{equation}\label{E166}
\begin{aligned}
 \sum_{k=1}^\infty
       \min\{(r_k/r)^\alpha,(r/r_k)^{1-\alpha}\}
 &\leq\sum_{\nu=0}^\infty4^{-\nu(1-\alpha)}
                  +\sum_{\nu=0}^\infty4^{-\nu\alpha}=\frac1{1-4^{-(1-\alpha)}}+\frac1{1-4^{-\alpha}}.
 \end{aligned}
\end{equation}
The range $k\leq j$ is empty when $j=0$. This bound is
independent of $r$ and also applies to every tail of the series.
For each $r>0$ and $h\in L^\infty(\R^d)$,
\eqref{E5}, \eqref{eq:packet-bound}, and \eqref{E166} give
$$
 \sum_{k=1}^\infty\norm{D_{r^2}(f_k,h)}_{\mathcal S_p}
 \leq\norm{h}_{L^\infty(\R^d)}
       \sum_{k=1}^\infty m_{p,r^2}(f_k)<\infty.
$$
Since $p>d\geq2$, completeness
\cite[Section 1.4, p. 26]{Zhu1990} gives an absolutely convergent
series in $\mathcal S_p(\mathcal F_{r^2}^2)$.
To identify its sum as $D_{r^2}(f,h)$, we use \eqref{E1},
\eqref{E2}, and \eqref{E165} to obtain
\begin{align*}
 \bigg\|\sum_{k=1}^N D_{r^2}(f_k,h)-D_{r^2}(f,h)\bigg\|
 &\leq2\norm{h}_{L^\infty(\R^d)}
       \bigg\|\sum_{k=1}^N f_k-f\bigg\|_{L^\infty(\R^d)}\rightarrow0\quad(N\rightarrow\infty).
\end{align*}
By \eqref{E128}, the series also converges in operator norm,
so the preceding limit identifies its sum as $D_{r^2}(f,h)$.
For each integer $N\geq0$, the triangle inequality and
\eqref{E5} give
\begin{align*}
\bigg\|D_{r^2}\bigg(\sum_{k>N}f_k,h\bigg)\bigg\|_{\mathcal S_p}
 &\leq\sum_{k>N}\norm{D_{r^2}(f_k,h)}_{\mathcal S_p}\leq\norm{h}_{L^\infty(\R^d)}
                \sum_{k>N}m_{p,r^2}(f_k)
 \leq C_1\norm{h}_{L^\infty(\R^d)}.
\end{align*}
The last inequality uses \eqref{eq:packet-bound} and
\eqref{E166}, where
$C_1=C\bigl((1-4^{-(1-\alpha)})^{-1}+(1-4^{-\alpha})^{-1}\bigr)>0$
depends only on $n,m,M,p,\chi$ and is independent of
$N,r$. Taking the supremum over
$\norm{h}_{L^\infty(\R^d)}\leq1$ in \eqref{E5} proves
\eqref{eq:packet-tail-bound}.

For the conjugate test, write $f=h_N+f_{>N}$, where
$h_N=\sum_{k\leq N}f_k$ and $f_{>N}=f-h_N$.
Since every $f_k$ is real, so is $f_{>N}$.
The tail bound \eqref{eq:packet-tail-bound}, the conjugate
factorization \eqref{eq:factorization}, and \eqref{E9} yield
\begin{equation}\label{E182}
 \norm{H_{f_{>N}}^{(r^2)}}_{\mathcal S_{2p}(\mathcal F_{r^2}^2,L_{r^2}^2\ominus\mathcal F_{r^2}^2)}^2
 =\norm{D_{r^2}(f_{>N},f_{>N})}_{\mathcal S_p(\mathcal F_{r^2}^2)}
 \leq C_1\norm{f_{>N}}_{L^\infty(\R^d)}.
\end{equation}
As $r\rightarrow0^{+}$ with $N$ fixed, we apply
\eqref{eq:hankel-gradient} and \eqref{E57} with exponent $2p$
to the smooth compactly supported function $h_N$ and obtain
\begin{equation}\label{E183}
 \norm{H_{h_N}^{(r^2)}}_{\mathcal S_{2p}(\mathcal F_{r^2}^2,L_{r^2}^2\ominus\mathcal F_{r^2}^2)}
 \leq C_{H,p}r^{1-d/(2p)}\norm{\nabla h_N}_{L^{2p}(\R^d,dv)}
 \rightarrow0\quad(r\rightarrow0^{+}).
\end{equation}
Here $C_{H,p}>0$ depends only on $n,m,M,p$.
Combining \eqref{E182} and \eqref{E183} with the triangle
inequality and \eqref{E165}, we obtain
\begin{align*}
 \limsup_{r\rightarrow0^{+}}
 \norm{H_f^{(r^2)}}_{\mathcal S_{2p}(\mathcal F_{r^2}^2,L_{r^2}^2\ominus\mathcal F_{r^2}^2)}
 &\leq C_1^{1/2}\norm{f_{>N}}_{L^\infty(\R^d)}^{1/2}\leq C_1^{1/2}\bigl(\sup_{k>N}a_k\bigr)^{1/2} \rightarrow0\quad(N\rightarrow\infty).
 \end{align*}
Since $f$ is real, \eqref{eq:conjugate-identity} gives \eqref{E97}. The proof is complete.
\end{proof}

To show that the fixed function $g$ in \eqref{E40} prevents
convergence, we estimate the diagonal of $D_{r_k^2}(f,g)$
on $B(c_k,R_k/4)$.

\begin{lemma}\label{L8}
Let $f,g$ be the functions in \eqref{E40}. For every weight
satisfying \eqref{eq:weight}, there is a constant $c_0>0$,
depending only on $n,m,M$, such that
\begin{equation}\label{E98}
 \bigl|\langle D_{r_k^2}(f,g)k_{r_k^2,z},k_{r_k^2,z}
                            \rangle_{r_k^2}\bigr|
 \geq(c_0/2)a_k
\end{equation}
for $z\in B(c_k,R_k/4)$ and all sufficiently large $k$.
\end{lemma}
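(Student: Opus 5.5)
The plan is to rescale to unit scale, replace $f$ and $g$ near $c_k$ by a single plane wave, and bound the resulting main diagonal term below by a local analytic distance.

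\textbf{Step 1: rescaling.} Put $r=r_k$, $t=r_k^2$ and $w=z/r_k$. Lemma \ref{lem:scaled-kernel} gives $U_tk_{t,z}=k_{1,w}$, since this identity was used in Lemma \ref{lem:lattice}. Together with \eqref{E55} and \eqref{eq:factorization}, and because $f$ is real, we get
$$
 \langle D_{t}(f,g)k_{t,z},k_{t,z}\rangle_t
 =-\langle H_{g_r}^{(1)}k_{1,w},H_{f_r}^{(1)}k_{1,w}\rangle_1 ,
$$
where $f_r=f(r\cdot)$ and $g_r=g(r\cdot)$.

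\textbf{Step 2: comparison with a plane wave.} Set $\theta=(c_k)_1/r_k$ and $c_\theta(\zeta)=\cos(\zeta_1-\theta)$. Take $z\in B(c_k,R_k/4)$. Then $B(z,R_k/4)\subseteq B(c_k,R_k/2)$, and on this set $\chi((\cdot-c_k)/R_k)=1$ while every other packet vanishes, because the balls $B(c_j,R_j)$ are disjoint. In scaled variables this gives
$$
 g_r=c_\theta+e_g,\qquad f_r=a_kc_\theta+e_f,
$$
where $e_g$ and $e_f$ vanish on $B(w,D_k)$ with $D_k=R_k/(4r_k)=2^{k^2+9k+14}$, and $|e_g|,|e_f|\le 2$.

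The third estimate in \eqref{eq:gram} at $t=1$, combined with $K(w,w)e^{-2\varphi(w)}\asymp1$, gives the pointwise bound $|k_{1,w}(\zeta)|^2e^{-2\varphi(\zeta)}\le Ce^{-c|\zeta-w|}$. Since $\norm{I-P}\le1$, this yields
$$
 \norm{H_{e}^{(1)}k_{1,w}}_{L_1^2}\le\norm{ek_{1,w}}_{L_1^2}\le Ce^{-cD_k}
$$
for $e\in\{e_f,e_g\}$. Expanding the inner product bilinearly and using $\norm{H_{c_\theta}^{(1)}k_{1,w}}\le1$, we obtain
$$
 \bigl|\langle D_t(f,g)k_{t,z},k_{t,z}\rangle_t\bigr|
 \ge a_k\norm{H_{c_\theta}^{(1)}k_{1,w}}_{L_1^2}^2-C'e^{-cD_k}.
$$
Compare $e^{-cD_k}$ with $a_k=2^{-(k^2+9k+16)d/p}$: since $D_k$ grows like $2^{k^2}$, the error term is eventually at most $(c_0/2)a_k$. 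So it suffices to show $\norm{H_{c_\theta}^{(1)}k_{1,w}}^2\ge c_0$ uniformly in $\theta$ and $w$.

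\textbf{Step 3: uniform lower bound.} Follow \eqref{E148}. On $B(w,\rho)$ the kernel $k_{1,w}$ has no zeros, and $H_{c_\theta}^{(1)}k_{1,w}=k_{1,w}(c_\theta-h)$ with $h=P(c_\theta k_{1,w})/k_{1,w}$ holomorphic there. The local lower bound \eqref{E105} then gives
$$
 \norm{H_{c_\theta}^{(1)}k_{1,w}}^2\ge C_0\int_{B(w,\rho)}|c_\theta-h|^2\,dv\ge C_0|B(0,\rho)|\,G_{2,\rho}^2(c_\theta)(w).
$$
By translation invariance, $G_{2,\rho}(c_\theta)(w)$ depends only on the phase $w_1-\theta$ modulo $2\pi$. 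The argument of \eqref{E110} (with $L^2$ in place of $L^1$) shows that it is a continuous function of this phase. It is strictly positive: a zero value would make a real function equal to a holomorphic one a.e. on the ball (using that the infimum is attained, as in \cite[Lemma 3.3]{HuVirtanen2023}), and a real-valued holomorphic function is constant by Cauchy--Riemann, whereas the cosine is not constant on any ball of radius $\rho$. Compactness of the circle then gives a positive minimum $c_0$ depending only on $\rho,C_0$, hence only on $n,m,M$.

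\textbf{Main obstacle.} The delicate point is the bookkeeping in Step 2. One must check that the cutoff equals one and no other packet meets the scaled ball $B(w,D_k)$. One must also check that the exponential kernel tail beats the tiny amplitude $a_k$; this is exactly what the super-exponential ratio $R_k/r_k$ in \eqref{E169} is designed to guarantee.
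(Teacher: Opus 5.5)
Your proposal is correct and follows essentially the same route as the paper: it isolates the $k$-th packet using exponential decay of the kernel, which beats the amplitude $a_k$ because of the super-exponential ratio $R_k/r_k$. It then bounds the main term below by the local $L^2$ analytic distance of a cosine, made uniform through continuity, periodicity, and positivity in the phase $\vartheta$. The differences are only cosmetic: the paper works at scale $r_k^2$ without rescaling, and it writes $f=a_kg+u_k$ instead of comparing both symbols with a global plane wave, so only the single cross term $\langle H_gk,H_{u_k}k\rangle$ appears and $g$ itself stays in the main term.
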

\begin{proof}
Recall the kernel radius $\rho>0$ from Lemma \ref{L1}.
Set $t=r_k^2$ and let $k_{t,z}$ be the normalized reproducing
kernel defined before \eqref{E10}. To separate the contribution
of the $k$-th support, put $u_k=f-a_kg$. The disjoint supports
in \eqref{E40} and $0<a_\ell\leq1$ from \eqref{E28} give
\begin{equation}\label{E188}
 u_k=\sum_{\ell\ne k}(a_\ell-a_k)g_\ell,\quad
 u_k|_{B(c_k,R_k)}=0,\quad
 \norm{u_k}_{L^\infty(\R^d)}
 \leq\sup_{\ell\ne k}|a_\ell-a_k|\leq2.
\end{equation}
Since $f,g,u_k$ are real, \eqref{eq:factorization} yields
\begin{equation}\label{E193}
 \langle D_t(f,g)k_{t,z},k_{t,z}\rangle_t
 =-a_k\norm{H_g^{(t)}k_{t,z}}_{L_t^2}^2
  -\langle H_g^{(t)}k_{t,z},H_{u_k}^{(t)}k_{t,z}\rangle_t.
\end{equation}
By \eqref{E1}, the contractivity of $I-P_t$, and \eqref{E165},
\begin{equation}\label{E194}
 \norm{H_g^{(t)}k_{t,z}}_{L_t^2}
 \leq\norm{gk_{t,z}}_{L_t^2}
 \leq\norm{g}_{L^\infty(\R^d)}\norm{k_{t,z}}_{L_t^2}\leq1.
\end{equation}
In view of \eqref{E193} and \eqref{E194}, it remains to bound
$\norm{H_g^{(t)}k_{t,z}}_{L_t^2}^2$ away from zero and to show
that $\norm{H_{u_k}^{(t)}k_{t,z}}_{L_t^2}=o(a_k)$ uniformly for
$z\in B(c_k,R_k/4)$ as $k\rightarrow\infty$.

For the lower bound, \eqref{E169} gives $\rho r_k<R_k/4$
for all sufficiently large $k$, so
$$
 B(z,\rho r_k)\subset B(c_k,R_k/2)
 \quad(z\in B(c_k,R_k/4)).
$$
On this ball, the cutoff $\chi((w-c_k)/R_k)$ equals one, and all
the other terms in $g$ vanish by the disjoint supports in \eqref{E40}.
With $y=(y_1,\ldots,y_d)\in\R^d$ and
$\vartheta=(\operatorname{Re}z_1-(c_k)_1)/r_k$, we have
\begin{equation}\label{E184}
 g(z+r_ky)=\cos(\vartheta+y_1),\quad |y|<\rho.
\end{equation}
To measure the distance of this cosine from holomorphic
functions, define
\begin{equation}\label{E185}
 \delta(\vartheta)=
 \inf_{h\in\mathcal O(B(0,\rho))\cap L^2(B(0,\rho),dv)}
     \norm{\cos(\vartheta+y_1)-h(y)}_{L^2(B(0,\rho),dv)}.
\end{equation}
The choice $h=0$ gives $\delta(\vartheta)\leq|B(0,\rho)|^{1/2}$.
The triangle inequality, followed by taking the infimum in
\eqref{E185} and interchanging $\vartheta,\vartheta'$, gives
\begin{align*}
 |\delta(\vartheta)-\delta(\vartheta')|
 &\leq\norm{\cos(\vartheta+y_1)-\cos(\vartheta'+y_1)}_{L^2(B(0,\rho),dv)}\leq |B(0,\rho)|^{1/2}|\vartheta-\vartheta'|.
\end{align*}
This estimate proves continuity of $\delta$, while its
$2\pi$-periodicity follows from \eqref{E185}.

To prove positivity, use \cite[Lemma 3.3]{HuVirtanen2023}
with $q=2$ for the locally square-integrable function
$y\mapsto\cos(\vartheta+y_1)$. It provides a holomorphic
minimizer $h_\vartheta$ on $B(0,\rho)$, and the triangle inequality gives
\begin{align*}
 \norm{h_\vartheta}_{L^2(B(0,\rho),dv)}
 &\leq\norm{\cos(\vartheta+y_1)}_{L^2(B(0,\rho),dv)}
       +\norm{\cos(\vartheta+y_1)-h_\vartheta(y)}_{L^2(B(0,\rho),dv)}\\
 &\leq2|B(0,\rho)|^{1/2}<\infty.
\end{align*}
Here the last inequality uses the competitor $h=0$ in the
minimization. Multiplication by the normalization factor
$|B(0,\rho)|^{1/2}$ does not change the minimizer, so
$$
 \delta(\vartheta)
 =\norm{\cos(\vartheta+y_1)-h_\vartheta(y)}_{L^2(B(0,\rho),dv)}.
$$
If $\delta(\vartheta)=0$, continuity makes the almost everywhere
identity $h_\vartheta(y)=\cos(\vartheta+y_1)$ valid throughout
$B(0,\rho)$, contradicting \eqref{E13} because
$$
 \tfrac12(\partial_{y_1}+i\partial_{y_2})\cos(\vartheta+y_1)
 =-\tfrac12\sin(\vartheta+y_1)\not\equiv0
 \quad\text{on }B(0,\rho).
$$
Continuity, periodicity, and positivity of $\delta$ now imply
\begin{equation}\label{E186}
 \delta_0=\min_{[0,2\pi]}\delta>0,
\end{equation}
where $\delta_0$ depends only on $n$ and the fixed radius $\rho$.

To transfer \eqref{E186} to the Hankel operator, fix
$z\in B(c_k,R_k/4)$ and set
\begin{equation}\label{E187}
 h_z(w)=\frac{P_t(gk_{t,z})(w)}{k_{t,z}(w)},
 \quad w\in B(z,\rho r_k).
\end{equation}
The numerator belongs to $\mathcal F_t^2$, and \eqref{E105}
makes the denominator nonzero on this ball, so $h_z$ is
holomorphic there. Writing $c>0$ for the lower bound constant
in \eqref{E105}, we use \eqref{E82} to obtain
\begin{align*}
 \int_{B(z,\rho r_k)}|h_z(w)|^2 dv(w)
 &\leq c^{-1}\int_{B(z,\rho r_k)}
       |P_t(gk_{t,z})(w)|^2e^{-2\varphi(w/r_k)} dv(w)\\
 &\leq c^{-1}r_k^d\norm{P_t(gk_{t,z})}_{L_t^2}^2<\infty.
\end{align*}
The last norm is finite because $gk_{t,z}\in L_t^2$ and
$P_t$ is an orthogonal projection. Rescaling shows that
$y\mapsto h_z(z+r_ky)$ is admissible in \eqref{E185}.

By \eqref{E1} and \eqref{E187},
$$
 (H_g^{(t)}k_{t,z})(w)=(g(w)-h_z(w))k_{t,z}(w),
 \quad w\in B(z,\rho r_k).
$$
Restricting the squared norm to this ball and using
\eqref{E82} and \eqref{E105}, we obtain
\begin{align*}
 \norm{H_g^{(t)}k_{t,z}}_{L_t^2}^2
 &\geq cr_k^{-d}\int_{B(z,\rho r_k)}|g(w)-h_z(w)|^2 dv(w)\\
 &=c\int_{B(0,\rho)}|\cos(\vartheta+y_1)-h_z(z+r_ky)|^2 dv(y)
 \geq c\delta_0^2.
 \end{align*}
The equality follows from $w=z+r_ky$ and \eqref{E184},
and the last inequality follows from \eqref{E185} and \eqref{E186}.
With $c_0=c\delta_0^2>0$ depending only on $n,m,M$,
this estimate gives
\begin{equation}\label{E41}
 \norm{H_g^{(r_k^2)}k_{r_k^2,z}}_{L_{r_k^2}^2}^2\geq c_0
 \qquad (z\in B(c_k,R_k/4))
\end{equation}
for all sufficiently large $k$.

For the remainder in \eqref{E193}, the vanishing property
in \eqref{E188} gives $\supp u_k\subseteq B(c_k,R_k)^c$.
The triangle inequality gives
\begin{equation}\label{E189}
 |w-z|\geq|w-c_k|-|z-c_k|\geq3R_k/4
\end{equation}
for $z\in B(c_k,R_k/4)$ and $w\in\supp u_k$.
For $t=r^2$, the reproducing identity \eqref{E10}, the
definition of $k_{t,z}$, and the first two estimates in
\eqref{eq:gram} give
\begin{align*}
 |k_{t,z}(w)|^2r^{-d}e^{-2\varphi(w/r)}
 &=|\langle k_{t,z},k_{t,w}\rangle_t|^2
                         K_t(w,w)r^{-d}e^{-2\varphi(w/r)}\\
 &\leq C_3r^{-d}e^{-C_5|w-z|/r},
 \end{align*}
where $C_3,C_5>0$ depend only on $n,m,M$.
This bounds the density of $|k_{t,z}|^2d\mu_t$ by \eqref{E82}.

Using this density bound with \eqref{E188} and \eqref{E189}, followed by
\eqref{E60} with exponential coefficient $C_5/2$,
$\delta=3R_k/4$, and $r=r_k$, we obtain
\begin{align*}
 \norm{u_kk_{r_k^2,z}}_{L_{r_k^2}^2}^2
 &\leq4C_3r_k^{-d}
   \int_{\{w\in\C^n:|w-z|\geq3R_k/4\}}
                 e^{-C_5|w-z|/r_k} dv(w)\\
 &\leq4C_3C_6e^{-3C_5R_k/(8r_k)},
 \end{align*}
where $C_6=\int_{\R^d}e^{-(C_5/2)|y|} dv(y)<\infty$.
Taking square roots and using the contractivity of $I-P_t$
in \eqref{E1} gives
\begin{equation}\label{E42}
 \norm{H_{u_k}^{(r_k^2)}k_{r_k^2,z}}_{L_{r_k^2}^2}
 \leq\norm{u_kk_{r_k^2,z}}_{L_{r_k^2}^2}
 \leq C_2e^{-C_4R_k/r_k}
 \quad(z\in B(c_k,R_k/4)),
\end{equation}
with $C_2=2(C_3C_6)^{1/2}$ and $C_4=3C_5/16$, depending
only on $n,m,M$.

To compare this remainder with the main term, \eqref{E28}
and \eqref{E169} give
$$
 a_k^{-1}e^{-C_4R_k/r_k}
 =(R_k/r_k)^{d/p}e^{-C_4R_k/r_k}
 \longrightarrow0\quad(k\rightarrow\infty).
$$
The preceding limit gives $C_2e^{-C_4R_k/r_k}\leq(c_0/2)a_k$
for all sufficiently large $k$. Applying the reverse triangle inequality and
Cauchy--Schwarz to \eqref{E193}, and then using \eqref{E194},
\eqref{E41}, and \eqref{E42}, we obtain
\begin{align*}
 \bigl|\langle D_{r_k^2}(f,g)k_{r_k^2,z},k_{r_k^2,z}
                           \rangle_{r_k^2}\bigr|
 &\geq a_k\norm{H_g^{(r_k^2)}k_{r_k^2,z}}_{L_{r_k^2}^2}^2
       -\norm{H_{u_k}^{(r_k^2)}k_{r_k^2,z}}_{L_{r_k^2}^2}
          \norm{H_g^{(r_k^2)}k_{r_k^2,z}}_{L_{r_k^2}^2}\\
 &\geq c_0a_k-C_2e^{-C_4R_k/r_k}\geq(c_0/2)a_k
 \end{align*}
for all such $k$ and $z\in B(c_k,R_k/4)$. This proves \eqref{E98}. The proof is complete.
\end{proof}

We combine Lemmas \ref{L7} and \ref{L8} with the lattice
estimate \eqref{eq:lattice-diagonal} to prove
Proposition \ref{prop:continuous-concentration}.

\begin{proof}[Proof of Proposition \ref{prop:continuous-concentration}]
The construction in \eqref{E40} and \eqref{E165} gives
real-valued $f\in C_c(\R^d)$ and $g\in L^\infty(\R^d)$
with $\norm{g}_{L^\infty(\R^d)}\leq1$.
The case $N=0$ of \eqref{eq:packet-tail-bound} gives
$\sup_{r>0}m_{p,r^2}(f)\leq C_1$, and \eqref{E97} gives
$\norm{D_{r^2}(f,f)}_{\mathcal S_p}\to0$ as $r\to0^+$.
These prove the first two assertions after setting $t=r^2$.
By \eqref{E5}, the same bound also gives
\begin{equation}\label{E195}
 \norm{D_{r^2}(f,g)}_{\mathcal S_p(\mathcal F_{r^2}^2)}
 \leq m_{p,r^2}(f)\leq C_1\quad(r>0).
\end{equation}

To obtain a positive lower bound independent of $k$, we sum
\eqref{E98} over lattice centers in $B(c_k,R_k/4)$.
Fix $L>2\rho$ as in Lemma \ref{lem:lattice}, and define
$$
 z_j=Lr_kj,\quad Q_j=z_j+[-Lr_k/2,Lr_k/2)^d,\quad j\in\Z^d.
$$
We select the cubes indexed by
$$
 J_k=\{j\in\Z^d:Q_j\cap B(c_k,R_k/8)\neq\varnothing\}.
$$
For $j\in J_k$, choose $y\in Q_j\cap B(c_k,R_k/8)$.
The definition of $Q_j$ gives
$$
 |z_j-c_k|\leq|z_j-y|+|y-c_k|
 \leq R_k/8+\tfrac{\sqrt d}{2}Lr_k.
$$
Since the lattice spacing $Lr_k$ is positive for fixed $k$,
only finitely many $z_j$ satisfy this bound, so $J_k$ is finite.
By \eqref{E169}, $r_k/R_k\to0$, and the same estimate gives
\begin{equation}\label{E196}
 z_j\in B(c_k,R_k/4)\quad(j\in J_k)
\end{equation}
for all sufficiently large $k$.

To count these centers, use the partition
$\R^d=\bigsqcup_{j\in\Z^d}Q_j$, where $\bigsqcup$ denotes
a disjoint union. Each $x\in\R^d$ belongs to exactly one $Q_j$.
If $x\in B(c_k,R_k/8)$, this cube meets the ball, so
$j\in J_k$ by definition. Hence
$$
 B(c_k,R_k/8)\subseteq\bigcup_{j\in J_k}Q_j.
$$
Recall that $\#J_k$ denotes the number of elements of $J_k$,
whereas $|E|$ denotes the Lebesgue measure of a measurable set
$E\subseteq\R^d$.
Since $|Q_j|=(Lr_k)^d$, monotonicity and additivity of
Lebesgue measure give
$$
 |B(c_k,R_k/8)|
 \leq\bigg|\bigcup_{j\in J_k}Q_j\bigg|
 =\sum_{j\in J_k}|Q_j|=\#J_k(Lr_k)^d.
$$
Dividing by $(Lr_k)^d$ and using
$|B(c_k,R_k/8)|=|B(0,1)|(R_k/8)^d$, we obtain
\begin{equation}\label{E190}
 \#J_k\geq b_1(R_k/r_k)^d\quad\text{and}\quad
 b_1=\frac{|B(0,1)|}{(8L)^d}>0.
\end{equation}

Let $c_0$ be the constant in Lemma \ref{L8}, let $C_p>0$
be the constant in \eqref{eq:lattice-diagonal}, and put
$b_2=b_1(c_0/2)^p/C_p>0$.
In view of \eqref{E195}, we may use
\eqref{eq:lattice-diagonal} with $r=r_k$ and lattice translation
$b=0$. Since $J_k\subseteq\Z^d$ and the summands are nonnegative,
we obtain
\begin{align*}
 \sum_{j\in J_k}
 \bigl|\langle D_{r_k^2}(f,g)k_{r_k^2,z_j},k_{r_k^2,z_j}
                    \rangle_{r_k^2}\bigr|^p
 &\leq\sum_{j\in\Z^d}
 \bigl|\langle D_{r_k^2}(f,g)k_{r_k^2,z_j},k_{r_k^2,z_j}
                    \rangle_{r_k^2}\bigr|^p\\
 &\leq C_p\norm{D_{r_k^2}(f,g)}_{\mathcal S_p(\mathcal F_{r_k^2}^2)}^p.
\end{align*}
For all sufficiently large $k$, we divide by $C_p$ and use
\eqref{E98} at the centers in \eqref{E196} to obtain
\begin{align*}
 \norm{D_{r_k^2}(f,g)}_{\mathcal S_p(\mathcal F_{r_k^2}^2)}^p
 &\geq C_p^{-1}\sum_{j\in J_k}
 \bigl|\langle D_{r_k^2}(f,g)k_{r_k^2,z_j},k_{r_k^2,z_j}
                    \rangle_{r_k^2}\bigr|^p
\geq C_p^{-1}\#J_k(c_0a_k/2)^p\\&\geq b_2(R_k/r_k)^d a_k^p=b_2>0.
\end{align*}
The third inequality uses \eqref{E190}, and the equality
uses $(R_k/r_k)^d a_k^p=1$ from \eqref{E28}.
The constant $b_2$ depends only on $n,m,M,p,L$ and is
independent of $k$. Since $r_k^2\to0$ by \eqref{E28}, we obtain
$$
 \limsup_{t\rightarrow0^+}
 \norm{D_t(f,g)}_{\mathcal S_p(\mathcal F_t^2)}
 \geq\liminf_{k\rightarrow\infty}
 \norm{D_{r_k^2}(f,g)}_{\mathcal S_p(\mathcal F_{r_k^2}^2)}
 \geq b_2^{1/p}>0.
$$
This proves the last assertion with the fixed test $g$.
The construction in \eqref{E28} and \eqref{E40} is independent
of the weight, so the same pair works for every weight satisfying
\eqref{eq:weight}. The proof is complete.
\end{proof}

\begin{remark}\label{R2}
The function $f$ in \eqref{E40} satisfies both conditions in
\eqref{eq:super-necessary}, whereas
$\{\omega_r(f):0<r<r_0\}\notin\mathcal W$ for every $r_0>0$.
Thus, these conditions are not sufficient, even for continuous
compactly supported symbols.

To exhibit the concentration of $\omega_r(f)$, we first
estimate the mean oscillation of the local cosine.
In the following integral, $y$ and $\xi$ are vectors in $\R^d$,
and $y_1,\xi_1$ denote their first real coordinates. For $\vartheta\in\R$, define
\begin{equation}\label{E191}
 q(\vartheta)=\frac1{|B(0,1)|}\int_{B(0,1)}
   \left|\cos(\vartheta+y_1)
       -\frac1{|B(0,1)|}\int_{B(0,1)}
                    \cos(\vartheta+\xi_1) dv(\xi)\right| dv(y).
\end{equation}
To bound $q$ from below uniformly in $\vartheta$, we first
apply the triangle inequality to its definition and obtain
\begin{align*}
 |q(\vartheta)-q(\vartheta')|
 &\leq\frac{2}{|B(0,1)|}\int_{B(0,1)}
       |\cos(\vartheta+y_1)-\cos(\vartheta'+y_1)| dv(y)\leq2|\vartheta-\vartheta'|.
\end{align*}
Then $q$ is continuous and $2\pi$-periodic.
If $q(\vartheta)=0$, the continuous nonnegative integrand
in its definition vanishes, giving
$$
 \cos(\vartheta+y_1)
 =\frac1{|B(0,1)|}\int_{B(0,1)}\cos(\vartheta+\xi_1) dv(\xi),
 \quad y\in B(0,1).
$$
Differentiation in $y_1$ would give
$\sin(\vartheta+y_1)=0$ for every $|y_1|<1$, a contradiction.
Hence $b_3=\min_{[0,2\pi]}q>0$, with $b_3$ depending only on $d$.

For sufficiently large $k$, \eqref{E169} gives $r_k<R_k/4$,
so $B(z,r_k)\subset B(c_k,R_k/2)$ whenever $z\in B(c_k,R_k/4)$.
The disjoint supports in \eqref{E40} give $f=f_k$ on this
ball, where the cutoff is one. With
$\vartheta=(\operatorname{Re}z_1-(c_k)_1)/r_k$, we have
$$
 f(z+r_ky)=a_k\cos(\vartheta+y_1),\quad y\in B(0,1).
$$
The substitution $w=z+r_ky$ and
$|B(z,r_k)|=r_k^d|B(0,1)|$ give the ball average
\begin{align*}
 f_{B(z,r_k)}
 &=\frac{r_k^d}{|B(z,r_k)|}
       \int_{B(0,1)}f(z+r_ky) dv(y)=\frac{a_k}{|B(0,1)|}
       \int_{B(0,1)}\cos(\vartheta+y_1) dv(y).
\end{align*}
Substituting this average into \eqref{E6} and using the same
change of variables, we obtain by \eqref{E191}
$$
 \operatorname{MO}_{1,r_k}(f)(z)=a_kq(\vartheta)\geq a_kb_3
 \quad(z\in B(c_k,R_k/4)).
$$

By \eqref{eq:energy-definitions}, this lower bound yields
\begin{equation}\label{E192}
 \begin{aligned}
 \int_{B(c_k,R_k/4)}\omega_{r_k}(f) dv
 &\geq r_k^{-d}(a_kb_3)^p|B(c_k,R_k/4)|=b_4r_k^{-d}R_k^da_k^p=b_4>0.
 \end{aligned}
\end{equation}
Here $b_4=4^{-d}|B(0,1)|b_3^p>0$ depends only on $d,p$ and is
independent of $k$. The equalities use
$|B(c_k,R_k/4)|=4^{-d}|B(0,1)|R_k^d$ and
$r_k^{-d}R_k^da_k^p=1$ from \eqref{E28}, which also gives
$R_k\to0$. The same volume formula therefore yields
$$
 |B(c_k,R_k/4)|\rightarrow0\quad(k\rightarrow\infty).
$$

For any $r_0,\varepsilon>0$, choose $k$ so large
that $r_k<r_0$ and $|B(c_k,R_k/4)|\leq\varepsilon$.
Equation \eqref{E192} gives
$$
 \sup_{0<r<r_0}\sup_{|E|\leq\varepsilon}
       \int_E\omega_r(f) dv
 \geq\int_{B(c_k,R_k/4)}\omega_{r_k}(f) dv
 \geq b_4,
$$
where $E$ ranges over measurable subsets of $\R^d$.
Since this holds for every $\varepsilon>0$, the uniform absolute
continuity condition in \eqref{eq:intrinsic-three} fails for
$\{\omega_r(f):0<r<r_0\}$ with every $r_0>0$.
These density families are therefore not uniformly integrable and do not belong to $\mathcal W$.

To verify the two conditions in \eqref{eq:super-necessary},
let $\rho>0$ be the kernel radius from Lemma \ref{L1} and
let $C_1$ be the constant in Lemma \ref{L7}.
Let $C_7,C_8>0$ denote the constants in \eqref{E36} and
the lower bound of \eqref{eq:two-sided}, respectively.
Since $f$ is real, the lower bound in \eqref{eq:two-sided}
with scale $4r/\rho$ and \eqref{eq:packet-tail-bound} with $N=0$
show that $G_{1,4r}(f)\in L^p(\R^d,dv)$ for every $r>0$.
Taking $p$-th powers in \eqref{E36} and using the lower bound
in \eqref{eq:two-sided} and \eqref{eq:packet-tail-bound}, we obtain
\begin{align*}
 \int_{\R^d}\omega_r(f) dv
 &\leq C_7^p r^{-d}\norm{G_{1,4r}(f)}_{L^p(\R^d,dv)}^p\leq C_7^p C_8^{-p}(4/\rho)^d m_{p,(4r/\rho)^2}^p(f)\leq C_7^p C_8^{-p}(4/\rho)^d C_1^p
\end{align*}
for $r>0$.
To obtain the first condition in \eqref{eq:super-necessary},
we use \eqref{eq:energy-definitions}, the pointwise bound
\eqref{E152}, and the preceding mass estimate to write
\begin{align*}
 r^{-d/p}\norm{G_{1,r}(f)}_{L^p(\R^d,dv)}
 =\left(\int_{\R^d}e_r(f) dv\right)^{1/p}
 \leq\left(\int_{\R^d}\omega_r(f) dv\right)^{1/p}
 \leq C_7C_8^{-1}(4/\rho)^{d/p}C_1
\end{align*}
for $r>0.$

For the second condition, the definition in
Proposition \ref{prop:conjugate} and $f=\overline{f}$ give
$\delta_{p,t}(f)=\norm{D_t(f,f)}_{\mathcal S_p}$.
Using this identity in \eqref{E31} at scale $r/\rho$ and the
conjugate test limit \eqref{E97}, we obtain
$$
 r^{-d/(2p)}\norm{G_{2,r}(f)}_{L^{2p}(\R^d,dv)}
 \leq C_{p,0}\rho^{-d/(2p)}
       \norm{D_{(r/\rho)^2}(f,f)}_{\mathcal S_p}^{1/2}
 \longrightarrow0\quad(r\rightarrow0^{+}).
$$
Thus, both conditions in \eqref{eq:super-necessary} hold,
whereas the weak compactness required by
Theorem \ref{thm:weakcriterion} fails.
\end{remark}

\section{Consequences for Toeplitz quantization}\label{sec:quantization}
The classification of $\mathcal A_p$ and its algebra properties
in Corollary \ref{cor:algebra-weight} give the finite product
and commutator limits in Corollary \ref{P1}.
We prove these limits and express them in an algebraic quotient
of operator families in Proposition \ref{P2}.
The quantitative estimate in Theorem \ref{thm:holder} also gives
the singular value bounds and finite rank approximations in
Proposition \ref{P3}.
Throughout this section, the weight satisfies \eqref{eq:weight},
and all symbols are independent of $t$. For families indexed by $t$,
limits without an indicated variable are taken as $t\rightarrow0^{+}$.

\subsection{Finite products and commutators}
Recall from \eqref{eq:property} that $f\in\mathcal A_p$ means
$\norm{D_t(f,g)}_{\mathcal S_p}\to0$ for every fixed bounded $g$.
We use closure under multiplication to iterate this limit for
finite products, and closure under conjugation to control the
reverse order of two Toeplitz factors.

\begin{proof}[Proof of Corollary \ref{P1}]
If $p<d$, Theorem \ref{thm:main} (a) gives
$\mathcal A_p=\C$. By \eqref{E1}, $T_c^{(t)}=cI$ for constant
$c$, so both differences vanish identically in this case.
We may therefore assume $p\geq d\geq2$ and use the
triangle inequality and the ideal inequality \eqref{E7}
in $\mathcal S_p$.

We first prove \textup{(a)}. For $1\leq j\leq k$, set
$$
 h_j=\prod_{i=1}^j f_i\quad\text{and}\quad
 A_j(t)=T_{f_1}^{(t)}\cdots T_{f_j}^{(t)}-T_{h_j}^{(t)}.
$$
Corollary \ref{cor:algebra-weight} gives
$h_j\in\mathcal A_p$ for $j<k$. Inserting
$T_{h_{j-1}}^{(t)}T_{f_j}^{(t)}$ into the difference
defining $A_j(t)$ and using \eqref{E2}, we obtain
\begin{equation}\label{E208}
 \begin{aligned}
 A_j(t)
 &=\bigl(T_{f_1}^{(t)}\cdots T_{f_{j-1}}^{(t)}
                -T_{h_{j-1}}^{(t)}\bigr)T_{f_j}^{(t)}
       +D_t(h_{j-1},f_j)\\
 &=A_{j-1}(t)T_{f_j}^{(t)}+D_t(h_{j-1},f_j),\quad 2\leq j\leq k.
 \end{aligned}
\end{equation}
Since the $f_j$ are fixed and bounded, \eqref{eq:property} gives
$$
 \max_{2\leq j\leq k}
 \norm{D_t(h_{j-1},f_j)}_{\mathcal S_p}
 \longrightarrow0\quad(t\rightarrow0^{+}).
$$

Choose $t_0>0$ such that
$D_t(h_{j-1},f_j)\in\mathcal S_p$ for $2\leq j\leq k$
and $0<t<t_0$. Starting with $A_1(t)=0$, \eqref{E208}
and \eqref{E7} give $A_j(t)\in\mathcal S_p$ for
$1\leq j\leq k$ on this interval.
Taking Schatten norms in \eqref{E208}, we use the triangle
inequality, \eqref{E7}, and the bound
$\norm{T_{f_j}^{(t)}}\leq\norm{f_j}_{L^\infty(\C^n)}$
from \eqref{E1} to obtain
$$
 \norm{A_j(t)}_{\mathcal S_p}
 \leq\norm{A_{j-1}(t)}_{\mathcal S_p}\norm{f_j}_{L^\infty(\C^n)}
       +\norm{D_t(h_{j-1},f_j)}_{\mathcal S_p},\qquad 2\leq j\leq k.
$$
Iterating this inequality from $A_1(t)=0$ gives
$$
 \norm{A_k(t)}_{\mathcal S_p}
 \leq\sum_{j=2}^k
       \norm{D_t(h_{j-1},f_j)}_{\mathcal S_p}
       \prod_{i=j+1}^k\norm{f_i}_{L^\infty(\C^n)}
 \longrightarrow0,
$$
where an empty product equals one. The sum has finitely many
terms, and its other factors are independent of $t$.
This proves \textup{(a)}.

For \textup{(b)}, the orthogonality of $P_t$ in \eqref{E1}
implies
$$
 \langle T_f^{(t)}u,v\rangle_t
 =\int_{\C^n}fu\overline{v}d\mu_t
 =\langle u,T_{\overline{f}}^{(t)}v\rangle_t
$$
for $u,v\in\cF$.
This calculation applies to every bounded symbol $f$ and proves
\begin{equation}\label{E103}
 (T_f^{(t)})^*=T_{\overline{f}}^{(t)}\quad\text{on }\cF.
\end{equation}
Using \eqref{E2} and \eqref{E103}, we obtain
\begin{align}\label{E197}
 [T_f^{(t)},T_g^{(t)}]
 &=D_t(f,g)-D_t(g,f)\quad\text{and}\quad
 D_t(g,f)^*
 =T_{\overline{f}}^{(t)}T_{\overline{g}}^{(t)}
       -T_{\overline{fg}}^{(t)}
 =D_t(\overline{f},\overline{g}).
\end{align}

Since $f,\overline{f}\in\mathcal A_p$ by
Corollary \ref{cor:algebra-weight}, \eqref{eq:property}
applied to the fixed tests $g,\overline{g}$ puts
$D_t(f,g),D_t(\overline{f},\overline{g})$ in $\mathcal S_p$
for all sufficiently small $t$, with both norms tending to zero.
In the canonical decomposition
\cite[Section 1.3, p. 16--17]{Zhu1990}, interchanging the
two orthonormal systems gives the adjoint. For a compact
operator $S$ on a complex Hilbert space $X$, this gives
\begin{equation}\label{E153}
 s_j(S^*)=s_j(S)\quad\text{and}\quad
 \norm{S^*}_{\mathcal S_p(X)}=\norm{S}_{\mathcal S_p(X)}
\end{equation}
for $j\geq1$ and $0<p<\infty$.
For $S=D_t(\overline{f},\overline{g})\in\mathcal S_p$,
the adjoint identity in \eqref{E197} gives $S^*=D_t(g,f)$.
Equation \eqref{E153} now yields
$D_t(g,f)\in\mathcal S_p$ and
$$
 \norm{D_t(g,f)}_{\mathcal S_p}
 =\norm{D_t(\overline{f},\overline{g})}_{\mathcal S_p}.
$$
The commutator identity in \eqref{E197}, this norm equality,
and the triangle inequality yield
$$
 \norm{[T_f^{(t)},T_g^{(t)}]}_{\mathcal S_p}
 \leq\norm{D_t(f,g)}_{\mathcal S_p}
     +\norm{D_t(\overline{f},\overline{g})}_{\mathcal S_p}
 \longrightarrow0.
$$
This proves \textup{(b)}. The proof is complete.
\end{proof}

\subsection{An algebra of operator families}
The preceding limits can be expressed by identifying two
operator families when their difference tends to zero in
$\mathcal S_p$. To specify the algebra in which this
identification takes place, let $\mathcal B(X)$ denote the
bounded operators on a Hilbert space $X$, and define
$$
 \mathfrak B=
 \left\{A=(A_t)_{0<t\leq1}:
 A_t\in\mathcal B(\cF),\quad
 \sup_{0<t\leq1}\norm{A_t}<\infty\right\}.
$$
Addition, scalar multiplication, multiplication, and adjoints
are defined at each parameter. In particular, the product and adjoint
families satisfy $(AB)_t=A_tB_t$ and $(A^*)_t=A_t^*$, so each composition
acts on a single space $\cF$. The identity family is
$\mathbf I=(I_{\cF})_{0<t\leq1}$.
For $0<p<\infty$, set
\begin{equation}\label{E99}
 \mathfrak J_p=
 \left\{A\in\mathfrak B:
 \lim_{t\rightarrow0^{+}}\norm{A_t}_{\mathcal S_p(\cF)}=0\right\}.
\end{equation}

For $A\in\mathfrak J_p$, the definition of $\mathfrak B$ and
\eqref{E99}, with the extended quantity from
Section \ref{sec:schatten}, give
$$
 \norm{A_t}\leq C_A\quad(0<t\leq1),\qquad
 A_t\in\mathcal S_p(\cF)\quad(0<t<t_0)
$$
for some $C_A>0$ and $0<t_0\leq1$.
The convergence in \eqref{E99} concerns only $t\to0^+$ and
imposes no Schatten condition for $t_0\leq t\leq1$.

\begin{proposition}\label{P2}
For every $0<p<\infty$, $\mathfrak J_p$ is a two-sided
ideal of $\mathfrak B$ closed under adjoints. With
$[A]=A+\mathfrak J_p$, the map
\begin{equation}\label{E100}
 Q_p:\mathcal A_p\longrightarrow\mathfrak B/\mathfrak J_p,
 \quad Q_p(f)=[(T_f^{(t)})_{0<t\leq1}]
\end{equation}
is a unital $*$-algebra homomorphism. The quotient is
understood algebraically.
\end{proposition}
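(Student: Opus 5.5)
The proof splits into two parts: first that $\mathfrak J_p$ is an adjoint-closed two-sided ideal, then that $Q_p$ is a unital $*$-homomorphism. The second part reduces to the semi-commutator limit and the adjoint identity \eqref{E103}.

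\emph{The ideal.} Let $A,B\in\mathfrak J_p$. Choose $t_0>0$ so that $A_t,B_t\in\mathcal S_p$ for $0<t<t_0$. For $p\geq1$, the triangle inequality gives $\norm{A_t+B_t}_{\mathcal S_p}\to0$. For $0<p<1$, I would use the quasi-triangle inequality $\norm{S+T}_{\mathcal S_p}\leq 2^{1/p}(\norm{S}_{\mathcal S_p}+\norm{T}_{\mathcal S_p})$. This follows from the Ky Fan inequality $s_{j+k-1}(S+T)\leq s_j(S)+s_k(T)$: splitting the singular values of $S+T$ into odd and even indices bounds $\sum_j s_j^p(S+T)$ by $2\sum_j\bigl(s_j(S)+s_j(T)\bigr)^p$. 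Scalar multiples are immediate. Uniform operator-norm boundedness of sums and products is clear, so the families stay in $\mathfrak B$.

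For the two-sided property, take $C\in\mathfrak B$ and $A\in\mathfrak J_p$. The ideal inequality \eqref{E7} holds for all $p>0$, since $s_j(CAD)\leq\norm{C}\,s_j(A)\,\norm{D}$ follows directly from \eqref{E102}. It gives $\norm{C_tA_t}_{\mathcal S_p}\leq\sup_s\norm{C_s}\,\norm{A_t}_{\mathcal S_p}\to0$, and the same for $A_tC_t$. Closure under adjoints comes from \eqref{E153}. So $\mathfrak B/\mathfrak J_p$ is a unital $*$-algebra, with unit $[\mathbf I]$.

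\emph{The map $Q_p$.} Well-definedness holds because $\norm{T_f^{(t)}}\leq\norm{f}_{L^\infty(\C^n)}$ by \eqref{E1}. Linearity of $f\mapsto T_f^{(t)}$ makes $Q_p$ linear, and $T_1^{(t)}=I$ makes it unital. The $*$-property is immediate from \eqref{E103}: $(T_f^{(t)})^*=T_{\overline f}^{(t)}$, and $\overline f\in\mathcal A_p$ by Corollary \ref{cor:algebra-weight}, so $Q_p(f)^*=Q_p(\overline f)$ exactly.

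Multiplicativity is the substantive step. For $f,h\in\mathcal A_p$ we have $fh\in\mathcal A_p$ by Corollary \ref{cor:algebra-weight}, and $T_f^{(t)}T_h^{(t)}-T_{fh}^{(t)}=D_t(f,h)$. Since $f\in\mathcal A_p$ and $h$ is a fixed bounded symbol, \eqref{eq:property} gives $\norm{D_t(f,h)}_{\mathcal S_p}\to0$. The family $(D_t(f,h))_{0<t\leq1}$ has operator norms bounded by $2\norm f_\infty\norm h_\infty$, so it lies in $\mathfrak J_p$. Hence $Q_p(f)Q_p(h)=Q_p(fh)$.

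The only point I expect to need any care is the quasi-norm case $0<p<1$ in the additive closure of $\mathfrak J_p$; everything else is direct bookkeeping from results already stated in the paper.
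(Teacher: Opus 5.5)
Your proof is correct and follows the paper's argument essentially step for step. It uses the Ky Fan inequality $s_{j+k-1}(S+T)\le s_j(S)+s_k(T)$ for additive closure, the approximation-number bound $s_j(CA),s_j(AC)\le\norm{C}s_j(A)$ for the two-sided ideal property in every $\mathcal S_p$, \eqref{E153} for adjoints, and $D_t(f,h)\in\mathfrak J_p$ together with \eqref{E103} and Corollary \ref{cor:algebra-weight} for the homomorphism.

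One harmless slip concerns the constant for $0<p<1$. Your odd/even splitting gives $\norm{S+T}_{\mathcal S_p}^p\le 2\bigl(\norm{S}_{\mathcal S_p}^p+\norm{T}_{\mathcal S_p}^p\bigr)$. Since $\norm{S}_{\mathcal S_p}^p+\norm{T}_{\mathcal S_p}^p\ge(\norm{S}_{\mathcal S_p}+\norm{T}_{\mathcal S_p})^p$ when $p<1$, this yields the constant $2^{2/p-1}$ rather than $2^{1/p}$. Any fixed constant suffices for closure under addition, so the proof is unaffected.
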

\begin{proof}
We first prove that $\mathfrak J_p$ is closed under addition for every $p>0$.
For compact operators $S,T$ on a Hilbert space $X$, the
approximation number formula \eqref{E102} gives
\begin{equation}\label{E209}
 s_{j+k-1}(S+T)\leq s_j(S)+s_k(T),\quad j,k\geq1.
\end{equation}
Indeed, if $F,G$ have ranks less than $j,k$, respectively,
then $\rank(F+G)<j+k-1$ and
$$
 s_{j+k-1}(S+T)
 \leq\norm{S+T-F-G}\leq\norm{S-F}+\norm{T-G}.
$$
Taking the infimum over $F,G$ proves \eqref{E209}.
Taking $k=j$ in \eqref{E209} bounds $s_{2j-1}(S+T)$ by
$s_j(S)+s_j(T)$. Monotonicity of singular values and
$(a+b)^p\leq\max\{1,2^{p-1}\}(a^p+b^p)$ for $a,b\geq0$
then give
\begin{equation}\label{E210}
 \begin{aligned}
 \norm{S+T}_{\mathcal S_p}^p
 &=\sum_{j=1}^\infty
       \bigl(s_{2j-1}^p(S+T)+s_{2j}^p(S+T)\bigr)\leq2\sum_{j=1}^\infty s_{2j-1}^p(S+T)\\
 &\leq2\sum_{j=1}^\infty(s_j(S)+s_j(T))^p
\leq2\max\{1,2^{p-1}\}
       \bigl(\norm{S}_{\mathcal S_p}^p+\norm{T}_{\mathcal S_p}^p\bigr).
 \end{aligned}
\end{equation}
For $A,G\in\mathfrak J_p$, the family $A+G$ belongs to
$\mathfrak B$ because
$$
 \sup_{0<t\leq1}\norm{A_t+G_t}
 \leq\sup_{0<t\leq1}\norm{A_t}
       +\sup_{0<t\leq1}\norm{G_t}<\infty.
$$
For all sufficiently small $t$, both $A_t$ and $G_t$ belong to
$\mathcal S_p(\cF)$, so \eqref{E210} on $X=\cF$, with $S=A_t$
and $T=G_t$, yields
$$
 \norm{A_t+G_t}_{\mathcal S_p}^p
 \leq2\max\{1,2^{p-1}\}
       \bigl(\norm{A_t}_{\mathcal S_p}^p
              +\norm{G_t}_{\mathcal S_p}^p\bigr)
 \longrightarrow0.
$$
By \eqref{E99}, $A+G\in\mathfrak J_p$. This together with
homogeneity proves linearity for every $p>0$.

To prove stability under left and right multiplication,
take $S\in\mathcal S_p(X)$, $B\in\mathcal B(X)$, and a finite rank operator $F$.
We have $\rank(BF),\rank(FB)\leq\rank F$ and
$$
 \norm{BS-BF}\leq\norm{B}\norm{S-F},\quad
 \norm{SB-FB}\leq\norm{B}\norm{S-F}.
$$
Taking the infimum over $\rank F<j$ in \eqref{E102}, we obtain
$$
 \max\{s_j(BS),s_j(SB)\}\leq\norm{B}s_j(S),\quad j\geq1.
$$
Raising these bounds to the power $p>0$ and summing over $j$ gives
$$
 \max\bigg\{\sum_{j=1}^\infty s_j^p(BS),
             \sum_{j=1}^\infty s_j^p(SB)\bigg\}
 \leq\norm{B}^p\sum_{j=1}^\infty s_j^p(S).
$$
Taking the $p$th root, we obtain
\begin{equation}\label{E211}
 \max\{\norm{BS}_{\mathcal S_p},\norm{SB}_{\mathcal S_p}\}
 \leq\norm{B}\norm{S}_{\mathcal S_p}.
\end{equation}
For $A\in\mathfrak J_p$ and $B\in\mathfrak B$, we apply
\eqref{E211} on $X=\cF$ to $S=A_t$ and the bounded operator
$B_t$ for all sufficiently small $t$. Using the uniform operator
norm bound for the family $B$, we obtain
$$
 \max\{\norm{B_tA_t}_{\mathcal S_p},
        \norm{A_tB_t}_{\mathcal S_p}\}
 \leq\left(\sup_{0<s\leq1}\norm{B_s}\right)
       \norm{A_t}_{\mathcal S_p}\longrightarrow0.
$$
Since $A,B\in\mathfrak B$, we also have
$$
 \sup_{0<t\leq1}\max\{\norm{B_tA_t},\norm{A_tB_t}\}
 \leq\left(\sup_{0<t\leq1}\norm{B_t}\right)
       \left(\sup_{0<t\leq1}\norm{A_t}\right)<\infty,
$$
so \eqref{E99} gives $BA,AB\in\mathfrak J_p$.

For all sufficiently small $t$, $A_t$ is compact, so
\eqref{E153} applies to $S=A_t$. Since
$\norm{A_t^*}=\norm{A_t}$ for every bounded operator $A_t$,
we obtain
$$
 \norm{A_t^*}_{\mathcal S_p}=\norm{A_t}_{\mathcal S_p}
 \longrightarrow0\quad\text{and}\quad
 \sup_{0<t\leq1}\norm{A_t^*}
 =\sup_{0<t\leq1}\norm{A_t}<\infty.
$$
By \eqref{E99}, $A^*\in\mathfrak J_p$. To see that multiplication and adjoints do not depend on
the representatives, take $A,A',B,B'\in\mathfrak B$ with
$A-A',B-B'\in\mathfrak J_p$. The properties just proved give
$$
 AB-A'B'=(A-A')B+A'(B-B')\in\mathfrak J_p\quad\text{and}\quad
 A^*-(A')^*=(A-A')^*\in\mathfrak J_p.
$$
These identities show that both operations are well defined on the
algebraic quotient.

It remains to verify the assertions about $Q_p$.
For $f\in\mathcal A_p\subseteq L^\infty(\C^n)$,
\eqref{E1} gives
$$
 \sup_{0<t\leq1}\norm{T_f^{(t)}}
 \leq\norm{f}_{L^\infty(\C^n)}<\infty,
$$
so \eqref{E100} is well defined.
Linearity follows from \eqref{E1}, and
$T_1^{(t)}=I_{\cF}$ gives $Q_p(1)=[\mathbf I]$.
For $f,h\in\mathcal A_p$, Corollary \ref{cor:algebra-weight}
ensures that $fh,\overline{f}\in\mathcal A_p$.
Equations \eqref{E1}, \eqref{E2}, and \eqref{eq:property} give
$$
 \sup_{0<t\leq1}\norm{D_t(f,h)}
 \leq2\norm{f}_{L^\infty}\norm{h}_{L^\infty}<\infty\quad\text{and}\quad
 \norm{D_t(f,h)}_{\mathcal S_p}\longrightarrow0
 \quad(t\rightarrow0^+).
$$
By \eqref{E99}, these bounds imply
$$
 (T_f^{(t)}T_h^{(t)}-T_{fh}^{(t)})_{0<t\leq1}
 =(D_t(f,h))_{0<t\leq1}\in\mathfrak J_p.
$$
Taking its class in the quotient and
using the Toeplitz adjoint identity \eqref{E103}, we conclude that
$$
 Q_p(f)Q_p(h)=Q_p(fh)\quad\text{and}\quad
 Q_p(\overline{f})=Q_p(f)^*.
$$
This proves the stated homomorphism properties. The proof is complete.
\end{proof}

The product relation for $Q_p$ concerns each fixed pair
$f,h\in\mathcal A_p$. At $p=d$, Corollary \ref{cor:critical-bound}
shows that every nonconstant $f\in\mathcal A_d$ satisfies
$\liminf_{t\to0^{+}}m_{d,t}(f)>0$, although
$\norm{D_t(f,g)}_{\mathcal S_d}\to0$ as $t\to0^{+}$ for every
fixed bounded $g$.

\subsection{Singular values and finite rank approximation}
Theorem \ref{thm:holder} supplies a rate uniform over
$\norm{g}_{L^\infty(\C^n)}\leq1$ when $\alpha>d/p$.
We now convert that Schatten bound into estimates for the singular
values and the operator norm of the remainder after a finite rank
approximation.

\begin{proposition}\label{P3}
Let $p>d$, $d/p<\alpha\leq1$, and
$f\in C_c^{0,\alpha}(\R^d)$. Put
$\beta=\alpha/2-n/p>0$.
There is $C_f>0$ such that
\begin{equation}\label{E101}
 s_N(D_t(f,g))
 \leq C_f\norm{g}_{L^\infty(\C^n)}N^{-1/p}t^\beta
\end{equation}
for every $t>0$, $g\in L^\infty(\C^n)$, and integer $N\geq1$.
For these $t,g,N$, there is an operator
$A_{t,N}:\cF\to\cF$ with $\rank A_{t,N}\leq N$ such that
$$
 \norm{D_t(f,g)-A_{t,N}}
 \leq C_f\norm{g}_{L^\infty(\C^n)}(N+1)^{-1/p}t^\beta.
$$
The constant has the dependence stated in
Theorem \ref{thm:holder} and is independent of $t,g,N$.
\end{proposition}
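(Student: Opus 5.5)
The plan is to combine the uniform Schatten rate from Theorem \ref{thm:holder} with the elementary relation between Schatten norms and individual singular values, and then to take the truncated singular value decomposition as the finite rank approximant. By linearity of $D_t(f,g)$ in $g$, it suffices to treat $g\neq0$ and replace $g$ by $g/\norm{g}_{L^\infty(\C^n)}$. For $g=0$ both sides vanish.

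\textbf{Step 1.} Theorem \ref{thm:holder} with $\alpha>d/p$ gives a constant $C_f$, with the stated dependence, such that
\[
\norm{D_t(f,g)}_{\mathcal S_p}\leq C_f\norm{g}_{L^\infty(\C^n)}t^{\beta}
\]
for every $t>0$. Here I use that the proof of that theorem gives $m_{p,t}(f)\leq C_f t^{\alpha/2-n/p}$ for all $t>0$, via \eqref{E157} and \eqref{E158}. In particular $D_t(f,g)$ is compact.

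\textbf{Step 2.} The singular values are nonincreasing, so
\[
N s_N^p(D_t(f,g))\leq\sum_{j=1}^N s_j^p(D_t(f,g))\leq\norm{D_t(f,g)}_{\mathcal S_p}^p .
\]
Taking $p$-th roots gives $s_N\leq N^{-1/p}\norm{D_t(f,g)}_{\mathcal S_p}$. Combining this with Step 1 proves \eqref{E101}.

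\textbf{Step 3.} By the approximation number formula \eqref{E102}, which for compact operators coincides with the singular values \cite[Theorem 1.34]{Zhu1990}, the quantity $s_{N+1}(D_t(f,g))$ is the infimum of $\norm{D_t(f,g)-R}$ over $\rank R\leq N$. This infimum is attained by truncating the canonical (Schmidt) decomposition of the compact operator \cite[Section 1.3]{Zhu1990} after $N$ terms. Take $A_{t,N}$ to be this truncation. Then $\rank A_{t,N}\leq N$, and
\[
\norm{D_t(f,g)-A_{t,N}}=s_{N+1}(D_t(f,g))\leq C_f\norm{g}_{L^\infty(\C^n)}(N+1)^{-1/p}t^\beta
\]
by \eqref{E101} applied with $N+1$ in place of $N$.

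There is no real obstacle here. The only point needing care is that the rate in Theorem \ref{thm:holder} is valid for all $t>0$, not only for small $t$, with a constant independent of $t$ and $g$. Since its proof covers both $r<1/4$ and $r\geq1/4$, this holds.
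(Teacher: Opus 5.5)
Your proof is correct and follows the paper's argument essentially step for step. The paper also takes the uniform rate from Theorem \ref{thm:holder}, uses $N s_N^p\leq\norm{D_t(f,g)}_{\mathcal S_p}^p$, and truncates the canonical decomposition; the only difference is that it proves $\norm{D-A_{t,N}}=s_{N+1}(D)$ directly via Bessel's inequality instead of citing that the infimum is attained.
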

\begin{proof}
For every $g\in L^\infty(\C^n)$ and $t>0$,
Theorem \ref{thm:holder} and linearity in $g$ give
\begin{equation}\label{E212}
 \norm{D_t(f,g)}_{\mathcal S_p(\cF)}
 \leq C_f\norm{g}_{L^\infty(\C^n)}t^\beta<\infty.
\end{equation}
In particular, $D_t(f,g)$ is compact.
The definition in Section \ref{sec:schatten} and monotonicity
of its singular values give
\begin{equation}\label{E104}
 N s_N^p(D_t(f,g))
 \leq\sum_{j=1}^N s_j^p(D_t(f,g))
 \leq\norm{D_t(f,g)}_{\mathcal S_p(\cF)}^p.
\end{equation}
Taking $p$th roots in \eqref{E104} and using \eqref{E212},
we obtain
$$
 s_N(D_t(f,g))
 \leq N^{-1/p}\norm{D_t(f,g)}_{\mathcal S_p(\cF)}
 \leq C_f\norm{g}_{L^\infty(\C^n)}N^{-1/p}t^\beta.
$$
This proves \eqref{E101}.

Fix $t,g$, and write $D=D_t(f,g)$.
Its canonical decomposition
\cite[Section 1.3, p. 16--17]{Zhu1990}, applied on $\cF$,
provides orthonormal systems $(u_j),(v_j)$ such that
$$
 Dh=\sum_j s_j(D)\langle h,u_j\rangle_t v_j,
 \qquad h\in\cF.
$$
The series converges in $L_t^2$ for every $h\in\cF$ and
runs over the positive singular values, with a finite sum
when $D$ has finite rank. Define the truncation by
$$
 A_{t,N}h=\sum_{j=1}^N s_j(D)\langle h,u_j\rangle_t v_j,
 \qquad h\in\cF,
$$
omitting terms beyond the rank when necessary.
Its range lies in the span of at most $N$ vectors $v_j$,
so $\rank A_{t,N}\leq N$. Taking limits in the Pythagorean
identity for the finite partial sums and applying Bessel's
inequality to the orthonormal system $(u_j)$, we obtain
$$
 \norm{(D-A_{t,N})h}_{L_t^2}^2
 =\sum_{j>N}s_j^2(D)|\langle h,u_j\rangle_t|^2
 \leq s_{N+1}^2(D)\norm{h}_{L_t^2}^2.
$$
If $s_{N+1}(D)>0$, equality is attained at $h=u_{N+1}$.
If $s_{N+1}(D)=0$, the remainder vanishes.
Thus, $\norm{D-A_{t,N}}=s_{N+1}(D)$, and
\eqref{E101} with index $N+1$ proves
the asserted finite rank bound.
\end{proof}

If $p>d$ and $f\in C_c^{0,d/p}(\R^d)$, Theorem
\ref{thm:holder} gives $f\in\mathcal A_p$. Then, for every fixed bounded $g$, the operator $D_t(f,g)$ belongs
to $\mathcal S_p$ for all sufficiently small $t$, with its Schatten
norm tending to zero by \eqref{eq:property}.
Estimate \eqref{E104} requires only this Schatten membership,
so it also applies here. For each integer $N\geq1$, taking
$p$th roots in \eqref{E104} gives
$$
 s_N(D_t(f,g))
 \leq N^{-1/p}\norm{D_t(f,g)}_{\mathcal S_p}
 \longrightarrow0\quad(t\rightarrow0^{+}).
$$
This conclusion uses convergence for the fixed symbol $g$, whereas
\eqref{E101} gives a positive power of $t$
uniformly over $\norm{g}_{L^\infty}\leq1$ under the strict
inequality $\alpha>d/p$.

\section{Applications to other quantization models}\label{sec:other-models}
We first transfer the Fock classification to Gaussian anti-Wick
operators by an explicitly normalized Bargmann transform.
For compact K\"ahler manifolds, a Bergman kernel estimate and
$L^2$ approximation yield a H\"older sufficient condition, which
we compare with an exact calculation on $\mathbb{CP}^1$.

\subsection{Gaussian anti-Wick quantization}\label{sec:antiwick}
In this subsection, take $\varphi(z)=|z|^2/2$ in \eqref{E82}.
For $\hbar>0$, the normalized Gaussian window in
\cite[(12)]{KellerLuef2021}, with center $(0,0)$ and
$\varepsilon=\hbar$, is
$$
 \phi_\hbar(x)=(\pi\hbar)^{-n/4}e^{-|x|^2/(2\hbar)},
 \quad x\in\R^n.
$$
We define the short-time Fourier transform with this window,
as in \cite[Section 2.1, (6)]{KellerLuef2021}, by
$$
 (\mathcal V_\hbar u)(q,\xi)
 =(2\pi\hbar)^{-n/2}
   \int_{\R^n}u(x)\phi_\hbar(x-q)e^{-ix\cdot\xi/\hbar}dx
$$
for $u\in L^2(\R^n,dx)$ and $(q,\xi)\in\R^{2n}.$
Plancherel's identity in the frequency variable, with
$\xi=\hbar\eta$, followed by Tonelli's theorem and
$\norm{\phi_\hbar}_{L^2(\R^n,dx)}=1$, gives
\begin{align*}
 \norm{\mathcal V_\hbar u}_{L^2(\R^{2n},dq d\xi)}^2
 &=\int_{\R^n}\int_{\R^n}
   |u(x)|^2|\phi_\hbar(x-q)|^2dx dq\\
 &=\norm{u}_{L^2(\R^n,dx)}^2
   \norm{\phi_\hbar}_{L^2(\R^n,dx)}^2
 =\norm{u}_{L^2(\R^n,dx)}^2.
\end{align*}
This proves that
$\mathcal V_\hbar:L^2(\R^n,dx)\to L^2(\R^{2n},dq d\xi)$
is an isometry.
For $a\in L^\infty(\R^{2n})$, we follow the notation of
\cite[Section 2.2, (15)]{KellerLuef2021} and display the
dependence on $\hbar$ by writing
\begin{equation}\label{E226}
 \operatorname{op}_{\mathrm{aw},\hbar}(a)
 =\mathcal V_\hbar^*M_a\mathcal V_\hbar
 \quad\text{on }L^2(\R^n,dx),
\end{equation}
with the multiplier notation of Remark \ref{R3}.
The isometry gives
$\norm{\operatorname{op}_{\mathrm{aw},\hbar}(a)}\leq\norm{a}_{L^\infty(\R^{2n})}$.

To compare this operator with \eqref{E1}, let $\mathcal B_\hbar$
be the Bargmann transform
$$
 (\mathcal B_\hbar u)(z)
 =(2\pi\hbar)^{-n/2}(\pi\hbar)^{-n/4}
 \int_{\R^n}u(x)
 e^{(x\cdot z-z\cdot z/4-|x|^2/2)/\hbar}dx,
 \quad z\in\C^n,
$$
where $z\cdot w=\sum_{j=1}^n z_jw_j$ denotes the complex bilinear
pairing on $\C^n$.
By \cite[Section 2.1]{KellerLuef2021}, this transform is unitary
onto the holomorphic subspace of
$L^2(\C^n,e^{-|z|^2/(2\hbar)}dv)$.
The factor $t^{-n}$ in \eqref{E82} is accounted for by setting
\begin{equation}\label{E227}
 \mathcal U_\hbar=(2\hbar)^{n/2}\mathcal B_\hbar:
 L^2(\R^n,dx)\longrightarrow\mathcal F_{2\hbar}^2.
\end{equation}
Indeed, $\norm{\mathcal U_\hbar u}_{L_{2\hbar}^2}
=\norm{\mathcal B_\hbar u}_{L^2(\C^n,e^{-|z|^2/(2\hbar)}dv)}
=\norm{u}_{L^2(\R^n,dx)}$, so $\mathcal U_\hbar$ is unitary. For $f\in L^\infty(\C^n)$, write
$\widetilde f(q,\xi)=f(q-i\xi)$ for $(q,\xi)\in\R^{2n}$.
Then $\widetilde{fg}=\widetilde f\widetilde g$ for
$f,g\in L^\infty(\C^n)$.
 
\begin{corollary}\label{cor:antiwick}
Let $0<p<\infty$ and $f\in L^\infty(\C^n)$. Then
$$
 \lim_{\hbar\to0^+}
 \norm{\operatorname{op}_{\mathrm{aw},\hbar}(\widetilde f)\operatorname{op}_{\mathrm{aw},\hbar}(a)
       -\operatorname{op}_{\mathrm{aw},\hbar}(\widetilde f a)}_{\mathcal S_p(L^2(\R^n,dx))}=0
$$
for every fixed $a\in L^\infty(\R^{2n})$ if and only if
$f\in\mathcal A_p$.
More precisely, for bounded $f,g$ and every $\hbar>0$,
\begin{equation}\label{E214}
 \begin{aligned}
 &\norm{\operatorname{op}_{\mathrm{aw},\hbar}(\widetilde f)\operatorname{op}_{\mathrm{aw},\hbar}(\widetilde g)
       -\operatorname{op}_{\mathrm{aw},\hbar}(\widetilde{fg})}_{\mathcal S_p(L^2(\R^n,dx))}=\norm{D_{2\hbar}(f,g)}_{\mathcal S_p(\mathcal F_{2\hbar}^2)}.
 \end{aligned}
\end{equation}
\end{corollary}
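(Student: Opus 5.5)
The plan is to show that $\mathcal U_\hbar$ intertwines anti-Wick operators with Fock Toeplitz operators at scale $t=2\hbar$, namely
\begin{equation*}
 \mathcal U_\hbar\operatorname{op}_{\mathrm{aw},\hbar}(\widetilde f)\mathcal U_\hbar^{-1}=T_f^{(2\hbar)}
 \quad\text{on }\mathcal F_{2\hbar}^2,\qquad f\in L^\infty(\C^n).
\end{equation*}
Once this holds, \eqref{E214} follows at once. We conjugate the anti-Wick semi-commutator by the unitary \eqref{E227} and use $\widetilde{fg}=\widetilde f\widetilde g$. The conjugated operator is $T_f^{(2\hbar)}T_g^{(2\hbar)}-T_{fg}^{(2\hbar)}=D_{2\hbar}(f,g)$, and Schatten norms are invariant under unitary conjugation.

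To prove the intertwining, I would identify $\mathcal V_\hbar$ with $\mathcal U_\hbar$ up to a change of variables and a unimodular factor. Complete the square in the exponent of $\mathcal B_\hbar$ with $z=q-i\xi$. The exponent of $\mathcal B_\hbar$ then equals $-|x-q|^2/(2\hbar)-ix\cdot\xi/\hbar$ plus terms depending only on $(q,\xi)$. Those terms give a real Gaussian factor $e^{|z|^2/(4\hbar)}$ times a unimodular phase $e^{iq\cdot\xi/(2\hbar)}$. So there are a measurable function $\theta_\hbar(q,\xi)$ with $|\theta_\hbar|=1$ and a positive constant $\kappa_\hbar$ such that
\begin{equation*}
 (\mathcal V_\hbar u)(q,\xi)=\kappa_\hbar\,\theta_\hbar(q,\xi)\,e^{-|q-i\xi|^2/(4\hbar)}(\mathcal U_\hbar u)(q-i\xi).
\end{equation*}
Let $W_\hbar:L^2_{2\hbar}\to L^2(\R^{2n},dq\,d\xi)$ be $W_\hbar F(q,\xi)=\kappa_\hbar\theta_\hbar e^{-|q-i\xi|^2/(4\hbar)}F(q-i\xi)$. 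It is a weighted composition with the measure-preserving map $(q,\xi)\mapsto q-i\xi$. Both $\mathcal V_\hbar$ and $\mathcal U_\hbar$ are isometries, so $W_\hbar$ is isometric on $\mathcal F_{2\hbar}^2$. Comparing it with the density $(2\hbar)^{-n}e^{-|z|^2/(2\hbar)}$ of $\mu_{2\hbar}$ fixes $\kappa_\hbar$, so $W_\hbar$ is unitary on all of $L^2_{2\hbar}$. In particular $\kappa_\hbar$ never has to be computed by hand.

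The multiplier commutes with $W_\hbar$ in the form $M_{\widetilde f}W_\hbar=W_\hbar M_f$, because $\widetilde f(q,\xi)=f(q-i\xi)$ and the phase cancels against its conjugate in the inner product. Hence for $u,v\in L^2(\R^n)$ we get
\begin{equation*}
 \langle\operatorname{op}_{\mathrm{aw},\hbar}(\widetilde f)u,v\rangle
 =\langle M_{\widetilde f}W_\hbar\mathcal U_\hbar u,W_\hbar\mathcal U_\hbar v\rangle
 =\langle f\,\mathcal U_\hbar u,\mathcal U_\hbar v\rangle_{2\hbar}
 =\langle T_f^{(2\hbar)}\mathcal U_\hbar u,\mathcal U_\hbar v\rangle_{2\hbar},
\end{equation*}
which is the intertwining. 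The main obstacle is bookkeeping: getting the completed-square identity right, including the sign convention $z=q-i\xi$ that matches the choice of $\widetilde f$, and confirming that the leftover factor is exactly unimodular times the weight $e^{-|z|^2/(4\hbar)}$. I would check this against the conventions of \cite{KellerLuef2021}.

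For the equivalence, take first $f\in\mathcal A_p$ and a fixed $a\in L^\infty(\R^{2n})$. Set $g(z)=a(\Re z,-\Im z)$ in the coordinates $z=q-i\xi$, so that $\widetilde g=a$ and $g$ is bounded. Then \eqref{E214} gives convergence as $\hbar\to0^+$, since $t=2\hbar\to0^+$. Conversely, given any bounded $g$, apply the hypothesis with $a=\widetilde g$; \eqref{E214} then yields $\norm{D_t(f,g)}_{\mathcal S_p}\to0$ with $t=2\hbar$, so $f\in\mathcal A_p$ by \eqref{eq:property}.
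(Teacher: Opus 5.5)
Your proposal is correct and follows essentially the same route as the paper. The relation $(\mathcal V_\hbar u)(q,\xi)=e^{-iq\cdot\xi/(2\hbar)}e^{-|q-i\xi|^2/(4\hbar)}(\mathcal B_\hbar u)(q-i\xi)$ yields the intertwining $\mathcal U_\hbar\operatorname{op}_{\mathrm{aw},\hbar}(\widetilde f)\mathcal U_\hbar^{-1}=T_f^{(2\hbar)}$, unitary invariance of singular values then gives \eqref{E214}, and the measure-preserving bijection $a=\widetilde g$ gives the equivalence; the only differences are cosmetic, since you derive the relation by completing the square and package it as a unitary weighted composition $W_\hbar$ whose constant is fixed by isometry, whereas the paper quotes the relation from Keller--Luef and uses the pointwise identity \eqref{E228} with the explicit normalization $(2\hbar)^{n/2}$.
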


\begin{proof}
We replace the frequency variable $p$ used in
\cite[(7)]{KellerLuef2021} by $-\xi$ and obtain
$$
 (\mathcal V_\hbar u)(q,\xi)
 =e^{-iq\cdot\xi/(2\hbar)}
  e^{-(|q|^2+|\xi|^2)/(4\hbar)}
  (\mathcal B_\hbar u)(q-i\xi).
$$
For $u,v\in L^2(\R^n,dx)$ and $z=q-i\xi\in\C^n$,
cancellation of the phase factors gives
\begin{equation}\label{E228}
 (\mathcal V_\hbar u)(q,\xi)
 \overline{(\mathcal V_\hbar v)(q,\xi)}
 =(\mathcal B_\hbar u)(z)\overline{(\mathcal B_\hbar v)(z)}
   e^{-|z|^2/(2\hbar)}.
\end{equation}
By \eqref{E226} and the adjoint identity, we have
\begin{align*}
 \langle\operatorname{op}_{\mathrm{aw},\hbar}(\widetilde f)u,v\rangle_{L^2(\R^n,dx)}
 &=\langle M_{\widetilde f}\mathcal V_\hbar u,
                \mathcal V_\hbar v\rangle_{L^2(\R^{2n},dq d\xi)}\\
 &=\int_{\R^{2n}}f(q-i\xi)(\mathcal V_\hbar u)(q,\xi)
       \overline{(\mathcal V_\hbar v)(q,\xi)}dq d\xi\\
 &=\int_{\C^n}f(z)(\mathcal B_\hbar u)(z)
       \overline{(\mathcal B_\hbar v)(z)}e^{-|z|^2/(2\hbar)}dv(z)\\
 &=\langle T_f^{(2\hbar)}\mathcal U_\hbar u,
                \mathcal U_\hbar v\rangle_{2\hbar}.
\end{align*}
The third equality uses \eqref{E228} and $dv(z)=dq d\xi$,
since the change of variables $z=q-i\xi$ has absolute Jacobian one.
The last equality uses $P_{2\hbar}\mathcal U_\hbar v=\mathcal U_\hbar v$
and $(2\hbar)^{n/2}(2\hbar)^{n/2}(2\hbar)^{-n}=1$, with the
factors coming from \eqref{E227} and \eqref{E82}.
It follows that
\begin{equation}\label{E229}
 \mathcal U_\hbar\operatorname{op}_{\mathrm{aw},\hbar}(\widetilde f)\mathcal U_\hbar^{-1}
 =T_f^{(2\hbar)}.
\end{equation}

Set
$$
 A_\hbar
 =\operatorname{op}_{\mathrm{aw},\hbar}(\widetilde f)
  \operatorname{op}_{\mathrm{aw},\hbar}(\widetilde g)
  -\operatorname{op}_{\mathrm{aw},\hbar}(\widetilde{fg}).
$$
Equation \eqref{E229}, applied to $f,g,fg$, gives
$$
 \mathcal U_\hbar A_\hbar\mathcal U_\hbar^{-1}
 =T_f^{(2\hbar)}T_g^{(2\hbar)}-T_{fg}^{(2\hbar)}
 =D_{2\hbar}(f,g).
$$
Conjugation by $\mathcal U_\hbar$ is a rank-preserving bijection
between finite-rank operators on $L^2(\R^n,dx)$ and
$\mathcal F_{2\hbar}^2$.
By \eqref{E102} and unitarity, we obtain
\begin{align*}
 s_j(D_{2\hbar}(f,g))
 &=\inf_{\rank R<j}
   \norm{D_{2\hbar}(f,g)-\mathcal U_\hbar R\mathcal U_\hbar^{-1}}\\
 &=\inf_{\rank R<j}
   \norm{\mathcal U_\hbar(A_\hbar-R)\mathcal U_\hbar^{-1}}\\
 &=\inf_{\rank R<j}\norm{A_\hbar-R}
 =s_j(A_\hbar),\quad j\geq1,
\end{align*}
where $R$ ranges over bounded operators on $L^2(\R^n,dx)$.
The definition of the Schatten quantity now gives
$$
 \norm{A_\hbar}_{\mathcal S_p(L^2(\R^n,dx))}^p
 =\sum_{j=1}^\infty s_j(A_\hbar)^p
 =\sum_{j=1}^\infty s_j(D_{2\hbar}(f,g))^p
 =\norm{D_{2\hbar}(f,g)}_{\mathcal S_p(\mathcal F_{2\hbar}^2)}^p,
$$
which proves \eqref{E214}, also when the series diverges.

To transfer the condition on every fixed second symbol, take
$a\in L^\infty(\R^{2n})$ and define
$g(z)=a(\Re z,-\Im z)$ for $z\in\C^n$. Then
$$
 \widetilde g(q,\xi)=g(q-i\xi)=a(q,\xi),
$$
so $a=\widetilde g$. The change of variables $z=q-i\xi\in\C^n$
preserves Lebesgue measure, with $dv(z)=dq d\xi$, and gives
$$
 \norm{g}_{L^\infty(\C^n)}
 =\norm{a}_{L^\infty(\R^{2n})}<\infty.
$$
This correspondence is bijective and independent of $\hbar$.
Since $\widetilde f a=\widetilde{fg}$, \eqref{E214} with
$t=2\hbar$ shows that the limit for every fixed $a$ is equivalent to
$$
 \lim_{t\to0^+}
 \norm{D_t(f,g)}_{\mathcal S_p(\mathcal F_t^2)}=0
 \quad\text{for every fixed }g\in L^\infty(\C^n),
$$
which is precisely $f\in\mathcal A_p$ by \eqref{eq:property}. This completes the proof.
\end{proof}

Consequently, Theorems \ref{thm:main} and \ref{thm:weakcriterion}
classify the first symbols in Corollary \ref{cor:antiwick}.
Taking suprema over $\norm{g}_{L^\infty(\C^n)}\leq1$ in
\eqref{E214} also transfers the uniform criterion
of Theorem \ref{thm:uniform} to anti-Wick operators for $2\leq p<\infty$.

\subsection{Compact K\"ahler manifolds}\label{sec:compact-model}
We prove Proposition \ref{P4} after recalling the geometric
setting and the associated operators.

\subsubsection{Geometric setting}
Let $(X,\omega)$ be a compact connected K\"ahler manifold of complex
dimension $n\geq1$, with complex structure $J$, as in
\cite[Section 2.1]{Schlichenmaier2010}.
Its Riemannian metric and volume are
$$
 g_X(v,w)=\omega(v,Jw),\quad dv_X=\frac{\omega^n}{n!},
$$
as in \cite[p. 2]{MaMarinescu2012}.
Let $d_X$ be the induced distance, write $B_X(x,r)$ for its open
metric balls, and set $\operatorname{vol}_X(E)=\int_E dv_X$
for measurable $E\subseteq X$.
The H\"older seminorm and space used in the introduction are given by
\begin{equation}\label{E230}
 [f]_\alpha=\sup_{x\neq y}
       \frac{|f(x)-f(y)|}{d_X^\alpha(x,y)},\quad
 C^{0,\alpha}(X)=\{f\in C(X):[f]_\alpha<\infty\}
\end{equation}
for $0<\alpha\leq1$.

Write $\operatorname{Tan}_xX$ for the real tangent space of $X$
at $x\in X$.
Let $L\to X$ be a holomorphic line bundle with a smooth Hermitian
metric $h^L$ and Chern connection $\nabla^L$
\cite[Section 2.3]{Schlichenmaier2010}.
We assume that its curvature $R^L=(\nabla^L)^2$ is strictly positive
in the sense of \cite[Section 2.8, p. 6]{Schlichenmaier2010}.
Writing $(R^L)_x$ for the curvature form at $x$, we express
this condition as
$$
 i(R^L)_x(v,Jv)>0,\quad x\in X,\quad 0\neq v\in \operatorname{Tan}_xX.
$$

\subsubsection{Sections and the Bergman projection}
For each integer $k\geq1$, give $L^k=L^{\otimes k}$ the induced
metric $h^{L^k}$.
In a local holomorphic frame $e$ of $L$ on an open set $U\subseteq X$,
a smooth section $u\in C^\infty(X,L^k)$ has the representation
$$
 u|_U=u_e e^{\otimes k},\quad
 |u|_{h^{L^k}}^2=|u_e|^2|e|_{h^L}^{2k},
 \quad u_e\in C^\infty(U),
$$
as in \cite[Sections 2.9 and 3.1]{Schlichenmaier2010}.
The section is holomorphic precisely when all these local coefficients
are holomorphic. Following \cite[(0.2), p. 2]{MaMarinescu2012}, let
$\mathcal L_k=L^2(X,L^k;dv_X)$ be the completion of the smooth
sections in the inner product
$$
 \langle u,v\rangle_{\mathcal L_k}
 =\int_X h^{L^k}(u(x),v(x))dv_X(x),
 \quad u,v\in C^\infty(X,L^k),
$$
with the fiber metric and inner product linear in the first variable.
Since $X$ is compact, \cite[Section 2.2, p. 16]{BaerBallmann2012},
applied to the smooth Hermitian bundle $L^k$ and volume $dv_X$,
identifies $\mathcal L_k$ isometrically with the measurable sections
$u$ satisfying $$\int_X|u|_{h^{L^k}}^2dv_X<\infty,$$ modulo equality
almost everywhere.
The holomorphic sections form the finite-dimensional subspace
$\mathcal H_k=H^0(X,L^k)\subset\mathcal L_k$
\cite[Section 3.1]{Schlichenmaier2010}.
Let $P_k:\mathcal L_k\to\mathcal L_k$ be its orthogonal projection,
called the Bergman projection.
Compactness and curvature positivity will provide the uniform
geometric bounds and projection kernel estimate used in the proof.

\subsubsection{Toeplitz operators and Schatten estimates}
For $a\in L^\infty(X)$ and $u\in\mathcal L_k$, multiplication
is given locally by $(M_au)|_U=(au_e)e^{\otimes k}$ almost everywhere
on $U$, where $u_e$ is measurable.
Integrating $|au|_{h^{L^k}}^2\leq
\norm{a}_{L^\infty(X)}^2|u|_{h^{L^k}}^2$ gives
$$\norm{M_a}_{\mathcal L_k\to\mathcal L_k}\leq\norm{a}_{L^\infty(X)}.$$
This bound extends the Toeplitz construction in
\cite[(0.4), p. 2]{MaMarinescu2012} to bounded measurable symbols.
Restricting to $\mathcal H_k$ and defining the associated Hankel
operator as in \eqref{E1}, we set
$$
 T_{a,k}=P_kM_a:\mathcal H_k\to\mathcal H_k,
 \quad
 H_{a,k}=(I-P_k)M_a:\mathcal H_k\to\mathcal L_k.
$$
Recall that $D_k(f,g)=T_{f,k}T_{g,k}-T_{fg,k}$.
We use the unnormalized Schatten norms from Section \ref{sec:schatten}.

\begin{proof}[Proof of Proposition \ref{P4}]
With $L_x^k$ denoting the fiber of $L^k$ at $x\in X$, write
$P_k(x,y)\in\operatorname{Hom}(L_y^k,L_x^k)$ for the
kernel of $P_k$ with respect to $dv_X(y)$, as in
\cite[(0.3), p. 2]{MaMarinescu2012}, where
$\operatorname{Hom}(L_y^k,L_x^k)$ denotes the space of complex
linear maps from $L_y^k$ to $L_x^k$.
We use the induced fiber norm for $|P_k(x,y)|$.
Compactness makes $(X,g_X)$ complete, with positive injectivity
radius, and gives uniform bounds for the curvature tensors of
$(X,g_X)$ and $(L,h^L)$ and their covariant derivatives of every order.
Since $J$ is parallel for the K\"ahler metric, these verify the
bounded-geometry hypotheses in
\cite[Theorem 1, p. 1328]{MaMarinescu2015}, as also noted in the proof
of \cite[Corollary 3, p. 1340]{MaMarinescu2015}.
On the compact unit tangent bundle, strict positivity gives
$$
 2\varepsilon
 =\min_{\substack{x\in X,\ v\in \operatorname{Tan}_xX\\ |v|_{g_X}=1}}
       i(R^L)_x(v,Jv)>0,
 \quad i(R^L)_x(v,Jv)\geq2\varepsilon|v|_{g_X}^2.
$$
In particular, $i(R^L)_x(v,Jv)>\varepsilon|v|_{g_X}^2$ for $v\neq0$,
which is the curvature hypothesis in
\cite[(1.2), p. 1328]{MaMarinescu2015}.
Taking derivative order zero and the trivial auxiliary line bundle
in \cite[Theorem 5, p. 1338, and (1.3), p. 1328]{MaMarinescu2015},
with tensor power $k$, gives constants $C_0,c_0>0$ and $k_0\geq1$,
depending only on $(X,\omega,J)$ and $(L,h^L)$, such that
\begin{equation}\label{E216}
 |P_k(x,y)|\leq C_0 k^n e^{-c_0\sqrt{k}d_X(x,y)},
 \quad x,y\in X,\quad k\geq k_0.
\end{equation}
Recall that compactness of $X$ gives
$N_k=\dim\mathcal H_k<\infty$ for each fixed $k\geq1$
\cite[Section 3.1, p. 8]{Schlichenmaier2010}.
For an orthonormal basis $(s_{k,j})_{j=1}^{N_k}$ of
$\mathcal H_k$, set
$B_k(x)=\sum_{j=1}^{N_k}|s_{k,j}(x)|^2$.
For $x\in X$, let $I_{L_x^k}$ denote the identity on the fiber $L_x^k$.
The diagonal kernel identity
\cite[(4.19), p. 1344]{MaMarinescu2015}, written in this orthonormal
basis, gives
$$
 \begin{aligned}
 P_k(x,x)v
 &=\sum_{j=1}^{N_k}h^{L^k}(v,s_{k,j}(x))s_{k,j}(x)\\
 &=\sum_{j=1}^{N_k}|s_{k,j}(x)|^2v=B_k(x)v,
 \quad v\in L_x^k,
 \end{aligned}
$$
where the second equality uses $\dim_{\C}L_x^k=1$.
Hence $P_k(x,x)=B_k(x)I_{L_x^k}$ and $|P_k(x,x)|=B_k(x)$.
Taking $y=x$ in \eqref{E216} gives the first bound below.
Orthonormality gives
$$\int_X B_kdv_X=\sum_{j=1}^{N_k}\norm{s_{k,j}}_{\mathcal L_k}^2=N_k,$$
so integration of that bound yields
\begin{equation}\label{E217}
 B_k(x)\leq C_0k^n,\quad
 N_k=\int_X B_k(x)dv_X(x)\leq C_0\operatorname{vol}_X(X)k^n.
\end{equation}

We first estimate $H_{f,k}$ in operator norm.
Fix $r_*>0$, depending only on $g_X$, smaller than the injectivity radius.
In normal coordinates, write $dv_X=\kappa_x(Z)dZ$ as in
\cite[(2.4), p. 11]{MaMarinescu2012}, where
$Z\in\operatorname{Tan}_xX$, $dZ$ is the Euclidean volume induced by
$g_X$, and $\kappa_x$ is the smooth positive volume density.
This density is jointly smooth in $(x,Z)$ on a neighborhood of
the closed disk bundle $|Z|_{g_X}\leq r_*$ over $X$.
Since $X$ is compact, this disk bundle is compact, so we may take
$$
 0<C_{\rm vol}=\max_{\substack{x\in X\\ |Z|_{g_X}\leq r_*}}
                 \kappa_x(Z)<\infty.
$$
This constant depends only on $(X,g_X)$ and the chosen $r_*$.
With $C_{\rm ball}=C_{\rm vol}|B_{\R^{2n}}(0,1)|$, the change of variables gives
$$
 \begin{aligned}
 \operatorname{vol}_X(B_X(x,r))
 &=\int_{\{|Z|_{g_X}<r\}}\kappa_x(Z)dZ\\
 &\leq C_{\rm vol}|B_{\R^{2n}}(0,1)|r^{2n}
 =C_{\rm ball}r^{2n},\quad 0<r\leq r_*.
 \end{aligned}
$$
For $r>r_*$, use $B_X(x,r)\subseteq X$ and $(r/r_*)^{2n}>1$.
With $C_X=\max\{C_{\rm ball},\operatorname{vol}_X(X)r_*^{-2n}\}$,
depending only on $(X,g_X)$ and independent of $x$ and $r$,
the two ranges give
\begin{equation}\label{E231}
 \operatorname{vol}_X(B_X(x,r))
 \leq
 \begin{cases}
 C_{\rm ball}r^{2n},&0<r\leq r_*,\\
 \operatorname{vol}_X(X)r_*^{-2n}r^{2n},&r>r_*,
 \end{cases}
 \leq C_Xr^{2n}.
\end{equation}
For the kernel integral, set
$$
 C_\alpha=C_X\sum_{j=0}^\infty(j+1)^{2n+\alpha}e^{-c_0j}<\infty.
$$
The constant $C_\alpha$ depends only on
$(X,\omega,J)$, $(L,h^L)$, and $\alpha$.
Splitting $X$ into the sets
$j/\sqrt{k}\leq d_X(x,y)<(j+1)/\sqrt{k}$, $j\geq0$, and using
\eqref{E231}, we obtain
\begin{equation}\label{E232}
 \begin{aligned}
 &\qquad k^n\int_X d_X^\alpha(x,y)
           e^{-c_0\sqrt{k}d_X(x,y)}dv_X(y)\\
 &\quad\leq k^n\sum_{j=0}^\infty
       \left(\frac{j+1}{\sqrt{k}}\right)^\alpha e^{-c_0j}
       \operatorname{vol}_X\!\left(B_X\!\left(x,\frac{j+1}{\sqrt{k}}\right)\right)\\
 &\quad\leq C_X k^{-\alpha/2}
       \sum_{j=0}^\infty (j+1)^{2n+\alpha}e^{-c_0j}
       \leq C_\alpha k^{-\alpha/2}.
 \end{aligned}
\end{equation}
The bound in \eqref{E232} is uniform in $x$ and $k$.
Set $C_{\alpha,1}=2C_0C_\alpha$, with the same dependencies as
$C_\alpha$ and independent of $k,f,g$.
By \eqref{E230}, $|f(x)-f(y)|\leq[f]_\alpha d_X^\alpha(x,y)$.
Combining this inequality with \eqref{E216} and \eqref{E232},
also with $x,y$ interchanged, gives
\begin{equation}\label{E233}
 \begin{aligned}
 &\sup_x\int_X |f(x)-f(y)||P_k(x,y)|dv_X(y)\\
 &\qquad+\sup_y\int_X |f(x)-f(y)||P_k(x,y)|dv_X(x)
 \leq C_{\alpha,1}[f]_\alpha k^{-\alpha/2}.
 \end{aligned}
\end{equation}

The commutator $M_fP_k-P_kM_f$ has kernel
$(f(x)-f(y))P_k(x,y)$. For $u\in\mathcal L_k$, its fiber norm satisfies
$$
 |((M_fP_k-P_kM_f)u)(x)|
 \leq\int_X |f(x)-f(y)||P_k(x,y)||u(y)|dv_X(y).
$$
For $[f]_\alpha>0$, \eqref{E233} verifies both integral
conditions of Schur's test \cite[Corollary 3.7, p. 53]{Zhu1990}
on $L^2(X,dv_X)$, with auxiliary function one and both constants
equal to $C_{\alpha,1}[f]_\alpha k^{-\alpha/2}$.
As in the passage from \eqref{E133} to \eqref{E134},
the preceding pointwise bound gives
$$
 \norm{M_fP_k-P_kM_f}_{\mathcal L_k\to\mathcal L_k}
 \leq C_{\alpha,1}[f]_\alpha k^{-\alpha/2}.
$$
If $[f]_\alpha=0$, the kernel vanishes and the same estimate holds.
Since $(M_fP_k-P_kM_f)|_{\mathcal H_k}=H_{f,k}$, we obtain
\begin{equation}\label{E218}
 \norm{H_{f,k}}_{\mathcal H_k\to\mathcal L_k}
 \leq C_{\alpha,1}[f]_\alpha k^{-\alpha/2}.
\end{equation}
Equation \eqref{E218} also holds for $\overline f$ because
$[\overline f]_\alpha=[f]_\alpha$.
From $P_k^2=P_k=P_k^*$, we have
\begin{equation}\label{E219}
 D_k(f,g)=-P_kM_f(I-P_k)M_g|_{\mathcal H_k}
         =-H_{\overline f,k}^*H_{g,k}.
\end{equation}
Since $\rank D_k(f,g)\leq\dim\mathcal H_k=N_k$, \eqref{E102}
gives $s_j(D_k(f,g))\leq\norm{D_k(f,g)}$ for $j\geq1$ and
$s_j(D_k(f,g))=0$ for $j>N_k$, by taking the approximating
operators to be $0$ and $D_k(f,g)$, respectively.
The Schatten norm definition in Section \ref{sec:schatten}
and the factorization \eqref{E219} yield
$$
 \begin{aligned}
 \norm{D_k(f,g)}_{\mathcal S_p(\mathcal H_k)}^p
 &=\sum_{j=1}^{N_k}s_j(D_k(f,g))^p\leq N_k\norm{D_k(f,g)}^p\leq N_k\norm{H_{\overline f,k}}^p\norm{H_{g,k}}^p,
 \end{aligned}
$$
where the last inequality uses
$\norm{H_{\overline f,k}^*}=\norm{H_{\overline f,k}}$.
Taking $p$-th roots and substituting \eqref{E217}, \eqref{E218},
and $\norm{H_{g,k}}\leq\norm{g}_{L^\infty(X)}$, we obtain
$$
 \norm{D_k(f,g)}_{\mathcal S_p(\mathcal H_k)}
 \leq N_k^{1/p}\norm{H_{\overline f,k}}\norm{H_{g,k}}
 \leq C[f]_\alpha\norm{g}_{L^\infty(X)}k^{n/p-\alpha/2}.
$$
Here $C=C_{\alpha,1}(C_0\operatorname{vol}_X(X))^{1/p}$ depends
only on $(X,\omega,J)$, $(L,h^L)$, $\alpha$, and $p$.
This proves \eqref{E215}.

To include the critical exponent, fix $g\in L^\infty(X)$.
For any $h\in C^\infty(X)$, apply the Hilbert--Schmidt basis
formula \cite[Theorem 1.22, p. 18]{Zhu1990} to the extension of
$H_{g-h,k}$ by zero on $\mathcal H_k^\perp\subset\mathcal L_k$,
which preserves its nonzero singular values as in \eqref{E138}.
The contractivity of $I-P_k$, the definition of $B_k$, and
\eqref{E217} then give
\begin{equation}\label{E234}
\begin{aligned}
 \norm{H_{g-h,k}}_{\mathcal S_2(\mathcal H_k,\mathcal L_k)}^2
 &=\sum_{j=1}^{N_k}\norm{H_{g-h,k}s_{k,j}}_{\mathcal L_k}^2\\
 &\leq\sum_{j=1}^{N_k}\norm{(g-h)s_{k,j}}_{\mathcal L_k}^2
   =\int_X|g-h|^2B_kdv_X\\
 &\leq C_0k^n\norm{g-h}_{L^2(X,dv_X)}^2.
\end{aligned}
\end{equation}
Write $(dh)_x:\operatorname{Tan}_xX\to\C$ for the real differential
of $h$ at $x$. Since $h$ is smooth and $X$ is compact,
integration along piecewise smooth curves and the definition of $d_X$ give
$$
 [h]_1\leq\max_{x\in X}
   \sup_{\substack{v\in\operatorname{Tan}_xX\\|v|_{g_X}=1}}
       |(dh)_x(v)|<\infty.
$$
Using $\rank H_{h,k}\leq N_k$, the dimension bound in \eqref{E217},
and \eqref{E218} with $\alpha=1$, we obtain
\begin{equation}\label{E235}
\begin{aligned}
 \norm{H_{h,k}}_{\mathcal S_2(\mathcal H_k,\mathcal L_k)}^2
 &\leq N_k\norm{H_{h,k}}^2\leq C_0\operatorname{vol}_X(X)k^n
       \bigl(C_{1,1}[h]_1k^{-1/2}\bigr)^2=C_hk^{n-1},
\end{aligned}
\end{equation}
where $C_h=C_0\operatorname{vol}_X(X)C_{1,1}^2[h]_1^2$
depends only on $(X,\omega,J)$, $(L,h^L)$, and $h$, and is
independent of $k$ and $g$.
The triangle inequality, \eqref{E234}, and \eqref{E235} give
\begin{align*}
 k^{-n/2}\norm{H_{g,k}}_{\mathcal S_2(\mathcal H_k,\mathcal L_k)}
 &\leq k^{-n/2}\norm{H_{g-h,k}}_{\mathcal S_2(\mathcal H_k,\mathcal L_k)}
       +k^{-n/2}\norm{H_{h,k}}_{\mathcal S_2(\mathcal H_k,\mathcal L_k)}\\
 &\leq C_0^{1/2}\norm{g-h}_{L^2(X,dv_X)}
       +C_h^{1/2}k^{-1/2}.
\end{align*}
Keeping $h$ fixed and letting $k\to\infty$, we obtain
\begin{equation}\label{E225}
 \limsup_{k\to\infty}k^{-n/2}
 \norm{H_{g,k}}_{\mathcal S_2(\mathcal H_k,\mathcal L_k)}
 \leq C_0^{1/2}\norm{g-h}_{L^2(X,dv_X)}.
\end{equation}

On the closed manifold $X$ with its complex orientation,
\cite[Corollary 5.3, p. 249]{Clarke2013}, applied with $g_X$
and exponent two to real and imaginary parts, gives density
of $C^\infty(X)$ in $L^2(X,dv_X)$.
Since $g\in L^\infty(X)\subset L^2(X,dv_X)$, choose a sequence
$(h_\nu)_{\nu\geq1}\subset C^\infty(X)$ with
$$
 \norm{g-h_\nu}_{L^2(X,dv_X)}\longrightarrow0
 \quad(\nu\to\infty).
$$
For this fixed $g$, substituting $h=h_\nu$ in \eqref{E225}
and then letting $\nu\to\infty$ gives
\begin{equation}\label{E220}
 \norm{H_{g,k}}_{\mathcal S_2(\mathcal H_k,\mathcal L_k)}^2
 =o(k^n).
\end{equation}
For $p>2$, we apply \eqref{E124} to the finite rank operator
$H_{g,k}$. Using $\norm{H_{g,k}}\leq\norm{g}_{L^\infty(X)}$
and \eqref{E220}, we obtain
$$
 \norm{H_{g,k}}_{\mathcal S_p(\mathcal H_k,\mathcal L_k)}^p
 \leq\norm{g}_{L^\infty(X)}^{p-2}
       \norm{H_{g,k}}_{\mathcal S_2(\mathcal H_k,\mathcal L_k)}^2
 =o(k^n).
$$
Together with \eqref{E220} for $p=2$, this yields
$\norm{H_{g,k}}_{\mathcal S_p(\mathcal H_k,\mathcal L_k)}
=o(k^{n/p})$ for each fixed $2\leq p<\infty$.
The ideal inequality \eqref{E7}, applied to \eqref{E219}, together
with \eqref{E218} and this estimate for $H_{g,k}$, gives
\begin{align*}
 \norm{D_k(f,g)}_{\mathcal S_p(\mathcal H_k)}
 &\leq\norm{H_{\overline f,k}^*}
       \norm{H_{g,k}}_{\mathcal S_p(\mathcal H_k,\mathcal L_k)}\\
 &\leq C_{\alpha,1}[f]_\alpha k^{-\alpha/2}
       \norm{H_{g,k}}_{\mathcal S_p(\mathcal H_k,\mathcal L_k)}
 =o(k^{n/p-\alpha/2})\quad(k\to\infty).
\end{align*}
Since $p\geq2n/\alpha\geq2$, the last expression tends to zero. The proof is complete.
\end{proof}

\subsection{A trace norm obstruction on the projective line}

The endpoint conclusion of Proposition \ref{P4} concerns each fixed
bounded second symbol. The following example on $\mathbb{CP}^1$
gives this convergence in $\mathcal S_2$ but not in $\mathcal S_1$,
even though the first symbol is smooth.

Let $X=\mathbb{CP}^1=\C\cup\{\infty\}$ with homogeneous
coordinates $[z_0:z_1]$, where nonzero pairs $(z_0,z_1)\in\C^2$
are identified up to multiplication by a nonzero complex scalar.
Write $\mathcal O(-1)$ for the tautological line bundle of
\cite[Section 2.4]{Schlichenmaier2010}, whose fiber at
$[z_0:z_1]$ is the complex line
$\mathcal O(-1)_{[z_0:z_1]}=\C(z_0,z_1)\subset\C^2$,
and set $L=\mathcal O(1)=\mathcal O(-1)^*$, with $*$ denoting
the fiberwise complex linear dual.
On $z_0\ne0$, the affine coordinate $z=z_1/z_0$ identifies
points with $[1:z]$, and the remaining point $[0:1]$ represents
$\infty$ \cite[Section 2.5]{Schlichenmaier2010}.
Take the frame $e(z)$ dual to $(1,z)$ on this chart and the
metric induced by the Euclidean metric on $\C^2$.
The dual metric gives
$|e(z)|^2=|(1,z)|^{-2}=(1+|z|^2)^{-1}$.
We normalize the Fubini--Study volume in
\cite[(2.6)]{Schlichenmaier2010} by
$$
 d\nu(z)=\frac{\omega_{\mathrm{FS}}}{2\pi}
        =\frac{dA(z)}{\pi(1+|z|^2)^2},
$$
where $dA$ is planar Lebesgue measure.
This rescaling multiplies every inner product by the same
positive constant, so the orthogonal projection and the Toeplitz
operators are unchanged.

With $\mathcal O(k)=L^{\otimes k}$, the correspondence in
\cite[Section 5.2]{Schlichenmaier2010} identifies
$\mathcal H_k=H^0(X,\mathcal O(k))$ with the homogeneous
polynomials of degree $k$ on $\C^2$.
Restricting these polynomials to $(1,z)$ gives the polynomials
$u(z)$ of degree at most $k$, with local section
$u e^{\otimes k}$.
For two such polynomials $u,v$, we have
$h^{L^k}(ue^{\otimes k},ve^{\otimes k})
=u\overline v(1+|z|^2)^{-k}$.
Substituting this identity and $d\nu$ into the inner product of
Section \ref{sec:compact-model} gives
\begin{equation}\label{E221}
 \langle u,v\rangle_{\mathcal H_k}
 =\frac1\pi\int_\C
       u(z)\overline{v(z)}(1+|z|^2)^{-k-2}dA(z).
\end{equation}

\begin{example}\label{Eg1}
Define $x:\mathbb{CP}^1\to[0,1]$ by
$x(z)=|z|^2/(1+|z|^2)$ on $\C$ and $x(\infty)=1$.
Set $f=x$, $g=\mathbf1_{\{x>1/2\}}$, and
$D_k=D_k(f,g)=T_{f,k}T_{g,k}-T_{fg,k}$.
Then $f\in C^\infty(\mathbb{CP}^1)$ and
$g\in L^\infty(\mathbb{CP}^1,d\nu)$, with both symbols
independent of $k$. As $k\to\infty$, the corresponding
semi-commutator satisfies
\begin{equation}\label{E222}
 \begin{gathered}
 \norm{D_k}_{\mathcal S_1(\mathcal H_k)}
 =\frac{k+1}{4(k+2)}\longrightarrow\frac14,\\[6pt]
 \begin{aligned}
 \norm{D_k}_{\mathcal S_2(\mathcal H_k)}^2
 &=\frac1{16}\left(\frac{k+1}{k+2}\right)^2
       4^{-k}\binom{2k}{k}\sim\frac1{16\sqrt{\pi k}}.
 \end{aligned}
 \end{gathered}
\end{equation}
\end{example}

\begin{proof}
The expression for $f$ is smooth on $\C$.
In the coordinate $\zeta=1/z$ near infinity, we have
$$
 f(1/\zeta)=\frac{|\zeta|^{-2}}{1+|\zeta|^{-2}}
          =\frac1{1+|\zeta|^2},\quad \zeta\ne0.
$$
This expression extends smoothly to $\zeta=0$ with value
$1=f(\infty)$, so $f\in C^\infty(\mathbb{CP}^1)$.
Moreover, $g$ is the indicator of
$\{z\in\C:|z|>1\}\cup\{\infty\}$, so it is bounded and
discontinuous along $|z|=1$.
To compute the two norms, we diagonalize the radial Toeplitz
operators in a monomial basis.
In polar coordinates $z=re^{i\theta}$, the monomials
$1,z,\ldots,z^k$ are orthogonal for the inner product in \eqref{E221}.
For their norms, the substitution $\xi=r^2/(1+r^2)$ satisfies
$d\xi=2r(1+r^2)^{-2}dr$ and maps $0<r<\infty$ onto $0<\xi<1$.
The beta integral \cite[(5.12.1)]{OlverEtAl2010} then gives
$$
 \begin{aligned}
 \norm{z^j}_{\mathcal H_k}^2
 &=2\int_0^\infty r^{2j+1}(1+r^2)^{-k-2}dr\\
 &=\int_0^1 \xi^j(1-\xi)^{k-j}d\xi
 =\frac{j!(k-j)!}{(k+1)!},\quad 0\leq j\leq k.
 \end{aligned}
$$
Then $e_{j,k}(z)=\sqrt{(k+1)\binom{k}{j}}z^j$, $0\leq j\leq k$,
is an orthonormal basis of $\mathcal H_k$.
For a bounded measurable function $a:[0,1]\to\C$, the
orthogonality of $P_k$ and the radiality of $a\circ x$ give
$\langle T_{a\circ x,k}e_{j,k},e_{\ell,k}\rangle_{\mathcal H_k}=0$
for $j\ne\ell$ by \eqref{E221}.
Since $(e_{j,k})_{j=0}^k$ is an orthonormal basis, we have
\begin{equation}\label{E223}
 T_{a\circ x,k}e_{j,k}=t_{j,k}(a)e_{j,k},\quad 0\leq j\leq k.
\end{equation}
By \eqref{E221} and the normalization of $e_{j,k}$,
the same substitution $\xi=r^2/(1+r^2)$ gives
\begin{equation}\label{E238}
 \begin{aligned}
 t_{j,k}(a)
 &=\langle T_{a\circ x,k}e_{j,k},e_{j,k}\rangle_{\mathcal H_k}\\
 &=2(k+1)\binom{k}{j}\int_0^\infty
      a\!\left(\frac{r^2}{1+r^2}\right)
      r^{2j+1}(1+r^2)^{-k-2}dr\\
 &=(k+1)\binom{k}{j}
      \int_0^1a(\xi)\xi^j(1-\xi)^{k-j}d\xi.
 \end{aligned}
\end{equation}
In particular, \eqref{E238} and the beta integral with
$a(\xi)=\xi$ give
$T_{f,k}e_{j,k}=(j+1)(k+2)^{-1}e_{j,k}$.

By \eqref{E223}, write $D_ke_{j,k}=d_{j,k}e_{j,k}$.
Using \eqref{E238} and the identity
$$
 \frac{d}{d\xi}\bigl(\xi^{j+1}(1-\xi)^{k-j+1}\bigr)
 =\bigl(j+1-(k+2)\xi\bigr)\xi^j(1-\xi)^{k-j},
$$
we obtain
\begin{equation}\label{E224}
 \begin{aligned}
 d_{j,k}
 &=\frac{j+1}{k+2}\langle T_{g,k}e_{j,k},e_{j,k}\rangle_{\mathcal H_k}
   -\langle T_{fg,k}e_{j,k},e_{j,k}\rangle_{\mathcal H_k}\\
 &=(k+1)\binom{k}{j}\int_{1/2}^1
       \left(\frac{j+1}{k+2}-\xi\right)\xi^j(1-\xi)^{k-j}d\xi\\
 &=\frac{k+1}{k+2}\binom{k}{j}
       \bigl[\xi^{j+1}(1-\xi)^{k-j+1}\bigr]_{1/2}^1\\
 &=-\frac{k+1}{k+2}2^{-k-2}\binom{k}{j}.
 \end{aligned}
\end{equation}
Since $D_k$ is diagonal in the orthonormal basis
$(e_{j,k})_{j=0}^k$, formula \eqref{E224} shows that $D_k$
is negative definite. Then its singular values are the numbers
$|d_{j,k}|=-d_{j,k}$, counted with multiplicity.
The definition of the Schatten quantity now gives
\begin{equation}\label{E237}
 \norm{D_k}_{\mathcal S_p(\mathcal H_k)}^p
 =\left(\frac{k+1}{4(k+2)}\right)^p
       \sum_{j=0}^k\left(2^{-k}\binom{k}{j}\right)^p,
 \quad 0<p<\infty.
\end{equation}

The binomial identities
$$
 \sum_{j=0}^k\binom{k}{j}=2^k,
 \quad
 \sum_{j=0}^k\binom{k}{j}^2=\binom{2k}{k}
$$
give the two exact formulas in \eqref{E222} by taking
$p=1,2$ in \eqref{E237}. The second identity follows by
comparing the coefficients of $t^k$ in
$(1+t)^k(1+t)^k=(1+t)^{2k}$.
Stirling's formula \cite[(5.11.3)]{OlverEtAl2010} yields
$$
 4^{-k}\binom{2k}{k}
 =4^{-k}\frac{(2k)!}{(k!)^2}
 \sim\frac1{\sqrt{\pi k}}
 \quad(k\rightarrow\infty),
$$
which completes \eqref{E222}.
\end{proof}

The ratio
$\binom{k}{j+1}/\binom{k}{j}=(k-j)/(j+1)$ for $0\leq j<k$
shows that $\binom{k}{j}$ is maximal at $j=\lfloor k/2\rfloor$,
where $\lfloor k/2\rfloor$ denotes the greatest integer not exceeding $k/2$.
For this pair, \eqref{E224} and Stirling's formula therefore give
$$
 \norm{D_k}
 =\max_{0\leq j\leq k}|d_{j,k}|
 =\frac{k+1}{4(k+2)}2^{-k}\binom{k}{\lfloor k/2\rfloor}
 =O(k^{-1/2})\quad(k\to\infty),
$$
where the implied constant is absolute.
The operator norm bound and \eqref{E222} imply
$$
 \norm{D_k}_{\mathcal S_p(\mathcal H_k)}^p
 =\sum_{j=0}^k|d_{j,k}|^p
 \leq\norm{D_k}^{p-1}\norm{D_k}_{\mathcal S_1(\mathcal H_k)}
 \longrightarrow0\quad(k\rightarrow\infty)
$$
for every fixed $p>1$ in this example.

\end{document}